\newtheorem{defn}{Definition}[subsection]
\newtheorem{thm}[defn]{Theorem}
\newtheorem{prop}[defn]{Proposition}
\newtheorem{cor}[defn]{Corollary}
\newtheorem{lemma}[defn]{Lemma}
\newtheorem{conjec}[defn]{Conjecture}
\newtheorem{rmk}[defn]{Remark}
\begin{document}

\title[The K-theory of the C*-algebras associated to rational functions]{The K-theory of the C*-algebras associated to rational functions}

\author[Hume]{Jeremy B. Hume}
\address{Jeremy B. Hume \\ School of Mathematics and Statistics  \\
University of Glasgow\\ University Place \\ Glasgow Q12 8QQ \\ United Kingdom}
\email{jeremybhume@gmail.com}
\thanks{2020 \textit{Mathematics Subject Classification}: Primary 46L80, 46L08; Secondary 19K35, 37F10.}
\thanks{\textit{Key words and phrases}: $K$-theory, $C^{*}$-algebra, rational function, holomorphic dynamical system, $C^{*}$-correspondence, $KK$-theory.}
\begin{abstract}
We compute the $K$-theory of the three $C^{*}$-algebras associated to a rational function $R$ acting on the Riemann sphere, its Fatou set, and its Julia set. The latter $C^{*}$-algebra is a unital UCT Kirchberg algebra and is thus classified by its $K$-theory. The $K$-theory in all three cases is shown to depend only on the degree of $R$, the critical points of $R$, and the Fatou cycles of $R$. Our results yield new dynamical invariants for rational functions and a $C^{*}$-algebraic interpretation of the Density of Hyperbolicity Conjecture for quadratic polynomials. These calculations are possible due to new exact sequences in $K$-theory we induce from morphisms of $C^{*}$-correspondences.
\end{abstract}

\maketitle

\section{Introduction}\label{introduction}
A rational function $R(z) = \frac{P(z)}{Q(z)}$, where $P(z)$, $Q(z)$ are polynomials with complex co-efficients, can be thought of as a holomorphic map on the Riemann sphere $\hat{\mathbb{C}} = \mathbb{C}\cup\{\infty\}$. We will assume the degree of $R$ is larger than one, so that $R$ is not a homeomorphism or a constant. $R:\hat{\mathbb{C}}\mapsto \hat{\mathbb{C}}$ can be studied as a dynamical system by considering its forward iterations $\{R^{\circ n}\}_{n\in\mathbb{N}_{0}}$ by function composition.
\par
Holomorphic dynamical systems have been studied since the $19^{th}$ century, and we invite the reader to \cite{A94} and \cite{AIR11} for historical overviews. The modern theory was developed in the 1920's, primarily by Fatou \cite{F19} and Julia \cite{J18}, with contributions from Lattés \cite{L18} and Ritt \cite{R20}. The standard reference for the modern theory is \cite{Milnor:Dynamics_in_one_complex_variable}.
\par
Fatou and Julia made the fundamental observation that the sphere partitions into two $R$-invariant domains exhibiting wildly different dynamical phenomena. The first such domain is the \textit{Fatou set} $F_{R}$, where the dynamics is regular. Remarkably, every connected component of $F_{R}$ is eventually mapped by $R$ into a finite cycle of Fatou components, and there can only be four different types of behaviour of $R$ when restricted to a Fatou cycle (see \cite[Theorem~16.1]{Milnor:Dynamics_in_one_complex_variable}).
\par
The complement of $F_{R}$ in $\hat{\mathbb{C}}$ is the \textit{Julia set} and is denoted $J_{R}$. This set is always non-empty (\cite[Theorem~14.1]{Milnor:Dynamics_in_one_complex_variable}) and the dynamics restricted to $J_{R}$ behaves chaotically (\cite[Corollary~14.2]{Milnor:Dynamics_in_one_complex_variable}). There is no corresponding classification of the possible behaviours for $R:J_{R}\mapsto J_{R}$, and these systems are notoriously hard to understand for seemingly simple families of rational functions.
\par
An approach to understanding a dynamical system is to study its space of orbits. One quickly runs into trouble when trying to do this for a rational function $R:J_{R}\mapsto J_{R}$, as each orbit $O(x) = \{y\in J_{R}:\exists\text{ } n,m\in\mathbb{N}: R^{\circ n}(x) = R^{\circ m}(y)\}$ is dense in $J_{R}$ and thus the space of orbits $O_{R,J_{R}}$ is highly singular and contains little information. This is a common occurence for a dynamical system.
\par
An insight from Connes' non-commutative geometry program \cite{C94} is to instead consider the ``non-commutative'' orbit space of a dynamical system, which is a $C^{*}$-algebra constructed from the dual action of the dynamical system, acting on the continuous complex-valued functions of its phase space. In general, the $C^{*}$-algebra of a dynamical system is well behaved, and whenever the orbit space is well behaved it is the same (up to strong Morita equivalence) as the $C^{*}$-algebra of continuous functions on the orbit space (see \cite[Chapter~4]{W07}).
\par
Kajiwara and Watatani in \cite{KW:C*-algebras_associated_with_complex_systems} were the first to study the $C^{*}$-algebra $\mathcal{O}_{R,J_{R}}$ of a rational function $R:J_{R}\mapsto J_{R}$. They proved in \cite{KW:C*-algebras_associated_with_complex_systems} that $\mathcal{O}_{R,J_{R}}$ are unital UCT Kirchberg algebras and are therefore classified up to *-isomorphism (homeomorphism of non-commutative spaces) by their $K$-theory and the class of the unit in $K_{0}$ by the Kirchberg-Phillips Theorem \cite{Phillips:classification}. Calculating the $K$-theory of $\mathcal{O}_{R,J_{R}}$ is therefore of fundamental importance in understanding these $C^{*}$-algebras.
\par
Nekrashevych in \cite{Nek:C-alg_and_self_similar} computed the $K$-theory of $\mathcal{O}_{R,J_{R}}$ in the special case that $R$ is hyperbolic and post-critically finite. While these are very interesting examples, there are only countably many distinct conjugacy classes of such $R$ and they do not capture all of the diverse behaviour of rational dynamics. Moreover, the method Nekrashevych employs to calculate the $K$-theory does not extend past his example class, so new techniques must be developed.
\par
In this paper, we calculate the $K$-theory of $\mathcal{O}_{R,J_{R}}$ for a general rational function using an entirely different approach. We also calculate the $K$-theory of the $C^{*}$-algebras associated to the dynamics of $R$ on $\hat{\mathbb{C}}$ ($\mathcal{O}_{R,\hat{\mathbb{C}}}$) and on $F_{R}$ ($\mathcal{O}_{R,F_{R}}$). A corollary is that, for polynomials of a fixed degree, there are only finitely many isomorphism classes of the $C^{*}$-algebras $\mathcal{O}_{R,J_{R}}$ and two such $C^{*}$-algebras are isomorphic if and only if their corresponding rational functions have the same number of critical points inside their Julia sets and have the same number of Fatou cycles. In general, we express the $K$-theory of $\mathcal{O}_{R,J_{R}}$ in terms of the Fatou cycles and the kernel and co-kernel of a matrix associated to the oriented Herman cycles of $R$ (a type of Fatou cycle) and the location of its critical points relative to these orientations. A new result from our calculations is that these kernel and co-kernel groups, as well as the Fatou cycle length data of $R$, are invariants for the topological conjugacy class of $R$ restricted to its Julia set.
\par
Our approach is to study a category $\textbf{Cor}$ where the objects are $C^{*}$-correspondences and the morphisms are ``intertwiners'' between $C^{*}$-corrrespondences. A special sub-category of $\textbf{Cor}$ has been studied by a number of authors, see for instance \cite[Section~2.4]{MS19} and the references therein. To a correspondence one can always associate a class in a $KK^{0}$ group. We show for the first time this association is functorial. We are also the first to study exactness in this category and its $KK$-theoretic consequences. Our main tools in calculating the $K$-theory of these algebras are derived from these consequences.
\par
Out of all families of rational functions, the quadratic family $f_{c}(z) = z^{2} + c$, $c$ in $\mathbb{C}$, has been studied the most extensively. By the work of Douady and Hubbard \cite{DH85}, we know that there is a deep relationship between the dynamical properties of $f_{c}$ and the location of the parameter $c$ relative to the Mandelbrot set $\mathcal{M}\subseteq\mathbb{C}$. For instance, by \cite[Proposition~11(b)]{DH85}, the set of parameters $c$ for which $f_{c}$ is \textit{structurally stable} on its Julia set (or $J$-stable) is precisely $\mathbb{C}\setminus \partial \mathcal{M}$. One of the most important conjectures of holomorphic dynamics is that $J$-stability of $f_{c}$ is equivalent to hyperbolicity of $f_{c}$. This conjecture is equivalent to the conjecture that hyperbolic quadratics are dense. See \cite[p.~201]{MSS83} for a brief discussion of the importance of this conjecture or \cite{M14}, \cite{Benini:MLC_survey} for surveys on progress made so far. 
\par
We specialize our $K$-theory calculations to the case of the quadratics and show there are only four isomorphism types of $C^{*}$-algebras, and the isomorphism type is dependent on the location of $0$ relative to the filled Julia set $K_{c}$ of $f_{c}$. A consequence of this is that the Density of Hyperbolicity Conjecture is equivalent to the density of quadratics with a certain $C^{*}$-algebra.
\section{Acknowledgements}
This project has received funding from an NSERC Discovery Grant in Canada and the European Research Council (ERC) under the European Union's Horizon 2020 research and innovation programme (grant agreement No. 817597). It also forms part of the PhD thesis of the author at the University of Glasgow, Scotland. Early work on this project was done while the author was a MSc student at the University of Victoria, Canada, and he thanks Ian Putnam for many enlightening conversations about $K$-theory. He would also like to express thanks to Xin Li and Michael Whittaker for helpful comments on drafts of this project.

\section{Statement of Results}
\subsection{$K$-theory of $\mathcal{O}_{R,\hat{\mathbb{C}}}$}
The $C^{*}$-algebra $\mathcal{O}_{R,\hat{\mathbb{C}}}$ of a rational function $R:\hat{\mathbb{C}}\mapsto \hat{\mathbb{C}}$ is the Exel crossed product of $C(\hat{\mathbb{C}})$ by the endomorphism $R^{*}:C(\hat{\mathbb{C}})\mapsto C(\hat{\mathbb{C}})$ and the transfer operator $\Phi$, defined for $f$ in $C(\hat{\mathbb{C}})$ as 
$$\Phi(f)(z) = \sum_{w:R(w) = z}\text{ind}_{R}(w)f(w), \text{ }z\text{ in }\hat{\mathbb{C}},$$
where $\text{ind}_{R}(w)$ is the (positive) local winding number of $R:\hat{\mathbb{C}}\mapsto \hat{\mathbb{C}}$ about $w$. The weightings $\text{ind}_{R}$ are required so that $\Phi$ maps continuous functions to continuous functions. Kajiwara and Watatani first define $\mathcal{O}_{R,\hat{\mathbb{C}}}$ as the Cuntz-Pimsner algebra of the natural bi-module $E_{R,\hat{\mathbb{C}}}$ associated to $R^{*}$ and $\Phi$, but these are equivalent constructions (\cite[Proposition~3.2]{KW:C*-algebras_associated_with_complex_systems}). The $C^{*}$-algebras $\mathcal{O}_{R,X}$ for $X = F_{R}, J_{R}$ are defined similarly and were also first studied in \cite{KW:C*-algebras_associated_with_complex_systems}.
\par
Our first new result is the calculation of the $K$-theory of $\mathcal{O}_{R,\hat{\mathbb{C}}}$. For each $X = F_{R},\hat{\mathbb{C}}, J_{R}$, we shall denote by $C_{R,X}$ the critical points $\{c\in X:\text{ind}_{R}(c) > 1\}$. These are finite sets. Denote $|C_{R,X}| = c_{R,X}$.
\newtheorem*{thm:forC}{Theorem 1 (\ref{CK})}
\begin{thm:forC}
Let $R$ be a rational function of degree $d>1$. Then,
$K_{0}(\mathcal{O}_{R,\hat{\mathbb{C}}})\simeq \mathbb{Z}^{c_{R,\hat{\mathbb{C}}}+1}$, with the class of the unit corresponding to a generator in a minimal generating set for $\mathbb{Z}^{c_{R,\hat{\mathbb{C}}}+1}$, and $K_{1}(\mathcal{O}_{R,\hat{\mathbb{C}}})\simeq\mathbb{Z}$.
\end{thm:forC}
There are rational functions with $J_{R} = \hat{\mathbb{C}}$. Some examples are considered by Lattès in \cite{L18}, and Rees showed in \cite{R86} there is a ``positive measure'' subset of rational functions which have $J_{R} = \hat{\mathbb{C}}$. Thus, we have already determined the $K$-theory of $\mathcal{O}_{R,J_{R}}$ for a large class of examples.
\\
\subsection{$K$-theory of $\mathcal{O}_{R,F_{R}}$}
Our second new result we present is the $K$-theory of $\mathcal{O}_{R,F_{R}}$. 
The \textit{Fatou set} $F_{R}$ is the maximal open set $U$ such that the iterates $\{R^{\circ n}|_{U}\}_{n\in\mathbb{N}}$ are pre-compact in the compact-open topology on $C(U,\hat{\mathbb{C}})$. It can be shown to be \textit{totally invariant} in the sense that $R^{-1}(F_{R}) = F_{R}$ (see \cite[Lemma~4.3]{Milnor:Dynamics_in_one_complex_variable}). For this reason, $\mathcal{O}_{R,F_{R}}$ is an ideal of $\mathcal{O}_{R,\hat{\mathbb{C}}}$.
A maximal connected component $U$ of $F_{R}$ is called a \textit{Fatou component}. It is clear that $R$ maps any Fatou component onto another such component. A finite collection $P$ of Fatou components is a \textit{Fatou cycle} if  we can write $P = \{U_{1},...,U_{n}\}$, where $U_{i}\neq U_{j}$ for all $i\neq j$ and
$R(U_{i}) = U_{i+1}$ for all $i\leq n$, with indices taken modulo $n$. We will denote the set of Fatou cycles of $R$ by $\mathcal{F}_{R}$.
\par
By \cite[Corollary~2]{Shishikura}, $\mathcal{F}_{R}$ must be a finite set.
By \cite[Theorem~1]{NWDSullivan}, every Fatou component is eventually mapped onto an element in a Fatou cycle. Moreover, a Fatou cycle $P$ is either an
\begin{enumerate}
    \item \textit{attracting} cycle: for every $U$ in $P$, $\{R^{\circ nk}:U\mapsto U\}_{k\in\mathbb{N}}$ converges on compact sets to an attracting fixed point in $U$,
    \item \textit{parabolic} cycle: for every $U$ in $P$, $\{R^{\circ nk}:U\mapsto U\}_{n\in\mathbb{N}}$ converges on compact sets to a parabolic (see \cite[Section~10]{Milnor:Dynamics_in_one_complex_variable}) fixed point in $\partial U$,
    \item \textit{Siegel} cycle: for every $U$ in $P$, $R^{\circ n}:U\mapsto U$ is conformally conjugate to an irrational rotation on $\mathbb{D}$, or
    \item \textit{Herman} cycle: for every $U$ in $P$, $R^{\circ n}:U\mapsto U$ is conformally conjugate to an irrational rotation on $\mathbb{A}_{r} = \{z\in\mathbb{C}: 1< |z|< r\}$ for some $r>1$.
\end{enumerate}
See \cite[Theorem~16.1]{Milnor:Dynamics_in_one_complex_variable} for a proof of this fact. This is known as the Fatou-Julia-Sullivan classification of Fatou components, as it was proven in part by Fatou and Julia, and finished by Sullivan in \cite{NWDSullivan} after he adapted the techniques of Kleinian group theory to the study of holomorphic dynamical systems.
\par
We shall denote the set of Herman cycles for $R$ by $\mathcal{H}_{R}$. They play a very special role in the groups for $R:F_{R}\mapsto F_{R}$ and $R:J_{R}\mapsto J_{R}$, as we shall see. Denote $f_{R} = |\mathcal{F}_{R}|$ and $h_{R} = |\mathcal{H}_{R}|$.

\newtheorem*{thm:forF}{Theorem 2 (\ref{FK})}
\begin{thm:forF} Let $R$ be a rational function of degree $d>1$. Then, $K_{0}(\mathcal{O}_{R, F_{R}})\simeq\mathbb{Z}^{c_{R,F_{R}} + f_{R} + h_{R}}$ and $K_{1}(\mathcal{O}_{R,F_{R}})\simeq\mathbb{Z}^{f_{R} + h_{R}}$.

\end{thm:forF}
The proof uses the classification of Fatou components in an essential way. The Herman cycles are counted twice due to the non-triviality of $K^{-1}$ of an annulus. In all other cycle types, any compact subset of the cycle will be eventually mapped into an ``attractor'' with trivial $K^{-1}$ (see Corollary \ref{fatattractor}).
\\
\subsection{$K$-theory of $\mathcal{O}_{R,J_{R}}$}
Our third new and most important result is the $K$-theory of $\mathcal{O}_{R,J_{R}}$. The \textit{Julia set} $J_{R}$ is the complement of the Fatou set. It is a compact uncountable subset of $\hat{\mathbb{C}}$ with no isolated points and is either the entire Riemann sphere, or it has no interior. Moreover, repelling periodic points of $R$ are dense in $J_{R}$, and for every open set $V\subseteq J_{R}$, there is some number $n$ such that $R^{\circ n}(V) = J_{R}$; see \cite[Theorem~14.1]{Milnor:Dynamics_in_one_complex_variable} and \cite[Corollary~14.2]{Milnor:Dynamics_in_one_complex_variable} for proofs of these assertions. Thus, the dynamics restricted to $J_{R}$ behaves chaotically. The above dynamical properties also imply $\mathcal{O}_{R,J_{R}}$ is purely infinite and simple (see \cite[Theorem~3.8]{KW:C*-algebras_associated_with_complex_systems}). For further properties of these algebras, see \cite{IzumiKW:KMS_states_branched}, \cite{KW17} and \cite{I23}.
\\
\par
We now present the $K$-theory of $\mathcal{O}_{R,J_{R}}$.
Let $Q$ be a Herman cycle of length $n$ and $U$ a Fatou component in $Q$. By \cite[Lemma~15.7]{Milnor:Dynamics_in_one_complex_variable} the boundary of $U$ has two connected components. We orient $Q$ by first choosing a boundary component $\partial^{+}U$ of $\partial U$. We then orient every other element $V$ in $Q$ (choosing a component $\partial^{+}V$ of $\partial V$) by declaring $R$ to be orientation preserving on $Q$, in the sense that  $\partial^{+}R^{\circ i}(U) = R^{\circ i}(\partial^{+}U)$ for every $i\leq n$. Since $R^{\circ n}:U\mapsto U$ is conjugate to an irrational rotation, it is homotopic to the identity. Therefore, $R^{\circ n}(\partial^{+}U) = \partial^{+}U$ and thus the orientation given to $Q$ is well defined. Note also that there are only two possible orientations on $Q$ defined in this way. We call $Q$ with such a choice of orientation an \textit{oriented Herman cycle}.
\par
We will assume now that all of our Herman cycles in $\mathcal{H}_{R}$ are oriented. For $x$ in $J_{R}$ and $Q$ in $\mathcal{H}_{R}$, we will let $H_{Q}(x)$ be the number of Fatou components $U$ in $Q$ for which $x$ is in the connected component of $\hat{\mathbb{C}}\setminus U$ containing $\partial^{+}U$. The function $H_{Q}:J_{R}\mapsto\mathbb{Z}$ is continuous and satisfies the property that $H_{Q} - \Phi(H_{Q})$ is constant (Corollary \ref{Hconst}). Moreover, one can show $\{H_{Q}\}_{Q\in\mathcal{H}_{R}}\cup\{1_{J_{R}}\}$ are linear independent over $\mathbb{Z}$ and generate the subgroup $\{f\in C(J_{R},\mathbb{Z}): f - \Phi(f)\text{ is constant}\}$ (Proposition \ref{k0nc}).
\par
We let $H_{R}$ be the following matrix with rows indexed by elements in $C_{R,J_{R}}\cup\{u\}$ and columns indexed by $\mathcal{H}_{R}\cup\{u\}$ ($u$ is an extra index we must add): 
\begin{itemize}
\item $(H_{R})_{c,Q} = H_{Q}(c)$ for all $c$ in $C_{R,J_{R}}$ and $Q$ in $\mathcal{H}_{R}$,
\item $(H_{R})_{c,u} = 1$, for all $c$ in $C_{R,J_{R}}$,
\item $(H_{R})_{u,Q} = \Phi(H_{Q}) - H_{Q}$, for all $Q$ in $\mathcal{H}_{R}$, and
\item $(H_{R})_{u,u} = \text{deg}(R)-1$.
\end{itemize}
$H_{R}$ determines a group homomorphism $H_{R}:\mathbb{Z}[\mathcal{H}_{R}\cup\{u\}]\mapsto \mathbb{Z}[C_{R,J_{R}}\cup\{u\}]$.
Denote by $\omega_{R}$ the greatest common divisor of the Fatou cycle lengths $\{|P|\}_{P\in\mathcal{F}_{R}}$. If $\mathcal{F}_{R}=\emptyset$, then we set $\omega_{R} = 1$.

\newtheorem*{thm:forJ*}{Theorem 3 (\ref{jk})}
\begin{thm:forJ*}
Let $R$ be a rational function of degree $d>1$. Then, $K_{1}(\mathcal{O}_{R,J_{R}})\simeq\text{ker}(H_{R})\oplus\mathbb{Z}/\omega_{R}\mathbb{Z}\oplus\mathbb{Z}^{|f_{R}-1|}$ and $K_{0}(\mathcal{O}_{R,J_{R}})\simeq \text{co-ker}(H_{R})\oplus\mathbb{Z}^{|f_{R} +h_{R} -1|}$, with class of the unit corresponding to the class of $u$ in $\text{co-ker}(H_{R})$.
\end{thm:forJ*}
Interestingly, the matrix $H_{R}$, its kernel, or its co-kernel do not appear anywhere in the complex dynamics literature. Since these two groups are invariants for the topological conjugacy class of $R:J_{R}\mapsto J_{R}$ (see Corollary \ref{apphgroup}), it would be worthwhile to compare it with other invariant combinatorial objects of $R$ constructed from oriented Herman cycles, like its Shishikura tree \cite{S89}.
\par
When $\mathcal{H}_{R} = \emptyset$ (which happens, for instance, for all polynomials), we have $\text{ker}(H_{R}) = 0$ and $\text{co-ker}(H_{R})\simeq \mathbb{Z}^{c_{R, J_{R}}}$ if $C_{R,J_{R}}\neq \emptyset$ and $\text{co-ker}(H_{R})\simeq\mathbb{Z}/(d-1)\mathbb{Z}$ otherwise, where $d$ is the degree of $R$. In the first case, the class of the unit corresponds to a generator in a minimal generating set for $\mathbb{Z}^{c_{R,J_{R}}}$, while in the second case the unit generates $\mathbb{Z}/(d-1)\mathbb{Z}$.
\par
Since a polynomial $R$ always has an attracting cycle of length $1$ at $\infty$, we always have $f_{R}\geq 1$ and $\omega_{R} = 1$. We therefore have a simple characterization for when two polynomials have isomorphic $C^{*}$-algebras:
\newtheorem*{thm:poly}{Theorem 4 (\ref{appcharpol})}
\begin{thm:poly}
Let $R$ and $S$ be non-linear polynomials. Then, $\mathcal{O}_{R,J_{R}}$ is isomorphic to $\mathcal{O}_{S,J_{S}}$ if and only if either
\begin{itemize}
    \item $c_{R,J_{R}} = c_{S,J_{S}} = 0$, $\text{deg}(R) = \text{deg}(S)$, and $f_{R} = f_{S}$, or 
    \item $c_{R,J_{R}} = c_{S,J_{S}}\neq 0$ and $f_{R} = f_{S}$.
\end{itemize}

\end{thm:poly}
\par
Nekrashevych in \cite[Theorem~6.6]{ Nek:C-alg_and_self_similar} computed the $K$-theory of $\mathcal{O}_{R,J_{R}}$ in the special case that $R$ is hyperbolic and post-critically finite. This just means that every critical point is eventually mapped to an attracting periodic orbit. There are only countably many distinct conjugacy classes of such $R$ by Thurston's Rigidity Theorem (see \cite[Theorem~2.2]{pcf}), and for quadratic polynomials there are only two: the class of $z^{2}$ and of $z^{2} - 1$. With the extra structure afforded to such a rational function, Nekrashevych constructs a descending sequence of approximations $(\mathcal{C}_{n})_{n\in\mathbb{N}}$ of the Julia set, each with manageable $K$-theory, computes the $K$-theory for $R:\mathcal{C}_{n+1}\mapsto\mathcal{C}_{n}$, then takes a limit as $n$ approaches $\infty$ to get the $K$-theory for $\mathcal{O}_{R,J_{R}}$. We remark that this approximation method is not viable in general, due to the fact that the Fatou set can be more complicated (the components may no longer be simply connected, and all four of the cycle types might appear, not just attractors). We review our method and our main results on the $C^{*}$-correspondence category in Section \ref{Methods}.
\\
\par
\subsection{Applications}
Now, we present applications. By the \textit{Fatou cycle length data of $R$} we shall mean the tuple $L_{R} = (|P|)_{P\in \mathcal{F}_{R}}$, where the entries are ordered in non-decreasing order. Similarly, we define the \textit{Herman cycle length data of $R$} to be the tuple $T_{R} = (|Q|)_{Q\in \mathcal{H}_{R}}$.
\newtheorem*{app:1}{Corollary 5 (\ref{appconjboot})}
\begin{app:1}
Let $R$ and $S$ be rational functions. If $R$ and $S$ are topologically conjugate on their Julia sets, then $L_{R} = L_{S}$ and $T_{R} = T_{S}$.
\end{app:1}
It is interesting that data from the dynamics of $R$ on its Fatou set is an invariant for its dynamics on the Julia set - this result reflects the rigid nature of rational dynamics.
The above corollary is proven by observing that if $R$ and $S$ are conjugate on their Julia sets, then  $R^{\circ n}$ and $S^{\circ n}$ are conjugate on their Julia sets, for all $n\in\mathbb{N}$, and so our calculations in Section \ref{ratmapj} imply $f_{R^{\circ n}} = f_{S^{\circ n}}$ and $h_{R^{\circ n}} = h_{S^{\circ n}}$. By a lemma concerning elementary number theory (Lemma \ref{number}), these sequences of numbers are equal if and only if $L_{R} =L_{S}$ and $T_{R} = T_{S}$. It is not clear how this result would be proven using a dynamical argument.
\\
\par
We now provide a more detailed description of the $K$-theory for a quadratic polynomial. Every quadratic polynomial is conjugate to one of the form $f_{c}(z) = z^{2} + c$, $z$ in $\hat{\mathbb{C}}$, for some $c$ in $\mathbb{C}$. Note that its only critical points are $0$ and $\infty$, with $\infty$ being a super-attracting fixed point inside the Fatou set. Denote $J_{f_{c}} = J_{c}$ and $\mathcal{O}_{f_{c},J_{c}} = \mathcal{O}_{c,J}$. Define the \textit{filled Julia set} of $f_{c}$ to be the points $z$ for which the orbit $\{f^{\circ n}_{c}(z)\}_{n\in\mathbb{N}}$ is bounded in $\mathbb{C}$ and denote this set $K_{c}$. It can be shown that $\partial K_{c} = J_{c}$ and that $K_{c}$ is the union of $J_{c}$ along with the bounded Fatou components of $f_{c}$ (see \cite[Lemma~9.4]{Milnor:Dynamics_in_one_complex_variable}).
\newtheorem*{app:2}{Corollary 6 (\ref{appclassquad1})}
\begin{app:2}
There are four isomorphism types for $\mathcal{O}_{c,J}$, dependent on the location of $0$ relative to the filled Julia set.
\begin{enumerate}
    \item[] \textbf{Case 0} $(0\notin K_{c}):$ Then, $K_{1}(\mathcal{O}_{c,J}) = K_{0}(\mathcal{O}_{c,J}) = 0$.
    \par
    \item[] \textbf{Case 1}  $(0\in\text{int}(K_{c})):$ Then, $K_{1}(\mathcal{O}_{c,J})\simeq K_{0}(\mathcal{O}_{c,J})\simeq\mathbb{Z}$ and $[1_{J_{c}}] = 0$.
    
    \item[] \textbf{Case 2} $(0\in \partial K_{c} = J_{c}$,  $\text{int}(K_{c})\neq\emptyset):$ Then $K_{1}(\mathcal{O}_{c,J})\simeq\mathbb{Z}$, $K_{0}(\mathcal{O}_{c,J})\simeq\mathbb{Z}^{2}$ and $[1_{J_{c}}]$ is a generator in a minimal generating set for $\mathbb{Z}^{2}$.
    
    \item[] \textbf{Case 3} $(0\in \partial K_{c} =J_{c}$, $\text{int}(K_{c}) = \emptyset):$ Then $K_{1}(\mathcal{O}_{c,J}) = 0$, $K_{0}(\mathcal{O}_{c,J})\simeq\mathbb{Z}$ and $[1_{J_{c}}]$ is a generator.
\end{enumerate}
\end{app:2}
Case 0,1,3 above are realized by $c = 1, 0, -2$, respectively, while case 2 is realized by $c = \frac{e^{2\pi i\varphi}}{2}(1 + \frac{e^{2\pi i\varphi}}{2})$, where $\varphi$ is the golden ratio.
\par
We now identify the corresponding $C^{*}$-algebras to each case, and for more details see the discussion below Corollary \ref{bolicchar}.
Case $0$ corresponds to the Cuntz algebra $\mathcal{O}_{2}$ considered first in \cite{C77}. Case $1$ corresponds to the 2-adic ring $C^{*}$-algebra $\mathcal{Q}_{2}$ studied by Larsen and Li in \cite{LL12}, and the author thanks Chris Bruce for this identification. It also appears in other contexts; see \cite[remark~3.2]{LL12}. To our knowledge, case $2$ corresponds to a $C^{*}$-algebra that has not been studied in the literature. It can be shown it is a $C^{*}$-algebra of a certain graph, see the figure above Corollary \ref{mandelconjeccor}. We denote it $\mathcal{Q}_{2,\infty}$ as it seems to share properties of both $\mathcal{Q}_{2}$ and $\mathcal{O}_{\infty}$. Case $3$ corresponds to the Cuntz algebra $\mathcal{O}_{\infty}$ considered in \cite{C77} also.
\par
A consequence of the above Corollary is that $[1_{J_{c}}] = 0$ if and only if $f_{c}$ is either hyperbolic or parabolic. This allows us to re-state the Density of Hyperbolicity Conjecture for quadratics in terms of the $K$-theory for quadratics. Recall that the Mandelbrot set is defined as $\mathcal{M} = \{c\in\mathbb{C}:0\in K_{c}\}$ ( $= \{c\in\mathbb{C}: K_{0}(\mathcal{O}_{c,J})\neq 0\}$).
\newtheorem*{app:3}{Corollary 7 (\ref{mandelconjeccor})}
\begin{app:3}
The Density of Hyperbolicity Conjecture is true if and only if
$\mathcal{H}':=\{c\in\mathcal{M}: [1_{J_{c}}] = 0 \text{ in }K_{0}(\mathcal{O}_{c,J})\} = \{c\in \mathcal{M}:\mathcal{O}_{c,J}\simeq\mathcal{Q}_{2}\}$ is dense in $\mathcal{M}$.
\end{app:3}
We do not claim this makes the conjecture any easier, though it is interesting to see it in $C^{*}$-algebraic terms.
\section{Methods}
\label{Methods}
\subsection{Motivation}
We now motivate our approach to calculating the $K$-theory of $\mathcal{O}_{R,J_{R}}$. The situation is that we have a short exact sequence of Cuntz-Pimsner algebras $$
\begin{tikzcd}
0 \arrow[r] & {\mathcal{O}_{R,F_{R}}} \arrow[r] & {\mathcal{O}_{R,\hat{\mathbb{C}}}} \arrow[r] & {\mathcal{O}_{R,J_{R}}} \arrow[r] & 0
\end{tikzcd}$$ and the Pimsner-Voiculescu 6-term exact sequence (\cite[Theorem~4.9]{Pimsner:Generalizing_Cuntz-Krieger}) of $\mathcal{O}_{R,F_{R}}$ and $\mathcal{O}_{R,\hat{\mathbb{C}}}$ can be calculated. In these cases, the exact sequences split, so it determines for us the $K$-theory of $\mathcal{O}_{R,F_{R}}$ and $\mathcal{O}_{R,\hat{\mathbb{C}}}$. The problem is that this splitting is not natural, which makes calculating the $K$-theory of $\mathcal{O}_{R,J_{R}}$ from the $6$-term exact sequence determined by the above extension of $C^{*}$-algebras rather difficult. Our observation is that not only do we have a short exact sequence of $C^{*}$-algebras, we also have a short exact sequence of their defining $C^{*}$-correspondences
$$
\begin{tikzcd}
0 \arrow[r] & {(E_{R,F_{R}},\alpha_{F_{R}})} \arrow[r] & {(E_{R,\hat{\mathbb{C}}},\alpha_{\hat{\mathbb{C}}})} \arrow[r] & {(E_{R,J_{R}},\alpha_{J_{R}})} \arrow[r] & 0,
\end{tikzcd}$$
and this will induce exact sequences at the level of the building blocks of $K$-theory determined by the Pimsner-Voiculescu $6$-term exact sequences. Thus, we circumvent the need to work with an un-natural splitting. This is our motivation for considering the category of $C^{*}$-correspondences, which we now present along with our main results about it.
\\
\subsection{Morphisms of $C^{*}$-correspondences}
Recall that if $A_{1}$, $A_{2}$ are $C^{*}$-algebras, then an \textit{$A_{1}$-$A_{2}$ correspondence} ${}_{A_{1}}(E,\alpha)_{A_{2}}$ is a left action $\alpha$ of $A_{1}$ as endomorphisms of a Hilbert $A_{2}$-module $E$. Given such a correspondence, we can always restrict the left action $\alpha$ to the ideal $J_{(E,\alpha)}$ consisting of elements of $A_{1}$ which act by compact operators of $E$. After restriction this correspondence defines a class $K(E,\alpha)$ in $KK^{0}(J_{(E,\alpha)},A_{2})$. We define a category for which the assignment $(E,\alpha)\mapsto K(E,\alpha)$ is functorial.
\par
A \textit{morphism of correspondences} in our sense is a triple $(\varphi_{1}, S, \varphi_{2}): {}_{A_{1}}(E,\alpha)_{A_{2}}\mapsto {}_{B_{1}}(F,\beta)_{B_{2}}$, where $\varphi_{1}:A_{1}\mapsto A_{2}$, $\varphi_{2}:A_{2}\mapsto B_{2}$ are $*$-homomorphisms and $S:E\mapsto F$ is a linear map that satisfies compatibility criterion with $\varphi_{1}$, $\varphi_{2}$ comparable to that of a co-variant representation of a Hilbert bi-module; see Definition \ref{morph_of_corr}. The sub-category where $A_{1} = A_{2}$, $B_{1} = B_{2}$, and $\varphi_{1} = \varphi_{2}$ has been studied by many authors, usually with applications to Cuntz-Pimsner algebras in mind. See for instance \cite[Section~2.4]{MS19} and the references therein. However, $KK$-theory consequences have not been considered and this is our main focus. Our first main tool is that the assignment $(E,\alpha)\mapsto K(E,\alpha)$ is functorial, though we will not present it like this until Remark \ref{intertwiner}.

\newtheorem*{thm:cor_funct}{Proposition 8 (\ref{morphintertwine})}
\begin{thm:cor_funct}
Suppose $(\varphi_{1},S,\varphi_{2}): {}_{A_{1}}(E,\alpha){}_{A_{2}}\mapsto {}_{B_{1}}(F,\beta){}_{B_{2}}$ is a morphism of correspondences. Then, $K(E,\alpha)\hat{\otimes}[\varphi_{2}] = [\varphi_{1}]\hat{\otimes}K(F,\beta)$, where $\hat{\otimes}$ is the Kasporov product and $[\varphi_{i}]$, for $i=1,2$, is the class of the $*$-homomorphism $\varphi_{i}$ in $KK^{0}$. 
\end{thm:cor_funct}
We now present our second main tool. If $(\varphi_{1},S,\varphi_{2}): {}_{A_{1}}(E,\alpha){}_{A_{2}}\mapsto {}_{B_{1}}(F,\beta){}_{B_{2}}$ is a morphism, then it restricts to  a morphism $(\varphi_{1},S,\varphi_{2}):{}_{J_{(E,\alpha)}}(E,\alpha){}_{A_{2}}\mapsto {}_{J_{(F,\beta)}}(F,\beta){}_{B_{2}}$. We call this functor $J:\textbf{Cor}\mapsto \textbf{Cor}$.
\par
We say a sequence 
$$
\begin{tikzcd}
{{}_{A_{1}}(E,\alpha){}_{A_{2}}} \arrow[r, "{(\varphi_{1}, S, \varphi_{2})}"] & {{}_{B_{1}}(F,\beta){}_{B_{2}}} \arrow[r, "{(\psi_{1}, T, \psi_{2})}"] & {{}_{C_{1}}(G,\gamma){}_{C_{2}}}
\end{tikzcd}$$
is \textit{$J$-exact} if it is exact after applying the functor $J$ (Definition \ref{morphexact}).
\newtheorem*{thm:cor_exact}{Proposition 9 (\ref{morphnatural})}
\begin{thm:cor_exact}
If
$$\begin{tikzcd}
0 \arrow[r] & {}_{A_{1}}{(E,\alpha)}{}_{A_{2}} \arrow[r, "{(i_{1}, I,i_{2})}"] & {}_{B_{1}}{(F,\beta)}{}_{B_{2}} \arrow[r, "{(q_{1},Q,q_{2})}"] & {}_{C_{1}}{(G,\gamma)}{}_{C_{2}} \arrow[r] & 0
\end{tikzcd}$$ is a  $J$-exact sequence of correspondences, then $\delta_{J_{(F,\beta)}}\hat{\otimes} K(E,\alpha) = K(G,\gamma)\hat{\otimes}\delta_{B_{2}}$, where $\delta_{J_{(F,\beta)}}$, $\delta_{B_{2}}$ are the classes of the short exact sequences
$$
\begin{tikzcd}
0 \arrow[r] & {J_{(E,\alpha)}} \arrow[r, "i_{1}"] & {J_{(F,\beta)}} \arrow[r, "q_{1}"] & {J_{(G,\gamma)}} \arrow[r] & 0 \\
0 \arrow[r] & A_{2} \arrow[r, "i_{2}"]            & B_{2} \arrow[r, "q_{2}"]           & C_{2} \arrow[r]            & 0,
\end{tikzcd}$$

respectively, in $KK^{1}$.
\end{thm:cor_exact}
This result can be thought of as an extension of the usual naturality result for an extension class in $KK^{1}$ and is proven as such.
\par
Applying Proposition 5 and Proposition 6 above to $K$-theory yields the following diagram, which is our main tool in this paper.
\newtheorem*{thm:main_dia}{Corollary 10 (\ref{Maindiagram})}
\begin{thm:main_dia}

If
$$\begin{tikzcd}
0 \arrow[r] & {}_{A}{(E,\alpha)}{}_{A} \arrow[r, "{(i, I,i)}"] & {}_{B}{(F,\beta)}{}_{B} \arrow[r, "{(q,Q,q)}"] & {}_{C}{(G,\gamma)}{}_{C} \arrow[r] & 0
\end{tikzcd}$$ is a  $J$-exact sequence of correspondences,
then the following diagram commutes

$$    
\begin{tikzcd}[sep=small]
& K_{0}(J_{(E,\alpha)})  \arrow[rr] \arrow[dd] & & K_{0}(J_{(F,\beta)}) \arrow[rr] \arrow[dd] & & K_{0}(J_{(G,\gamma)}) \arrow[dl] \arrow[dd]\\
K_{1}(J_{(G,\gamma)})  \arrow[dd] \arrow[ur]& & K_{1}(J_{(F,\beta)})\arrow[dd, crossing over] \arrow[ll, crossing over] & & K_{1}(J_{(E,\alpha)}) \arrow[dd, crossing over] \arrow[ll, crossing over]\\
& K_{0}(A) \arrow[rr] & & K_{0}(B) \arrow[rr] & & K_{0}(C)\arrow[dl]\\
K_{1}(C) \arrow[ur]& & K_{1}(B) \arrow[ll] \arrow[from=uu, crossing over] \arrow[ll] & & K_{1}(A) \arrow[ll]\\
\end{tikzcd}
$$
The top and bottom horizontal faces are the 6-term exact sequences of $K$-theory associated to the respective extensions of $C^{*}$-algebras, and the vertical maps are of the form $\iota - \hat{\otimes}_{i}K(H,\eta)$, where $\iota:K_{i}(J_{(H,\eta)})\mapsto K_{i}(D)$ is the map induced by inclusion and ${}_{D}(H,\eta){}_{D} = {}_{A}(E,\alpha){}_{A}, {}_{B}(F,\beta){}_{B}, {}_{C}(G,\gamma){}_{C}$.
\end{thm:main_dia}
The result in the paper is more generally stated for non self-correspondences; see Corollary \ref{Maindiagram}. We can also get similar diagrams for $K$-homology (with arrows reversed) and more generally for $KK^{*}(-, A)$, $KK^{*}(A,-)$, where $A$ is any seperable nuclear $C^{*}$-algebra.
\par
We can extract exact sequences involving the $K$-theory building blocks of $\mathcal{O}_{R,F_{R}}$, $\mathcal{O}_{R,\hat{\mathbb{C}}}$ and $\mathcal{O}_{R,J_{R}}$ by applying the Snake Lemma strategically to certain vertical faces of the above diagram. This is done in Section \ref{ratmapj}, and it is how we calculate the $K$-theory of $\mathcal{O}_{R,J_{R}}$.
\subsection{Branched functions}
Inspired by Kajiwara and Watatani's construction of a $C^{*}$-correspondence from a rational function, we define $C^{*}$-correspondences from \textit{branched functions}. A function $F:X\mapsto Y$ is branched if at every point in its range, it admits a system of inverse branches; see Definition \ref{branchedfunction}. This represents a broad and interesting class of functions, including all holomorphic functions between Riemann surfaces. In particular, we can for the first time define a $C^{*}$-algebra from an arbitrary holomorphic dynamical system without excluding its branched points.
\par
No similar definition of a branched function exists in the literature. Our reason for introducing branched functions here is that we will be working with holomorphic functions restricted to a variety of subspaces, and we would like a unified definition for $C^{*}$-correspondences associated to these to make applying our results on morphisms of $C^{*}$-correspondences to such restrictions easier. 
\par
We present two results that will be useful for us when calculating $K$-theory. Denote the $C^{*}$-correspondence of a branched function $F:X\mapsto Y$ by $(E_{F,X},\alpha_{X})$.
\newtheorem*{branched}{Proposition 11 (\ref{classnaturality})}
\begin{branched}
If $F:X\mapsto Y$ is a branched function and $U\subseteq X$, $V\subseteq Y$ are open sets such that $F(U)\subseteq V$, then $F:U\mapsto V$ is branched and the inclusion functions induce a morphism ${}_{C_{0}(X)}(E_{F,X}, \alpha_{X}){}_{C_{0}(Y)}\mapsto {}_{C_{0}(U)}(E_{F,U},\alpha_{U}){}_{C_{0}(V)}$
\end{branched}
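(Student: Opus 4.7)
The plan has two parts: verifying branched-ness of $F|_U$, then building the morphism from extension-by-zero maps.

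For branched-ness, fix $y \in V$. Since $F: X \to Y$ is branched at $y$, there is a neighborhood $W \subseteq Y$ of $y$ together with continuous inverse branches $\sigma_x: W \to X$ indexed by $x \in F^{-1}(y)$, satisfying the usual conditions (in particular matching the local index data). For each $x \in F^{-1}(y) \cap U$, openness of $U$ and continuity of $\sigma_x$ make $\sigma_x^{-1}(U)$ an open neighborhood of $y$. Intersecting these finitely many preimages with $V$ yields an open neighborhood $W' \subseteq V$ of $y$ on which the restrictions $\sigma_x|_{W'}$ (for $x \in F^{-1}(y) \cap U$) form a system of inverse branches for $F|_U: U \to V$, and the local index data transfers verbatim because $\mathrm{ind}_{F|_U}(x) = \mathrm{ind}_F(x)$ for $x \in U$ (the local index depends only on the germ of $F$ at $x$).

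For the morphism, the inclusions $U \hookrightarrow X$ and $V \hookrightarrow Y$ induce canonical extension-by-zero $*$-homomorphisms $\iota_1: C_0(U) \to C_0(X)$ and $\iota_2: C_0(V) \to C_0(Y)$; in parallel, extension by zero from $C_c(U)$ into $C_c(X)$ furnishes a linear map $S$ into $E_{F,X}$. The heart of the proof is the inner-product computation: for $\xi, \eta \in C_c(U)$ and $y \in Y$,
\[
\langle S\xi, S\eta\rangle_X(y) \;=\; \sum_{x \in F^{-1}(y) \cap U} \mathrm{ind}_F(x)\,\overline{\xi(x)}\,\eta(x),
\]
which equals $\langle \xi, \eta\rangle_U(y)$ if $y \in V$, and is zero otherwise because $F(U) \subseteq V$ forces $F^{-1}(y) \cap U = \emptyset$ for $y \notin V$. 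Thus $\langle S\xi, S\eta\rangle_X = \iota_2(\langle \xi, \eta\rangle_U)$; in particular $S$ is isometric and extends continuously to $E_{F,U}$. Compatibility with the left and right actions, namely $S(h \cdot \xi) = \iota_1(h) \cdot S\xi$ for $h \in C_0(U)$ and $S(\xi \cdot g) = S\xi \cdot \iota_2(g)$ for $g \in C_0(V)$, follows from the same pointwise-support argument: both sides vanish outside $U$ and manifestly agree on $U$.

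The main obstacle is keeping track of the double role played by the hypothesis $F(U) \subseteq V$: it lets inverse branches of $F: X \to Y$ through $U$-points be restricted into $V$ (giving branched-ness of $F|_U$), and it also forces the $E_{F,X}$-inner product of $U$-supported functions to vanish outside $V$ (so the inner product genuinely lands in $\iota_2(C_0(V))$, as the morphism axioms demand). Once this is set up, verifying the axioms of a morphism of correspondences from Definition \ref{morph_of_corr} is a routine unwinding of the extension-by-zero construction in all three slots.
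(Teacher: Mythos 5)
Your construction of the morphism is correct and is the argument the paper has in mind (the paper declares this part ``straightforward'' and omits it). A few remarks.

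First, a small notational point worth flagging: as written in the statement from the introduction, the arrow points the wrong way. With the paper's Definition \ref{morph_of_corr}, the $*$-homomorphism slots must go $A_i \to B_i$, and the only natural $*$-homomorphisms here are the extensions-by-zero $C_0(U)\to C_0(X)$, $C_0(V)\to C_0(Y)$; the morphism therefore runs ${}_{C_0(U)}(E_{F,U},\alpha_U)_{C_0(V)} \to {}_{C_0(X)}(E_{F,X},\alpha_X)_{C_0(Y)}$, exactly as you built it and exactly as the body of the paper (Proposition \ref{classnaturality}) states. You silently corrected this, which is fine.

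Second, your branched-ness argument misreads the definition. Branched-ness (Definition \ref{branchedfunction}) is a local property at each point $x$ of the \emph{domain}: one produces finitely many inverse branches $\{F_i^{-1}: V_0 \to U_0\}_{i\le \operatorname{ind}_F(x)}$ centered at $x$, indexed by $1,\dots,\operatorname{ind}_F(x)$, not a single system indexed over all of $F^{-1}(y)$ on a shared neighborhood of some $y$ in the codomain (indeed $F^{-1}(y)$ may be infinite, so no shared $W$ exists). The repair is short: fix $x_0 \in U$, take inverse branches of $F:X\to Y$ centered at $x_0$, shrink their common domain $V_0$ to an open $V_0' \subseteq V$ small enough that, by continuity of each $F_i^{-1}$ at $F(x_0)$, $F_i^{-1}(V_0')\subseteq U$ for all $i$, and set $U_0' = \bigcup_i F_i^{-1}(V_0')$. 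Conditions (1)--(3) are immediate; for (4), note that if $u\in U_0'$ and $u\in F_i^{-1}(V_0)$, then $F(u)\in V_0'$ because $u\in U_0'$, hence $u\in F_i^{-1}(V_0')$, so the index count is unchanged.

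Third, your verification stops at properties (1) and (2) of Definition \ref{morph_of_corr}. Property (3), that $i_U$ carries $J_{(E_{F,U},\alpha_U)} = C_0(U\setminus C_{F,U})$ into $J_{(E_{F,X},\alpha_X)} = C_0(X\setminus C_{F,X})$, follows from $C_{F,U} = C_{F,X}\cap U$ for open $U$. Property (4), that $\widehat{i_U}(\alpha_U(f)) = \alpha_X(i_U(f))$ for $f\in C_0(U\setminus C_{F,U})$, is \emph{not} automatic here (the remark after Definition \ref{morph_of_corr} gives sufficient conditions, neither of which holds, since $i_U$ is not surjective). The standard check, as done in the paper's proof of Proposition \ref{holmorph1}, reduces to positive $f$ compactly supported in an open set $W\subseteq U$ disjoint from the critical set and on which $F$ is injective, where $\alpha_U(f) = \theta_{\sqrt f,\sqrt f}$ and the identity is immediate from the definition of $\widehat{i_U}$. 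You gesture at ``routine unwinding,'' which is fair, but the step is worth naming because property (4) is precisely the condition that can fail for non-surjective $S$.

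The inner-product computation, and the observation that $F(U)\subseteq V$ is doing double duty (it restricts the inverse branches and it forces $\langle S\xi,S\eta\rangle$ to vanish off $V$), are exactly the key points.
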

We also characterize when a restriction of a branched function to a closed set yields a short $J$-exact sequence. For a branched function $F:X\mapsto Y$, denote by $C_{F,X} = \{x\in X: F\text{ is not locally injective at }x\}$.
\newtheorem*{branchedr}{Proposition 12 (\ref{holmorph1})}
\begin{branchedr}
Let $F:U\mapsto V$ be a branched function and $Y$ a closed subset of $V$. Denote $F^{-1}(Y) = X$. Then, inclusion and restriction induce morphisms $(E_{F,U\setminus X},\alpha_{U\setminus X})\mapsto (E_{U},\alpha_{U})$ and $(E_{U},\alpha_{U})\mapsto (E_{X},\alpha_{X})$, respectively. $C_{F,U}\cap X = C_{F,X}$ if and only if the sequence 
$$
\begin{tikzcd}
0 \arrow[r] & {(E_{F,U\setminus X},\alpha_{U\setminus X})} \arrow[r] & {(E_{F,U},\alpha_{U})} \arrow[r] & {(E_{F,X},\alpha_{X})} \arrow[r] & 0
\end{tikzcd}$$
is $J$-exact.
\end{branchedr}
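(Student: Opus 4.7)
The plan begins by noting that the two morphisms claimed in the statement come directly from Proposition~\ref{classnaturality}: $U \setminus X$ is open in $U$ with $F(U \setminus X) \subseteq V \setminus Y$, which yields the inclusion morphism, while $X$ sits inside $U$ with $F(X) \subseteq Y$, producing by passing to quotients the restriction morphism. Vanishing of the composite is immediate from support considerations, so we have a genuine chain of correspondence morphisms, and the content of the proposition reduces to showing that $J$-exactness of this chain is equivalent to $C_{F,U} \cap X = C_{F,X}$.

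The first step is to pin down the ideal $J_{(E_{F,Z})} \subseteq C_0(Z)$ for an arbitrary branched function $F : Z \to W$, working from the local inverse-branch data and the weighted transfer operator. I would aim for a description that is local on $Z$ and expressed in terms of how the support of $a \in C_0(Z)$ meets the critical locus $C_{F,Z}$, so it is manifestly well-behaved under restrictions. Combining such a description with the standard open-closed extension $0 \to C_0(U \setminus X) \to C_0(U) \to C_0(X) \to 0$ reduces $J$-exactness to exactness of the induced ideal sequence $0 \to J_{(E_{F,U\setminus X})} \to J_{(E_{F,U})} \to J_{(E_{F,X})} \to 0$.

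For the ``if'' direction, assume $C_{F,U} \cap X = C_{F,X}$. Injectivity on the left and exactness in the middle follow from the open-closed structure of $C_0(U)$. For surjectivity of $J_{(E_{F,U})} \to J_{(E_{F,X})}$, given $a \in J_{(E_{F,X})}$ I would extend by Tietze to $\tilde a \in C_0(U)$, cut off by a bump function supported in a small $U$-neighborhood of $X$, and use the critical-point hypothesis to verify that the resulting function lies in $J_{(E_{F,U})}$: the assumption guarantees that near any point of $X$ the local inverse-branch structure of $F$ on $U$ coincides with that of $F|_X$, so the local description of $J$ transfers. For the converse, pick $x_0 \in (C_{F,U} \cap X) \setminus C_{F,X}$, so $F|_X$ is locally injective at $x_0$ while $F|_U$ is not. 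I would take a bump function $a \in C_c(X)$ supported in a neighborhood on which $F|_X$ is a homeomorphism; then $a \in J_{(E_{F,X})}$, but any extension $\tilde a \in C_0(U)$ with $\tilde a(x_0) \neq 0$ is forced to interact with the extra preimages of $F(x_0)$ coming from the branches of $F|_U$ outside $X$, and the local description of $J$ will show $\tilde a \notin J_{(E_{F,U})}$, contradicting surjectivity.

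The principal obstacle is the first step: obtaining an explicit and tractable description of $J_{(E_{F,Z})}$ for a branched function as defined in this paper. Since branched functions here are more general than topological graphs in the sense of Katsura, one cannot directly invoke his regular-vertex characterization, and I expect to need a hands-on local computation from the defining inverse-branch data together with the weighted transfer operator. Once this description is in place, the remaining steps are standard Tietze extension arguments and local bump-function constructions, and both directions of the equivalence reduce to the local-versus-global comparison of inverse branches that the critical-point condition precisely captures.
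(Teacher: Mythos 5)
Your overall architecture matches the paper's: verify the morphisms, convert $J$-exactness into exactness of concrete sequences of commutative $C^*$-algebras, and reduce the resulting condition to $C_{F,U}\cap X = C_{F,X}$. But two things need correction.

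First, your reduction of $J$-exactness to exactness of the ideal sequence $0\to J_{(E_{F,U\setminus X})}\to J_{(E_{F,U})}\to J_{(E_{F,X})}\to 0$ is incomplete. By Definition~\ref{morphexact}, $J$-exactness of the three-term sequence also requires exactness at the Hilbert-module level, i.e.\ of $0\to C_2(U\setminus X)\to C_2(U)\to C_2(X)\to 0$, and at the codomain-coefficient level $0\to C_0(V\setminus Y)\to C_0(V)\to C_0(Y)\to 0$. The latter is standard, but surjectivity of $r_1:C_2(U)\to C_2(X)$ is not automatic: $C_2(\cdot)$ is a strict subspace of $C_0(\cdot)$ when $\deg(F)=\infty$, so you cannot just extend by Tietze and cut off. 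The paper handles this by observing that $r_1$ is a $r_2$-twisted morphism of Hilbert modules, hence has closed image (via the linking-algebra argument), and therefore $r_1(C_c(U))=C_c(X)$ being dense forces $r_1(C_2(U))=C_2(X)$. Your plan omits this piece entirely, so as written it would not establish one of the three required exactness conditions.

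Second, the obstacle you flag --- ``pinning down $J_{(E_{F,Z})}$'' --- is not an obstacle in the paper, and your remark about Katsura's regular-vertex characterization not applying is aimed at the wrong target: the paper realizes a branched function as a topological quiver in the sense of Muhly--Tomforde (which is strictly more general than Katsura's topological graphs), and \cite[Theorem~3.11]{MT05} gives exactly $J_{(E_{F,Z},\alpha_Z)}=C_0(Z\setminus C_{F,Z})$. Once you have this, the ideal sequence becomes
$0\to C_0(U\setminus(X\cup C_{F,U}))\to C_0(U\setminus C_{F,U})\to C_0(X\setminus C_{F,X})\to 0$
(using $C_{F,U\setminus X}=C_{F,U}\cap(U\setminus X)$), and since the quotient of $C_0(U\setminus C_{F,U})$ by $C_0(U\setminus(X\cup C_{F,U}))$ is canonically $C_0(X\setminus(X\cap C_{F,U}))$, exactness is equivalent to $X\cap C_{F,U}=C_{F,X}$ with no extension or bump-function argument needed. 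Your Tietze/bump-function plan is therefore circuitous: it essentially re-derives the ideal characterization locally rather than invoking it and reading off the answer.
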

As a special case, any holomorphic dynamical system $R:M\mapsto M$, where $M$ is a connected Riemann surface, will determine a $J$-exact sequence of correspondences via the restriction to its Julia set. See Section \ref{holcor} for more information on these correspondences.
\section{Organization}
In Section \ref{background}, we review the necessary background required to understand this paper. We review Hilbert modules, $C^{*}$-correspondences, Cuntz-Pimsner algebras, $K$-theory, $KK$-theory and holomorphic dynamical systems. In Section \ref{morphcorr} we define the category of $C^{*}$-correspondences and study its relation to $KK$-theory.
We define branched functions in Section \ref{holcorrespondence}, construct their associated $C^{*}$-correspondences and study them within the context of our category.
We specialize to branched functions that are restrictions of holomorphic functions in Section \ref{holcor}, prove some regularity properties about their $C^{*}$-correspondences and describe their action on $K$-theory.
Sections \ref{ratmapc}, \ref{ratmapf} and \ref{ratmapj} calculate the $K$-theory of $\mathcal{O}_{R,\hat{\mathbb{C}}}$, $\mathcal{O}_{R,F_{R}}$ and $\mathcal{O}_{R,J_{R}}$, respectively.
Section \ref{app} is devoted to proving applications of our $K$-theory results.

\section{Background}\label{background}
\subsection{$C^{*}$-correspondences}
\label{correspondence} We briefly review the concepts of Hilbert $C^{*}$-modules and $C^{*}$-correspondences, as well as some of their auxiliary notions. Our review of the standard definitions closely follow the exposition of Jensen and Thomsen in \cite[Section~1]{Jensen}. See \cite{Lance} for a more detailed treatment of the theory of Hilbert $C^{*}$-modules.
\par
Let $A$ be a $C^{*}$-algebra. A \textit{right Hilbert $A$-module} is a complex vector space $E$ with a right $A$-module structure,
together with a right $\mathbb{C}$-linear map
$\langle\cdot,\cdot\rangle:E\times E\mapsto A$ that is also
\begin{enumerate}
\item right $A$-linear: $\langle e_{1}, e_{2}\cdot a\rangle = \langle e_{1}, e_{2}\rangle b$, for all $e_{1}$, $e_{2}$ in $E$ and $a$ in $A$,
\item Hermitian: $\langle e_{1}, e_{2}\rangle^{*} = \langle e_{2}, e_{1}\rangle$, for all $e_{1}$, $e_{2}$ in $E$,
\item positive: $\langle e, e\rangle\geq 0$, for all $e$ in $E$,
\item faithful: $\langle e, e\rangle\neq 0$, for all non-zero $e$ in $E$,
\item and $E$ is complete with respect to the norm $\|e\|:=\|\sqrt{\langle e, e\rangle }\|$.
\end{enumerate}
We will call $\langle\cdot,\cdot\rangle$ an \textit{$A$-inner product} if it satisfies these conditions.
All $C^{*}$-algebras and Hilbert modules in this paper will be assumed \textit{seperable}.
\par
We say a Hilbert $A$-module is \textit{full} if the image of the $B$-inner product linearly spans a dense sub-*-algebra in $A$. In general, the closed linear span of $\langle\cdot,\cdot\rangle$ is an ideal in $A$, which we shall denote by $\langle E\rangle$.
\par
A right $A$-module endomorphism $T:E\mapsto E$ is said to be \textit{adjointable} if there exists another right $A$-module endomorphism $T^{*}:E\mapsto E$ such that 
$$\langle T(e_{1}),e_{2}\rangle = \langle e_{1}, T^{*}(e_{2})\rangle\text{ for all }e_{1},e_{2}\text{ in }E.$$
$T^{*}$ is called the $\textit{adjoint}$ of $T$. The collection of all bounded, adjointable right $A$-module endomorphisms is denoted $\mathcal{B}(E)$. It is a $C^{*}$-algebra with pointwise addition of endomorphisms as addition, composition as multiplication, the adjoint as the involution, and the operator norm induced from $(E,\|\cdot\|)$ as the norm. 
\par
For each $e_{1},e_{2}$ in $E$, define the endomorphism $\theta_{e_{1},e_{2}}:E\mapsto E$ as
$$\theta_{e_{1},e_{2}}(e_{3}) = e_{1}\cdot\langle e_{2}, e_{3}\rangle, \text{ for all }e_{3}\text{ in }E.$$
The closed linear span of all such endomorphisms is denoted as $\mathcal{K}(E)$, the \textit{compact operators of $E$}, and can be shown to be an ideal of $\mathcal{B}(E)$.
\par
To a Hilbert $A$-module $E$, we may construct its \textit{linking algebra} $$\mathcal{L}(E) = \{\begin{pmatrix}
k & e\\
f & a 
\end{pmatrix}:k\in \mathcal{K}(E), e,f\in E, a\in \langle E\rangle \},$$
which is a $C^{*}$-algebra with involution $\begin{pmatrix}
k & e\\
f & a 
\end{pmatrix}^{*} = \begin{pmatrix}
k^{*} & f\\
e & a^{*} 
\end{pmatrix}$, multiplication $$\begin{pmatrix}
k_{1} & e_{1}\\
f_{1} & a_{1} 
\end{pmatrix}\cdot \begin{pmatrix}
k_{2} & e_{2}\\
f_{2} & a_{2} 
\end{pmatrix} = \begin{pmatrix} 
k_{1}k_{2} + \theta_{e_{1}, f_{2}} & k_{1}(e_{2}) + e_{1}a_{2}\\
k^{*}_{2}(f_{1}) + f_{2}a_{1}^{*} & \langle f_{1}, e_{2}\rangle + a_{1}a_{2}
\end{pmatrix},$$ and addition  $$\begin{pmatrix}
k_{1} & e_{1}\\
f_{1} & a_{1} 
\end{pmatrix} + \begin{pmatrix}
k_{2} & e_{2}\\
f_{2} & a_{2} 
\end{pmatrix} = \begin{pmatrix}
k_{1} + k_{2} & e_{1} +e_{2}\\
f_{1} + f_{2} & a_{1} + a_{2} 
\end{pmatrix}.$$
See \cite[Lemma~3.19]{RW98} for a description of the $C^{*}$-norm on $\mathcal{L}(E)$.
$\mathcal{K}(E)$ and $\langle E\rangle$ are full and hereditary sub-$C^{*}$-algebras of $\mathcal{L}(E)$ with respect to the obvious corner embeddings (\cite[Theorem~3.19]{RW98}).
\par
Let $E$ be a Hilbert $A$-module, $F$ a Hilbert $B$-module, and $\varphi:A\mapsto B$ a $*$-homomorphism. A \textit{$\varphi$-twisted morphism from $E$ to $F$} is a linear map $S:E\mapsto F$ satisfying $\langle S(e), S(f)\rangle= \varphi(\langle e, f\rangle)$ for all $e,f$ in $E$. It follows that $S(e\cdot a) = S(e)\cdot\varphi(a)$, for all $e$ in $E$ and $a$ in $A$.
\par
It can be shown, using the linking algebra and a standard amplification trick, that, for any elements $\{e_{i}\}_{i=1}^{n}, \{f_{i}\}_{i=1}^{n}\subseteq E$, the norm of $\sum_{i=1}^{n}\theta_{e_{i},f_{i}}$ is equal to the norm of the matrix $\sqrt{F}\sqrt{E}$ in $M_{n}(A)$, where the $(i,j)$ entry of $E$, $F$, is $\langle e_{i}, e_{j}\rangle$, $\langle f_{i}, f_{j}\rangle$, respectively.
\par
It follows that
a $\varphi$-twisted morphism $S$ induces a well defined $*$-homomorphism 
$\hat{S}:\mathcal{K}(E)\mapsto\mathcal{K}(F)$, determined by the equation
$$\hat{S}(\sum_{i=1}^{n}\theta_{e_{i},f_{i}}) = \sum_{i=1}^{n}\theta_{S(e_{i}),S(f_{i})},\text{ for any } \{e_{i}\}_{i=1}^{n}, \{f_{i}\}_{i=1}^{n}\subseteq E.$$
It is then an easy check that $\mathcal{L}(S):\mathcal{L}(E)\mapsto \mathcal{L}(F)$, defined for $\begin{pmatrix}
k & e\\
f & a 
\end{pmatrix}$ in $\mathcal{L}(E)$ as 
$$\mathcal{L}(S)(\begin{pmatrix}
k & e\\
f & a 
\end{pmatrix}) = \begin{pmatrix}
\hat{S}(k) & S(e)\\
S(f) & \varphi(a) 
\end{pmatrix}$$ is a $*$-homomorphism. It follows that any $\varphi$-twisted morphism has closed image.
\par
Let $A$ and $B$ be $C^{*}$-algebras. an \textit{$A$-$B$ $C^{*}$-correspondence} is a right Hilbert $B$-module $E$ together with a $*$-homomorphism $\alpha:A\mapsto \mathcal{B}(E)$. We will call $\alpha$ the \textit{action of $A$ on $E$} and will frequently use the notation $a\cdot e = \alpha(a)(e)$, $a$ in $A$, $e$ in $E$. A correspondence will be denoted ${}_{A}(E,\alpha)_{B}$. We will frequently write ${}_{A}(E,\alpha)_{B} = (E,\alpha)$ if the algebras $A$ and $B$ are specified. $(E,\alpha)$ is \textit{faithful} if $\alpha$ is injective, \textit{full} if $E$ is full, and \textit{non-degenerate} if the linear span of $A\cdot E$ is dense in $E$. 
\par
We shall denote $J_{(E,\alpha)}:= \alpha^{-1}(\mathcal{K}(E))$. Note that this is an ideal of $A$.
\par
We can compose an $A$-$B$ $C^{*}$-correspondence  $(E,\alpha)$ with a $B$-$C$ $C^{*}$-correspondence $(F,\beta)$ to obtain an $A$-$C$ $C^{*}$-correspondence $(E\otimes_{B} F, \alpha\otimes_{B}\text{id})$. This construction is called the \textit{balanced (or internal) tensor product} and appears, for instance, in \cite[Section~1.2.3]{Jensen}. We review the construction briefly.
\par
Let $E\otimes F$ be the vector space tensor product of $E$ and $F$. Clearly this is still a right $C$-module, and carries the left $A$ action $\alpha\otimes\text{id}$.  We define a map $\langle\cdot,\cdot\rangle: E\otimes F\times E\otimes F\mapsto C$  first on basic tensors as $$\langle e_{1}\otimes f_{1}, e_{2}\otimes f_{2}\rangle := \langle f_{1}, \langle e_{1},e_{2}\rangle\cdot f_{2}\rangle, \text{ for all }e_{1},e_{2}\text{ in }E, \text{ }f_{1},f_{2}\text{ in }F,$$
and extend $\langle \cdot,\cdot\rangle $ linearly in the right-entry, anti-linearly in the left-entry to all of $E\otimes F\times E\otimes F$. $\langle\cdot,\cdot\rangle$ satisfies the axioms $(1)$-$(3)$ for a $C$-inner product, but not necessarily $(4)$ and $(5)$. 
\par
However, if we quotient out by the $C$-sub-module $\{g\in E\otimes F:\langle g,g\rangle = 0\}$ and complete with respect to the quotient norm of $\|\cdot\|:= \|\sqrt{\langle \cdot,\cdot\rangle}\|$, then we obtain a vector space $E\otimes_{B}F$ such that the right $C$-module structure and $\langle\cdot,\cdot\rangle$ on $E\otimes F$ pass down to a Hilbert $C$-module structure, and the left $A$ action passes down to a $*$-homomorphism
$\alpha\otimes_{B}\text{id}:A\mapsto \mathcal{B}(E\otimes_{B}F)$. $(E\otimes_{B}F, \alpha\otimes_{B}\text{id})$ is therefore an $A$-$C$ $C^{*}$-correspondence.
\subsection{Cuntz-Pimsner algebras}
\label{cpalg}
We now review the Cuntz-Pimsner algebra construction. We refer the reader to \cite{Pimsner:Generalizing_Cuntz-Krieger} for more details.
\par
Let $A$ be a $C^{*}$-algebra and $(E,\alpha)$ a full, faithful, $A$ - $A$ $C^{*}$-correspondence. Denote $E^{\otimes_{A} 0} = A$, $E^{\otimes_{A} 1} = E$, and define, for $n$ in $\mathbb{N}$, $E^{\otimes_{A} n}:= E\otimes_{A}E^{\otimes_{A}(n-1)}$. The \textit{Fock module of $E$} is the orthogonal sum $\sum_{n=0}^{\infty}E^{\otimes_{A} n} = \mathcal{F}_{E}$. It is itself an $A$-$A$ correspondence, and we shall denote the left action by $\alpha_{\mathcal{F}}$.
\par
For each $e$ in $E$, we can define an adjointable, bounded Fock module endomorphism $T_{e}:\mathcal{F}_{E}\mapsto \mathcal{F}_{E}$ as
$$T_{e}(g) = e\otimes_{A} g,\text{ for }g\text{ in }\mathcal{F}_{E}.$$
The $C^{*}$-algebra generated by $\{T_{e}\}_{e\in E}$ is called the \textit{Toeplitz algebra} of $E$, and is denoted $\mathcal{T}_{E}$. As an abuse of notation, for $a$ in $A$, we shall write $\alpha_{\mathcal{F}}(a) = a$. Then, note that $T^{*}_{e}T_{f} = \langle e, f\rangle$ for all $e,f$ in $E$, so, by full-ness of $E$,  $\mathcal{T}_{E}$ contains all of $A$. Moreover, $a\cdot T_{e} = T_{a\cdot e}$, $T_{e}\cdot a =  T_{e\cdot a}$, and $T_{e} + T_{f} = T_{e+f}$ for all $e, f$ in $E$ and $a$ in $A$. By \cite[Theorem~3.4]{Pimsner:Generalizing_Cuntz-Krieger}, $\mathcal{T}_{E}$ is the universal $C^{*}$-algebra with operators $T_{e}$, $e$ in $E$, and $a$ in $A$ satisfying the above relations.
\par
Let $J_{E} = J_{(E,\alpha)}$ and consider the Hilbert $J_{E}$-module $G_{E} = \{x\in\mathcal{F}_{E}:\langle x, x\rangle\in J_{E}\}$. Then, $\mathcal{K}(G_{E})$ (considered as a sub-algebra of $\mathcal{K}(\mathcal{F}_{E})$) is an ideal of $\mathcal{B}(\mathcal{F}_{E})$ contained in $\mathcal{T}_{E}$ (\cite[Theorem~3.13]{Pimsner:Generalizing_Cuntz-Krieger}). We shall denote this ideal by $\mathcal{I}_{E}$. It can be described in the following two ways:
\par
First, the map sending, for each pair $e,f$ in $E$, a compact operator $\theta_{e,f}$ in $\mathcal{K}(E)$ to $T_{e}T_{f}^{*}$ extends to a $*$-embedding $\phi:\mathcal{K}(E)\mapsto \mathcal{T}_{E}$, so, for every $a$ in $J_{E}$,
$a - \phi(\alpha(a))$ is in $\mathcal{T}_{E}$, and is in fact in $\mathcal{I}_{E}$. Moreover, such operators generate the ideal $\mathcal{I}_{E}$.
\par
Second, consider, for each $k$ in $\mathbb{N}_{0}$ the projection $P_{k}:\mathcal{F}_{E}\mapsto\mathcal{F}_{E}$ onto the Hilbert $A$-sub-module $\sum_{n=0}^{k}E^{\hat{\otimes}n}$. It can be shown that $T$ is in $\mathcal{I}_{E}$ if and only if $T$ is in $\mathcal{T}_{E}$ and $\lim_{k\to\infty}\|P_{k}T-T\|=0$; see \cite[Theorem~3.13]{Pimsner:Generalizing_Cuntz-Krieger}, \cite[Definition~1.1]{Pimsner:Generalizing_Cuntz-Krieger}, and condition $(4)$ above \cite[Remark~3.9]{Pimsner:Generalizing_Cuntz-Krieger} for the three descriptions of the ideal above, respectively.
\par
The \textit{Cuntz-Pimsner algebra of $E$} is the quotient $\mathcal{T}_{E}/\mathcal{I}_{E}$, and is denoted $\mathcal{O}_{E}$. $A\subseteq\mathcal{O}_{E}$ is often called the \textit{co-efficient algebra}.

\subsection{$K$-theory} 
\label{k}
We will assume the reader has basic knowledge of $K$-theory for $C^{*}$-algebras, including the 6-term exact sequence associated to an extension. A good reference for the background required is \cite{Rordam:intro_K-theory}. We mention that no $K$-theory class in the $K_{0}$ or $K_{1}$ group of a non-commutative $C^{*}$-algebra is ever computed explicitly in this paper (other than the class of the unit in $K_{0}$). For a locally compact Hausdorff space $X$, we shall write $K_{i}(C_{0}(X)) = K^{-i}(X)$, $i=0,1$, to keep certain notations from becoming too cumbersome, but we will still be using the operator $K$-theory picture for $X$. When $A$ is a sub-$C^{*}$-algebra of $B$, we will often denote by $j$ or $i$ the inclusion map $A\mapsto B$.

\subsection{$KK$-theory}
\label{kk}
We briefly review the (un-graded) $KK$-theory that is used in this paper. See \cite{Jensen} for more details about the general theory.
\par
Let $A$ and $B$ be $C^{*}$-algebras, and recall that a \textit{Kasparov $A$ - $B$ bi-module} is a triple $(E,\alpha, T)$, where $(E,\alpha)$ is an $A$ - $B$ correspondence and $T:E\mapsto E$ is a bounded adjointable endomorphism such that, for all $a$ in $A$, the operators $T\alpha(a) - \alpha(a)T$, $(T - T^{*})\alpha(a)$, and $(T^{2} - 1)\alpha(a)$ are in $\mathcal{K}(E)$.
\par
It is clear that the structure of a Kasparov $A$ - $B$ bi-module is preserved under direct sum. There is a notion of \textit{homotopy} between two Kasparov bi-modules such that, if we consider the set of all homotopy equivalence classes $KK^{0}(A,B)$, then this is an abelian group under the direct sum operation, with $0$ being the class of the zero $A$ - $B$ bi-module. We shall denote the homotopy class of $\mathcal{E} = (E,\alpha, F)$ by $[\mathcal{E}]$.
\par
There are natural isomorpishms $KK^{i}(\mathbb{C}, A)\simeq K_{i}(A)$ and $KK^{i}(A,\mathbb{C})\simeq K^{i}(A)$, for $i=0,1$, making $KK$-theory a generalization of $K$-theory and $K$-homology.
\par
A class $[\mathcal{E}]$ in $KK^{0}(A,B)$ determines mappings of $K$-theory $\hat{\otimes}_{i}[\mathcal{E}]:K_{i}(A)\mapsto K_{i}(B)$, for $i = 0,1$. This is a special case of the Kasparov product $\hat{\otimes}:KK^{i}(A,B)\times KK^{j}(B,C)\mapsto KK^{i+j}(A, C)$, for $i,j$ in $\{0,1\}$, (index $i+j$ is taken mod 2) which is why the image of a class $g$ in $K_{i}(A)$ under $\hat{\otimes}_{i}[\mathcal{E}]$ will be denoted $g\hat{\otimes}_{i}[\mathcal{E}]$. For example, the induced map $\hat{\otimes}_{i}[\varphi]$ of a $*$-homomorphism $\varphi$ is equal to the usual induced map $\varphi_{*}$ on $K$-theory.
\par
In this paper, we will mostly only consider Kasparov bi-modules of the form $\mathcal{E} = (E,\alpha, 0)$, so $\alpha(A)\subseteq\mathcal{K}(E)$ necessarily. We now describe the operations we will have to consider on such bi-modules.
\par
Suppose $\psi:B\mapsto B'$ and  $\varphi:A'\mapsto A$ are $*$-homomorphism of $C^{*}$-algebras. The \textit{pullback of $\mathcal{E}$ by $\varphi$} is the Kasparov $A'$ - $B$ bi-module $\varphi^{*}(\mathcal{E}) = (E,\alpha\circ\varphi, 0)$. The \textit{pushforward of $\mathcal{E}$ by $\psi$} is the Kasparov $A$ - $B'$ bi-module $\psi_{*}(\mathcal{E}) = (E\otimes_{B} B', \alpha\otimes_{B}\text{id}_{B'}, 0)$, where $B'$ is regarded as the $B$ - $B'$ correspondence with inner product $\langle b_{1}, b_{2}\rangle  = b^{*}_{1}b_{2}$, right $B'$-action $b_{1}\cdot b_{2} = b_{1}b_{2}$, defined for all $b_{1},b_{2}$ in $B'$, and the left $B$-action $b\cdot b' = \psi(b)b'$, defined for $b$ in $B$ and $b'$ in $B'$. See \cite[Lemma~1.2.8]{Jensen} for the proof that $\psi_{*}(\mathcal{E})$ is a Kasparov bi-module. We will denote $E\otimes_{B}B' = E\otimes_{\psi}B'$ and $\alpha\otimes_{B}\text{id} = \alpha\otimes_{\psi}\text{id}$ so that the module structure is clear. It can be shown that $[\varphi^{*}(\mathcal{E})] = [\varphi]\hat{\otimes}[\mathcal{E}]$ and $[\psi_{*}(\mathcal{E})] = [\mathcal{E}]\hat{\otimes}[\psi]$.
\par
We will say two Kasparov $A$ - $B$ bi-modules $\mathcal{E} = (E,\alpha,0)$, $\mathcal{F} = (E,\beta,0)$ are \textit{isomorphic} if there is a unitary $U:E\mapsto F$ such that $U\alpha U^{*} = \beta$.
\par
When $\psi$ as above is surjective, consider the quotient space $E/E^{\psi}=:E_{\psi}$ of $E$ by the sub-module $E^{\psi} = \{e\in E:\psi(\langle e, e\rangle ) = 0\}$. Let $q:E\mapsto E_{\psi}$ denote the  quotient map. It can be checked that $\psi_{*}\mathcal{E}$ is isomorphic to the bi-module structure on $(E_{\psi},\alpha_{\psi})$ satisfying $\langle q(e), q(f)\rangle = \psi(\langle e, f\rangle)$ and $\alpha_{\psi}(a)q(e)\psi(b) = q(\alpha(a)eb)$ for all $e,f$ in $E$, $a$ in $A$ and $b$ in $B$.
\par
For $t$ in $[0,1]$, let $ev_{t}:C([0,1],B)\mapsto B$ be the $*$-homomorphism sending $f$ to $ev_{t}(f) = f(t)$. Two bi-modules $\mathcal{E} = (E,\alpha,0)$, $\mathcal{F} = (E,\beta,0)$ are \textit{homotopic} if there is a Kasparov $A$ - $C([0,1],B)$ bi-module $\mathcal{H} = (H, \eta, 0)$ such that $(ev_{0})_{*}\mathcal{H}$ is isomorphic to $\mathcal{E}$ and $(ev_{1})_{*}\mathcal{H}$ is isomorphic to $\mathcal{F}.$
\par
As an example of a $KK^{0}$ class, an $A$ - $B$ correspondence $(E,\alpha)$ determines a Kasparov $J_{(E,\alpha)}$ - $B$ bi-module $J(E,\alpha): =(E,\alpha|_{J_{{(E,\alpha)}}},0)$ and hence a class $K(E,\alpha) = [J(E,\alpha)]$ in $KK^{0}(J_{(E,\alpha)}, B)$.
\par
The class of a short exact sequence of $C^{*}$-algebras
$$
\begin{tikzcd}
0 \arrow[r] & A \arrow[r, "i"] & B \arrow[r, "q"] & C \arrow[r] & 0
\end{tikzcd}$$
in $KK^{1}(C,A)$ will be denoted $\delta_{B}$. Recall that this class is natural in the sense that if 
$$
\begin{tikzcd}
0 \arrow[r] & A_{1} \arrow[r, "i_{1}"] \arrow[d, "\alpha"] & B_{1} \arrow[r, "q_{1}"] \arrow[d, "\beta"] & C_{1} \arrow[r] \arrow[d, "\gamma"] & 0 \\
0 \arrow[r] & A_{2} \arrow[r, "i_{2}"]                     & B_{2} \arrow[r, "q_{2}"]                    & C_{2} \arrow[r]                     & 0
\end{tikzcd}$$
is a commutative diagram of $*$-homomorphisms with exact rows, then $\delta_{B_{1}}\hat{\otimes}[\alpha] = [\gamma]\hat{\otimes}\delta_{B_{2}}$. See the paragraph above \cite[Remark~2.5.1]{CS86} for an explanation. 
\par
When we consider the action of $\delta_{B}$ on $K$-theory, often we will denote $\hat{\otimes}_{1}\delta_{B} =: \delta$ and call this the \textit{index map}, and denote $\hat{\otimes}_{0}\delta_{B}=:\text{exp}$ and call this the \textit{exponential map}. These are the usual index and exponential maps appearing in the 6-term exact sequence of $K$-theory associated to the above short exact sequence of $C^{*}$-algebras.
\par
If $(E,\alpha)$ is a faithful and full $A$ - $A$ correspondence, we will let $\delta_{E}$ denote the class of the extension
$$
\begin{tikzcd}
0 \arrow[r] & \mathcal{I}_{E} \arrow[r] & \mathcal{T}_{E} \arrow[r] & \mathcal{O}_{E} \arrow[r] & 0
\end{tikzcd}$$ in $KK^{1}(\mathcal{O}_{E}, \mathcal{I}_{E})$ and recall the $\mathcal{I}_{E}$-$J_{(E,\alpha)}$ equivalence bi-module $G_{E}$ defined in section \ref{cpalg}. Let $\delta_{E,PV} = \delta\hat{\otimes}[G_{E}]$.
For a Cuntz-Pimsner algebra $\mathcal{O}_{E}$
Pimsner in \cite{Pimsner:Generalizing_Cuntz-Krieger} determined the following relationship between the $K$-theory of $J_{(E,\alpha)}$, $A$, and $\mathcal{O}_{E}$ which will be crucial in computing the $K$-theory of a rational function. See \cite[Theorem~4.9]{Pimsner:Generalizing_Cuntz-Krieger}.
\begin{prop}
\label{cpexact}
Let $(E,\alpha)$ be a faithful and full $A$-$A$ correspondence. Let $i:A\mapsto \mathcal{O}_{E}$ and $\iota:J_{E}\mapsto A$ be the inclusions. Then, we have the following $6$-term exact sequence of $K$-theory:
$$
\begin{tikzcd}[sep = large]
K_{0}(J_{E}) \arrow[r, "{\iota - \hat{\otimes}_{0}K(E,\alpha)}"] & K_{0}(A) \arrow[r, "i_{*}"] & K_{0}(\mathcal{O}_{E}) \arrow[d, "\hat{\otimes}_{0}\delta_{E,PV}"]            \\
K_{1}(\mathcal{O}_{E}) \arrow[u, "\hat{\otimes}_{1}\delta_{E,PV}"]                & K_{1}(A) \arrow[l, "i_{*}"] & K_{1}(J_{E}). \arrow[l, "{\iota - \hat{\otimes}_{1}K(E,\alpha)}"]
\end{tikzcd}$$ The analogous 6-term exact sequence also holds for $K$-homology (with arrows reversed).
\end{prop}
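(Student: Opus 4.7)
The plan is to follow Pimsner's original strategy in \cite{Pimsner:Generalizing_Cuntz-Krieger}: apply the $6$-term exact sequence of $K$-theory to the defining Toeplitz extension $0 \to \mathcal{I}_{E} \to \mathcal{T}_{E} \to \mathcal{O}_{E} \to 0$ and translate the result into the advertised sequence using two auxiliary $KK$-equivalences. The technical heart of the argument is to prove that the inclusion $i_{\mathcal{T}}: A \hookrightarrow \mathcal{T}_{E}$ is a $KK$-equivalence. The inverse class is represented by a Kasparov $\mathcal{T}_{E}$-$A$ bi-module built from the Fock module $\mathcal{F}_{E}$ together with a degree-one Fredholm operator obtained from the unilateral shift on $\mathcal{F}_{E} \oplus \mathcal{F}_{E}$; one then verifies $[i_{\mathcal{T}}] \hat{\otimes} [\mathcal{F}_{E}] = 1_{A}$ and $[\mathcal{F}_{E}] \hat{\otimes} [i_{\mathcal{T}}] = 1_{\mathcal{T}_{E}}$ via explicit operator homotopies of Kasparov bi-modules.

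Next, recall that $\mathcal{I}_{E} = \mathcal{K}(G_{E})$ and $J_{E} = \langle G_{E} \rangle$, so $G_{E}$ provides a Morita $KK^{0}$-equivalence class $[G_{E}] \in KK^{0}(\mathcal{I}_{E}, J_{E})$. Combining this with the $KK$-equivalence $A \sim \mathcal{T}_{E}$, the map $K_{*}(\mathcal{I}_{E}) \to K_{*}(\mathcal{T}_{E})$ induced by the inclusion $j : \mathcal{I}_{E} \hookrightarrow \mathcal{T}_{E}$ transports to a homomorphism $K_{*}(J_{E}) \to K_{*}(A)$, which I claim equals $\iota - \hat{\otimes}_{i} K(E,\alpha)$. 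The central calculation is that for $a \in J_{E}$ the element $a - \phi(\alpha(a))$ lies in $\mathcal{I}_{E}$; under the identification $K_{*}(\mathcal{T}_{E}) \cong K_{*}(A)$ via $[i_{\mathcal{T}}]$ the summand $a$ contributes $\iota_{*}[a]$, while $\phi(\alpha(a))$ contributes $[a] \hat{\otimes}_{i} K(E,\alpha)$, because composing the Morita imprimitivity bi-module $[G_{E}]$ with the embedding $\phi : \mathcal{K}(E) \hookrightarrow \mathcal{I}_{E}$ reproduces exactly the Kasparov bi-module $J(E,\alpha)$ underlying $K(E,\alpha)$.

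Finally, feed these identifications into the $6$-term exact sequence of the Toeplitz extension. By naturality of the $KK^{1}$ boundary class $\delta_{E}$ under the Morita equivalence, the connecting maps become exactly $\hat{\otimes}_{i} \delta_{E,PV} = \hat{\otimes}_{i} (\delta_{E} \hat{\otimes} [G_{E}])$, yielding the stated diagram. The $K$-homology version is obtained by applying the contravariant functors $KK^{0}(-,\mathbb{C})$ and $KK^{1}(-,\mathbb{C})$ to the same chain of $KK$-classes, so the identical computations produce the sequence with arrows reversed.

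The main obstacle is Pimsner's $KK$-equivalence $A \sim \mathcal{T}_{E}$, which requires constructing a concrete Fredholm Kasparov bi-module on $\mathcal{F}_{E}$ and verifying its invertibility through non-trivial operator homotopies (one direction handled by a rotation-type homotopy on $\mathcal{F}_{E} \oplus \mathcal{F}_{E}$, the other by compression arguments). The secondary subtlety is the identification $[\phi \circ \alpha|_{J_{E}}] = \hat{\otimes}_{i} K(E,\alpha)$ after transport across the Morita equivalence; this is the computation responsible for the appearance of the difference $\iota - \hat{\otimes}_{i} K(E,\alpha)$ rather than merely $\iota$, and it is exactly what encodes the defining correspondence structure into the $K$-theoretic sequence.
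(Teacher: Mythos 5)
The paper does not actually prove this proposition: it records it as Pimsner's theorem and cites \cite[Theorem~4.9]{Pimsner:Generalizing_Cuntz-Krieger} directly. Your outline correctly reconstructs Pimsner's own strategy for that theorem, namely running the $K$-theory six-term sequence of the Toeplitz extension and transporting it across the $KK$-equivalence $A \sim_{KK} \mathcal{T}_E$ (Pimsner's Theorem~4.4, with inverse built from the Fock module) and the Morita equivalence $\mathcal{I}_E \sim_{\text{Mor}} J_E$ via the imprimitivity bimodule $G_E$; and the naturality of the boundary class correctly produces $\delta_{E,PV}=\delta_{E}\hat{\otimes}[G_E]$.

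The one place where your sketch papers over the genuine technical content is the ``central calculation.'' From the element-level identity $a-\phi(\alpha(a))\in\mathcal{I}_E$ you cannot directly read off a decomposition of the transported $K$-theory map as $\iota_*-\hat{\otimes}_i K(E,\alpha)$: neither $a$ nor $\phi(\alpha(a))$ individually lies in $\mathcal{I}_E$, so ``the summand $a$ contributes $\iota_*$, while $\phi(\alpha(a))$ contributes $K(E,\alpha)$'' has no literal meaning at the $K$-theory level. What does hold is that $\sigma\colon J_E\to\mathcal{I}_E$, $\sigma(a)=a-\phi(\alpha(a))$, is a $*$-homomorphism (one checks $a\,\phi(\alpha(b))=\phi(\alpha(a))\,b=\phi(\alpha(a))\phi(\alpha(b))=\phi(\alpha(ab))$ in $\mathcal{T}_E$), and what Pimsner proves is the $KK$-identity $[G_E]^{-1}\hat{\otimes}[j]\hat{\otimes}[i_{\mathcal{T}}]^{-1}=[\iota]-K(E,\alpha)$ in $KK^0(J_E,A)$ by an explicit Kasparov-product and homotopy argument on the Fock module. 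That computation, not the element-level heuristic, is the real obstacle; as written your proposal asserts the conclusion of that computation rather than proving it. Everything else in the outline (the $KK$-equivalence via the Fock-space Fredholm module, the Morita step, and the dualisation to $K$-homology) is correctly identified and unproblematic.
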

We will call this the \textit{Pimsner-Voiculescu 6-term exact sequence of $K$-theory}. We will often denote $\hat{\otimes}_{1}\delta_{E,PV} =:\delta_{PV}$ and $\hat{\otimes}_{0}\delta_{E,PV} = :\text{exp}_{PV}$.
\subsection{Holomorphic dynamical systems}
\label{holdynsys}
A \textit{holomorphic dynamical system} is a holomorphic function $R:M\mapsto M$ defined on a Riemann surface $M$. By the Uniformization Theorem, the universal covering space $\tilde{M}$ of a connected Riemann surface $M$ is isomorphic either to the Riemann sphere $\hat{\mathbb{C}}$, the plane $\mathbb{C}$ or the unit disk $\mathbb{D}$ and the holomorphic map $R$ lifts to a holomorphic map $\tilde{R}:\tilde{M}\mapsto\tilde{M}$. 
\par
The dynamics of holomorphic maps defined on Riemann surfaces with $\tilde{M}\simeq \mathbb{D}$ is essentially trivial by the Pick Theorem (see \cite[Theorem~2.11]{Milnor:Dynamics_in_one_complex_variable}). 
\par
If $\tilde{M}\simeq\hat{\mathbb{C}}$, then $M\simeq\hat{\mathbb{C}}$ and $R$ is a rational function. This case will be the main focus of the paper and the standard reference for the general theory is \cite{Milnor:Dynamics_in_one_complex_variable}. 
\par
As for the case $\tilde{M}\simeq\mathbb{C}$, we have either $M\simeq\mathbb{C}$, $M\simeq\mathbb{C}\setminus\{0\}=:\mathbb{C}^{*}$ or $M\simeq\mathbb{C}/\Lambda$, where $\Lambda\subseteq\mathbb{C}$ is some lattice. If $M\simeq\mathbb{C}/\Lambda$, then there is $\alpha$ in $\Lambda$ and $\beta$ in $\mathbb{C}$ such that $\tilde{R}(z) = \alpha z +\beta$ for all $z$ in $\mathbb{C}$ (see \cite[Theorem~6.1]{Milnor:Dynamics_in_one_complex_variable}. Thus $R$ is either constant, a homeomorphism, or an expanding local homeomorphism, and the dynamics can be described relatively easily. On the contrary, the dynamical behaviour of $R$ when $M\simeq \mathbb{C}$ or $M\simeq\mathbb{C}^{*}$ can be extremely complicated and less is known than the case when $M\simeq\hat{\mathbb{C}}$. If $R$ is defined on either of these Riemann surfaces and cannot be extended to a rational function, it is often called \textit{transcendental}. We refer the reader to the surveys \cite{B93} and \cite{S98}. Further references can also be found in \cite[Section~6]{Milnor:Dynamics_in_one_complex_variable}.
\par
The dynamics of $R$ partitions $M$ into two sets. A point $z$ in $M$ is in the \textit{Fatou set} $F_{R}$ if and only if there is a neighbourhood $U$ of $z$ for which $\{R^{n}:U\mapsto M\}_{n\in\mathbb{N}}$ is a pre-compact set in the compact open-topology on $C(U,M)$. $F_{R}$ is open and invariant, in the sense that $R^{-1}(F_{R}) = F_{R}$ (see \cite[Section~4]{Milnor:Dynamics_in_one_complex_variable}). The dynamics restricted to $F_{R}$ is generally well understood; see for instance \cite[Section~16]{Milnor:Dynamics_in_one_complex_variable} for a description of the possible dynamical behaviour when $R$ is a rational function, and \cite{B93} in the case that $R$ is an entire function on $M = \mathbb{C}$. 
\par
The main difference between the Fatou set in the rational case and the transcendental case is the presence of ``wandering domains'', i.e. a sequence of $\{U_{n}\}_{n\in\mathbb{N}}$ of pairwise disjoint connected components in the Fatou set such that $R(U_{n})\subseteq U_{n+1}$. It is first proved in \cite{NWDSullivan} that a rational function does not admit wandering domains, while the first example of a wandering domain for a transcendental function was found in \cite{B76}.
\par
The complement of the Fatou set is the Julia set and is denoted $J_{R}$. When $M$ is either $\hat{\mathbb{C}}$, $\mathbb{C}$ or $\mathbb{C}^{*}$ and $\text{deg}(R) > 1$, $J_{R}$ is non-empty, totally invariant ($R^{-1}(J_{R}) = J_{R}$), contains no isolated points and is the closure of the repelling periodic points of $R$ (see \cite{B93}). The Julia set is where the dynamics behaves chaotically and is the main object of study of a holomorphic dynamical system.

\section{Morphisms of $C^{*}$-correspondences}
\label{morphcorr}
We show that $C^{*}$-correspondences are the objects of a category in which a morphism $(E,\alpha)\mapsto (F,\beta)$ in this category intertwines $(E,\alpha)$ with $(F,\beta)$ in a suitable sense. We show in Proposition \ref{morphintertwine} that such a morphism induces an intertwining of the $KK^{0}$-classes $K(E,\alpha)$ and $K(F,\beta)$. We then consider types of exactness in this category and show that a short $J$-exact sequence induces an intertwining of certain extension classes in $KK$-theory (Proposition \ref{morphnatural}). This can be thought of as a generalization of naturality of extension classes associated to exact sequences of $C^{*}$-algebras.  
\begin{defn}\label{morph_of_corr} A
 \textit{morphism} from an $A_{1}$ - $A_{2}$  $C^{*}$-correspondence $(E,\alpha)$ to a $B_{1}$ - $B_{2}$ correspondence $(F,\beta)$ is a triple $(\varphi_{1},S,\varphi_{2})$ of maps, where $\varphi_{1}: A_{1}\mapsto B_{1}$, $\varphi_{2}:A_{2}\mapsto B_{2}$ are *-homomorphisms, and $S:E\mapsto F$ is a linear map satisfying the additional identities 
\begin{enumerate}
\item $\langle S(e_{1}), S(e_{2})\rangle = \varphi_{2}(\langle e_{1}, e_{2}\rangle)$ for all $e_{1}$, $e_{2}$ in $E$ (i.e. $S$ is a $\varphi_{2}$-twisted morphism), 
\item $S(a\cdot e) = \varphi(a)\cdot S(e)$, for all $a$ in $A_{1}$, $e$ in $E$, 
\item $\varphi_{1}(J_{(E,\alpha)})\subseteq J_{(F,\beta)}$, and
\item $\hat{S}(\alpha(a)) = \beta(\varphi_{1}(a))$ for all $a$ in $J_{(E,\alpha)}$.
\end{enumerate}
\end{defn}
Condition $(4)$ above is automatically implied  by $(1)$-$(3)$ if $S$ is surjective, or if $\alpha^{-1}(\mathcal{K}(E)) = \emptyset$.
\par
The special case of the above definition when $A_{1} = A_{2}$, $B_{1} = B_{2}$ and $\varphi_{1} = \varphi_{2}$ appears often in the literature under the name ``co-variant morphism'' between self-correspondences, see \cite[Section~2.4]{MS19} and the references therein. The focus is usually on its functorial properties with respect to the Cuntz-Pimsner algebra construction, which we will review briefly, but here we will focus on its $KK$-theoretic consequences in the general case when $A_{1}$, $A_{2}$ and $B_{1},$ $B_{2}$ are not necessarily the same.
\par
It is easy to see that if $(\varphi_{1}, S, \varphi_{2}): (E,\alpha)\mapsto (F,\beta)$ and $(\psi_{1}, T, \psi_{2}): (F,\beta)\mapsto (G,\gamma)$ are morphisms, then their composition $(\psi_{1}, T, \psi_{2})\circ (\varphi_{1}, S, \varphi_{2}) = (\psi_{1}\circ \varphi_{1}, T\circ S, \psi_{2}\circ\varphi_{2})$ is a morphism, and that, given an $A_{1}$ - $A_{2}$ correspondence $(E,\alpha)$, $\text{id}_{(E,\alpha)} = (\text{id}_{A_{1}}, \text{id}_{E}, \text{id}_{A_{2}})$ is an identity morphism relative to this composition operation. Thus, $C^{*}$-correspondences form a category with morphisms as above.
\par
Now, suppose $(\varphi, S, \varphi)$ is a morphism from an $A$ - $A$ correspondence $(E,\alpha)$ to a $B$ - $B$ correspondence $(F,\beta)$ (both assumed to be full and faithful). The universal property of Toeplitz algebras (\cite[Theorem~3.4]{Pimsner:Generalizing_Cuntz-Krieger}) imply there is a unique $*$-homomorphism $\mathcal{T}(S):\mathcal{T}_{E}\mapsto\mathcal{T}_{F}$ defined by the equation
$$\mathcal{T}(S)(T_{e}) = T_{S(e)}, \text{ for all }e\text{ in }E.$$
\par
Note that $\mathcal{T}(S)\circ \phi = \phi\circ\hat{S}$, so the fact that $\hat{S}\circ \alpha(a) = \beta(\varphi(a))$, for all $a$ in $J_{E}$, implies

$\mathcal{T}(S)(a - \phi(\alpha(a))) = \varphi(a) - \phi(\beta(\varphi(a)))$, for all $a$ in $J_{E}$. Therefore, $\mathcal{T}(S)(\mathcal{I}_{E})\subseteq \mathcal{I}_{F}$, and hence $\mathcal{T}(S)$ passes down to a $*$-homomorphism at the level of Cuntz-Pimsner algebras, which we shall denote by $\mathcal{O}(S):\mathcal{O}_{E}\mapsto\mathcal{O}_{F}$.
\par
We show now that a morphism between two correspondences intertwines their induced classes in $KK^{0}$.
\begin{prop}
\label{morphintertwine}
Suppose $(\varphi_{1},S,\varphi_{2}): {}_{A_{1}}(E,\alpha){}_{A_{2}}\mapsto {}_{B_{1}}(F,\beta){}_{B_{2}}$ is a morphism of correspondences. Then, $K(E,\alpha)\hat{\otimes}[\varphi_{2}] = [\varphi_{1}]\hat{\otimes}K(F,\beta)$. 
\end{prop}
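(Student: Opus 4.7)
The plan is to prove this identity by constructing an explicit intertwining map $\tilde{S}$ between the Kasparov bimodules representing the two sides, and then showing they differ only by a degenerate summand that represents $0$ in $KK^{0}$. First I would unwind the definitions: $K(E,\alpha)\hat{\otimes}[\varphi_{2}]$ is represented by the pushforward $(E\otimes_{\varphi_{2}}B_{2},\alpha|_{J_{(E,\alpha)}}\otimes_{\varphi_{2}}\mathrm{id},0)$, while $[\varphi_{1}]\hat{\otimes}K(F,\beta)$ is represented by the pullback $(F,\beta|_{J_{(F,\beta)}}\circ\varphi_{1},0)$; both are bona fide Kasparov $J_{(E,\alpha)}$-$B_{2}$ bimodules, the second thanks to condition (3) of the morphism, which ensures $\varphi_{1}(J_{(E,\alpha)})\subseteq J_{(F,\beta)}$ so that the left action lands in $\mathcal{K}(F)$.

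Next I would define $\tilde{S}\colon E\otimes_{\varphi_{2}}B_{2}\to F$ on elementary tensors by $\tilde{S}(e\otimes b)=S(e)\cdot b$. Condition (1) on the morphism (the $\varphi_{2}$-twisted isometry property) makes $\tilde{S}$ well-defined on the balanced tensor product and isometric for the $B_{2}$-valued inner product, while condition (2) makes it intertwine the left $J_{(E,\alpha)}$-actions. Condition (4), together with the identity $\hat{S}(\alpha(a))=\beta(\varphi_{1}(a))$ for $a\in J_{(E,\alpha)}$, realises $\beta(\varphi_{1}(a))$ as a norm-limit of operators $\theta_{S(e),S(f)}$, whose images lie in the closed submodule $N:=\overline{\tilde{S}(E\otimes_{\varphi_{2}}B_{2})}\subseteq F$; since $\beta(\varphi_{1}(a^{*}))=\beta(\varphi_{1}(a))^{*}$ also has image in $N$, the operator $\beta(\varphi_{1}(a))$ annihilates the orthogonal complement $N^{\perp}$ for every $a\in J_{(E,\alpha)}$. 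Via $\tilde{S}$, the pushforward bimodule is then unitarily isomorphic as a Kasparov bimodule to $(N,(\beta\circ\varphi_{1})|_{N},0)$, so the problem reduces to establishing $[(F,\beta\circ\varphi_{1},0)]=[(N,(\beta\circ\varphi_{1})|_{N},0)]$ in $KK^{0}(J_{(E,\alpha)},B_{2})$; when $N$ is orthogonally complementable, this is immediate, since $F=N\oplus N^{\perp}$ exhibits the pullback as a direct sum of the desired bimodule and the degenerate $(N^{\perp},0,0)$.

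The main obstacle is that closed Hilbert submodules are not in general orthogonally complementable. I would resolve this by stabilising both modules by the degenerate Kasparov bimodule $(\ell^{2}(B_{2}),0,0)$, which does not change the $KK^{0}$-class, and then invoking Kasparov's stabilisation theorem together with a standard approximate-identity argument for the ideal $\beta(\varphi_{1}(J_{(E,\alpha)}))\subseteq\mathcal{K}(F)$ to reduce to the complementable case; alternatively, one can assemble a direct homotopy over $C([0,1],B_{2})$ built from $\tilde{S}$ that realises the equivalence without ever restricting to an orthogonal complement. Either route completes the proof.
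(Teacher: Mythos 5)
Your initial setup matches the paper's: you define the same map $\tilde{S}(e\otimes b)=S(e)b$ from $E\otimes_{\varphi_2}B_2$ onto the closed submodule $N=\overline{S(E)\cdot B_2}\subseteq F$, and you correctly observe that the problem reduces to comparing the Kasparov $J_{(E,\alpha)}$-$B_2$ bimodules carried by $N$ and $F$. You also correctly flag the central obstacle — $N$ need not be orthogonally complemented in $F$ — and this is exactly the point at which the paper parts ways with the naive decomposition.

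Where your proposal has a genuine gap is in the first repair you offer. Stabilising both modules by $(\ell^{2}(B_{2}),0,0)$ does not resolve non-complementability: after stabilisation you get an inclusion of $\ell^{2}(B_{2})$ into itself, and submodules of $\ell^{2}(B_{2})$ are still not complemented in general (take $B_2 = C([0,1])$ and the submodule $C_0((0,1))\subseteq B_2\subseteq\ell^2(B_2)$). The "approximate-identity argument" is also suspect: you call $\beta(\varphi_1(J_{(E,\alpha)}))$ an ideal of $\mathcal{K}(F)$, but $\varphi_1(J_{(E,\alpha)})$ is generally only a subalgebra of $B_1$ — condition (3) of the morphism definition places it inside $J_{(F,\beta)}$, not necessarily as an ideal — and even when one has a genuine ideal $J$ acting on $F$, the strict limit of $\pi(u_\lambda)$ need not exist as a projection in $\mathcal{B}(F)$ unless $\overline{\pi(J)F}$ is already complemented, which is what you were trying to prove. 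So the first route is circular or false.

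Your second, alternative route — an explicit homotopy over $C([0,1],B_2)$ built from $\tilde{S}$ — is precisely what the paper does, via the mapping-cylinder module $H=\{h\in C([0,1],F):h(0)\in N\}$ with left action $\eta(a)h(t)=\beta(\varphi_1(a))h(t)$; evaluation at $0$ recovers the pushforward bimodule and evaluation at $1$ recovers the pullback. But there is one substantive verification you have not mentioned and cannot skip: one must check that $\eta(J_{(E,\alpha)})\subseteq\mathcal{K}(H)$, so that $(H,\eta|_{J_{(E,\alpha)}},0)$ is actually a Kasparov bimodule and the homotopy is legitimate. The paper handles this by first proving $J_{(E,\alpha)}\subseteq J_{(N,\gamma)}$ by an approximate-unit argument (showing $\beta(\varphi_1(a))$, approximated by $\theta_{S(e_i),S(e_i')}$, can be cut down to $\mathcal{K}(N)$), and then observing this inclusion propagates to $J_{(H,\eta)}$. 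Without that step, both your homotopy route and your orthogonal-decomposition route are incomplete, so as written the proposal does not constitute a proof; committing to the homotopy route and supplying this compactness check would close the gap.
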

\begin{proof}
It is routine to see the mapping $T:E\otimes_{\psi} B_{2}\mapsto F$ defined on a basic tensor $e\otimes_{\psi} b$, for $e$ in $E$ and $b$ in $B_{2}$, as $T(e\otimes_{\psi} b) = S(e)b$ is a unitary onto its image $G = \overline{\text{span}\{S(E)\cdot B_{2}}\}$. Under $T$, the left action $\alpha\otimes_{\varphi_{2}}\text{id}$ on $E\otimes_{\varphi_{2}} B_{2}$ is identified with the action defined, for all $a$ in $A_{1}$ and $g$ in $G$, as $\gamma(a)\cdot g:= \beta(\varphi_{1}(a))g$.
\par
Let us show $J_{(E,\alpha)}\subseteq J_{(G,\gamma)}$. By the definition of morphism, for every $a$ in $J_{(E,\alpha)}$ and $\epsilon > 0$, there are $\{e_{i}\}_{i=1}^{n},\{e'_{i}\}_{i=1}^{n}\subseteq E_{1}$ such that $\|\beta(\varphi_{1}(a)) - \sum_{i=1}^{n}\theta_{S(e_{i}), S(e'_{i})}\|_{\mathcal{B}(E_{2})}\leq\epsilon$. If $\{u_{\lambda}\}_{\lambda\in \Lambda}$ is an approximate unit in $B_{2}$, then there is $\lambda_{0}$ in $\Lambda$ such that $\|\sum_{i=1}^{n}\theta_{S(e_{i}), S(e'_{i})} - \sum_{i=1}^{n}\theta_{S(e_{i})\sqrt{u_{\lambda_{0}}}, S(e'_{i})\sqrt{u_{\lambda_{0}}}}\|_{\mathcal{B}(F)}\leq\varepsilon$. Therefore, $\|\gamma(a) - \sum_{i=1}^{n}\theta_{S(e_{i})\sqrt{u_{\lambda_{0}}}, S(e'_{i})\sqrt{u_{\lambda_{0}}}}\|_{\mathcal{B}(G)}\leq \|\alpha_{2}(\varphi(a)) - \sum_{i=1}^{n}\theta_{S(e_{i})\sqrt{u_{\lambda_{0}}}, S(e'_{i})\sqrt{u_{\lambda_{0}}}}\|_{\mathcal{B}(F)}\leq 2\varepsilon$. As $\sum_{i=1}^{n}\theta_{S(e_{i})u_{\lambda_{0}}, S(e'_{i})u_{\lambda_{0}}}$ is in $\mathcal{K}(G)$ and $\epsilon$ was arbitrary, we may conclude that $a$ is in $J_{(G,\gamma)}$. Hence, $J_{(E,\alpha)}\subseteq J_{(G,\gamma)}$.
\par
Consider the Hilbert $C([0,1],B_{2})$-module $H = \{h\in C([0,1],F): h(0)\in G\}$ with operations defined point-wise as 
\begin{enumerate}
    \item $g\cdot b(t):= g(t)b(t)$, for all $h$ in $H$, $b$ in $B_{2}$ and $t$ in $[0,1]$,
    \item $\langle h_{1},h_{2}\rangle(t) := \langle h_{1}(t),h_{2}(t)\rangle$, for all $h_{1},h_{2}$ in $H$ and $t$ in $[0,1]$.
\end{enumerate}
Since $\beta(\varphi_{1}(A_{1}))\cdot G\subseteq G$, we may define the left action $\eta$ on $H$ as $\eta(a)\cdot h(t):= \beta(\varphi(a))\cdot h(t)$, for all $a$ in $A_{1}$, $h$ in $H$, and $t$ in $[0,1]$.
\par
$G$ embeds into $H$ as the constant functions and $\eta$ restricted to $G$ is equal to $\gamma$, so $\mathcal{K}(G)\subseteq \mathcal{K}(H)$ and $J_{(G,\gamma)}\subseteq J_{(H,\eta)}$. Therefore, $J_{(E,\alpha)}\subseteq J_{(H,\eta)}.$
\par
Thus, $(H,\eta|_{J_{(E,\alpha)}})$ is a Kasparov $J_{(E,\alpha)}-C([0,1],B_{2})$ module such that $(ev_{0})_{*}\mathcal{E}_{H}\simeq (G,\gamma)\simeq (\varphi_{2})_{*}(E,\alpha|_{(J_{(E,\alpha)}})$ and $(ev_{1})_{*}\mathcal{E}_{H}\simeq \varphi_{1}^{*}(F,\beta|_{J_{(F,\beta)}})$, proving the Proposition.
\end{proof}
\begin{rmk}\label{intertwiner}
    Suppose $a$ is in $KK(A_{1},A_{2})$, $b$ is in $KK(B_{1}, B_{2})$. We will say a pair $(x_{1},x_{2})$ in $KK(A_{1},B_{1})\times KK(A_{2}, B_{2})$ intertwines $a$ with $b$ if $a\hat{\otimes}x_{2} = x_{1}\hat{\otimes}b$. We call $(x_{1},x_{2})$ an intertwiner and write $(x_{1},x_{2}):a\mapsto b$ suggestively. Clearly if $(x_{1},x_{2}):a\mapsto b$ and $(y_{1},y_{2}):b\mapsto c$ are intertwiners, then $(x_{1},x_{2})\hat{\otimes}(y_{1},y_{2}) := (x_{1}\hat{\otimes}y_{1}, x_{2}\hat{\otimes} y_{2}):a\mapsto c$ is an intertwiner.
    Proposition \ref{morphintertwine} can be re-stated as saying that if $(\varphi_{1}, S, \varphi_{2}):(E,\alpha)\mapsto (F,\beta)$ is a morphism, then $([\varphi_{1}], [\varphi_{2}]):K(E,\alpha)\mapsto K(F,\beta)$ is an intertwiner. Moreover, it is easy to see that the assignment $(\varphi_{1}, S, \varphi_{2})\mapsto ([\varphi_{1}], [\varphi_{2}])$ is functorial, but we shall not make use of this fact.
\end{rmk}

Let us specialize to the case of a morphism between full and faithful self correspondences and show that such a morphism induces intertwinings of the $KK$-classes involved in the Pimsner-Voicelescu 6-term exact sequences of the respective Cuntz-Pimsner algebras (Proposition \ref{cpexact}).
\begin{prop}
\label{functcp}
    Let $(\varphi, S, \varphi):{}_{A}(E,\alpha){}_{A}\mapsto {}_{B}(F,\beta){}_{B}$ be a morphism between two full and faithful $C^{*}$-correspondences. Then, 
    \begin{enumerate}
    \item $[\varphi]\hat{\otimes} (\iota - K(F,\beta))) = (\iota - K(E,\alpha))\hat{\otimes}[\varphi]$,
    \item $[\varphi]\hat{\otimes}[i] =[i]\hat{\otimes}[\mathcal{O}(S)]$ and
    \item $[\mathcal{O}(S)]\hat{\otimes}\delta_{PV} = \delta_{PV}\hat{\otimes}[\varphi]$.
    \end{enumerate}
\end{prop}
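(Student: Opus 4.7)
The plan is to use Proposition \ref{morphintertwine} as the main engine for (1) and (3), while (2) reduces to a $*$-homomorphism identity. The key auxiliary construction for (3) is a Fock-module map $\mathcal{F}(S):\mathcal{F}_E\mapsto\mathcal{F}_F$ obtained by iterating the balanced tensor product $S\otimes_\varphi\cdots\otimes_\varphi S$; it is a $\varphi$-twisted morphism whose induced $*$-homomorphism on compact operators implements $\mathcal{T}(S)$, in the sense that $\widehat{\mathcal{F}(S)}$ agrees with $\mathcal{T}(S)|_{\mathcal{K}(\mathcal{F}_E)\cap\mathcal{T}_E}$. Because $\varphi(J_{(E,\alpha)})\subseteq J_{(F,\beta)}$ by condition (3) of Definition \ref{morph_of_corr}, the identity $\langle\mathcal{F}(S)x,\mathcal{F}(S)x\rangle=\varphi(\langle x,x\rangle)$ forces $\mathcal{F}(S)$ to restrict to a $\varphi|_{J_{(E,\alpha)}}$-twisted morphism $\tilde{S}:G_E\mapsto G_F$ whose induced $*$-homomorphism on compacts coincides with $\mathcal{T}(S)|_{\mathcal{I}_E}$.

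For (1), I would apply Proposition \ref{morphintertwine} directly to obtain $K(E,\alpha)\hat{\otimes}[\varphi]=[\varphi|_{J_{(E,\alpha)}}]\hat{\otimes}K(F,\beta)$. The commutation $\varphi\circ\iota=\iota\circ\varphi|_{J_{(E,\alpha)}}$ separately gives $[\iota]\hat{\otimes}[\varphi]=[\varphi|_{J_{(E,\alpha)}}]\hat{\otimes}[\iota]$. Subtracting these two identities produces (1). For (2), the universal property of $\mathcal{T}_E$ together with the construction of $\mathcal{T}(S)$ forces $\mathcal{T}(S)|_A=\varphi$, so $\mathcal{O}(S)\circ i=i\circ\varphi$, and passing to $KK^0$ yields (2).

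For (3), I would first apply naturality of the $KK^1$ extension class to the commutative diagram of Toeplitz--Pimsner extensions with vertical maps $(\mathcal{T}(S)|_{\mathcal{I}_E},\mathcal{T}(S),\mathcal{O}(S))$, obtaining $[\mathcal{O}(S)]\hat{\otimes}\delta_F=\delta_E\hat{\otimes}[\mathcal{T}(S)|_{\mathcal{I}_E}]$. Next, viewing $G_E$ and $G_F$ as the canonical $\mathcal{I}_E$-$J_{(E,\alpha)}$ and $\mathcal{I}_F$-$J_{(F,\beta)}$ equivalence bimodules (with left action via compact operators), I would apply Proposition \ref{morphintertwine} a second time to the morphism $(\mathcal{T}(S)|_{\mathcal{I}_E},\tilde{S},\varphi|_{J_{(E,\alpha)}})$ between them, obtaining $[\mathcal{T}(S)|_{\mathcal{I}_E}]\hat{\otimes}[G_F]=[G_E]\hat{\otimes}[\varphi|_{J_{(E,\alpha)}}]$. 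Kasparov-multiplying the two identities and recalling that $\delta_{PV}=\delta\hat{\otimes}[G]$ in each case yields (3).

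The main technical obstacle is the construction of $\mathcal{F}(S)$ together with the compatibility $\hat{\tilde{S}}=\mathcal{T}(S)|_{\mathcal{I}_E}$. At each tensor power one must verify that $S\otimes_\varphi\cdots\otimes_\varphi S$ descends through the null-submodule quotient used to define $E^{\otimes_A n}$, and the compatibility with $\mathcal{T}(S)$ then reduces to a routine check on operators of the form $T_eT_f^*$. All remaining steps are formal manipulations with Kasparov products.
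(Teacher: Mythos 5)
Your proposal is correct and is essentially the paper's proof. Parts (1) and (2) both follow from Proposition \ref{morphintertwine} applied to morphisms of (identity) correspondences (for (2) the paper applies it to the morphism $(\varphi,\mathcal{O}(S),\mathcal{O}(S)):i\mapsto i$, which encodes exactly the commutation $\mathcal{O}(S)\circ i=i\circ\varphi$ you use), and your handling of (3) --- the Fock map $S^{\infty}$, its restriction $\tilde{S}:G_{E}\to G_{F}$, Proposition \ref{morphintertwine} applied to $(\mathcal{T}(S)|_{\mathcal{I}_{E}},\tilde{S},\varphi|_{J_{(E,\alpha)}})$, naturality of $\delta$ for the Toeplitz extensions, and $\delta_{H,PV}=\delta_{H}\hat{\otimes}[G_{H}]$ --- is exactly the argument the paper gives.
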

\begin{proof}
$(\varphi,\mathcal{O}(S),\mathcal{O}(S)):i\mapsto i$ and $(\varphi, \varphi, \varphi):\iota\mapsto \iota$ and $(\varphi, S, \varphi):(E,\alpha)\mapsto (F,\beta)$ are morphisms, so $(1)$ and $(3)$ follow from Proposition \ref{morphintertwine}.
\par
The diagram
$$
\begin{tikzcd}
0 \arrow[r] & \mathcal{I}_{E} \arrow[r] \arrow[d, "\mathcal{T}(S)"] & \mathcal{T}_{E} \arrow[r] \arrow[d, "\mathcal{T}(S)"] & \mathcal{O}_{E} \arrow[r] \arrow[d, "\mathcal{O}(S)"] & 0 \\
0 \arrow[r] & \mathcal{I}_{F} \arrow[r]                             & \mathcal{T}_{F} \arrow[r]                             & \mathcal{O}_{F} \arrow[r]                             & 0
\end{tikzcd}$$ commutes, so naturality of $\delta$ implies $\delta_{E}\hat{\otimes}[\mathcal{T}(S)] = [\mathcal{O}(S)]\hat{\otimes}\delta_{F}$. 
\par
Since $(\varphi, S, \varphi)$ is a morphism, it is easy to see that $(\mathcal{T}(S), S^{\infty}, \varphi):G_{E}\mapsto G_{F}$ is a morphism, where $S^{\infty}:\mathcal{F}_{E}\mapsto \mathcal{F}_{F}$ is defined on a basic tensor $t= e_{1}\otimes_{A} e_{2}...\otimes_{A} e_{n}$ in $E^{\otimes_{A} n}$ as $S^{\infty}(t) = S(e_{1})\otimes_{B} S(e_{2})\otimes...\otimes S(e_{n})$. Proposition \ref{morphintertwine} then implies $[\mathcal{T}(S)]\hat{\otimes}[G_{F}] = [G_{E}]\hat{\otimes}[\varphi]$. Combining this equality with naturality of the index map and $\delta_{H, PV} =\delta_{H}\hat{\otimes}[G_{H}]$, for $H=E,F$, we have that $[\mathcal{O}(S)]\hat{\otimes}\delta_{F,PV} = \delta_{E,PV}\hat{\otimes}[\varphi]$.
\end{proof}
We now formulate a type of exactness in this category.
\begin{defn}
\label{morphexact}
Let $(E,\alpha)$ be an $A_{1}-A_{2}$ correspondence, $(F,\beta)$ a $B_{1}-B_{2}$ correspondence and $(G,\gamma)$ a $C_{1}-C_{2}$ correspondence. Morphisms $(\varphi_{1}, S, \varphi_{2}):(E,\alpha)\mapsto (F,\beta)$ and $(\psi_{1}, T, \psi_{2}):(F,\beta)\mapsto (G,\gamma)$ are said to be \textit{$J$-exact} if

$$
\begin{tikzcd}
J_{(E,\alpha)} \arrow[r, "\varphi_{1}"]                                          & J_{(F,\beta)} \arrow[r, "\psi_{1}"]                                          & J_{(G,\gamma)} \\
E \arrow[r, "S"]                                                  & F \arrow[r, "T"]                                                  & G\\
A_{2} \arrow[r, "\varphi_{2}"]                                          & B_{2} \arrow[r, "\psi_{2}"]                                          & C_{2}
\end{tikzcd}$$
are exact.
\end{defn}
\begin{rmk}
    Given a morphism $(\varphi_{1}, S, \varphi_{2}):(E,\alpha)\mapsto (F,\beta)$ from an $A_{1}$-$A_{2}$ correspondence to a $B_{1}$-$B_{2}$ correspondence, its \textit{image} is the $\varphi_{1}(A_{1})$ - $\varphi_{2}(A_{2})$ correspondence $\text{im}(\varphi_{1}, S, \varphi_{2}) = (S(E),\beta|_{\varphi_{1}(A_{1})})$ and its kernel is the $\text{ker}(\varphi_{1})$ - $\text{ker}(\varphi_{2})$ correspondence $\text{ker}(\varphi_{1}, S, \varphi_{2})=(\text{ker}(S), \alpha|_{\text{ker}(\varphi_{1})})$. 
    \par
    It is easy to see that the inclusion of the kernel and image correspondences into $(E,\alpha)$, $(F,\beta)$, respectively and the restriction of the co-domain of $(\varphi_{1}, S, \varphi_{2})$ to the image are morphisms. Moreover, they satisfy the universal property for images and kernels of morphisms in a category. The categorically correct notion of exactness is therefore a pair of morphisms $(\varphi_{1}, S, \varphi_{2}):(E,\alpha)\mapsto (F,\beta)$ and $(\psi_{1}, T, \psi_{2}):(F,\beta)\mapsto (G,\gamma)$ such that $\text{im}(\varphi_{1}, S, \varphi_{2}) = \text{ker}(\psi_{1}, T, \psi_{2})$. This is the case if and only if
    $$\begin{tikzcd}
A_{1} \arrow[r, "\varphi_{1}"]                                          & B_{1} \arrow[r, "\psi_{1}"]                                          & C_{1} \\
E \arrow[r, "S"]                                                  & F \arrow[r, "T"]                                                  & G\\
A_{2} \arrow[r, "\varphi_{2}"]                                          & B_{2} \arrow[r, "\psi_{2}"]                                          & C_{2}
\end{tikzcd}$$
are exact.
    \par
    Unfortunately an exact pair of morphisms is not necessarily $J$-exact. Suppose $B$ is a $C^{*}$-algebra with a non-trivial ideal $I$ and equip $I$, $B$ and $B/I$ with their identity Hilbert module structures. Let $\alpha:B\mapsto \mathcal{M}(I)$ and $\beta:B\mapsto \mathcal{M}(B)$ be the standard actions by multipliers and let $\gamma:0\mapsto \mathcal{M}(B/I)$. Then,
    denoting $i:I\mapsto B$ the inclusion and $q:B\mapsto B/I$ the quotient map,
    $$
    \begin{tikzcd}
    {(I,\alpha)} \arrow[r, "{(\text{id}_{B}, i,i)}"] & {(B,\beta)} \arrow[r, "{(0,q,q)}"] & {(B/I,\gamma)}
    \end{tikzcd}$$
    is exact in the categorical sense, but is not $J$-exact.
\end{rmk}
The following result is an extension of naturality of the index class of extensions.
\begin{prop}
\label{morphnatural}
If
$$\begin{tikzcd}
0 \arrow[r] & {}_{A_{1}}{(E,\alpha)}{}_{A_{2}} \arrow[r, "{(i_{1}, I,i_{2})}"] & {}_{B_{1}}{(F,\beta)}{}_{B_{2}} \arrow[r, "{(q_{1},Q,q_{2})}"] & {}_{C_{1}}{(G,\gamma)}{}_{C_{2}} \arrow[r] & 0
\end{tikzcd}$$ is a  $J$-exact sequence of correspondences, then $\delta_{J_{(F,\beta)}}\hat{\otimes} K(E,\alpha) = K(G,\gamma)\hat{\otimes}\delta_{B_{2}}$.
\end{prop}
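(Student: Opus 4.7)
The plan is to reduce the identity to two applications of the classical naturality of the extension class in $KK^{1}$, which for a morphism of short exact sequences of $C^{*}$-algebras gives $\delta \hat{\otimes} [\alpha] = [\gamma] \hat{\otimes} \delta'$. The first step is to factor each correspondence class into a $*$-homomorphism piece and a Morita-equivalence piece: for any $D$-$D'$ correspondence $(H,\eta)$, the Kasparov module $(H,\eta|_{J_{(H,\eta)}},0)$ is the pullback of $(H,\mathrm{id}_{\mathcal{K}(H)},0)$ by $\eta|_{J_{(H,\eta)}}$, so
$$K(H,\eta) = [\eta|_{J_{(H,\eta)}}] \hat{\otimes} [H],$$
where $[H] \in KK^{0}(\mathcal{K}(H),D')$ is the class of $H$ as a $\mathcal{K}(H)$-$D'$ correspondence under its standard left action.

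Next, from $J$-exactness I would extract a third short exact sequence of $C^{*}$-algebras, namely $0 \to \mathcal{K}(E) \to \mathcal{K}(F) \to \mathcal{K}(G) \to 0$, in addition to the given sequences $0 \to J_{(E,\alpha)} \to J_{(F,\beta)} \to J_{(G,\gamma)} \to 0$ (with class $\delta_{J_{(F,\beta)}}$) and $0 \to A_{2} \to B_{2} \to C_{2} \to 0$ (with class $\delta_{B_{2}}$). Injectivity of $\hat{I}$ and surjectivity of $\hat{Q}$ are immediate; the exactness in the middle is the delicate point, and would follow from the characterization $\ker(Q) = \{f \in F : \langle f,f \rangle \in A_{2}\}$ afforded by $Q$ being $q_{2}$-twisted, together with an approximate unit argument in $A_{2}$ lifting compact operators on $F$ with range in $E$ into the image of $\hat{I}$. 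The restrictions of $\alpha, \beta, \gamma$ to the respective $J$-ideals define a $*$-homomorphism morphism from the $J$-extension to the $\mathcal{K}$-extension, and classical naturality yields
$$\delta_{J_{(F,\beta)}} \hat{\otimes} [\alpha|_{J_{(E,\alpha)}}] = [\gamma|_{J_{(G,\gamma)}}] \hat{\otimes} \delta_{\mathcal{K}(F)}.$$

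Composing on the right with $[E]$ reduces the proposition to the auxiliary identity $\delta_{\mathcal{K}(F)} \hat{\otimes} [E] = [G] \hat{\otimes} \delta_{B_{2}}$, and I expect this to be the main obstacle, since the ``vertical arrows'' $[E]$ and $[G]$ are Morita-type Kasparov classes rather than $*$-homomorphisms. My proposed approach is to work inside the linking algebras: $J$-exactness, combined with polarization and approximate unit arguments for the ideals generated by inner products, yields an extension $0 \to \mathcal{L}(E) \to \mathcal{L}(F) \to \mathcal{L}(G) \to 0$. Inside each $\mathcal{L}(H)$, the corners $\mathcal{K}(H)$ and $\langle H \rangle$ are Morita-equivalent full hereditary subalgebras, witnessed by the $KK^{0}$-identity $[\mathcal{K}(H) \hookrightarrow \mathcal{L}(H)] = [H] \hat{\otimes} [\langle H \rangle \hookrightarrow \mathcal{L}(H)]$ with the second factor invertible. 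Applying classical naturality to the two families of corner inclusions into the linking algebras, together with a final application of naturality to the ideal inclusions $\langle H \rangle \hookrightarrow D'$, and cancelling the invertible Morita-equivalence factor, delivers the auxiliary identity. Chaining it with the earlier equation produces $\delta_{J_{(F,\beta)}} \hat{\otimes} K(E,\alpha) = K(G,\gamma) \hat{\otimes} \delta_{B_{2}}$, as required.
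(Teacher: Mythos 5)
Your proposal follows essentially the same route as the paper: factor the correspondence class through the compacts, propagate naturality of the extension class down the tower $J \to \mathcal{K} \to \mathcal{L} \leftarrow \langle\cdot\rangle \to A_{2}$, and cancel the invertible full-hereditary corner class to recover $K(E,\alpha)$ and $K(G,\gamma)$. The paper handles the two exactness facts you leave informal (for $\mathcal{K}(\cdot)$ and $\langle\cdot\rangle$) by a single citation of the Rieffel correspondence, and it supplies an explicit homotopy for the Morita identity $[M(H)]\hat{\otimes}[\langle H\rangle\hookrightarrow\mathcal{L}(H)]=[\mathcal{K}(H)\hookrightarrow\mathcal{L}(H)]$ that you assert; otherwise the structure is the same.
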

\begin{proof}
In the notation of remark \ref{intertwiner}, we must show $(K(G,\gamma), K(E,\alpha)):\delta_{J_{(F,\beta)}}\mapsto \delta_{B_{2}}$ is an intertwiner. By the Rieffel correspondence (\cite[Proposition~3.24]{RW98}), exactness of 
$$
\begin{tikzcd}
0 \arrow[r] & E \arrow[r, "I"] & F \arrow[r, "Q"] & G \arrow[r] & 0
\end{tikzcd}$$ implies exactness of
$$
\begin{tikzcd}
0 \arrow[r] & \mathcal{K}(E) \arrow[r, "\hat{I}"] & \mathcal{K}(F) \arrow[r, "\hat{Q}"] & \mathcal{K}(G) \arrow[r]   & 0 \\
0 \arrow[r] & \langle E\rangle \arrow[r, "i_{2}"] & \langle F\rangle \arrow[r, "q_{2}"] & \langle G\rangle \arrow[r] & 0.
\end{tikzcd}$$ Hence, the following diagram commutes and has exact rows (the vertical maps labelled with $i$ are the respective inclusions):
$$
\begin{tikzcd}
0 \arrow[r] & {J_{(E,\alpha)}} \arrow[d, "\alpha"] \arrow[r, "i_{1}"]                                                    & {J_{(F,\beta)}} \arrow[d, "\beta"] \arrow[r, "q_{1}"]                                                     & {J_{(G,\gamma)}} \arrow[d, "\gamma"] \arrow[r]                                                  & 0 \\
0 \arrow[r] & \mathcal{K}(E) \arrow[r, "\hat{I}"] \arrow[d, "i^{E}_{K\mapsto L}"]                                        & \mathcal{K}(F) \arrow[r, "\hat{Q}"] \arrow[d, "i^{F}_{K\mapsto L}"]                                       & \mathcal{K}(G) \arrow[r] \arrow[d, "i^{G}_{K\mapsto L}"]                                        & 0 \\
0 \arrow[r] & \mathcal{L}(E) \arrow[r, "\mathcal{L}(I)"]                                                                 & \mathcal{L}(F) \arrow[r, "\mathcal{L}(Q)"]                                                                & \mathcal{L}(G) \arrow[r]                                                                        & 0 \\
0 \arrow[r] & \langle E\rangle  \arrow[u, "i^{E}_{\langle \rangle\mapsto L}"'] \arrow[r, "i_{2}"] \arrow[d, "i^{A_{2}}"] & \langle F\rangle  \arrow[u, "i^{F}_{\langle\rangle\mapsto L}"'] \arrow[r, "q_{2}"] \arrow[d, "i^{B_{2}}"] & \langle G\rangle \arrow[u, "i^{G}_{\langle\rangle\mapsto L}"'] \arrow[r] \arrow[d, "i^{C_{2}}"] & 0 \\
0 \arrow[r] & A_{2} \arrow[r, "i_{2}"]                                                                                   & B_{2} \arrow[r, "q_{2}"]                                                                                  & C_{2} \arrow[r]                                                                                 & 0.
\end{tikzcd}$$
Therefore, naturality of $\delta$ implies 

\begin{enumerate}
\item $([\gamma], [\alpha]):\delta_{J_{(F,\beta)}}\mapsto \delta_{\mathcal{K}(F)}$,
\item $([i^{G}_{K\mapsto L}], [i^{E}_{K\mapsto L}]):\delta_{\mathcal{K}(F)}\mapsto \delta_{\mathcal{L}(F)}$,
\item $([i^{G}_{\langle\rangle\mapsto L}], [i^{E}_{\langle\rangle\mapsto L}]):\delta_{\langle F\rangle}\mapsto \delta_{L(F)}$, and
\item $([i^{C_{2}}], [i^{A_{2}}]):\delta_{\langle F\rangle}\mapsto \delta_{B_{2}}$
\end{enumerate}
 are all intertwiners.
\par
For $H = E,F, G$, the inclusion $i^{H}_{\langle\rangle\mapsto L}:\langle H\rangle\mapsto \mathcal{L}(H)$ is full and hereditary, and therefore its class $[i^{H}_{\langle\rangle\mapsto L}]$ in $KK^{0}$ is invertible. Therefore, $([i^{G}_{\langle\rangle\mapsto L}]^{-1}, [i^{E}_{\langle\rangle\mapsto L}]^{-1}):\delta_{L(F)}\mapsto \delta_{\langle F\rangle}$ is an intertwiner. Composing all the intertwiners above yields an intertwiner $(g,e):\delta_{J_{(F,\beta)}}\mapsto \delta_{B_{2}}$, where $g = [\gamma]\hat{\otimes}[i^{G}_{K\mapsto L}]\hat{\otimes}[i^{G}_{\langle\rangle\mapsto L}]^{-1}\hat{\otimes}i^{C_{2}}$ and $e = [\alpha]\hat{\otimes}[i^{E}_{K\mapsto L}]\hat{\otimes}[i^{E}_{\langle\rangle\mapsto L}]^{-1}\hat{\otimes}i^{A_{2}}$. Let us show $e = K(E,\alpha)$ and $g = K(G,\gamma)$.
\par
For $H = E,F,G$, denote by $M(H)$ the $\mathcal{K}(H)$ - $\langle H\rangle$ Kasparov bi-module $H$. $(i^{H}_{\langle \rangle\mapsto L})_{*}M(H)$ is isomorphic as a bi-module to $\begin{pmatrix}
    \mathcal{K}(H) & H\\
    0 & 0
\end{pmatrix}$ with left action of $k$ in $\mathcal{K}(H)$ by the matrix $i^{H}_{K\mapsto L}(k) = \begin{pmatrix}
    k & 0\\
    0 & 0
\end{pmatrix}$ and right $\mathcal{L}(H)$-module structure inherited by the identity Hilbert module structure on $\mathcal{L}(H)$. The isomorphism $U:H\hat{\otimes}_{i^{H}_{\langle\rangle\mapsto L}}\mathcal{L}(H)\mapsto \begin{pmatrix}
    \mathcal{K}(H) & H\\
    0 & 0
\end{pmatrix}$ on a basic tensor $t = e_{1}\hat{\otimes} \begin{pmatrix}
    k & e_{2}\\
    f & a
\end{pmatrix}$ is $U(t) = \begin{pmatrix}
    0 & e_{1}\\
    0 & 0
\end{pmatrix}\cdot \begin{pmatrix}
    k & e_{2}\\
    f & a
\end{pmatrix} = \begin{pmatrix}
    \theta_{e_{1},f} & e_{1}\cdot a\\
    0 & 0
\end{pmatrix}$.
\par
Let $D = \{f\in C([0,1],\mathcal{L}(H)):f(1)\in \begin{pmatrix}
    \mathcal{K}(H) & H\\
    0 & 0
\end{pmatrix}\}$ with the right $C([0,1],\mathcal{L}(H))$-module structure inherited by the identity Hilbert module structure on $C([0,1],\mathcal{L}(H))$. Let $\eta:\mathcal{K}(H)\mapsto\mathcal{K}(D)$ be the embedding that sends $k$ in $\mathcal{K}(H)$ to the constant function $\eta(k)(t) = \begin{pmatrix}
    k & 0\\
    0 & 0
\end{pmatrix}$, $t$ in $[0,1]$. Then, $(D,\eta)$ is a Kasparov $\mathcal{K}(H)$-$C([0,1],\mathcal{L}(H))$ bi-module such that $(ev_{0})_{*}(D,\eta)\simeq (\mathcal{L}(H), i^{H}_{K\mapsto L})$ and $(ev_{1})_{*}(D,\eta)\simeq (\begin{pmatrix}
    \mathcal{K}(H) & H\\
    0 & 0
\end{pmatrix}, i^{H}_{K\mapsto L})\simeq\\(i^{H}_{\langle\rangle\mapsto L})_{*}M(H)$. Hence, $[M(H)]\hat{\otimes}[i^{H}_{\langle\rangle\mapsto L}] = [i^{H}_{K\mapsto L}]$, so that $[M(H)] = [i^{H}_{\langle\rangle\mapsto L}]\hat{\otimes}[i^{H}_{K\mapsto L}]^{-1}$.
\par
Now, it is routine to see that $[\alpha]\hat{\otimes}[M(E)]\hat{\otimes}[i^{A_{2}}] = K(E,\alpha)$ and $[\gamma]\hat{\otimes}[M(F)]\hat{\otimes}[i^{C_{2}}] = K(F,\gamma)$, so the calculation directly above implies $e = K(E,\alpha)$ and $g = K(G,\gamma)$.
\end{proof}
The intertwinings we have proven above yield commutative diagrams of their induced maps on $K$-theory and $K$-homology (and more generally on $KK^{*}(A,-)$ and $KK^{*}(-, A)$, $A$ nuclear). We record the diagram that will be of use to us in this paper.
\begin{cor}
\label{Maindiagram}
If
$$\begin{tikzcd}
0 \arrow[r] & {}_{A_{1}}{(E,\alpha)}{}_{A_{2}} \arrow[r, "{(i_{1}, I,i_{2})}"] & {}_{B_{1}}{(F,\beta)}{}_{B_{2}} \arrow[r, "{(q_{1},Q,q_{2})}"] & {}_{C_{1}}{(G,\gamma)}{}_{C_{2}} \arrow[r] & 0
\end{tikzcd}$$ is a  $J$-exact sequence of correspondences,
then the following diagram commutes

$$    
\begin{tikzcd}[sep=small]
& K_{0}(J_{(E,\alpha)})  \arrow[rr] \arrow[dd] & & K_{0}(J_{(F,\beta)}) \arrow[rr] \arrow[dd] & & K_{0}(J_{(G,\gamma)}) \arrow[dl] \arrow[dd]\\
K_{1}(J_{(G,\gamma)})  \arrow[dd] \arrow[ur]& & K_{1}(J_{(F,\beta)})\arrow[dd, crossing over] \arrow[ll, crossing over] & & K_{1}(J_{(E,\alpha)}) \arrow[dd, crossing over] \arrow[ll, crossing over]\\
& K_{0}(A_{2}) \arrow[rr] & & K_{0}(B_{2}) \arrow[rr] & & K_{0}(C_{2})\arrow[dl]\\
K_{1}(C_{2}) \arrow[ur]& & K_{1}(B_{2}) \arrow[ll] \arrow[from=uu, crossing over] \arrow[ll] & & K_{1}(A_{2}) \arrow[ll]\\
\end{tikzcd}
$$
The top and bottom horizontal faces are the 6-term exact sequences of $K$-theory associated to the respective extensions of $C^{*}$-algebras, and the vertical maps are the respective maps of the form $\hat{\otimes}_{i}K(H,\eta)$, $(H,\eta) = (E,\alpha), (F,\beta), (G,\gamma)$.
\par
If $A_{1} = A_{2}=:A$, $B_{1} = B_{2}=:B$ and $C_{1} = C_{2}=:C$ then the above diagram commutes with the vertical maps of the form $\iota - \hat{\otimes}_{i}K(H,\eta)$, where $\iota:K_{i}(J_{(H,\eta)})\mapsto K_{i}(D)$ is the map induced by inclusion, and ${}_{D}(H,\eta){}_{D} = {}_{A}(E,\alpha){}_{A}, {}_{B}(F,\beta){}_{B}, {}_{C}(G,\gamma){}_{C}$.
\end{cor}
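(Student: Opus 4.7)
The plan is to assemble the diagram face by face. The top and bottom horizontal hexagons are simply the Pimsner--Voiculescu type $6$-term exact sequences of $K$-theory associated to the short exact sequences $0\to J_{(E,\alpha)}\to J_{(F,\beta)}\to J_{(G,\gamma)}\to 0$ (which is exact by $J$-exactness of the given sequence of correspondences) and $0\to A_2\to B_2\to C_2\to 0$ respectively, so nothing remains to verify there. The real content is the commutativity of the six connecting vertical squares, which split into two kinds: four ``straight'' squares whose horizontal edges are induced by the $*$-homomorphisms $i_1,q_1,i_2,q_2$, and two ``connecting'' squares whose horizontal edges are the index and exponential maps of the two rows.

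For the four straight squares, I would apply Proposition \ref{morphintertwine} directly. The triples $(i_1,I,i_2):(E,\alpha)\to(F,\beta)$ and $(q_1,Q,q_2):(F,\beta)\to(G,\gamma)$ are morphisms of correspondences by hypothesis, so the proposition yields the identities $K(E,\alpha)\hat\otimes [i_2]=[i_1]\hat\otimes K(F,\beta)$ in $KK^0(J_{(E,\alpha)},B_2)$ and $K(F,\beta)\hat\otimes [q_2]=[q_1]\hat\otimes K(G,\gamma)$ in $KK^0(J_{(F,\beta)},C_2)$. Pairing each identity on the left with a class in $KK^i(\mathbb{C},J_{(E,\alpha)})=K_i(J_{(E,\alpha)})$ (respectively $K_i(J_{(F,\beta)})$) and using that Kasparov product with the class of a $*$-homomorphism agrees with its induced map on $K$-theory gives the four commutative squares for both $i=0$ and $i=1$.

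For the two connecting squares, I would apply Proposition \ref{morphnatural}, which yields the intertwining $\delta_{J_{(F,\beta)}}\hat\otimes K(E,\alpha)=K(G,\gamma)\hat\otimes \delta_{B_2}$ in $KK^1(J_{(G,\gamma)},A_2)$. Pairing this identity with a class in $K_i(J_{(G,\gamma)})$ and using that $\hat\otimes_i\delta_{J_{(F,\beta)}}$ and $\hat\otimes_i \delta_{B_2}$ are precisely the index and exponential maps of the top and bottom hexagons yields the commutativity of the two remaining squares (one with domain $K_0(J_{(G,\gamma)})$ and target $K_1(A_2)$, the other with domain $K_1(J_{(G,\gamma)})$ and target $K_0(A_2)$). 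Composing vertically with signs and checking degrees is routine; all the content is in Propositions \ref{morphintertwine} and \ref{morphnatural}.

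Finally, for the self-correspondence refinement where the vertical maps become $\iota-\hat\otimes_i K(H,\eta)$, I would observe that condition (3) in Definition \ref{morph_of_corr} gives a commutative ladder of short exact sequences
\begin{equation*}
\begin{tikzcd}
0 \arrow[r] & J_{(E,\alpha)} \arrow[r] \arrow[d,"\iota_A"] & J_{(F,\beta)} \arrow[r] \arrow[d,"\iota_B"] & J_{(G,\gamma)} \arrow[r] \arrow[d,"\iota_C"] & 0\\
0 \arrow[r] & A \arrow[r,"i"] & B \arrow[r,"q"] & C \arrow[r] & 0,
\end{tikzcd}
\end{equation*}
so by functoriality of $K$-theory and naturality of the connecting maps, the six vertical squares with vertical maps given by the inclusions $\iota_*$ also commute. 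Subtracting this from the diagram already established then gives commutativity with the advertised $\iota-\hat\otimes_i K(H,\eta)$ vertical maps. The main obstacle is purely bookkeeping --- verifying that the Kasparov pairings are placed with the correct variance and that the intertwinings of Propositions \ref{morphintertwine} and \ref{morphnatural} land in the correct $KK$-groups to be paired with the $K_i$-classes of the hexagon corners; no new analytic input is required.
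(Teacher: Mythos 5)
Your proof is correct and takes essentially the same approach as the paper: the paper's proof is a one-line "direct application of Propositions \ref{morphintertwine} and \ref{morphnatural}," and your decomposition into the four ladder squares (handled by Proposition \ref{morphintertwine}), the two connecting squares (handled by Proposition \ref{morphnatural}), and the naturality-plus-subtraction argument for the self-correspondence refinement is precisely what that one-liner compresses. One small terminological slip: the top and bottom hexagons are the ordinary six-term exact sequences of $K$-theory for the extensions $0\to J_{(E,\alpha)}\to J_{(F,\beta)}\to J_{(G,\gamma)}\to 0$ and $0\to A_2\to B_2\to C_2\to 0$, not "Pimsner--Voiculescu type" sequences (which in this paper refers specifically to the exact sequence relating $K_i(J_E)$, $K_i(A)$, and $K_i(\mathcal{O}_E)$ from Proposition \ref{cpexact}); this does not affect the argument.
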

\begin{proof}
This is a direct application of Proposition \ref{morphintertwine} and \ref{morphnatural}.
\end{proof}
\par
Let us show a $J$-exact sequence of self-correspondences induces a short exact sequence of Cuntz-Pimsner algebras.
\begin{prop}\label{J-exactness.C.P.} If $$\begin{tikzcd}
0 \arrow[r] & {}_{A}{(E,\alpha)}{}_{A} \arrow[r, "{(i, I,i)}"] & {}_{B}{(F,\beta)}{}_{B} \arrow[r, "{(q,Q,q)}"] & {}_{C}{(G,\gamma)}{}_{C} \arrow[r] & 0
\end{tikzcd}$$ is a $J$-exact sequence of morphisms of full and faithful correspondences, then the following diagram commutes and has exact rows and columns:
$$
\begin{tikzcd}
            & 0 \arrow[d]                                                    & 0 \arrow[d]                                                    & 0 \arrow[d]                                  &   \\
0 \arrow[r] & \mathcal{I}_{E} \arrow[r, "\mathcal{T}(I)"] \arrow[d, "i_{E}"] & \mathcal{I}_{F} \arrow[r, "\mathcal{T}(Q)"] \arrow[d, "i_{F}"] & \mathcal{I}_{G} \arrow[r] \arrow[d, "i_{G}"] & 0 \\
0 \arrow[r] & \mathcal{T}_{E} \arrow[r, "\mathcal{T}(I)"] \arrow[d, "q_{E}"] & \mathcal{T}_{F} \arrow[r, "\mathcal{T}(Q)"] \arrow[d, "q_{F}"] & \mathcal{T}_{G} \arrow[r] \arrow[d, "q_{G}"] & 0 \\
0 \arrow[r] & \mathcal{O}_{E} \arrow[r, "\mathcal{O}(I)"] \arrow[d]          & \mathcal{O}_{F} \arrow[r, "\mathcal{O}(Q)"] \arrow[d]          & \mathcal{O}_{G} \arrow[r] \arrow[d]          & 0 \\
            & 0                                                              & 0                                                              & 0                                            &  
\end{tikzcd}$$
\end{prop}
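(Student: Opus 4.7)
The plan is to establish exactness of the top two rows (the $\mathcal{I}$-row and the $\mathcal{T}$-row) and then deduce exactness of the bottom $\mathcal{O}$-row via the $3\times 3$ lemma, since each column is exact by construction. Commutativity of the whole diagram is immediate from the definitions of $\mathcal{T}(\cdot)$ and $\mathcal{O}(\cdot)$ given above the proposition, so the content is entirely in the exactness of the rows.

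For the Toeplitz row, surjectivity of $\mathcal{T}(Q)$ is immediate: $\mathcal{T}_G$ is generated by $\{T_g:g\in G\}\cup C$, and both $Q$ and $q$ are surjective, so every generator lifts. Injectivity of $\mathcal{T}(I)$ will follow from isometry of $I$ (which holds because $i:A\to B$ is injective): iterating $I$ through tensor powers yields an $i$-twisted isometric linear map $I^{\infty}:\mathcal{F}_E\to\mathcal{F}_F$ satisfying $\mathcal{T}(I)(T)\circ I^{\infty}=I^{\infty}\circ T$ for all $T\in\mathcal{T}_E$, and, combined with the faithfulness of the Fock representation $\mathcal{T}_E\hookrightarrow\mathcal{B}(\mathcal{F}_E)$, this forces $\mathcal{T}(I)$ to be isometric. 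The delicate step is exactness at $\mathcal{T}_F$. One inclusion is clear from $Q\circ I=0$ and $q\circ i=0$. For the other, I will exploit the canonical gauge action $\gamma_z(T_e)=zT_e$, $\gamma_z(a)=a$, under which $\mathcal{T}(I)$ and $\mathcal{T}(Q)$ are equivariant. The faithful conditional expectations onto the fixed-point subalgebras reduce the problem to exactness on each spectral subspace, which is spanned by monomials $T_{e_1}\cdots T_{e_n}T_{f_1}^{*}\cdots T_{f_m}^{*}$; this in turn reduces to exactness of the finite-level sequences $0\to E^{\otimes_A n}\to F^{\otimes_B n}\to G^{\otimes_C n}\to 0$, which follow from the $J$-exactness hypothesis together with the Rieffel correspondence already invoked in the proof of Proposition \ref{morphnatural}.

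For the ideal row, injectivity is inherited from the Toeplitz row. Surjectivity of $\mathcal{I}_F\to\mathcal{I}_G$ holds because $\mathcal{I}_G$ is generated by elements of the form $c-\phi_G(\gamma(c))$ with $c\in J_{(G,\gamma)}$, and $J$-exactness lifts such a $c$ to $b\in J_{(F,\beta)}$, with $b-\phi_F(\beta(b))\in\mathcal{I}_F$ mapping to $c-\phi_G(\gamma(c))$. For middle exactness, if $x\in\mathcal{I}_F$ satisfies $\mathcal{T}(Q)(x)=0$, then exactness of the Toeplitz row gives $x=\mathcal{T}(I)(y)$ for some $y\in\mathcal{T}_E$; using the filtration characterisation $\mathcal{I}_H=\{T\in\mathcal{T}_H:\lim_k\|P_k^H T-T\|=0\}$ from Section \ref{cpalg}, together with the intertwining $I^{\infty}\circ P_k^E=P_k^F\circ I^{\infty}$ and the isometry of $I^{\infty}$, one concludes that $y\in\mathcal{I}_E$. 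With both upper rows exact, the $3\times 3$ lemma delivers exactness of the bottom row.

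The main obstacle is the middle exactness of the Toeplitz row; the gauge-action / spectral-subspace argument is standard in spirit but requires some careful bookkeeping to align the fixed-point subalgebras across the three Toeplitz algebras. Once that step is in hand, every remaining verification is a routine unwinding of the Fock-module descriptions of $\mathcal{I}_H,\mathcal{T}_H,\mathcal{O}_H$ and a direct diagram chase.
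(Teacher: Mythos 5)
Your decomposition agrees with the paper's in every step except one: surjectivity of $\mathcal{T}(Q)$ (lifting generators), injectivity of $\mathcal{T}(I)$ (intertwining with the sub-Fock-module $\sum_n I(E)^{\otimes n}$), exactness of the $\mathcal{I}$-row (generators $c-\phi(\gamma(c))$ together with the $P_k$-filtration), and the passage to the $\mathcal{O}$-row by a diagram chase / $3\times 3$ lemma are all essentially what the paper does. The one place you genuinely diverge is middle exactness of the Toeplitz row, which is also the crux. The paper shows $\ker\mathcal{T}(Q)\subseteq\mathcal{T}(I)(\mathcal{T}_E)$ by building a Toeplitz representation $(\pi,s)$ of $(G,\gamma)$ in the quotient $\mathcal{T}_F/\mathcal{T}(I)(\mathcal{T}_E)$ (well-defined thanks to the identities $\mathcal{T}(I)(\mathcal{T}_E)\cap B = i(A)$ and $\mathcal{T}(I)(\mathcal{T}_E)\cap T_F = T_{I(E)}$, which come from $J$-exactness) and invoking the universal property of $\mathcal{T}_G$ to produce an inverse for the induced map. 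You instead propose a gauge-action / conditional-expectation argument, reducing to exactness of the sequences $0\to E^{\otimes_A n}\to F^{\otimes_B n}\to G^{\otimes_C n}\to 0$. This is a standard and reasonable strategy in spirit, but it is not a free substitution.

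The genuine gap is your claim that exactness of $0\to E^{\otimes_A n}\to F^{\otimes_B n}\to G^{\otimes_C n}\to 0$ ``follows from the $J$-exactness hypothesis together with the Rieffel correspondence.'' What the Rieffel correspondence gives you is that $\ker(Q^{\otimes n})$ is the ideal submodule $(F^{\otimes_B n})\cdot i(A) = \{x : \langle x,x\rangle\in i(A)\}$. That is one half. The other half is to identify this ideal submodule with $\operatorname{im}(I^{\otimes n})$, and that is \emph{not} supplied by the Rieffel correspondence. For $n=2$ this amounts to showing that every $f\otimes e'$ with $f\in F$ and $e'\in I(E)$ lies in $\overline{I(E)\otimes_B I(E)}$. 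The only natural way to achieve this is to move an approximate unit for $i(A)$ across the balanced tensor: $f\otimes e' = \lim f\otimes\beta(i(a_\lambda))e' = \lim(f\cdot i(a_\lambda))\otimes e'$, which works precisely because $f\cdot i(a_\lambda)\in F\cdot i(A)=I(E)$, but this requires $\overline{\beta(i(A))I(E)} = I(E)$, i.e.\ non-degeneracy of the left action $\alpha$ on $E$. That hypothesis is available in the applications (holomorphic correspondences are non-degenerate) but it is an extra assertion, nowhere acknowledged in your sketch. Beyond this, you also tacitly use Fejér-type convergence of Ces\`aro means (to recover an element from its spectral components), and you need $\mathcal{T}(I)(\mathcal{T}_E)$ to be closed and gauge-invariant so that the approximation lands where you want it; these are routine once stated, but they contribute to the ``careful bookkeeping'' you wave at. The net effect is that your route is longer and needs more imported machinery (gauge action, spectral decomposition, tensor-power exactness), whereas the paper's appeal to the universal property of $\mathcal{T}_G$ is shorter and more self-contained.
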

\begin{proof}
We first show the sequence of Toeplitz algebras is exact. For $T$ in $\mathcal{T}_{E}$, $\mathcal{T}(I)(T)$ restricted to the $A$-module $\sum_{n=0}^{\infty}I(E)^{\otimes_{B} n}\simeq\mathcal{F}_{E}$ is conjugate to $T$. Therefore, $\mathcal{T}(I)$ is injective.
\par
$\mathcal{T}(Q)$ surjects the generators $T_{F} = \{T_{f}:f\in F\}$ onto the generators $T_{G}$, and is therefore surjective. 
\par
By functoriality of $\mathcal{T}(-)$, we have that $\mathcal{T}(Q)\circ\mathcal{T}(I) = \mathcal{T}(Q\circ I) = 0$. It remains to show $\mathcal{T}(Q):\mathcal{T}_{F}/\mathcal{T}(I)(\mathcal{T}_{E})\mapsto\mathcal{T}_{G}$ is injective. For $T$ in $\mathcal{T}_{F}$, denote by $\overline{T}$ its image in $\mathcal{T}_{F}/\mathcal{T}(I)(\mathcal{T}_{E})$. For $c$ in $C$ and $g$ in $G$, let $b$ in $B$ and $f$ in $F$ be such that $q(b) = c$ and $Q(f) = g$. Let $s(c) = \overline{b}$ and $\pi(g) = \overline{T_{f}}$. $s:C\mapsto \mathcal{T}_{F}/\mathcal{T}(I)(\mathcal{T}_{E})$ and $\pi:G\mapsto \mathcal{T}_{F}/\mathcal{T}(I)(\mathcal{T}_{E})$ are well defined since $\mathcal{T}(I)(\mathcal{T}_{E})\cap B = i(A)$ and $\mathcal{T}(I)(\mathcal{T}_{E})\cap T_{F} = T_{I(E)}$, and a different choice of representative for $s(c)$, $\pi(g)$ differs by an element of $i(A)$, $\mathcal{T}_{I(E)}$, respectively.
\par
It is easy to see that $(\pi, s)$ is a Toeplitz representation. By the universal property of Toeplitz algebras, $(\pi, s)$ induces a $*$-homomorphism $\hat{\pi}:\mathcal{T}_{G}\mapsto \mathcal{T}_{F}/\mathcal{T}(I)(\mathcal{T}_{E})$ that is the inverse of $\mathcal{T}(Q):\mathcal{T}_{F}/\mathcal{T}(I)(\mathcal{T}_{E})\mapsto\mathcal{T}_{G}$.
\par
Now, we show the top row sequence is exact. For $H = E,F,G$, denote by $P_{k}^{H}$ the projection of the Fock module $\sum_{n=0}^{\infty}H^{\otimes n}$ onto $\sum_{n=0}^{k} H^{\otimes n}$ and let $T$ be in $\mathcal{T}_{E}$. By the second characterization of $\mathcal{I}_{F}$ in Section \ref{cpalg}, $\mathcal{T}(I)(T)$ is in $\mathcal{I}_{F}$ if and only if $P_{k}^{F}\mathcal{T}(I)(T)$ converges to $\mathcal{T}(I)(T)$ in the operator norm as $k$ tends to $\infty$.
\par
The restrictions of $P_{k}^{F}$, for $k$ in $\mathbb{N}$, and $\mathcal{T}(I)(T)$ to the sub-module $\sum_{n=0}^{\infty}I(E)^{\otimes n}$ are identical to $P^{E}_{k}$ and $T$, respectively, so if $\mathcal{T}(I)(T)$ is in $\mathcal{I}_{E}$, then $P_{k}^{E}\circ T$ converges to $T$ in the operator norm. Hence, $\mathcal{T}(I)^{-1}(\mathcal{I}_{F})\subseteq \mathcal{I}_{E}$. The reverse inclusion is obvious.
\par
The equality $\mathcal{T}(I)^{-1}(\mathcal{I}_{F}) = \mathcal{I}_{E}$ together with exactness of the middle row implies $\mathcal{T}(I)(\mathcal{I}_{E}) = \text{ker}(\mathcal{T}_{Q})\cap \mathcal{I}_{F}$.
\par
It remains to show $\mathcal{T}(Q):\mathcal{I}_{F}\mapsto \mathcal{I}_{G}$ is surjective. By $J$-exactness, $q:\mathcal{J}_{(F,\beta)}\mapsto\mathcal{J}_{(E,\alpha)}$ is surjective. Therefore, for every generator for $\mathcal{I}_{G}$ of the form $c - \phi(\gamma(c))$, $c$ in $\mathcal{J}_{(G,\gamma)}$, there is $b$ in $\mathcal{J}_{F,\beta)}$ such that $q(b) = c$. Hence $\mathcal{T}(Q)(b - \phi(\beta(b))) = c - \phi(\gamma(c))$. Therefore, $\mathcal{T}(Q):\mathcal{I}_{F}\mapsto \mathcal{I}_{G}$ is surjective.
\par
The above commutative diagram has exact columns by definition and the top two rows are exact from what we have shown. Exactness of the bottom row then follows from a diagram chase.
\end{proof}

\section{Correspondences from branched functions}
\label{holcorrespondence}
Along the way to proving our main result, we will be working with the dynamics of holomorphic functions restricted to a variety of subspaces, so we need to abstract the $C^{*}$-correspondence construction of Kajiwara and Watatani \cite{KW:C*-algebras_associated_with_complex_systems} to allow for these restrictions. It turns out that the only essential fact for constructing correspondences in a similar fashion is that the function has a system of inverse branches, which we now make precise.
\begin{defn}
\label{branchedfunction}
Let $F:X\mapsto Y$ be a continuous function between locally compact Hausdorff spaces $X$ and $Y$. We will say $F$ is a \textit{branched function} if there is a function $\text{ind}_{F}:X\mapsto \mathbb{N}$ such that for every $x$ in $X$, there is a neighbourhood $U$ of $x$, a neighbourhood $V$ of $F(x)$, and functions $\{F_{i}^{-1}:V\mapsto U\}_{i=1}^{\text{ind}_{F}(x)}$ satisfying
\begin{enumerate}
\item $F\circ F_{i}^{-1} = \text{id}_{V}$, for all $i\leq \text{ind}_{F}(x)$,
\item $F_{i}^{-1}(F(x)) = x$ and $F_{i}^{-1}$ is continuous at $F(x)$, for all $i\leq \text{ind}_{F}(x)$,
\item $U = \bigcup_{i=1}^{\text{ind}_{F}(x)}F_{i}^{-1}(V)$, and
\item for all $u$ in $U$, $\text{ind}_{F}(u) = |\{i\leq \text{ind}_{F}(x): u\in F^{-1}_{i}(V)\}|$.
\end{enumerate}
$\text{ind}_{F}$ is called the index function and $\{F_{i}^{-1}:V\mapsto U\}_{i=1}^{d}$ are called inverse branches of $F$ centered at $x$.
\end{defn}
We remark that branched functions are in abundance. For instance, take a sheet of paper (allowing for arbitrary shape and cuts) and fold it in such a way that the crease lines and boundary map to the boundary of the folded paper. This is a branched function. 
\par
A special case of this construction is to take a Sierpinksi triangle and fold the outer three equilateral triangles into the middle. We may rotate the resulting Sierpinksi triangle by 180 degrees and dilate by 3 to get a surjective self map if we wish. The index function will be 1 everywhere except 3 at the intersection points of the three outer triangles.
\par
As in \cite[Lemma~2.1]{KW:C*-algebras_associated_with_complex_systems}, these branching properties can be used to show a canonical transfer operator of $F$ maps $C_{c}(X)$ functions to $C_{c}(Y)$ functions. 
\par
First, note that if $\{F^{-1}_{i}:V\mapsto U\}_{i=1}^{\text{ind}_{F}(x)}$ are inverse branches centered at $x$ in $X$, then $U\cap F^{-1}(F(x)) = \{x\}$. Hence, $F^{-1}(y)$ is discrete, for all $y$ in $Y$.
\begin{prop}
\label{transfer}
Let $F:X\mapsto Y$ be a branched function. If $f$ is in $C_{c}(X)$, then the function $\Phi(f):Y\mapsto\mathbb{C}$ defined, for $y$ in $Y$, as $$\Phi(f)(y) = \sum_{x\in F^{-1}(y)}\text{ind}_{F}(x)f(x)$$ is in $C_{c}(Y)$.
\end{prop}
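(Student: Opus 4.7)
The plan is to reduce continuity of $\Phi(f)$ at a point $y \in Y$ to a local formula expressing $\Phi(f)$ as a finite sum $\sum_{j,i} f \circ F^{-1}_{i,j}$ of compositions with inverse branches, and then to handle well-definedness and compactness of support separately with softer arguments. Let $K = \mathrm{supp}(f)$. Since $K$ is compact and $F^{-1}(y)$ is discrete (as observed just above the statement, using property~(3)), the set $F^{-1}(y) \cap K$ is finite, so the sum defining $\Phi(f)(y)$ makes sense. Compact support of $\Phi(f)$ is then immediate: $\Phi(f)$ vanishes off $F(K)$, and $F(K)$ is compact, hence closed.

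The main work is continuity. Fix $y \in Y$; the case $y \notin F(K)$ is trivial because $\Phi(f)$ then vanishes on the open neighborhood $Y \setminus F(K)$. Otherwise, enumerate $F^{-1}(y) \cap K = \{x_1,\dots,x_n\}$, and for each $j$ choose inverse branches $\{F^{-1}_{i,j}:V_j \to U_j\}_{i=1}^{d_j}$ centered at $x_j$, with $d_j = \mathrm{ind}_F(x_j)$. Shrinking the $V_j$ (using that $X$ is Hausdorff), I would arrange that the $U_j$ are pairwise disjoint. Next, by the standard compactness argument: if no neighborhood $V$ of $y$ satisfied $F^{-1}(V) \cap K \subseteq \bigcup_j U_j$, one could extract a sequence $x_n' \in K \setminus \bigcup_j U_j$ with $F(x_n') \to y$; compactness of $K$ yields a limit point $x' \in F^{-1}(y) \cap K$ outside $\bigcup_j U_j$, contradicting our enumeration. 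Hence such a $V \subseteq \bigcap_j V_j$ exists. Now, for $y' \in V$, every preimage of $y'$ that contributes to $\Phi(f)(y')$ lies in some $U_j$, and property~(4) of Definition~\ref{branchedfunction} says that the number of indices $i$ with $F^{-1}_{i,j}(y') = u$ equals $\mathrm{ind}_F(u)$. This produces the local formula
$$\Phi(f)(y') = \sum_{j=1}^{n}\sum_{i=1}^{d_j} f(F^{-1}_{i,j}(y')),$$
which, since each $F^{-1}_{i,j}$ is continuous at $y$ and $f$ is continuous, is continuous at $y$.

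The step I expect to be the main obstacle is precisely the interaction between the local formula and condition~(4): the values $F^{-1}_{i,j}(y')$ for different $i$ may coincide at points $u$ with $\mathrm{ind}_F(u) > 1$, and one must check that this collision is exactly accounted for by the weighting $\mathrm{ind}_F(u)$ appearing in the definition of $\Phi(f)$. This matching of multiplicities is what makes the local sum agree with $\sum_{u \in F^{-1}(y') \cap U_j} \mathrm{ind}_F(u)\, f(u)$ and hence with $\Phi(f)(y')$. Once this bookkeeping is in place, continuity follows from continuity of the finitely many maps $F^{-1}_{i,j}$ at $y$, and the proof concludes.
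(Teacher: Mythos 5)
Your proposal is correct and follows essentially the same route as the paper: enumerate $F^{-1}(y)\cap K$, choose inverse branches at each point, use a compactness argument to find a neighbourhood $V$ of $y$ whose preimage within $K$ is captured by the branch codomains, and then invoke property (4) to match the weights $\mathrm{ind}_F$ against the multiplicities with which the branches collide, yielding the local formula $\Phi(f)|_V=\sum_{j,i}f\circ F^{-1}_{i,j}$. One small caveat: the spaces here are merely locally compact Hausdorff, not metrizable, so the "extract a sequence $x_n'$" step should be phrased with nets or, more cleanly, with the finite intersection property of the closed sets $\overline{F^{-1}(V)\cap K}\setminus\bigcup_j U_j$ as $V$ shrinks; the idea is unchanged.
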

\begin{proof}
Let $\text{supp}(f) = K$ and $y$ in $Y$. We show $\Phi(f)$ is continuous at $y$. Since $F$ is branched, $F^{-1}(y)$ is discrete and hence $F^{-1}(y)\cap K$ is finite. Hence, $\Phi(f)(y)<\infty$. Write $F^{-1}(y)\cap K = \{x_{1},...,x_{k}\}$ and let $\{U_{i}\}_{i=1}^{k}$ be a collection of pairwise disjoint open sets such that $U_{i}\cap F^{-1}(F(x)) = \{x_{i}\}$ and $U_{i}$ is a co-domain of inverse branches $\{F_{ij}^{-1}:F(U_{i})\mapsto U_{i}\}_{j=1}^{\text{ind}(x_{i})}$ centered at $x_{i}$, for $i\leq k$. By a compactness argument, there is an open neighbourhood $V\subseteq \bigcap_{i=1}^{k}F(U_{i})$ of $y$ such that $F^{-1}(V)\cap K= \bigcup_{i=1}^{k} F^{-1}(V)\cap U_{i}\cap K$. By property $(3)$ of the inverse branches, we have $\bigcup_{i=1}^{k} F^{-1}(V)\cap U_{i}\cap K = \bigcup_{i=1}^{k}\bigcup_{j=1}^{\text{ind}(x_{i})}F^{-1}_{ij}(V)\cap K$
\par
Hence, for every $v$ in $V$, we have $$\sum_{x\in F^{-1}(v)}\text{ind}_{F}(x)f(x) = \sum_{x\in \bigcup_{i=1}^{k}\bigcup_{j=1}^{\text{ind}(x_{i})}F^{-1}_{ij}(v)}\text{ind}_{F}(x)f(x).$$ By property $(4)$ of the inverse branches, we have $$\sum_{x\in \bigcup_{i=1}^{k}\bigcup_{j=1}^{\text{ind}(x_{i})}F^{-1}_{ij}(v)}\text{ind}_{F}(x)f(x) = \sum_{i=1}^{k}\sum_{j=1}^{\text{ind}_{F}(x_{i})}f\circ F^{-1}_{ij}(v).$$ Since $F_{ij}^{-1}$ is continuous at $y$ for all $i\leq k$ and $j\leq \text{ind}_{F}(x_{i})$, it follows that $\Phi(f)|_{V} = \sum_{i=1}^{k}\sum_{j=1}^{\text{ind}_{F}(x_{i})}f\circ F_{ij}^{-1}$ is continuous at $y$.
\par
As $y$ in $Y$ was arbitrary, we may conclude $\Phi(f)$ is continuous. Note that $\text{supp}(\Phi(f)) = F(\text{supp}(f))$, and therefore $\Phi(f)$ is in $C_{c}(Y)$.
\end{proof}
To a branched function $F:X\mapsto Y$, we can associate a $C_{0}(X)-C_{0}(Y)$ correspondence $(E_{F,X},\alpha_{X})$ in a similar way as in \cite{KW:C*-algebras_associated_with_complex_systems}. We show this now. Let $\tilde{E}_{F,X} = C_{c}(X)$, equipped with a right 
$C_{0}(Y)$-module structure defined, for $\psi$ in $\tilde{E}_{F,X}$ and $g$ in $C_{0}(Y)$, as 
$$(\psi\cdot g)(x) = \psi(x)g(F(x)),\text{ }x\text{ in }X.$$ 
$\tilde{E}_{F,X}$ is
equipped with the $C_{0}(Y)$-valued inner product defined, for $\psi, \varphi$ in $\tilde{E}_{F,X}$, as
$$\langle\psi,\varphi\rangle = \Phi(\overline{\psi}\varphi).$$
For all $\psi$ in $\tilde{E}_{F,X}$ we have
$\|\psi\|\leq \|\psi\|_{2}$, where $\|\cdot\|$ denotes the sup norm and $\|\psi\|_{2}:= \|\sqrt{\langle \psi,\psi\rangle}\|$.
If $\text{deg}(F):=\text{sup}_{y\in Y}|F^{-1}(y)|<\infty$, then we also have
$\|\psi\|_{2}\leq \sqrt{\text{deg}(\tilde{F})}\|\psi\|$. In this case, the Hilbert $C_{0}(Y)$-module completion of $\tilde{E}_{F,X}$ is $E_{F,X} = C_{0}(X)$ and the inner product formula and right action extend naturally.
\par
In general, the completion $E_{F,X}$ is a strict subspace of $C_{0}(X)$ that is invariant under the right action of $C_{0}(Y)$ defined above and equipped with the above inner product. We identify this now. 
First, for $i=1,2$, we let 
$$C_{i}(X) = \{\psi\in C_{0}(X): \Phi(|\psi|^{i})\in C_{0}(Y)\}.$$
\begin{lemma}
\label{transferconvergence}
Let $F:X\mapsto Y$ be a branched function. Then, $C_{1}(X)$ is a subspace of $C_{0}(X)$ that is hereditary in the sense that if $\varphi,\psi$ are in $C_{0}(X)$ with $\psi$ in $C_{1}(X)$ and $0\leq \varphi\leq \psi$, then $\varphi$ is in $C_{1}(X)$. Moreover, $C_{1}(X)$ is complete with respect to the norm $\|\cdot\|_{1}:= \|\Phi(|\cdot|)\|$.
\end{lemma}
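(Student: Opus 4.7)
The plan is to prove the hereditary property first, then the subspace property (which follows quickly from it), and finally completeness. Pointwise finiteness of $\Phi$ and algebraic manipulations of the transfer operator are immediate, so the genuine content lies in establishing continuity of $\Phi(\varphi)$ for the hereditary part and identifying the $\|\cdot\|_1$-limit in the completeness part.

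For the hereditary property, fix $\varphi, \psi \in C_0(X)$ with $0 \leq \varphi \leq \psi$ and $\psi \in C_1(X)$. Pointwise finiteness $\Phi(\varphi)(y) \leq \Phi(\psi)(y) < \infty$ is immediate, and vanishing at infinity follows once continuity is known, since then $\{\Phi(\varphi) \geq \epsilon\}$ is a closed subset of the compact set $\{\Phi(\psi) \geq \epsilon\}$. Continuity at a point $y_0 \in Y$ is the main obstacle, because $F^{-1}(y_0)$ may be countably infinite. I would argue by truncation: using discreteness of $F^{-1}(y_0)$ (noted just before Proposition \ref{transfer}) and the convergent series $\Phi(\psi)(y_0) = \sum_k \text{ind}_F(x_k)\psi(x_k)$, for any $\epsilon > 0$ cut off a finite subset $\{x_1,\dots,x_n\} \subset F^{-1}(y_0)$ whose tail is below $\epsilon$, and then apply Definition \ref{branchedfunction} at each $x_k$ to produce pairwise disjoint neighborhoods $U_k$ of $x_k$ together with inverse branches $F^{-1}_{kj}$ on a common neighborhood $V$ of $y_0$. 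For $y \in V$, $\Phi(\varphi)(y)$ decomposes as a finite local sum $\sum_{k=1}^n \sum_{j=1}^{\text{ind}_F(x_k)} \varphi(F_{kj}^{-1}(y))$ of continuous functions plus a remainder $R_\varphi(y) \leq R_\psi(y) := \Phi(\psi)(y) - \sum_{k=1}^n \sum_{j=1}^{\text{ind}_F(x_k)} \psi(F_{kj}^{-1}(y))$, and continuity of $\Phi(\psi)$ at $y_0$ combined with the tail estimate forces $R_\psi(y)$ to approach the small tail as $y \to y_0$, giving continuity of $\Phi(\varphi)$. The subspace property then follows at once from $|\psi_1 + \psi_2| \leq |\psi_1| + |\psi_2|$ and additivity of $\Phi$ on non-negative functions: the majorant $|\psi_1|+|\psi_2|$ lies in $C_1(X)$, so hereditariness forces $\psi_1 + \psi_2 \in C_1(X)$.

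For completeness, the key observation is $|\psi(x)| \leq \text{ind}_F(x)|\psi(x)| \leq \Phi(|\psi|)(F(x))$, which gives $\|\cdot\|_\infty \leq \|\cdot\|_1$ on $C_1(X)$. Thus any Cauchy sequence $\{\psi_n\}$ in $\|\cdot\|_1$ has a uniform limit $\psi \in C_0(X)$. By the reverse triangle inequality and positivity of $\Phi$, $\{\Phi(|\psi_n|)\}$ is Cauchy in $(C_0(Y), \|\cdot\|_\infty)$ and converges uniformly to some $g \in C_0(Y)$. To identify $g = \Phi(|\psi|)$ I would pass to a fast-convergent subsequence with $\|\psi_{n_{k+1}} - \psi_{n_k}\|_1 < 2^{-k}$ and form the dominating function $h = |\psi_{n_1}| + \sum_k |\psi_{n_{k+1}} - \psi_{n_k}|$, for which $\Phi(h)(y) \leq \|\psi_{n_1}\|_1 + 1$ at every $y$. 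Applying the dominated convergence theorem to the counting measure on $F^{-1}(y)$ yields $\Phi(|\psi_{n_k}|)(y) \to \Phi(|\psi|)(y)$, so $g = \Phi(|\psi|)$ and $\psi \in C_1(X)$. The same dominated-convergence argument applied to $|\psi_n - \psi_{n_k}|$ as $k \to \infty$ then gives $\|\psi_n - \psi\|_1 \to 0$, establishing completeness.
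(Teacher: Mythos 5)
Your argument for the hereditary property takes essentially the same route as the paper: truncate the convergent series $\Phi(\psi)(y_0)$ to a finite subset of $F^{-1}(y_0)$, construct inverse branches over a common neighbourhood of $y_0$, split $\Phi(\varphi)$ into the finite local sum (continuous) plus a remainder, and bound the remainder by the corresponding $\psi$-remainder using $0 \leq \varphi \leq \psi$; the continuity of $\Phi(\psi)$ then controls the remainder near $y_0$. Your observation that vanishing at infinity follows from $0 \leq \Phi(\varphi) \leq \Phi(\psi)$ (or equivalently the compactness of the superlevel set of $\Phi(\psi)$) also matches the paper. The deduction of closure under addition from hereditariness plus $|\psi_1 + \psi_2| \leq |\psi_1| + |\psi_2|$ is the same.

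For completeness your route differs in detail, though it lands in the same place. The paper invokes the absolutely-convergent-series criterion: given $\sum_n \|\psi_n\|_1 < \infty$, it shows $\sum_n |\psi_n| \in C_1(X)$ via the estimate $\| \sum_{n\leq k}\Phi(|\psi_n|) - \sum_n \Phi(|\psi_n|)\| \leq \sum_{n>k}\|\psi_n\|_1$ and the Tonelli-type interchange $\Phi(\sum_n |\psi_n|) = \sum_n \Phi(|\psi_n|)$, then applies hereditariness to conclude $\sum_n \psi_n \in C_1(X)$. It does not explicitly verify that the partial sums converge to the limit in the $\|\cdot\|_1$-norm, leaving that step implicit. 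You instead run the Riesz--Fischer argument directly: extract a fast $\ell^1$-convergent subsequence, build the dominating function $h$, apply dominated convergence with respect to the weighted counting measure on each fibre $F^{-1}(y)$ to identify the $C_0(Y)$-limit of $\Phi(|\psi_{n_k}|)$ with $\Phi(|\psi|)$, and then a second DCT to establish $\|\psi_n - \psi\|_1 \to 0$. Your version is a bit longer but more explicit about the final norm convergence; both are correct, and the measure-theoretic interchanges (MCT in the paper, DCT in yours) are the only steps requiring care, which you handle properly via the domination by $h$.
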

\begin{proof}
The fact that $C_{1}(X)$ is a subspace will follow from the hereditary property. Suppose $0\leq \varphi\leq \psi$, $\psi$ is in $C_{1}(X)$ and $\varphi$ is in $C_{0}(X)$. First, we show $\Phi(\varphi)$ is continuous.
\par
Fix $y$ in $Y$ and $\epsilon > 0$. Since $\Phi(\psi)(y) < \infty$, there is $k$ in $\mathbb{N}$ and $\{x_{i}\}^{k}_{i=1}\subseteq F^{-1}(y)$ such that $x_{i}\neq x_{j}$ for all $i\neq j\leq k$ and $$|\Phi(\psi)(y) - \sum_{i=1}^{k}\text{ind}_{F}(x_{i})\psi(x_{i})| <\epsilon/7.$$ For each $i\leq k$, let $\{F^{-1}_{ij}:V\mapsto U_{i}\}^{\text{ind}_{F}(x_{i})}_{j=1}$ be inverse branches of $F$ centered at $x_{i}$. By continuity of these branches and $\Phi(\psi)$ at $y$, we may assume the neighbourhood $V$ of $y$ is chosen such that, for all $\tilde{y}$ in $V$, we have

$$|\sum_{i=1}^{k}\text{ind}_{F}(x_{i})\psi(x_{i}) - \sum_{i=1}^{k}\sum_{j=1}^{\text{ind}_{F}(x_{i})}\psi\circ F^{-1}_{ij}(\tilde{y})| <\epsilon/7,$$ 

$$|\Phi(\psi)(y) -\Phi(\psi)(\tilde{y})| < \epsilon/7,$$

and \begin{equation}
\tag{*}
\label{*}
    |\sum_{i=1}^{k}\sum_{j=1}^{\text{ind}_{F}(x_{i})}\varphi\circ F^{-1}_{ij}(y)- \sum_{i=1}^{k}\sum_{j=1}^{\text{ind}_{F}(x_{i})}\varphi\circ F^{-1}_{ij}(\tilde{y})| < \epsilon/7.
\end{equation}

Let $U = \bigcup_{i=1}U_{i}$. For any $f$ in $C_{0}(X)$ and $\tilde{y}$ in $V$, we have $$\sum_{x\in F^{-1}(\tilde{y})\cap U}\text{ind}_{F}(x)f(x) = \sum_{i=1}^{k}\sum_{j=1}^{\text{ind}_{F}(x_{i})}f\circ F^{-1}_{ij}(\tilde{y})$$. Therefore, the first three inequalities imply, by the triangle inequality, that $$\sum_{x\in F^{-1}(\tilde{y})\setminus U}\text{ind}_{F}(x)\psi(x) =  |\Phi(\psi)(\tilde{y}) - \sum_{x\in F^{-1}(\tilde{y})\cap U}\text{ind}_{F}(x)\psi(x)| < 3\epsilon/7,$$ for all $\tilde{y}$ in $V$.
\par
Since $0\leq \varphi\leq \psi$, it follows that \begin{equation}
    \tag{**}
    \label{**}
    |\Phi(\varphi)(y) - \sum_{x\in F^{-1}(\tilde{y})\cap U}\text{ind}_{F}(x)\varphi(x)|\leq \sum_{x\in F^{-1}(\tilde{y})\setminus U}\text{ind}_{F}(x)\psi(x) < 3\epsilon/7,
\end{equation} for all $\tilde{y}$ in $V$. So, by the triangle inequality and the inequalities \eqref{*} and \eqref{**}, we have 
$$|\Phi(\varphi)(y) - \Phi(\varphi)(\tilde{y})| < \epsilon,$$ for all $\tilde{y}$ in $V$. This shows $\Phi(\varphi)$ is continuous.
\par
$0\leq \Phi(\varphi)\leq \Phi(\psi)$ and the fact that $\Phi(\psi)$ vanishes at infinity implies $\Phi(\varphi)$ does too. Therefore, $\Phi(\varphi)$ is in $C_{0}(Y)$.
\par
We now show $C_{1}(X)$ is complete with respect to $\|\cdot\|_{1}$. Let $\{\psi_{n}\}_{n\in\mathbb{N}}\subset C_{0}(X)$ such that $\sum_{n\in\mathbb{N}}\|\psi_{n}\|_{1} <\infty$. Since $\|\cdot\|\leq \|\cdot\|_{1}$, $\sum_{n\in \mathbb{N}}\psi_{n}$ and $\sum_{n\in\mathbb{N}}|\psi_{n}|$ converge (with respect to $\|\cdot\|$) to elements in $C_{0}(X)$.
\par
$0\leq |\sum_{n\in\mathbb{N}}\psi_{n}|\leq \sum_{n\in\mathbb{N}}|\psi_{n}|$ so, by the hereditary property of $C_{1}(X)$, to prove that $\sum_{n\in\mathbb{N}}\psi_{n}$ is in $C_{1}(X)$ it suffices to show $\sum_{n\in\mathbb{N}}|\psi_{n}|$ is in $C_{1}(X)$.
\par
$\|\sum_{n\leq k}\Phi(|\psi_{n}|) - \sum_{n\in\mathbb{N}}\Phi(|\psi_{n}|)\|\leq \sum_{n > k}\|\psi_{n}\|_{1}$ and so the hypothesis implies \\$\Phi(\sum_{n\in\mathbb{N}}|\psi_{n}|) = \sum_{n\in\mathbb{N}}\Phi(|\psi_{n}|)$ is in $C_{0}(Y)$. Hence, $C_{1}(X)$ is complete.
\end{proof}
\begin{prop}
Let $F:X\mapsto Y$ be a branched function. Then,
the Hilbert $C_{0}(Y)$-module completion of $\tilde{E}_{F,X}$ is isomorphic to $C_{2}(X)$.
\end{prop}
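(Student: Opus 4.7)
The plan is to identify $C_2(X)$ as the Hilbert $C_0(Y)$-module completion of $\tilde{E}_{F,X}$. I would proceed in four steps: extend the module structure to $C_2(X)$, establish the fundamental norm bound $\|\psi\|_\infty\leq \|\psi\|_2$, prove density of $\tilde{E}_{F,X}$ in $C_2(X)$, and finally show that $C_2(X)$ is complete with respect to $\|\cdot\|_2$.

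First, I would show $C_2(X)$ is a vector subspace of $C_0(X)$ stable under the right $C_0(Y)$-action, and that the inner product $\langle\psi,\varphi\rangle=\Phi(\bar\psi\varphi)$ takes values in $C_0(Y)$. All three rely on the pointwise inequalities $|\psi+\varphi|^2\leq 2(|\psi|^2+|\varphi|^2)$, $|\bar\psi\varphi|\leq \tfrac{1}{2}(|\psi|^2+|\varphi|^2)$, and $|\psi\cdot (g\circ F)|^2\leq \|g\|_\infty^2|\psi|^2$, combined with the hereditary property of $C_1(X)$ from Lemma \ref{transferconvergence}. The fundamental bound $\|\psi\|_\infty\leq\|\psi\|_2$ for $\psi\in C_2(X)$ follows from $|\psi(x)|^2\leq \text{ind}_F(x)|\psi(x)|^2\leq \Phi(|\psi|^2)(F(x))$ for each $x\in X$, and is invoked repeatedly in the remaining steps.

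For density, given $\psi\in C_2(X)$, I would take an increasing sequence $\chi_n\in C_c(X,[0,1])$ converging to $1$ uniformly on compacts, so $\psi\chi_n\in \tilde{E}_{F,X}$ approximates $\psi$. By the hereditary property applied to $0\leq |\psi|^2(1-\chi_n)^2\leq |\psi|^2\in C_1(X)$, the sequence $\Phi(|\psi|^2(1-\chi_n)^2)$ lies in $C_0(Y)$, is decreasing in $n$, and converges pointwise to zero by dominated convergence on the countable discrete set $F^{-1}(y)$ with summable majorant $\Phi(|\psi|^2)(y)$. Uniform convergence follows by splitting $Y$: outside the compact set $\{y:\Phi(|\psi|^2)(y)\geq \epsilon\}$ the quantity is already below $\epsilon$, and inside Dini's theorem applies.

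For completeness, let $\{\psi_n\}\subseteq C_2(X)$ be Cauchy in $\|\cdot\|_2$. By the second step, $\psi_n$ is uniformly Cauchy, so $\psi_n\to\tilde\psi\in C_0(X)$ uniformly. A Cauchy-Schwarz estimate applied in each fibre $F^{-1}(y)$ gives $|\Phi(|\psi_n|^2-|\psi_m|^2)(y)|\leq \|\psi_n-\psi_m\|_2(\|\psi_n\|_2+\|\psi_m\|_2)$, showing $\{\Phi(|\psi_n|^2)\}$ is Cauchy in $C_0(Y)$ with some limit $g$. The crucial claim is $g=\Phi(|\tilde\psi|^2)$: Fatou's lemma gives $\Phi(|\tilde\psi|^2)(y)\leq g(y)$, while the reverse follows by applying the same Cauchy-Schwarz trick to $|\psi_n|^2-|\tilde\psi|^2\leq |\psi_n-\tilde\psi|(|\psi_n|+|\tilde\psi|)$ combined with the Fatou-derived bound $\Phi(|\psi_n-\tilde\psi|^2)(y)\leq \liminf_m \Phi(|\psi_n-\psi_m|^2)(y)\to 0$ as $n\to\infty$. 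This forces $\tilde\psi\in C_2(X)$, and the same bound yields $\|\psi_n-\tilde\psi\|_2\to 0$. The main obstacle is precisely this identification of the limit $g$, since neither monotone nor dominated convergence applies directly to the countable sums over $F^{-1}(y)$, so the combined Cauchy-Schwarz/Fatou maneuver is the essential technical device.
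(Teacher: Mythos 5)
Your proof is correct, but it takes a genuinely different route from the paper's, particularly on completeness. For density, both arguments multiply by a compactly supported cutoff; the paper builds an explicit pre-compact open set using the inverse branch structure and a compactness cover to get the estimate $\Phi(\psi) - \Phi(\psi 1_{U}) < \epsilon^{2}$, whereas you package the same decay via dominated convergence on each discrete fibre $F^{-1}(y)$ plus Dini's theorem on the compact level set $\{y:\Phi(|\psi|^{2})(y)\geq\epsilon\}$; the two are equivalent in content, yours a touch slicker. The real divergence is in completeness: the paper uses the absolutely-summable-series criterion and, via Cauchy--Schwarz ($\|\overline{\psi_{n}}\psi_{m}\|_{1}\leq\|\psi_{n}\|_{2}\|\psi_{m}\|_{2}$), reduces the problem to the completeness of $(C_{1}(X),\|\cdot\|_{1})$ established in Lemma~\ref{transferconvergence}, so that lemma does the heavy lifting. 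You instead work directly with a Cauchy sequence, extract a uniform limit $\tilde\psi$ from the bound $\|\cdot\|_{\infty}\leq\|\cdot\|_{2}$, show $\{\Phi(|\psi_{n}|^{2})\}$ is $C_{0}(Y)$-Cauchy with limit $g$ by the fibrewise Cauchy--Schwarz estimate, and then identify $g=\Phi(|\tilde\psi|^{2})$ by a Fatou/Cauchy--Schwarz bootstrap; this sidesteps the $C_{1}$-completeness lemma entirely (you still invoke its hereditary half). The paper's route is a cleaner reduction given that Lemma~\ref{transferconvergence} is already in hand, while yours is more self-contained and makes the role of Fatou's lemma in controlling infinite fibre sums explicit; both are sound.
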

\begin{proof}
Let us first show $C_{2}(X)$ is complete. Let $\{\psi_{n}\}_{n\in\mathbb{N}}\subset C_{0}(X)$ such that $\sum_{n\in\mathbb{N}}\|\psi_{n}\|_{2} <\infty$. Since $\|\cdot\|\leq \|\cdot\|_{2}$, $\sum_{n\in \mathbb{N}}\psi_{n}$ and $\sum_{n\in\mathbb{N}}|\psi_{n}|$ converge (with respect to  $\|\cdot\|$) to elements in $C_{0}(X)$. 
\par
We must show $|\sum_{n\in\mathbb{N}}\psi_{n}|^{2}$ is in $C_{1}(X)$. By the Cauchy-Schwarz inequality, $\|\overline{\psi_{n}}\psi_{m}\|_{1}\leq \|\psi_{n}\|_{2}\|\psi_{m}\|_{2}$ for all $n,m$ in $\mathbb{N}$. Hence, $|\sum_{n\in\mathbb{N}}\psi_{n}|^{2} = \sum_{n,m\in\mathbb{N}}\overline{\psi_{n}}\psi_{m}$ converges absolutely with respect to $\|\cdot\|_{1}$. Lemma \ref{transferconvergence} then implies $|\sum_{n\in\mathbb{N}}\psi_{n}|^{2}$ is in $C_{1}(X)$.
\par
Lastly, we must show $C_{c}(X)$ is dense in $C_{2}(X)$ relative to $\|\cdot\|_{2}$. It suffices to show positive elements in $C_{2}(X)$ can be approximated arbitrarily by elements in $C_{c}(X)$.
\par
Let $0\leq \varphi$ be such that $\psi:= \varphi^{2} $ is in $C_{1}(X)$. Given $\epsilon > 0$, let $K\subseteq Y$ be a compact set such that $\Phi(\psi)(\tilde{y}) < \epsilon^{2}/2$ for all $\tilde{y}$ in $X\setminus K$. As in Lemma \ref{transferconvergence}, For each $y$ in $K$ there is a pre-compact open set $U_{y}\subseteq X$ such that $y$ is in $F(U_{y}) =: V_{y}$ and  $$|\Phi(\psi)(\tilde{y}) - \sum_{x\in F^{-1}(\tilde{y})\cap U_{y}}\text{ind}_{F}(x)\psi(x)| < \epsilon^{2}/2$$ for all $\tilde{y}$ in $V_{y}$. Let $\{y_{i}\}^{d}_{i=1}\subseteq K$ be such that $K\subseteq \bigcup_{i=1}^{d}V_{y_{i}}$. Then, $U: = \bigcup_{i=1}^{d}U_{y_{i}}$ is a pre-compact open set such that 
$$|\Phi(\psi)(\tilde{y}) - \sum_{x\in F^{-1}(\tilde{y})\cap U}\text{ind}_{F}(x)\psi(x)| < \epsilon^{2}$$ for all $y$ in $Y$.
\par
Choose $\phi$ in $C_{c}(X)$ such that $0\leq \phi\leq 1$ and $\phi(x) = 1$ for all $x$ in $U$. From the above inequality, we have 
$$ |\Phi(|\varphi - \sqrt{\phi}\varphi|^{2})(\tilde{y})|\leq \sum_{x\in F^{-1}(\tilde{y})\setminus U}\text{ind}_{F}(x)\psi(x) < \epsilon^{2} $$ for all $\tilde{y}$ in $Y$. Hence, $\|\varphi - \sqrt{\phi}\varphi\|_{2}\leq \epsilon$.
\end{proof}
\par
$E_{F,X} = C_{2}(X)$ has a left action $\alpha_{X}$ of $C_{0}(X)$ by
Hilbert module endomorphisms given, for $f$ in $C_{0}(X)$ and $\psi$ in $E_{F,X}$, as 
$$(f\cdot \psi)(x) = f(x)\psi(x), \text{ }x\text{ in }X.$$ 
The pair $(E_{F,X},\alpha_{X})$ is a $C_{0}(X)$-$C_{0}(Y)$ correspondence. It is always injective and non-degenerate. If $F$ is surjective, it is full.
\par
If $F:X\mapsto Y$ and $G:Y\mapsto Z$ are branched functions with index functions $\text{ind}_{F}$ and $\text{ind}_{G}$, then $G\circ F$ is a branched function with index function $\text{ind}_{G\circ F} := (\text{ind}_{G}\circ F)\text{ind}_{F}$; the inverse branches of $F\circ G$ are the composite of inverse branches for $F$ and $G$.
\par
For $F:X\mapsto Y$ a continuous function, the \textit{critical points of $F$} will be the set of points in $X$ for which $F$ is \textit{not} a local homeomorphism at, in the sense that there is no open neighbourhood $U$ about $x$ such that $F(U)$ is open in $Y$ and $F:U\mapsto F(U)$ is a homeomorphism. We shall denote this set by $C_{F,X}$. If $F:X\mapsto Y$ is an open map (which is the case for a branched function), then this is a closed subset.
\par
We can think of a branched function $F:X\mapsto Y$ and its canonical transfer operator $\Phi$ as defining a topological quiver in the sense of \cite{MT05}; in the notation of that paper, $E^{1} = X$, $E^{0} = Y$ and $r = s = F$, with $r$-system $\{\Phi(-)(y)\}_{y\in Y}$. Then, the Hilbert $C_{0}(Y)$-module constructed in \cite[Section~3.1]{MT05} is isomorphic to $E_{F,X}$. \cite[Theorem~3.11]{MT05} then can be applied to show $J_{(E_{F,X},\alpha_{X})} = C_{0}(X\setminus C_{F,X})$. This is a generalization of \cite[Proposition~2.5]{KW:C*-algebras_associated_with_complex_systems}.
\par
We will denote the trivially graded Kasparov $C_{0}(X\setminus C_{F,X})-C_{0}(Y)$ bi-module \\$(E_{F,X}, \alpha_{X}|_{C_{0}(X\setminus C_{F,X})},0)$ by $\mathcal{E}_{F,X}$, and denote its class in $KK^{0}(C_{0}(X\setminus C_{F,X}), C_{0}(Y))$ by $[\mathcal{E}_{F,X}]$. We shall denote the class of the inclusion $i:C_{0}(X\setminus C_{F,X})\mapsto C_{0}(X)$ in $KK^{0}(C_{0}(X\setminus C_{F,X}), C_{0}(X))$ by $\iota$.
\par
We remark that there is an ambiguity in our notations $E_{F,X}$ and $\mathcal{E}_{F,X}$, since we do not specify the co-domain of $F$, but it will be clear from the context what it is.
\par
We now show that the class $[\mathcal{E}_{F,X}]$ behaves well with restrictions of $F$.
If $U$ is an open set in $X$, let $i_{U} = i:C_{0}(U)\mapsto C_{0}(X)$, $\iota_{U} = i:C_{0}(U\setminus C_{F,X})\mapsto C_{0}(X\setminus C_{F,X})$ denote the respective inclusions. For an open set $V$ of $Y$ such that $F(U)\subseteq V$ it is clear that $F:U\mapsto V$ is also a branched function.
\begin{prop}
\label{classnaturality}
If $F:X\mapsto Y$ is a branched function and $U$ is an open set in $X$, then $(\iota_{U})^{*}[\mathcal{E}_{F,X}] = (i_{V})_{*}[\mathcal{E}_{F,U}]$, where $V$ is any open set in $Y$ such that $F(U)\subseteq V$. Moreover, $(i_{U}, i_{U}, i_{V}):(E_{F,U},\alpha_{U})\mapsto (E_{F,X},\alpha_{X})$ is a morphism of correspondences
\end{prop}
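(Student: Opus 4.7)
The plan is to prove the second assertion first: namely, that $(i_{U}, i_{U}, i_{V})$ is a morphism in the sense of Definition \ref{morph_of_corr}. Once that is established, the equality $(\iota_{U})^{*}[\mathcal{E}_{F,X}] = (i_{V})_{*}[\mathcal{E}_{F,U}]$ of $KK^{0}$-classes will follow immediately from Proposition \ref{morphintertwine}, because $[\iota_{U}]\hat{\otimes}[\mathcal{E}_{F,X}] = (\iota_{U})^{*}[\mathcal{E}_{F,X}]$ and $[\mathcal{E}_{F,U}]\hat{\otimes}[i_{V}] = (i_{V})_{*}[\mathcal{E}_{F,U}]$.

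First I would record the preliminary point that $F:U\mapsto V$ is itself branched: any inverse branches of $F:X\mapsto Y$ centered at $x\in U$ may be shrunk so that both their domain lies in $V$ and their images lie in $U$, using openness of $U$ and continuity at $F(x)$. Moreover, being a local homeomorphism is a local property, so $C_{F,U} = C_{F,X}\cap U$; consequently $J_{(E_{F,U},\alpha_{U})} = C_{0}(U\setminus C_{F,X})$ and the restriction of $i_{U}$ to this ideal is exactly $\iota_{U}$, so condition (3) of Definition \ref{morph_of_corr} is immediate.

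Next I would check the remaining three axioms. Condition (2) is just the pointwise factorization $i_{U}(a\cdot\psi) = i_{U}(a)\cdot i_{U}(\psi)$ for $a\in C_{0}(U)$, $\psi\in E_{F,U}$. For the twisted morphism condition (1), the key observation is that if $\psi_{1},\psi_{2}\in E_{F,U}\subseteq C_{0}(U)$ are extended by zero to $X$, the product $\overline{\psi_{1}}\psi_{2}$ is supported in $U$; hence for $y\in V$,
\[
\Phi_{X}(\overline{\psi_{1}}\psi_{2})(y) = \sum_{x\in F^{-1}(y)}\mathrm{ind}_{F}(x)\overline{\psi_{1}}(x)\psi_{2}(x) = \sum_{x\in F^{-1}(y)\cap U}\mathrm{ind}_{F}(x)\overline{\psi_{1}}(x)\psi_{2}(x) = \Phi_{U}(\overline{\psi_{1}}\psi_{2})(y),
\]
while $\Phi_{X}(\overline{\psi_{1}}\psi_{2})(y)=0$ for $y\notin V$ since $F(U)\subseteq V$. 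This shows $\langle i_{U}(\psi_{1}),i_{U}(\psi_{2})\rangle_{E_{F,X}} = i_{V}(\langle \psi_{1},\psi_{2}\rangle_{E_{F,U}})$, and as a byproduct confirms that $i_{U}$ really does carry $E_{F,U}=C_{2}(U)$ into $E_{F,X}=C_{2}(X)$. Condition (4) is verified on generators: for $\psi_{1},\psi_{2}\in E_{F,U}$, by definition $\widehat{i_{U}}(\theta_{\psi_{1},\psi_{2}}) = \theta_{i_{U}(\psi_{1}),i_{U}(\psi_{2})}$, and using the Muhly--Tomforde identification (applied in the discussion of $J_{(E_{F,X},\alpha_{X})}$) that $\alpha_{U}(a)$ and $\alpha_{X}(i_{U}(a))$, for $a\in C_{0}(U\setminus C_{F,X})$, are approximated by such rank-one sums in a compatible way, one obtains $\widehat{i_{U}}(\alpha_{U}(a)) = \alpha_{X}(i_{U}(a))$ by density.

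Finally, applying Proposition \ref{morphintertwine} to the morphism $(i_{U},i_{U},i_{V})$ yields $K(E_{F,U},\alpha_{U})\hat{\otimes}[i_{V}] = [\iota_{U}]\hat{\otimes}K(E_{F,X},\alpha_{X})$, which is precisely the desired equality. The main obstacle is verifying axiom (4), since it is the only condition that is not manifestly pointwise; however, because both actions are by pointwise multiplication, and rank-one operators on $E_{F,U}$ inject into rank-one operators on $E_{F,X}$ under extension by zero, one needs only to show that a sequence of such rank-one sums approximating $\alpha_{U}(a)$ in $\mathcal{K}(E_{F,U})$ is carried under $\widehat{i_{U}}$ to an approximating sequence for $\alpha_{X}(i_{U}(a))$ in $\mathcal{K}(E_{F,X})$, which follows from the norm identity recalled in Section \ref{correspondence} relating $\|\sum\theta_{e_{i},f_{i}}\|$ to the matrix $\sqrt{F}\sqrt{E}$ and the compatibility of inner products already established.
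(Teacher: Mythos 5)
Your proposal follows the same route as the paper: verify that $(i_{U},i_{U},i_{V})$ is a morphism of correspondences (the paper declares this ``straightforward'' and omits it, but the check you supply mirrors the one the paper carries out explicitly in the proof of Proposition~\ref{holmorph1}), then invoke Proposition~\ref{morphintertwine}. Your verification of axiom (4) — reducing to positive functions compactly supported where $F$ is injective, so that $\alpha_{U}(f)=\theta_{\sqrt f,\sqrt f}$, and then passing to the general case by density — is exactly the argument the paper uses elsewhere, and the rest of the details (reduction to supports in $U$, locality of $C_{F,\cdot}$, restriction of the transfer operator) are correct.
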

\begin{proof}
The proof that $(i_{U}, i_{U}, i_{V}):(E_{F,U},\alpha_{U})\mapsto (E_{F,X},\alpha_{X})$ is a morphism of correspondences is straightforward, and we omit it. Proposition \ref{morphintertwine} implies $(\iota_{U})^{*}[\mathcal{E}_{F,X}] = (i_{V})_{*}[\mathcal{E}_{F,U}]$.
\end{proof}
If $X = Y$ and $F:X\mapsto X$ is a surjective branched function, we will denote the corresponding Cuntz-Pimsner algebra and its
Toeplitz algebra extension by $\mathcal{O}_{F,X}$ and $\mathcal{T}_{F,X}$, respectively. These algebras are conjugacy invariants for the pair $(F, \text{ind}_{F})$.
\begin{prop}
\label{conjugacyiso1}
Suppose $F:X\mapsto X$ and $G:Y\mapsto Y$ are surjective branched functions, and $\varphi:X\mapsto Y$ is a homeomorphism such that $\varphi\circ F = G\circ\varphi$ and $\text{ind}_{G}\circ\varphi = \text{ind}_{F}$. Then, $(\varphi^{*},\varphi^{*},\varphi^{*}):E_{G,Y}\mapsto E_{F,X}$ is an isomorphism of correspondences. In particular, the induced $*$-homomorphisms $\mathcal{T}(\varphi^{*}):\mathcal{T}_{G,Y}\mapsto \mathcal{T}_{F,X}$,
$\mathcal{O}(\varphi^{*}):\mathcal{O}_{G,Y}\mapsto\mathcal{O}_{F,X}$ are $*$-isomorphisms.
\end{prop}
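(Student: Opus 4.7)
The plan is to show directly that $\varphi^{*}$ pulls back each piece of the $G$-correspondence to the corresponding piece of the $F$-correspondence, and that invertibility follows by symmetry from the homeomorphism $\varphi^{-1}$. The key computation is the compatibility of the transfer operators.

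First I would verify that $\varphi^{*}:C_{0}(Y)\mapsto C_{0}(X)$, $g\mapsto g\circ\varphi$, restricts to a well-defined map $E_{G,Y}\mapsto E_{F,X}$. Given $\psi\in C_{2}(Y)$, I want $\Phi_{F}(|\psi\circ\varphi|^{2})\in C_{0}(X)$. The conjugacy $\varphi\circ F=G\circ\varphi$ together with the fact that $\varphi$ is a bijection implies that for every $x$ in $X$, $\varphi$ restricts to a bijection $F^{-1}(x)\mapsto G^{-1}(\varphi(x))$; combined with $\mathrm{ind}_{G}\circ\varphi=\mathrm{ind}_{F}$, this yields the fundamental identity
\[
\Phi_{F}(|\psi\circ\varphi|^{2})(x)=\sum_{x'\in F^{-1}(x)}\mathrm{ind}_{F}(x')|\psi(\varphi(x'))|^{2}=\Phi_{G}(|\psi|^{2})(\varphi(x)),
\]
so $\Phi_{F}(|\psi\circ\varphi|^{2})=\Phi_{G}(|\psi|^{2})\circ\varphi\in C_{0}(X)$ since $\varphi$ is a homeomorphism. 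The same argument, polarised, shows $\langle\varphi^{*}(\psi_{1}),\varphi^{*}(\psi_{2})\rangle_{E_{F,X}}=\varphi^{*}(\langle\psi_{1},\psi_{2}\rangle_{E_{G,Y}})$, giving condition (1) of Definition \ref{morph_of_corr}.

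Next I would check the remaining conditions of Definition \ref{morph_of_corr}. Condition (2), the intertwining $\varphi^{*}(f\cdot\psi)=\varphi^{*}(f)\cdot\varphi^{*}(\psi)$, is immediate from pointwise multiplication. For (3), observe that $\mathrm{ind}_{F}(x)=1$ iff $F$ is a local homeomorphism at $x$ (from conditions (3)--(4) of Definition \ref{branchedfunction}), so the hypothesis $\mathrm{ind}_{G}\circ\varphi=\mathrm{ind}_{F}$ gives $\varphi(X\setminus C_{F,X})=Y\setminus C_{G,Y}$. Since $J_{(E_{G,Y},\alpha_{Y})}=C_{0}(Y\setminus C_{G,Y})$ and likewise for $F$ by the quiver identification cited before Proposition \ref{classnaturality}, we get $\varphi^{*}(J_{(E_{G,Y},\alpha_{Y})})=J_{(E_{F,X},\alpha_{X})}$. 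Condition (4) is automatic because $\varphi^{*}$ on functions is surjective (as pointed out immediately after Definition \ref{morph_of_corr}).

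Invertibility is then free: running the same argument with the conjugacy $\varphi^{-1}\circ G=F\circ\varphi^{-1}$ and $\mathrm{ind}_{F}\circ\varphi^{-1}=\mathrm{ind}_{G}$ yields a morphism $((\varphi^{-1})^{*},(\varphi^{-1})^{*},(\varphi^{-1})^{*}):E_{F,X}\mapsto E_{G,Y}$, which is the two-sided inverse of $(\varphi^{*},\varphi^{*},\varphi^{*})$ componentwise. Hence $(\varphi^{*},\varphi^{*},\varphi^{*})$ is an isomorphism in $\mathbf{Cor}$. Finally, applying the functors $\mathcal{T}(-)$ and $\mathcal{O}(-)$ discussed after Definition \ref{morph_of_corr} to this isomorphism and its inverse produces mutually inverse $*$-homomorphisms $\mathcal{T}(\varphi^{*})$ and $\mathcal{O}(\varphi^{*})$, completing the proof.

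There is no serious obstacle here; the only subtlety is being careful that $\mathrm{ind}_{F}\equiv 1$ is equivalent to local injectivity of $F$, so that the condition $\mathrm{ind}_{G}\circ\varphi=\mathrm{ind}_{F}$ correctly identifies the critical sets and hence the Katsura ideals $J_{(E,\alpha)}$. Everything else is a clean transcription of the conjugacy through the definitions.
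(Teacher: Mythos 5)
The paper itself declines to write out this proof (``The proof is straightforward, and we omit it''), so there is no written argument to compare against; what you have supplied is a correct filled-in version of the omitted routine check, and it proceeds as one would expect: the conjugacy restricts $\varphi$ to a bijection of fibres, which together with $\mathrm{ind}_G\circ\varphi = \mathrm{ind}_F$ gives the intertwining $\Phi_F\circ\varphi^* = \varphi^*\circ\Phi_G$, from which conditions (1), (2), (4) of Definition \ref{morph_of_corr} and invertibility follow, and the $\mathcal{T}(-)$, $\mathcal{O}(-)$ functoriality finishes.

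One small point can be tightened. For condition (3) you appeal to ``$\mathrm{ind}_F(x)=1$ iff $F$ is a local homeomorphism at $x$.'' The forward direction is fine (the unique branch $F_1^{-1}$ inverts $F|_U$, and $F$ is open, so $F|_U$ is a homeomorphism), but the converse is not clearly established for general branched functions in the paper --- only for holomorphic branched functions does the text identify $\{\mathrm{ind}>1\}$ with $C_{F,X}$. Fortunately you do not need it: since $G = \varphi\circ F\circ\varphi^{-1}$ with $\varphi$ a homeomorphism, $F$ is a local homeomorphism at $x$ if and only if $G$ is a local homeomorphism at $\varphi(x)$, so $\varphi(C_{F,X}) = C_{G,Y}$ immediately and the index hypothesis plays no role in identifying the ideals $J_{(E,\alpha)} = C_0(X\setminus C_{F,X})$. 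The index hypothesis is what you actually use --- and correctly so --- in the transfer-operator computation for conditions (1) and (4).
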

\begin{proof}
The proof is straightforward, and we omit it.
\end{proof}
We now consider exact sequences of branched functions. If $F:U\mapsto V$ is branched and $Y\subseteq V$ is closed, letting $F^{-1}(Y) = X$, $F|_{X}:X\mapsto Y$ is also branched, with $\text{ind}_{F|_{X}} = \text{ind}_{F}|_{X}$ and inverse branches equal to the inverse branches of $F$ restricted to $Y$.
\begin{prop}
\label{holmorph1}
Let $F:U\mapsto V$ be a branched function, and $Y$ a closed subset of $V$. Denote $F^{-1}(Y) = X$, $i_{1}:C_{0}(U\setminus X)\mapsto C_{0}(U)$, $i_{2}:C_{0}(V\setminus Y)\mapsto C_{0}(V)$ the inclusions and $r_{1}:C_{0}(U)\mapsto C_{0}(X)$, $r_{2}:C_{0}(V)\mapsto C_{0}(Y)$ the restrictions. Then, $(i_{1},i_{1},i_{2}): (E_{F,U\setminus X},\alpha_{U\setminus X})\mapsto (E_{U},\alpha_{U})$ and $(r_{1},r_{1},r_{2}):(E_{U},\alpha_{U})\mapsto (E_{X},\alpha_{X})$ are morphisms. $C_{F,U}\cap X = C_{F,X}$, if and only if the sequence 
$$
\begin{tikzcd}
0 \arrow[r] & {(E_{F,U\setminus X},\alpha)} \arrow[r, "{(i_{1},i_{1},i_{2})}"] & {(E_{F,U},\alpha)} \arrow[r, "{(r_{1},r_{1},r_{2})}"] & {(E_{F,X},\alpha)} \arrow[r] & 0
\end{tikzcd}$$
is $J$-exact.
\end{prop}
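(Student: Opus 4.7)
The plan is first to confirm that both triples are morphisms in the category of correspondences, then to analyze when the resulting sequence is $J$-exact. The morphism structure of $(i_{1},i_{1},i_{2})$ follows directly from Proposition \ref{classnaturality} applied to the open inclusion $U\setminus X \subseteq U$ with $V\setminus Y \supseteq F(U\setminus X)$, the latter holding because $F^{-1}(Y)=X$. For $(r_{1},r_{1},r_{2})$ I would verify the conditions of Definition \ref{morph_of_corr} directly. The crucial identity $\langle r_{1}(e_{1}), r_{1}(e_{2})\rangle = r_{2}(\langle e_{1},e_{2}\rangle)$ rests on the observation that $F^{-1}(y)\subseteq X$ for every $y\in Y$, which makes the transfer sums computed via $\Phi_{X}$ and via $\Phi_{U}$ agree on $Y$. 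The right module compatibility is pointwise, and condition (3) uses the always-valid inclusion $C_{F,X}\subseteq C_{F,U}\cap X$ (a local homeomorphism of $U$ at a point of $X$ restricts to a local homeomorphism of $X$). Condition (4) will be automatic from surjectivity of $r_{1}$, via the remark after Definition \ref{morph_of_corr}.

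Next I would establish, unconditionally, exactness of the sequences at the right coefficient level and at the Hilbert module level. The coefficient sequence $0\to C_{0}(V\setminus Y)\to C_{0}(V)\to C_{0}(Y)\to 0$ is standard for the closed inclusion. At the module level, injectivity of $i_{1}$ is immediate. Identifying the push-forward $(r_{2})_{\ast}(E_{F,U},\alpha_{U})$ with the quotient of $E_{F,U}$ by $\{e\in C_{2}(U):r_{2}(\Phi_{U}(|e|^{2}))=0\}$ and noting that this vanishing is equivalent to $e|_{X}=0$, i.e.\ to $e\in i_{1}(E_{F,U\setminus X})$, simultaneously yields $\ker(r_{1})=i_{1}(E_{F,U\setminus X})$ and an isometric factorisation $\overline{r_{1}}:E_{F,U}/i_{1}(E_{F,U\setminus X})\to E_{F,X}$. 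Since $C_{c}(X)$ is dense in $E_{F,X}=C_{2}(X)$ and every $f\in C_{c}(X)$ admits a compactly supported Tietze extension to $U$, the isometry $\overline{r_{1}}$ has dense image, so it is surjective.

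The heart of the argument is the equivalence between $C_{F,U}\cap X = C_{F,X}$ and exactness of the ideal sequence
$$0\to C_{0}((U\setminus X)\setminus C_{F,U})\to C_{0}(U\setminus C_{F,U})\to C_{0}(X\setminus C_{F,X})\to 0,$$
obtained from the identifications $J_{(E_{F,W},\alpha_{W})}=C_{0}(W\setminus C_{F,W})$ recalled before Proposition \ref{classnaturality}. Injectivity of the first map and exactness in the middle hold with no hypothesis, so the entire equivalence reduces to surjectivity of the second map. For the implication $(\Rightarrow)$, given $g\in C_{0}(X\setminus C_{F,X})$, the function defined as $g$ on $X$ and $0$ on the closed set $C_{F,U}$ is continuous on the closed subset $X\cup C_{F,U}\subseteq U$ (consistency on the intersection $C_{F,U}\cap X = C_{F,X}$ holds because $g$ vanishes there), and Tietze extension produces a preimage in $C_{0}(U\setminus C_{F,U})$. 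For $(\Leftarrow)$, any $x_{0}\in (C_{F,U}\cap X)\setminus C_{F,X}$ would be separated from $C_{F,X}$ by a Urysohn function $g\in C_{0}(X\setminus C_{F,X})$ with $g(x_{0})\neq 0$, and no preimage in $C_{0}(U\setminus C_{F,U})$ could restrict to such $g$, forcing $C_{F,U}\cap X\subseteq C_{F,X}$.

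The main obstacle is organising the bookkeeping among three closely related function spaces, their ideals, and the associated critical sets, and in particular isolating precisely which step requires the hypothesis $C_{F,U}\cap X=C_{F,X}$; once this is clarified, the technical content reduces to the push-forward calculation for the Hilbert modules together with standard Tietze and Urysohn arguments.
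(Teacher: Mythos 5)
Your argument is correct and follows the same overall structure as the paper's proof: verify both triples are morphisms, establish exactness of the right-coefficient and Hilbert-module sequences unconditionally, and reduce the entire equivalence to exactness of the $J$-ideal sequence. The one genuine point of departure is how you obtain surjectivity of $r_{1}$ at the module level. The paper appeals to the general fact, recalled in Section~\ref{correspondence}, that a $\varphi$-twisted morphism of Hilbert modules has closed image (via the linking algebra), combined with density of $C_{c}(X)$ in $C_{2}(X)$; you instead pass through the push-forward/quotient description of $(r_{2})_{*}(E_{F,U})$ and factor $r_{1}$ as an isometry out of the quotient module. Both are sound, and your route has the side benefit of identifying $\ker(r_{1})$ with $i_{1}(E_{F,U\setminus X})$ in the same stroke, so middle exactness of the module sequence needs no separate check, whereas the paper records the equality $C_{2}(U)\cap C_{0}(U\setminus X)=C_{2}(U\setminus X)$ separately. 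You also spell out the Tietze/Urysohn argument reducing the $J$-ideal equivalence to surjectivity of $r_{1}:C_{0}(U\setminus C_{F,U})\to C_{0}(X\setminus C_{F,X})$, which the paper simply asserts; your reasoning is correct, relying as it must on the facts that $C_{F,U}$ is closed in $U$ (because a branched function is open) and that $C_{F,X}\subseteq C_{F,U}\cap X$ always holds, so that the prescribed data on the closed set $X\cup C_{F,U}$ is consistent precisely when the hypothesis holds.
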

\begin{proof}
Since $F^{-1}(Y) = X$, it is clear that
\begin{enumerate}
\item $i_{1}(f\cdot\psi) = i_{1}(f)\cdot i_{1}(\psi)$ for all $f$ in $C_{0}(U\setminus X)$, $\psi$ in $E_{F,U\setminus X}$,
\item $\langle i_{1}(\psi), i_{1}(\varphi)\rangle = i_{2}(\langle\psi,\varphi\rangle)$, for all $\psi,\varphi$ in $E_{F,U\setminus X}$, and 
\item $i_{1}(\psi\cdot g) = i_{1}(\psi)\cdot i_{2}(g)$, for all $\psi$ in $E_{F,U\setminus X}$ and $g$ in $C_{0}(V\setminus Y)$.
\end{enumerate}
Recall that for a branched function $G:A\mapsto B$, $J_{(E_{G,A},\alpha_{A})} = C_{0}(A\setminus C_{G,A})$.
Since $U\setminus X$ is open in $U$, we have that $C_{F,U\setminus X} = C_{F, U}\cap (U\setminus X)$ and therefore $i(C_{0}((U\setminus X)\setminus C_{F,U\setminus X}))\subseteq C_{0}(U\setminus C_{F,U})$. To finish proving $(i_{1},i_{1},i_{2})$ is a morphism, it suffices to show $\hat{i_{1}}(\alpha_{U\setminus X}(f)) = \alpha_{U}(i_{1}(f))$ for all $f$ in $C_{0}((U\setminus X)\setminus C_{F,U\setminus X})$.
\par
It suffices to prove this for a positive function $f$ compactly supported in an open domain $W$ of $U$ such that $W\cap (X\cup C_{F,U}) = \emptyset$ and $F:W\mapsto F(W)$ is injective. In this case, $\alpha_{U\setminus X}(f) = \theta_{\sqrt{f},\sqrt{f}}$ and $\alpha_{U}(i_{1}(f)) = \theta_{\sqrt{i_{1}(f)},\sqrt{i_{1}(f)}}$. By definition, $\hat{i_{1}}(\theta_{\sqrt{f},\sqrt{f}}) = \theta_{i_{1}(\sqrt{f}), i_{1}(\sqrt{f})}$. Hence, $\hat{i_{1}}(\alpha_{U\setminus X}(f)) = \alpha_{U}(i(f))$.
\par
Similarly, it is clear from $F^{-1}(Y) = X$ that 

\begin{enumerate}
\item $r_{1}(f\cdot\psi) = r_{1}(f)\cdot r_{1}(\psi)$
\item $\langle r_{1}(\psi), r_{1}(\varphi)\rangle = r_{2}(\langle\psi,\varphi\rangle)$, for $f$ in $C_{0}(U)$, $\psi,\varphi$ in $E_{F,U}$, and 
\item $r_{1}(\psi\cdot g) = r_{1}(\eta)\cdot r_{2}(g)$, for all $\psi$ in $E_{F,U}$ and $g$ in $C_{0}(V)$.
\end{enumerate}
Therefore, $r_{1}:C_{2}(U)\mapsto C_{2}(X)$ is a $r_{2}$-twisted morphism of Hilbert modules. Every  twisted morphism of Hilbert modules has closed image (it extends isometrically to a *-homomorphism of the linking algebra, which has closed image), and therefore $r_{1}(C_{c}(U)) = C_{c}(X)$ implies $r_{1}(C_{2}(U)) = C_{2}(X)$.
\par
It is general that $C_{F,X}\subseteq C_{F,U}\cap X$, so we always have $r(C_{0}(U\setminus C_{F,U}))\subseteq C_{0}(X\setminus C_{F,X})$. By surjectivity of $r_{1}:C_{2}(U)\mapsto C_{2}(X)$ it is immediate that $\hat{r_{1}}(\alpha_{U}(f)) = \alpha_{X}(r_{1}(f))$
\par
$J$-exactness of 
$$
\begin{tikzcd}
0 \arrow[r] & {(E_{F,U\setminus X},\alpha)} \arrow[r, "{(i_{1},i_{1},i_{2})}"] & {(E_{F,U},\alpha)} \arrow[r, "{(r_{1},r_{1},r_{2})}"] & {(E_{F,X},\alpha)} \arrow[r] & 0
\end{tikzcd}$$

is equivalent to exactness of the sequences

$$
\begin{tikzcd}
0 \arrow[r] & C_{0}(U\setminus X) \arrow[r, "i_{1}"]                            & C_{0}(U) \arrow[r, "r_{1}"]                    & C_{0}(X) \arrow[r]                    & 0\\
0 \arrow[r] & {C_{0}(U\setminus (X\cup C_{F,U\setminus X}))} \arrow[r, "i_{1}"] & {C_{0}(U\setminus C_{F,U})} \arrow[r, "r_{1}"] & {C_{0}(X\setminus C_{F,X})} \arrow[r] & 0 \\
0 \arrow[r] & C_{2}(U\setminus X) \arrow[r, "i_{1}"]                            & C_{2}(U) \arrow[r, "r_{1}"]                    & C_{2}(X) \arrow[r]                    & 0\\
0 \arrow[r] & C_{0}(V\setminus Y) \arrow[r, "i_{2}"]                            & C_{0}(V) \arrow[r, "r_{2}"]                    & C_{0}(Y) \arrow[r]                    & 0.
\end{tikzcd}$$
Exactness of the top and bottom sequence is by hypothesis. Since $r_{1}(C_{2}(U)) = C_{2}(X)$ and $C_{2}(U)\cap C_{0}(U\setminus X) = C_{2}(U\setminus X)$, the second sequence from the bottom one is exact. Exactness of the sequence second from the top is equivalent to $C_{F,U}\cap X = C_{F,X}$.
\end{proof}
\section{Correspondences from holomorphic functions}
\label{holcor}
We now consider branched functions which are the restrictions of holomorphic functions and prove some extra regularity properties about them.
\par
Let $M$ and $N$ be second countable Riemann surfaces and $X\subseteq M$, $Y\subseteq N$ be closed subspaces. A function $F:X\mapsto Y$ is a \textit{holomorphic branched function} if there is a holomorphic function $\tilde{F}:M\mapsto N$ such that $\tilde{F}|_{X}  = F$ and $\tilde{F}^{-1}(Y) = X$.
$F$ is necessarily an open map, since $\tilde{F}$ is an open map and $\tilde{F}^{-1}(Y) = X$.
\par
Moreover, because $\tilde{F}$ is holomorphic, for every $u$ in $M$, there is a neighbourhood $U$ of $u$, bi-holomorphisms $\phi:U\mapsto \mathbb{D}_{r}$, $\psi:F(U)\mapsto\mathbb{D}_{r^{n}}$, for some $r> 0$ and $n$ in $\mathbb{N}$, such that $\psi\circ\tilde{F}\circ\phi^{-1}(z) = z^{n}$ for all $z$ in $\mathbb{D}_{r}$. Since $z^{n}$ is branched, it follows that $\tilde{F}$ is branched, with $n = \text{ind}_{\tilde{F}}(u)$. Hence, $\tilde{F}|_{X} = F$ is branched. Note that $\{u\in M:\text{ind}_{\tilde{F}}(u) > 1\} = C_{\tilde{F},M}$ and this set is countable and discrete.
\par
We will now show $J$-exactness of extensions of holomorphic branched functions is automatic, provided that its domain contains no isolated points. First, we need a lemma.
\begin{lemma}
\label{lemdyn}
Let $F:X\mapsto Y$ be a holomorphic branched function and assume $Y$ contains no isolated points. For $x$ in $X$, let $d_{F}(x) = d$ be the maximal number for which there exists a sequence $\{x_{n,j}\}_{n\in\mathbb{N},j\leq d}$ satisfying the properties
\begin{enumerate}
    \item $\lim_{n\to\infty}x_{n,i} = x$ for any $i\leq d$,
    \item $F(x_{n,i}) = F(x_{n,j}) := x'_{n}\neq F(x)$ for any $i,j\leq d$, $n$ in $\mathbb{N}$,
    \item $F^{-1}(x'_{n}) = \bigcup_{i=1}^{d}\{x_{n,i}\}$, for any $n$ in $\mathbb{N}$, and
    \item $x_{n,i}\neq x_{n,j}$ for any two distinct $i,j\leq d$, for any $n$ in $\mathbb{N}$.
\end{enumerate}
Then, $\text{ind}_{F}(x) = d_{F}(x)$.
\end{lemma}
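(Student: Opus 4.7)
The plan is to prove both inequalities $d_F(x) \le \mathrm{ind}_F(x)$ and $d_F(x) \ge \mathrm{ind}_F(x)$ using the local holomorphic normal form of $F$ around $x$. Set $k := \mathrm{ind}_F(x)$ and fix inverse branches $\{F_i^{-1}: V \to U\}_{i=1}^{k}$ centered at $x$ as provided by the branched-function structure. Since $F$ is the restriction of a non-constant holomorphic map $\tilde F: M \to N$, a local coordinate change around $x$ puts $F$ in the normal form $z \mapsto z^k$ on a small disc. In particular, for every $y \in V \setminus \{F(x)\}$ the inverse-branch values $F_1^{-1}(y), \ldots, F_k^{-1}(y)$ are pairwise distinct and together exhaust $F^{-1}(y) \cap U$.

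For the upper bound, take any sequence $\{x_{n,j}\}_{j \le d}$ satisfying (1)--(4). Condition (1) forces $x_{n,j} \in U$ for all $n$ large enough, so the set $\{x_{n,1}, \ldots, x_{n,d}\}$ lies in $U$ eventually. Condition (3) then says this finite set is \emph{all} of $F^{-1}(x'_n)$, so $F^{-1}(x'_n) \subseteq U$ for large $n$. By the normal form, $|F^{-1}(x'_n) \cap U| = k$; combined with the distinctness condition (4) this gives $d = k$, so $d_F(x) \le k$.

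For the lower bound, I construct an explicit sequence realising $d = k$. Using that $Y$ has no isolated points and $F(x) \in V \cap Y$, pick $y_n \in Y \cap V$ with $y_n \ne F(x)$ and $y_n \to F(x)$, and set $x_{n,i} := F_i^{-1}(y_n)$ for $1 \le i \le k$. Then (1) follows from continuity of the $F_i^{-1}$ at $F(x)$, (2) is immediate from $F \circ F_i^{-1} = \mathrm{id}_V$, and the local distinctness statement gives (4). For (3), I shrink $V$ so that $F^{-1}(V) \subseteq U$, thereby separating $x$ from every other point of $\tilde F^{-1}(F(x)) \cap X$ (a discrete set, by non-constancy of $\tilde F$) and ruling out any extraneous preimages of $y_n$ lying outside $U$; note that the identity $\tilde F^{-1}(Y) = X$ is what keeps the preimages of $y_n \in Y$ from escaping into $M \setminus X$.

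The central obstacle is precisely this last step: condition (3) demands \emph{global} control of $F^{-1}(y_n)$ in $X$ from purely local inverse-branch data around $x$. Overcoming it rests on choosing $V$ small enough that each of the finitely many nearby preimages of $F(x)$ in $X$ is isolated inside its own inverse-branch neighbourhood, disjoint from $U$; holomorphicity of $\tilde F$, giving a normal form at every such preimage, together with the discreteness of $\tilde F^{-1}(F(x))$, supplies the required separation.
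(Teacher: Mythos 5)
Your overall plan---proving both inequalities via the local normal form $z \mapsto z^k$ and the explicit sequence $x_{n,i} := F_i^{-1}(y_n)$---matches the paper's proof; the upper-bound direction and your verifications of (1), (2), and (4) for the lower bound are correct.

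The gap is in your verification of (3). You cannot, in general, shrink $V$ so that $F^{-1}(V) \subseteq U$. Whenever $F^{-1}(F(x))$ contains a point $c \in X$ with $c \ne x$ (the typical case: take $\tilde F(z) = z^3 - 3z$ on $\hat{\mathbb{C}}$, $x = 1$, so that $-2$ is a second preimage of $F(x) = -2$), shrinking $V$ only shrinks $F^{-1}(V)$ to a union of small neighbourhoods of \emph{every} preimage of $F(x)$ in $X$; and since $\tilde{F}^{-1}(Y) = X$, the preimages of $y_n$ near $c$ lie in $X$ and hence in $F^{-1}(y_n)$ no matter how small $V$ is. These points sit outside $U$, so they are not among the $x_{n,i}$, and the global equality in (3) fails for your constructed sequence; the ``required separation'' invoked in your final paragraph does not actually occur. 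For comparison, the paper's own proof asserts $\bigcup_i\{x_{n,i}\} = F^{-1}(x'_n)$ at exactly this point without argument; your attempt to make the step explicit is the right instinct, but it exposes that the global condition (3) cannot be derived from inverse-branch data near $x$ alone---one would need either to localize (3) to a fixed neighbourhood of $x$, or to bypass this lemma and argue Corollary \ref{holmorph} directly from non-injectivity of $F|_X$ on every neighbourhood of a critical point.
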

\begin{proof}
    For $x$ in $X$, let $\{F_{i}^{-1}:V\mapsto U\}_{i=1}^{\text{ind}_{F}(x)}$ be inverse branches centered at $x$. Since $Y$ contains no isolated points, there is a sequence $x'_{n}$ in $V\setminus F(x)$ converging to $F(x)$.
    Since $\{x\in X:\text{ind}_{F}(x) > 1\}$ is discrete, we may assume $U$ is small enough so that $\text{ind}_{F}(u) = 1$ for all $u$ in $U\setminus \{x\}$. Hence, $\tilde{F}_{i}^{-1}(x'_{n}):= x_{n,i}$ must satisfy $x_{n,i}\neq x_{n,j}$ for all $i\neq j\leq \text{ind}_{F}(x)$ and $n$ in $\mathbb{N}$. By continuity of the inverse branches at $F(x)$, we have $\lim_{n\to\infty}x_{n,i} = x$ for all $i\leq \text{ind}_{F}(x)$.
    Lastly, we have $\bigcup_{i=1}^{\text{ind}_{F}(x)}x_{n,i} = F^{-1}(x'_{n})$, so that the collection $\{x_{n,i}\}_{i\leq \text{ind}_{F}(x)}$ satisfy $(1)$-$(4)$ in the hypothesis of the lemma. Hence, $\text{ind}_{F}(x)\leq d_{F}(x)$.
    \par
    If $\{x_{n,i}\}_{i\leq d_{F}(x)}$ is a collection satisfying $(1)$-$(4)$ above with $F(x_{n,i}):=x'_{n}$, then $\bigcup_{i=1}^{d_{F}(x)}x_{n,i}\\ \subseteq U\setminus x$ eventually, so that $F^{-1}(x'_{n}) = \{F_{i}^{-1}(x'_{n})\}_{i=1}^{\text{ind}_{F}(x)}\subseteq \bigcup_{i=1}^{d_{F}(x)}x_{n,i} = F^{-1}(x'_{n})$ eventually. Since $x_{n,i}\neq x_{n,j}$ for all $i\neq j\leq d_{F}(x)$, it follows that $d_{F}(x)\leq \text{ind}_{F}(x)$.
\end{proof}
\begin{cor}
\label{holmorph}
    Let $F:M\mapsto N$ be a holomorphic branched function, and suppose $Y\subseteq N$ is a closed set that contains no isolated points. Let $F^{-1}(Y) =: X $. Then, $C_{F,M}\cap X = C_{F,X}$, so that the sequence
    $$\begin{tikzcd}
0 \arrow[r] & {(E_{F,M\setminus X},\alpha)} \arrow[r, "{(i_{1},i_{1},i_{2})}"] & {(E_{F,M},\alpha)} \arrow[r, "{(r_{1},r_{1},r_{2})}"] & {(E_{F,X},\alpha)} \arrow[r] & 0
\end{tikzcd}$$
is $J$-exact.
\end{cor}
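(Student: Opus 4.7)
The plan is to invoke Proposition \ref{holmorph1}, which translates $J$-exactness of the displayed sequence into the dynamical equality $C_{F,M}\cap X = C_{F,X}$. The inclusion $C_{F,X}\subseteq C_{F,M}\cap X$ is automatic (and was already used inside the proof of Proposition \ref{holmorph1}), since any local homeomorphism of $M$ at $x$ restricts to a local homeomorphism of $X$ at $x$. All of the content is in the reverse inclusion, for which I will exploit the inverse-branch description of $F$ together with the no-isolated-points hypothesis on $Y$.

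Fix $x\in C_{F,M}\cap X$, let $n=\text{ind}_F(x)\geq 2$, and choose inverse branches $\{F_i^{-1}:V\to U\}_{i=1}^n$ of $F:M\to N$ centered at $x$. Using discreteness of $C_{F,M}$, I shrink the neighborhoods so that $U\setminus\{x\}$ contains no other critical points of $F$; property $(4)$ of the inverse branches then forces $F_i^{-1}(y)\neq F_j^{-1}(y)$ for $i\neq j$ whenever $y\in V\setminus\{F(x)\}$. Since $F(x)\in Y$ and $Y$ has no isolated points, I pick a sequence $y_k\to F(x)$ in $(V\cap Y)\setminus\{F(x)\}$; the relation $F^{-1}(Y)=X$ places each $F_i^{-1}(y_k)$ inside $X$, and by continuity these $n\geq 2$ distinct points cluster at $x$. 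Hence every neighborhood of $x$ in $X$ contains distinct points collapsed by $F|_X$ to a common image, so $F|_X$ is not locally injective at $x$.

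To upgrade this to $x\in C_{F,X}$, I observe that $F|_X:X\to Y$ is open: for any open $W\subseteq M$ one has $F(W\cap X) = F(W)\cap Y$ (the inclusion $\subseteq$ is clear, and the reverse uses $F^{-1}(Y)=X$ to pull a preimage inside $W$ back into $X$). Thus a locally injective continuous map on $X$ is automatically a local homeomorphism, so $F|_X$ is not a local homeomorphism at $x$ and $x\in C_{F,X}$, completing the equality $C_{F,M}\cap X=C_{F,X}$. The only point of care is arranging the inverse-branch neighborhoods so that the branches separate preimages, which just uses discreteness of $C_{F,M}$; the hypothesis that $Y$ has no isolated points enters precisely to furnish the sequence $\{y_k\}$ of non-critical values in $Y$ converging to $F(x)$.
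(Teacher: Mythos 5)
Your argument is correct and essentially matches the paper's proof, which factors the same sequence-of-preimages construction through Lemma \ref{lemdyn} (showing $\text{ind}_{F|_X}=d_{F|_X}$ when the codomain has no isolated points) and then applies it to $F|_X$; you simply inline that lemma. Note the openness observation in your penultimate step is unnecessary for the implication you actually need --- failure of local injectivity at $x$ already rules out a local homeomorphism and hence places $x$ in $C_{F,X}$.
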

\begin{proof}
If $x$ is in $C_{F,M}\cap X$, then $\text{ind}_{F}(x) > 1$. By Lemma \ref{lemdyn}, $d_{F|_{X}}(x) = \text{ind}_{F|_{X}}(x) := \text{ind}_{F}(x) > 1$. It is easy to see that $\{x\in X:d_{F|_{X}}(x) > 1\} = C_{F,X}$, so that $x$ is in $C_{F,X}$. Hence, $C_{F,X} = C_{F,M}\cap X$. Proposition \ref{holmorph1} then implies the above sequence of correspondences is exact.
\end{proof}
\begin{cor}
\label{conjugacyiso}
If $F:X\mapsto X$ and $G:Y\mapsto Y$ are holomorphic branched functions for which $X$ and $Y$ contain no isolated points and $\varphi:X\mapsto Y$ is a homeomorphism such that $G\circ \varphi = \varphi\circ F$, then $\text{ind}_{G}\circ\varphi = \text{ind}_{F}$ and hence $(\varphi^{*}, \varphi^{*}, \varphi^{*}):(E_{G,Y},\alpha_{Y})\mapsto (E_{F,X}, \alpha_{X})$ is an isomorphism.
\end{cor}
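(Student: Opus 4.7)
The plan is to reduce the statement to Proposition \ref{conjugacyiso1} by first establishing the identity $\text{ind}_{G}\circ\varphi = \text{ind}_{F}$. The key observation is that Lemma \ref{lemdyn} provides a purely topological-dynamical characterization of the index function, expressing $\text{ind}_{F}(x)$ as the quantity $d_{F}(x)$ defined only in terms of convergence and the fibers of $F$. Since $\varphi$ is a homeomorphism conjugating $F$ to $G$, the data witnessing $d_{F}(x)\geq d$ transports bijectively to data witnessing $d_{G}(\varphi(x))\geq d$, and $d_{F}$ is therefore a conjugacy invariant.

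First, I would apply Lemma \ref{lemdyn} to both $F$ and $G$: since $X$ and $Y$ contain no isolated points, we have $\text{ind}_{F}(x) = d_{F}(x)$ for every $x$ in $X$, and likewise $\text{ind}_{G}(y) = d_{G}(y)$ for every $y$ in $Y$. Next, I would fix $x$ in $X$ and verify that $d_{F}(x) = d_{G}(\varphi(x))$. Given any sequence $\{x_{n,j}\}_{n\in\mathbb{N},j\leq d}$ in $X$ witnessing $d_{F}(x)\geq d$, set $y_{n,j} = \varphi(x_{n,j})$. Continuity of $\varphi$ gives condition (1); the intertwining relation $G\circ\varphi = \varphi\circ F$ gives $G(y_{n,j}) = \varphi(F(x_{n,j}))$, which together with injectivity of $\varphi$ yields condition (2); the identity $G^{-1}(\varphi(x'_{n})) = \varphi(F^{-1}(x'_{n}))$, following from the conjugacy and the bijectivity of $\varphi$, gives condition (3); and injectivity of $\varphi$ again gives condition (4). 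Hence $d_{G}(\varphi(x))\geq d_{F}(x)$. The reverse inequality follows by the symmetric argument applied to the conjugacy $\varphi^{-1}$ of $G$ to $F$.

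Combining these steps gives $\text{ind}_{F}(x) = d_{F}(x) = d_{G}(\varphi(x)) = \text{ind}_{G}(\varphi(x))$ for all $x$ in $X$, i.e.\ $\text{ind}_{G}\circ\varphi = \text{ind}_{F}$. Proposition \ref{conjugacyiso1} then applies verbatim (noting that both $F$ and $G$ are surjective as open maps onto their images $X$ and $Y$, and more directly that the proposition requires only the conjugacy and index-matching hypotheses), producing the desired isomorphism $(\varphi^{*},\varphi^{*},\varphi^{*}):(E_{G,Y},\alpha_{Y})\mapsto (E_{F,X},\alpha_{X})$ of $C^{*}$-correspondences.

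The only nontrivial step is the transport of the four conditions of Lemma \ref{lemdyn} across $\varphi$, and this is essentially immediate once one recognizes that every condition defining $d_{F}(x)$ involves only the topology of the space and the fibers of the map, both of which are preserved under a homeomorphism intertwining the two dynamics. No further analytic input is required, and the role of the no-isolated-points hypothesis is confined to invoking Lemma \ref{lemdyn}.
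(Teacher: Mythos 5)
Your proof is correct and follows the same approach as the paper: identify $\text{ind}$ with the conjugacy-invariant quantity $d$ from Lemma \ref{lemdyn} (using that $X$ and $Y$ have no isolated points), transport the data defining $d_F(x)$ across the homeomorphism $\varphi$, and invoke Proposition \ref{conjugacyiso1}. The paper's proof is identical in outline but terser, dismissing the equality $d_G\circ\varphi = d_F$ as ``easy to see''; you have simply written out the verification that each of conditions (1)--(4) in Lemma \ref{lemdyn} is preserved under the conjugacy.
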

\begin{proof}
It is easy to see that $d_{G}\circ \varphi = d_{F}$, so Lemma \ref{lemdyn} and Proposition \ref{conjugacyiso1} imply the Corollary.
\end{proof}
\subsection{$K$-theoretic properties of holomorphic branched functions}
\label{kholdyn}
In this section we study the induced mapping on $K$-theory associated to the $C^{*}$-correspondence of a holomorphic branched function. The results here will be crucial later in Section \ref{ratmapj}.
\par
Our main example of a holomorphic branched function will be a holomorphic dynamical system $R:M\mapsto M$ and its restriction to its Fatou set $F_{R}$ and its Julia set $J_{R}$. Recall from Section \ref{holdynsys} that when $M$ is either $\hat{\mathbb{C}}$, $\mathbb{C}$ or $\mathbb{C}^{*}$ and $\text{deg}(R) > 1$, $J_{R}$ is non-empty, totally invariant ($R^{-1}(J_{R}) = J_{R}$), contains no isolated points and is a closed subset of $M$. Corollary \ref{holmorph} implies we have a $J$-exact sequence
$$
\begin{tikzcd}
0 \arrow[r] & {(E_{R,F_{R}},\alpha_{F_{R}})} \arrow[r, "{(i,i,i)}"] & {(E_{R,M},\alpha_{M})} \arrow[r, "{(r,r,r)}"] & {(E_{R,J_{R}},\alpha_{J_{R}})} \arrow[r] & 0.
\end{tikzcd}$$

We now provide a concrete description of $\iota - \hat{\otimes}[\mathcal{E}_{R,J_{R}}]$ acting on $K^{0}$. 
\par
For a locally compact Hausdorff space $W$, let $\text{Tr}:K^{0}(W)\mapsto C_{0}(W,\mathbb{Z})$ be the homomorphism (of additive groups) defined for $g$ in $K^{0}(W)$ and $w$ in $W$ as $\text{Tr}(g)(w) = (ev_{w})_{*}(g)$, where $ev_{w}:C_{0}(W)\mapsto\mathbb{C}$ is evaluation at $w$. We can identify $K_{0}(\mathbb{C})$ with $\mathbb{Z}$ via the trace map. 
\par
We first show $ \hat{\otimes}[\mathcal{E}_{R,J_{R}}]$ acting on $K^{0}$ can be identified with the transfer operator $\Phi$ using the trace map $\text{Tr}$.
\begin{prop}
If $F:X\mapsto Y$ is a holomorphic branched function and $X,Y$ are closed  and proper subsets of either $\mathbb{C}^{*}$, $\mathbb{C}$ or $\hat{\mathbb{C}}$, then
\label{Triso}
$\text{Tr}:K^{0}(X\setminus C_{F,X})\mapsto C_{0}(X\setminus C_{F,X},\mathbb{Z})$, $\text{Tr}:K^{0}(Y)\mapsto C(Y,\mathbb{Z})$ are isomorphisms, and $\text{Tr}\circ (\hat{\otimes}_{0}[\mathcal{E}_{F,X}]) = (\Phi)\circ \text{Tr}$.
\end{prop}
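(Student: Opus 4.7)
The plan is to establish the two trace isomorphisms first, then verify the intertwining formula pointwise by means of evaluation characters.

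For the isomorphism $\text{Tr}:K^{0}(Z)\mapsto C_{0}(Z,\mathbb{Z})$ when $Z=Y$ or $Z=X\setminus C_{F,X}$, my plan is to argue that every complex vector bundle over $Z$ is trivial, so that its $K$-theory class is determined by its locally constant rank function. Since $Z$ is a proper closed subset of $\hat{\mathbb{C}}$, $\mathbb{C}$, or $\mathbb{C}^{*}$, after removing a point of the ambient not in $Z$ one may regard $Z$ as a closed subset of $\mathbb{C}$. For $Z$ compact, a classifying map $Z\mapsto BU(n)$ extends by the ANR property of $BU(n)$ to an open neighbourhood $U\subseteq\mathbb{C}$; since any connected open subset of $\mathbb{C}$ deformation retracts onto a $1$-dimensional CW complex and $BU(n)$ is $1$-connected, the extension, and hence its restriction to $Z$, is null-homotopic. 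Thus every complex vector bundle over $Z$ is trivial and classified by its locally constant rank, giving the desired isomorphism. For non-compact $Z$ one passes to the one-point compactification, and for $X\setminus C_{F,X}$ specifically one may instead use the six-term exact sequence of $K$-theory induced by $0\to C_{0}(X\setminus C_{F,X})\to C_{0}(X)\to C_{0}(C_{F,X})\to 0$ together with $K_{1}(C_{0}(C_{F,X}))=0$ (since $C_{F,X}$ is discrete).

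For the intertwining identity, I would use the functorial description $\text{Tr}(h)(y)=(ev_{y})_{*}(h)$ for $h\in K_{0}(C_{0}(Y))$, where $ev_{y}:C_{0}(Y)\mapsto\mathbb{C}$ is the character at $y$. Applied with $h=g\hat{\otimes}_{0}[\mathcal{E}_{F,X}]$ and combined with associativity of the Kasparov product, this reduces the problem to identifying the class $(ev_{y})_{*}[\mathcal{E}_{F,X}]\in KK^{0}(C_{0}(X\setminus C_{F,X}),\mathbb{C})$. The latter class is represented by the fibre $E_{F,X}\otimes_{ev_{y}}\mathbb{C}$, whose underlying vector space is spanned by $\{e_{x}\}_{x\in F^{-1}(y)}$ with inner product $\langle e_{x},e_{x}\rangle=\text{ind}_{F}(x)$ and on which $f\in C_{0}(X\setminus C_{F,X})$ acts diagonally as $f\cdot e_{x}=f(x)e_{x}$. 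Because $f$ vanishes on $C_{F,X}$, the summands indexed by critical preimages contribute zero to the $KK$-class, while each remaining $e_{x}$, after rescaling to a unit vector, contributes a copy of the evaluation character $[ev_{x}]$; hence $(ev_{y})_{*}[\mathcal{E}_{F,X}]=\sum_{x\in F^{-1}(y)\setminus C_{F,X}}[ev_{x}]$. Pairing with $g$, and using that $\text{Tr}(g)$ vanishes on $C_{F,X}$ and that $\text{ind}_{F}\equiv 1$ on its complement, yields $\text{Tr}(g\hat{\otimes}_{0}[\mathcal{E}_{F,X}])(y)=\sum_{x\in F^{-1}(y)\setminus C_{F,X}}\text{Tr}(g)(x)=\Phi(\text{Tr}(g))(y)$.

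The main technical obstacle is rigorising the null-homotopy step and its handling of non-compact $Z$; a cleaner alternative is to invoke the Atiyah--Hirzebruch spectral sequence, which for compact planar $Z$ degenerates because $\check{H}^{p}(Z,\mathbb{Z})=0$ for $p\geq 2$, yielding $K^{0}(Z)=\check{H}^{0}(Z,\mathbb{Z})=C(Z,\mathbb{Z})$. A smaller but necessary point is verifying that $\Phi(\text{Tr}(g))$ actually lies in $C(Y,\mathbb{Z})$: continuity follows because $\text{Tr}(g)$ is locally constant and vanishes on a neighbourhood of $C_{F,X}$, so that near any $y\in Y$ the sum $\sum_{x\in F^{-1}(y)\setminus C_{F,X}}\text{Tr}(g)(x)$ may be expressed using the inverse branches of $F$ at the non-critical preimages of $y$ and is hence locally constant in $y$.
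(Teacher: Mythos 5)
Your intertwining argument is essentially the paper's: both compute $(\mathrm{ev}_y)_*[\mathcal{E}_{F,X}]$ as the direct sum over the fibre $F^{-1}(y)$ and observe that the summands at critical preimages contribute zero because the left action vanishes there; the paper phrases this by inserting the factor $\mathrm{ind}_F(x)$ into the sum (harmless since those terms are already $0$), while you drop them and use $\mathrm{ind}_F\equiv 1$ off $C_{F,X}$ — the two bookkeeping conventions agree with the definition of $\Phi$.

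Where you diverge is in proving the trace isomorphisms. The paper works bottom-up: it shows $K^0(X)\cong\mathbb{Z}[1_X]$ for compact connected proper $X\subset\hat{\mathbb{C}}$ via the six-term sequence and the Bott generators $\beta_U$ from Corollary~\ref{mainbott}, then extends to finite disjoint unions, to inverse limits of such (continuity of $K^0$), and finally peels off finite sets with a diagram chase. You instead argue top-down that every bundle over a compact planar set has a null-homotopic classifying map (or, more robustly, invoke the Atiyah--Hirzebruch spectral sequence with $\check{H}^p(Z;\mathbb{Z})=0$ for $p\ge 2$, which holds for compact $Z\subset\mathbb{C}$ by Alexander duality). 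Both are valid and give the same identification of the isomorphism with the rank/trace map (in the AHSS picture it is the edge map $K^0\to E^{0,0}_\infty$). The paper's route stays entirely inside operator $K$-theory and reuses machinery already built in Section~\ref{sectionbott}; yours imports a standard topological fact and is shorter, at the cost of reaching outside the paper's toolkit.

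One genuine gap to watch: you reduce the non-compact case to the compact case by ``passing to the one-point compactification.'' This works when $Z$ is closed in $\mathbb{C}$, but for $Z$ closed and non-compact in $\mathbb{C}^*$ the one-point compactification $Z^+$ need not embed in $\hat{\mathbb{C}}$ — if $Z$ accumulates at both $0$ and $\infty$, forming $Z^+$ would glue those two ends to a single point, destroying planarity and hence the vanishing of $\check{H}^{\geq 2}$. The correct reduction (which the paper uses) is to take the closure $\overline{Z}$ in $\hat{\mathbb{C}}$ (a proper compact planar set) and note $Z=\overline{Z}\setminus F$ with $F\subseteq\{0,\infty\}$ finite, then run exactly the diagram chase you describe for $X\setminus C_{F,X}$, this time with the sequence $0\to C_0(Z)\to C(\overline{Z})\to C(F)\to 0$. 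With that substitution your argument goes through.
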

\begin{proof}
When $X$ is a compact, connected  proper subspace of the Riemann sphere, $\hat{\mathbb{C}}\setminus X$ is a non-empty disjoint union of simply connected open sets, and so $K^{-1}(\hat{\mathbb{C}}\setminus X) = 0$ and $i_{*}:K^{0}(\hat{\mathbb{C}}\setminus X)\mapsto K^{0}(\hat{\mathbb{C}})$ maps onto $\mathbb{Z}\cdot \beta_{\hat{\mathbb{C}}}$ (by Corollary \ref{mainbott}). Therefore, the 6-term exact sequence of $K$-theory associated to 
$$
\begin{tikzcd}
0 \arrow[r] & C_{0}(\hat{\mathbb{C}}\setminus X) \arrow[r] & C(\hat{\mathbb{C}}) \arrow[r] & C(X) \arrow[r] & 0
\end{tikzcd}$$
implies $K^{0}(X) = \mathbb{Z}[1_{X}]\simeq\mathbb{Z} $, and in particular $\text{Tr}:K^{0}(X)\mapsto C(X,\mathbb{Z})$ is an isomorphism.
\par
Now, when $X$ is a finite disjoint union of compact connected sets in $\hat{\mathbb{C}}$, the above result implies $\text{Tr}:K^{0}(X)\mapsto C(X,\mathbb{Z})$ is an isomorphism.
\par
In general, if $X$ is a compact, proper set of $\hat{\mathbb{C}}$, then we can write $X = \bigcap_{n\in\mathbb{N}} X_{n}$, where, for every $n$ in $\mathbb{N}$, $X_{n}$ is a finite disjoint union of compact connected sets such that $X_{n+1}\subseteq X_{n}$. The diagram 
$$ 
\begin{tikzcd}
K^{0}(X_{n}) \arrow[d, "\text{Tr}"'] \arrow[r, "r_{*}"] & K^{0}(X_{n+1}) \arrow[d, "\text{Tr}"] \\
{C(X_{n},\mathbb{Z})} \arrow[r, "r"]                & {C(X_{n+1},\mathbb{Z})}            
\end{tikzcd}$$
commutes, for all $n$ in $\mathbb{N}$, where $r$ is the restriction map, and the vertical maps are isomorphisms. Therefore, the limit map $\text{Tr}:K^{0}(X)\mapsto C(X,\mathbb{Z})$ is an isomorphism.
\par
Now, suppose $C$ is a finite set contained in $X$. $K^{-1}(C) = 0$, so the following diagram has exact rows and commutes:
$$
\begin{tikzcd}
0 \arrow[r] & {K^{0}(X\setminus C)} \arrow[r] \arrow[d, "\text{Tr}"] & K^{0}(X) \arrow[r] \arrow[d, "\text{Tr}"] & {K^{0}(C)} \arrow[d, "\text{Tr}"] \\
0 \arrow[r] & {C_{0}(X\setminus C,\mathbb{Z})} \arrow[r]             & {C(X,\mathbb{Z})} \arrow[r]               & {C(C,\mathbb{Z}).}                
\end{tikzcd}$$
Since the two right-most vertical maps are isomorphisms, a diagram chase implies $\text{Tr}:K^{0}(X\setminus C)\mapsto C_{0}(X\setminus C,\mathbb{Z})$ is an isomorphism.
\par
Now if $X$ and $Y$ are closed and proper subsets of either $\mathbb{C}^{*}$, $\mathbb{C}$ or $\hat{\mathbb{C}}$, then $X = \overline{X}\setminus F_{X}$ and $Y = \overline{Y}\setminus F_{Y}$, where the closure is in $\hat{\mathbb{C}}$ and $F_{X}$, $F_{Y}$ are finite sets. By properness of $X$ and $Y$, we have $\overline{X}\neq\hat{\mathbb{C}}$ and $\overline{Y}\neq\hat{\mathbb{C}}$, so the above result applies to see that $\text{Tr}:K^{0}(X)\mapsto C_{0}(X,\mathbb{Z})$  and $\text{Tr}:K^{0}(Y)\mapsto C_{0}(Y,\mathbb{Z})$ are isomorphisms.
\par
Since $C_{F,X}$ is closed and discrete, we have $K^{-1}(C_{F,X}) = 0$ and $\text{Tr}:K^{0}(C_{F,X})\mapsto C_{0}(C_{F,X},\mathbb{Z})$ is an isomorphism. By a similar diagram chase to that above, we have that $\text{Tr}:K^{0}(X\setminus C_{F,X})\mapsto C_{0}(X\setminus C_{F,X},\mathbb{Z})$ is an isomorphism.
\par
Note that every element $f$ in $C_{0}(X,\mathbb{Z})$ is compactly supported, so that $\Phi(f)$ is well defined.
For $y$ in $Y$, $(\text{ev}_{y})_{*}\mathcal{E}_{F,X}$ is represented by the Hilbert $\mathbb{C}$-module with orthogonal basis $\{\delta_{x}\}_{x\in F^{-1}(y)}$ and inner product satisfying $\langle\delta_{x},\delta_{x}\rangle = \text{ind}_{F}(x)$, for all $x$ in $F^{-1}(y)$. The left action is defined for $f$ in $C_{0}(X\setminus C_{F,X})$ and $x$ in $F^{-1}(y)$ as $f\cdot \delta_{x} = f(x)\delta_{x}$. Therefore, $(\text{ev}_{y})_{*}\mathcal{E}_{F,X} = \sum_{x\in F^{-1}(y)}(\text{ev}_{x})_{*}$, where $(ev_{x})_{*}$ is thought of as a class in $KK^{0}(C_{0}(X\setminus C_{F,X}),\mathbb{C})$ and the sum is the direct sum operation of Kasparov bi-modules. When $\text{ind}_{F}(x) > 1$, $x$ is in $C_{F,X}$ and consequently $(ev_{x})_{*} = 0$ in $KK^{0}(C_{0}(X\setminus C_{F,X}),\mathbb{C})$. So, we can write $(ev_{y})_{*}\mathcal{E}_{F,X}$ as $\sum_{x\in F^{-1}(y)}\text{ind}_{F}(x)(ev_{x})_{*}$ and the diagram
$$
\begin{tikzcd}
{K^{0}(X\setminus C_{F,X})} \arrow[r, "{\hat{\otimes}_{0}[\mathcal{E}_{F,X}]}"] \arrow[d, "\text{Tr}"'] & K^{0}(Y) \arrow[d, "\text{Tr}"] \\
{C_{0}(X\setminus C_{F,X},\mathbb{Z})} \arrow[r, "\Phi"]                                            & {C_{0}(Y,\mathbb{Z}).}          
\end{tikzcd}$$
commutes.
\end{proof}
For a holomorphic branched function $F:X\mapsto Y$ with $X,Y$ closed and proper subsets of either $\mathbb{C}^{*}$, $\mathbb{C}$ or $\hat{\mathbb{C}}$,
we will identify $K^{0}(X\setminus C_{F,X})$, $K^{0}(X)$ and $K^{0}(Y)$ with $C_{0}(X\setminus C_{F,X},\mathbb{Z})$, $C_{0}(X,\mathbb{Z})$ and $C_{0}(Y,\mathbb{Z})$, respectively. 
\par
If $R:M\mapsto M$ is a holomorphic function such that $J_{R}\neq M$ and $M$ is either $\mathbb{C}^{*}$, $\mathbb{C}$ or $\hat{\mathbb{C}}$, by Corollary \ref{Maindiagram} and Proposition \ref{Triso}, we have a commutative diagram
$$
\begin{tikzcd}
{C_{0}(J_{R}\setminus C_{R,J_{R}},\mathbb{Z})} \arrow[r, "\text{exp}"] \arrow[d, "\Phi"'] & {K^{-1}(F_{R}\setminus C_{R,F_{R}})} \arrow[d, "{\hat{\otimes}_{1}[\mathcal{E}_{R,F_{R}}]}"] \\
{C_{0}(J_{R},\mathbb{Z})} \arrow[r, "\text{exp}"]                                             & K^{-1}(F_{R}).                                                                           
\end{tikzcd}$$
The left-most vertical map extends to the group homomorphism $\Phi:C_{0}(J_{R},\mathbb{Z})\mapsto C_{0}(J_{R},\mathbb{Z})$ and the top horizontal map extends to the exponential map $\text{exp}:C_{0}(J_{R},\mathbb{Z})\mapsto K^{-1}(F_{R}\setminus C_{R,F_{R}})$ from the short exact sequence

$$
\begin{tikzcd}
0 \arrow[r] & {C_{0}(F_{R}\setminus C_{R,F_{R}}))} \arrow[r, "i"] & {C_{0}(M\setminus C_{R,F_{R}})} \arrow[r, "r"] & C_{0}(J_{R}) \arrow[r] & 0.
\end{tikzcd}$$

We will show the extension of the diagram above commutes. This result will be used to compute the kernel and co-kernel of $\text{id} - \Phi:C(J_{R},\mathbb{Z})\mapsto C(J_{R},\mathbb{Z})$ when $R$ is rational (Proposition \ref{k0nc}). First, we we make a definition and prove two lemmas.
\par
If $U$ is a simply connected open proper subset of $\hat{\mathbb{C}}$ and $z$ is in $U$, the short exact sequence
$$
\begin{tikzcd}
0 \arrow[r] & C_{0}(U\setminus \{z\})) \arrow[r] & C_{0}(U) \arrow[r, "\text{ev}_{z}"] & \mathbb{C} \arrow[r] & 0
\end{tikzcd}$$
yields an isomorphism $\text{exp}:\mathbb{Z} = K^{0}(\{z\})\mapsto K^{-1}(U\setminus z).$ If $1_{z}$ denotes the characteristic function on the point $\{z\}$, we will let $v_{z} =: \text{exp}(1_{z})$
\par
Let $F_{d}:\mathbb{D}\mapsto \mathbb{D}$ be the mapping defined, for $z$ in $\mathbb{D}$, as $F_{d}(z) = z^{d}$.
\begin{lemma}
\label{zd}
$\hat{\otimes}_{1}[\mathcal{E}_{F_{d},\mathbb{D}\setminus\{0\}}]:K^{-1}(\mathbb{D}\setminus\{0\})\mapsto K^{-1}(\mathbb{D}\setminus\{0\})$ maps $v_{0}$ to $d\cdot v_{0}$.
\end{lemma}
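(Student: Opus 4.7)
My plan is to prove the lemma by relating the computation of $v_{0}\hat\otimes_{1}[\mathcal E_{F_{d},\mathbb D\setminus\{0\}}]$ to a $K^{0}$-level computation at the critical point $\{0\}$, exploiting naturality of the exponential map of the extension $0\to C_{0}(\mathbb D\setminus\{0\})\to C_{0}(\mathbb D)\to\mathbb C\to 0$ that produces $v_{0}=\exp([1_{\{0\}}])$. The factor $d$ should ultimately come from the identity $\Phi(1_{\{0\}})=d\cdot 1_{\{0\}}$ of the transfer operator, reflecting $\mathrm{ind}_{F_{d}}(0)=d$.

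Concretely, I would use the open inclusion $\mathbb D\setminus\{0\}\hookrightarrow \mathbb D$ to produce a morphism of correspondences $(i,i,i)\colon(E_{F_{d},\mathbb D\setminus\{0\}},\alpha_{\mathbb D\setminus\{0\}})\to(E_{F_{d},\mathbb D},\alpha_{\mathbb D})$ via Proposition~\ref{classnaturality}. Since $F_{d}$ is unbranched on $\mathbb D\setminus\{0\}$ and $C_{F_{d},\mathbb D}=\{0\}$, both $J$-ideals coincide with $C_{0}(\mathbb D\setminus\{0\})$, so Proposition~\ref{morphintertwine} yields the intertwining
\[
[\mathcal E_{F_{d},\mathbb D\setminus\{0\}}]\hat\otimes [i]=[i]\hat\otimes K(E_{F_{d},\mathbb D})\in KK^{0}(C_{0}(\mathbb D\setminus\{0\}),C_{0}(\mathbb D)).
\]
Combining this with the $6$-term exact sequence of $K$-theory for $0\to C_{0}(\mathbb D\setminus\{0\})\to C_{0}(\mathbb D)\to\mathbb C\to 0$ (where $K^{-1}(\mathbb D)=0$ forces $\exp$ to be surjective onto $K^{-1}(\mathbb D\setminus\{0\})$, so $v_{0}=\exp[1_{\{0\}}]$ generates), the goal is to transfer the multiplication by $d$ across the exponential map. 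The key observation is that on classes ``supported at $\{0\}$,'' the Kasparov class $K(E_{F_{d},\mathbb D})$ acts by a factor of $d$: the fiber of the Hilbert module $E_{F_{d},\mathbb D}$ at the branch point is $\mathbb C$ equipped with the rescaled inner product $\langle a,b\rangle = d\bar ab$, which in turn detects the transfer-operator formula $\Phi(1_{\{0\}})=d\cdot 1_{\{0\}}$ used to prove Proposition~\ref{Triso}.

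The main obstacle I anticipate is that the natural sequence of correspondences
\[
0\to (E_{F_{d},\mathbb D\setminus\{0\}},\alpha_{\mathbb D\setminus\{0\}})\to(E_{F_{d},\mathbb D},\alpha_{\mathbb D})\to (E_{F_{d},\{0\}},\alpha_{\{0\}})\to 0
\]
is \emph{not} $J$-exact: by Proposition~\ref{holmorph1} one would need $C_{F_{d},\mathbb D}\cap\{0\}=C_{F_{d},\{0\}}$, but the left side equals $\{0\}$ while the right is empty (the identity on a point is trivially a local homeomorphism). Consequently Corollary~\ref{Maindiagram} does not apply directly, and the desired naturality square for $\exp$ must be verified by an ad-hoc argument. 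I would try two approaches: first, a direct Morita-equivalence calculation identifying $\mathcal K(E_{F_{d},\mathbb D\setminus\{0\}})\cong C_{0}(\mathbb D\setminus\{0\})\rtimes \mathbb Z/d$ (using the Galois structure of the covering $F_{d}$) and then computing the class of $\alpha(e^{2\pi i\tilde h})$ in its $K_{1}$, where $\tilde h\in C_{0}(\mathbb D)$ is a radially-symmetric lift of $1_{\{0\}}$; and second, a Toeplitz/Pimsner-Voiculescu diagram chase at the level of $K$-theory using the intertwining above, in which the failure of $J$-exactness at $\{0\}$ is bookkept via the explicit rank-$d$ contribution of the fiber at the critical point.
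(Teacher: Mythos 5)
Your observation that the sequence $0\to(E_{F_{d},\mathbb D\setminus\{0\}},\alpha)\to(E_{F_{d},\mathbb D},\alpha)\to(E_{F_{d},\{0\}},\alpha)\to 0$ fails to be $J$-exact (because $C_{F_{d},\mathbb D}\cap\{0\}=\{0\}$ while $C_{F_{d},\{0\}}=\emptyset$) is correct and is indeed the obstruction to a purely formal argument. However, two parts of your plan don't survive scrutiny. First, the intertwining $K(E_{F_{d},\mathbb D\setminus\{0\}})\hat\otimes[i_{2}]=K(E_{F_{d},\mathbb D})$ from Proposition~\ref{morphintertwine} carries no information for the lemma at hand: both sides act on $K^{-1}(\mathbb D\setminus\{0\})$ with values in $K^{-1}(\mathbb D)=0$, so the intertwining is vacuous. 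Second, the commutativity you want to establish ``by an ad-hoc argument'' is precisely the content of Proposition~\ref{ncintertwine}, whose proof in the paper \emph{relies} on Lemma~\ref{zd} as its base case; so any ad-hoc proof of that square would have to contain, in disguise, a proof of the lemma itself. Your two fallbacks are pointing in a plausible direction---the Galois/$\mathbb Z/d\mathbb Z$ structure of the unbranched covering $F_{d}:\mathbb D\setminus\{0\}\to\mathbb D\setminus\{0\}$ really is where the factor $d$ lives---but neither is fleshed out, and the Morita/crossed-product route would require identifying the imprimitivity bimodule class and then computing a Kasparov product in $KK(C_{0}(\mathbb D\setminus\{0\}), C_{0}(\mathbb D\setminus\{0\})\rtimes\mathbb Z/d\mathbb Z)$, which is considerably more machinery than needed.

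The idea you're missing is to pull back the correspondence by $\varphi_{d}=(F_{d})^{*}$ so that the left and right actions of $C_{0}(\mathbb D\setminus\{0\})$ on $\varphi_{d}^{*}\mathcal E_{F_{d},\mathbb D\setminus\{0\}}$ coincide, and then decompose this self-correspondence equivariantly under the $\mathbb Z/d\mathbb Z$-action $z\mapsto\omega z$. The averaging projections $P_{j}(\psi)(z)=\frac{1}{d}\sum_{k}\omega^{-jk}\psi(\omega^{k}z)$, $j=0,\dots,d-1$, are mutually orthogonal adjointable projections summing to $1$ and commuting with the (now coincident) left action; a phase unitary cyclically conjugates the $P_{j}$, and a direct isometric identification $S=\tfrac{1}{\sqrt d}(F_{d})^{*}$ shows $P_{0}\varphi_{d}^{*}\mathcal E_{F_{d},\mathbb D\setminus\{0\}}\cong[\text{id}]$. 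This gives $[\varphi_{d}^{*}\mathcal E_{F_{d},\mathbb D\setminus\{0\}}]=d\,[\text{id}]$, and the lemma then follows from the simple naturality fact $(\varphi_{d})_{*}(v_{0})=v_{0}$ (not the $\Phi$-square), since $v_{0}\hat\otimes[\mathcal E]=(\varphi_{d})_{*}(v_{0})\hat\otimes[\mathcal E]=v_{0}\hat\otimes[\varphi_{d}^{*}\mathcal E]=d\cdot v_{0}$. Without this equivariant decomposition (or an equivalent concrete computation), your proposal has a genuine gap.
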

\begin{proof}
If $d=1$, then $F_{d} = \text{id}_{\mathbb{D}}$ and the lemma follows.
\par
Assume $d>1$. Denote $(F_{d})^{*}:C_{0}(\mathbb{D})\mapsto C_{0}(\mathbb{D})$ by $\varphi_{d}$. By naturality of $\text{exp}$, the diagram 

$$
\begin{tikzcd}
K^{0}(\{0\}) \arrow[r, "(\varphi_{d})_{*}"] \arrow[d, "\text{exp}"] & K^{0}(\{0\}) \arrow[d, "\text{exp}"] \\
K^{-1}(\mathbb{D}\setminus\{0\}) \arrow[r, "(\varphi_{d})_{*}"]     & K^{-1}(\mathbb{D}\setminus\{0\})    
\end{tikzcd}$$
commutes. The top horizontal map is equal to the identity. Hence, $(\varphi_{d})_{*}(v_{0}) = v_{0}$. Therefore, to prove the lemma, it suffices to show $\varphi_{d}^{*}[\mathcal{E}_{F_{d},\mathbb{D}\setminus\{0\}}]$ is equal to $d\cdot \text{id}_{C_{0}(\mathbb{D}\setminus\{0\})}$ in $KK^{0}(C_{0}(\mathbb{D}\setminus\{0\}),C_{0}(\mathbb{D}\setminus\{0\})).$
\par
Let $\omega$ be a $d^{th}$ root of unity, and consider for $0\leq j\leq d-1$ the linear map $P_{j}:C_{0}(\mathbb{D}\setminus\{0\})\mapsto C_{0}(\mathbb{D}\setminus\{0\})$ defined for
$\psi$ in $C_{0}(\mathbb{D}\setminus\{0\})$ as $P_{j}(\psi)(z) = \frac{1}{d}\sum_{k=1}^{d-1}\omega^{-jk}\psi(\omega^{k}z)$, $z$ in $\mathbb{D}\setminus\{0\}$. If $\varphi$ is a function in the image of $P_{j}$, then it
satisfies $\varphi(\omega z) = \omega^{j}\varphi(z)$, for all $z$ in $\mathbb{D}\setminus\{0\}$, so $P_{j}(\varphi) = \varphi$, and hence $P_{j}^{2} = P_{j}$.
\par
We also have, for any $z$ in $\mathbb{D}\setminus\{0\}$, that $\sum_{j=0}^{d-1}P_{j}(\psi)(z) = \sum_{k=0}^{d-1}(\frac{1}{d}\sum_{j=0}^{d-1}\omega^{-jk})\psi(\omega^{k}z)$.
$\frac{1}{d}\sum_{j=0}^{d-1}\omega^{-jk} = 1$ if $j=0$, and is zero otherwise. Hence, $\sum_{j=0}^{d-1}P_{j} = \text{id}$.
\par
We show $P_{j}$ is a Hilbert $C_{0}(\mathbb{D}\setminus\{0\})$-module endomorphism of $E_{F_{d},\mathbb{D}\setminus\{0\}}$ commuting with the left action $\alpha_{\mathbb{D}\setminus\{0\}}\circ \varphi_{d}$. Since $\omega^{d} = 1$, we have, for any $a$ in $C_{0}(\mathbb{D}\setminus\{0\})$, $\psi$ in $E_{F_{d}, \mathbb{D}\setminus\{0\}}$ and $z$ in $\mathbb{D}\setminus\{0\}$,
that 
$$P_{j}(\psi\cdot a)(z) = \frac{1}{d}\sum_{k=1}^{d-1}\omega^{-jk}\psi(\omega^{k}z)a((\omega^{k}z)^{d}) = (\frac{1}{d}\sum_{k=1}^{d-1}\omega^{-jk}\psi(\omega^{k}z))a(z^{d}) = (P_{j}(\psi)\cdot a)(z).$$
We also have $P_{j}(a\cdot\psi) = a\cdot P_{j}(\psi)$, since the left and right actions are equal for $\varphi_{d}^{*}\mathcal{E}_{F_{d}, \mathbb{D}\setminus\{0\}}$. 
\par
Since $\sum_{j=0}^{d-1}P_{j} = \text{id}$, to show that
$P_{j}^{*} = P_{j}$ (hence $P_{j}$ is adjointable), we only need to show that for all $\psi_{1}, \psi_{2}$ in $E_{F_{d}, \mathbb{D}\setminus\{0\}}$ and $i\neq j$, we have $\langle P_{i}(\psi_{1}), P_{j}(\psi_{2})\rangle = 0$. Write $\varphi_{1} = P_{i}(\psi_{1})$ and $\varphi_{2} = P_{j}(\psi_{2})$. For $z$ in $\mathbb{D}\setminus\{0\}$, we have 
$\langle \varphi_{1}, \varphi_{2} \rangle(F_{d}) = \sum_{k=0}^{d-1}\overline{\varphi_{1}}(\omega^{k}z)\varphi_{2}(\omega^{k}z).$ Since $\varphi_{1}(\omega^{k}z) = \omega^{ik}\varphi_{1}(z)$ and
$\varphi_{2}(\omega^{k}z) = \omega^{jk}\varphi_{1}(z)$, we have that 
$\sum_{k=0}^{d-1}\overline{\varphi_{1}}(\omega^{k}z)\varphi_{2}(\omega^{k}z) = (\sum_{k=0}^{d-1}\omega^{(j-i)k})\overline{\varphi_{1}}(z)\varphi_{2}(z) = 0$.
\par
We have shown that $\{P_{i}\}_{i=0}^{d-1}$ is a collection of mutually orthogonal projections of the Kasparov $C_{0}(\mathbb{D}\setminus\{0\})-C_{0}(\mathbb{D}\setminus\{0\})$ bi-module $\varphi_{d}^{*}\mathcal{E}_{F_{d},\mathbb{D}\setminus\{0\}}$ that sum to $\text{id}$.
Hence, $[\varphi_{d}^{*}\mathcal{E}_{F_{d}, \mathbb{D}\setminus\{0\}}] = \sum_{i=0}^{d-1}[P_{i}\varphi_{d}^{*}\mathcal{E}_{F_{d}, \mathbb{D}\setminus\{0\}}]$. 
\par
Consider the unitary $U$ of $\varphi_{d}^{*}\mathcal{E}_{F_{d}, \mathbb{D}\setminus\{0\}}$ defined for $\psi$ in $E_{F_{d}, \mathbb{D}\setminus\{0\}}$ as
$U(\psi)(z) = \frac{z}{|z|}\psi(z)$, $z$ in $\mathbb{D}\setminus\{0\}$. Then, one checks that $UP_{i}U^{*} = P_{i+1}$ for $0\leq i\leq d-1$ ($P_{d} = P_{0}$). Therefore, 
$[\varphi_{d}^{*}\mathcal{E}_{F_{d}, \mathbb{D}\setminus\{0\}}] = d[P_{0}\varphi_{d}^{*}\mathcal{E}_{F_{d}, \mathbb{D}\setminus\{0\}}]$. We show $[P_{0}\varphi_{d}^{*}\mathcal{E}_{F_{d}, \mathbb{D}\setminus\{0\}}] = \text{id}$.
\par
Note that $[\mathcal{E}_{F_{0},\mathbb{D}\setminus\{0\}}] = [\text{id}_{C_{0}(\mathbb{D}\setminus\{0\})}]$, so it suffices to show $[P_{0}\varphi_{d}^{*}\mathcal{E}_{F_{d}, \mathbb{D}\setminus\{0\}}] = [\mathcal{E}_{F_{0},\mathbb{D}\setminus\{0\}}]$.
Consider the map $S =\frac{1}{\sqrt{d}}(F_{d})^{*}:E_{z, \mathbb{D}\setminus\{0\}}\mapsto E_{F_{d}, \mathbb{D}\setminus\{0\}}$. It is obvious $S$ is $C_{0}(\mathbb{D}\setminus\{0\})$ linear with respect to the left and right actions on each bi-module. For $\psi_{1}, \psi_{2}$
in $E_{z, \mathbb{D}\setminus\{0\}}$, we have $\langle S(\psi_{1}), S(\psi_{2})\rangle(z) = \frac{1}{d}\sum_{w:w^{d} = z}\overline{\psi_{1}}(w^{d})\psi_{2}(w^{d}) = \overline{\psi_{1}}(z)\psi_{2}(z) = \langle\psi_{1}, \psi_{2}\rangle(z).$ So,
$S$ preserves the inner products. 
\par
Every function $\varphi$ in the image of $S$ satisfies $\varphi(\omega z) = \varphi(z)$ for all $z$ in $\mathbb{D}\setminus\{0\}$, 
so $\text{im}(S)\subseteq \text{im}(P_{0})$. For $\varphi$ in $E_{F_{d}, \mathbb{D}\setminus\{0\}}$, let $\Phi(\varphi)(z) = \frac{1}{\sqrt{d}}\sum_{w:w^{d} = z}\varphi(w)$. Then, $S\Phi = P_{0}$, so if $\varphi$ is in $\text{im}(P_{0})$, we have
$S\Phi(\varphi) = \varphi$. Therefore, $\text{im}(S) = \text{im}(P_{0})$ and $S:E_{F_{0},\mathbb{D}\setminus\{0\}}\mapsto P_{0}E_{F_{d},\mathbb{D}\setminus\{0\}}$ is an isomorphism of Kasparov bi-modules. Hence, $[P_{0}\varphi_{d}^{*}\mathcal{E}_{F_{d}, \mathbb{D}\setminus\{0\}}] = [\mathcal{E}_{F_{0},\mathbb{D}\setminus\{0\}}]$.
\end{proof}
\begin{lemma}
    \label{lem:evk-1}
    Let $M$ be either $\mathbb{C}^{*}$, $\mathbb{C}$ or $\hat{\mathbb{C}}$ and $X$ a proper closed subset of $M$. Then, for every non-zero $v$ in $K^{-1}(M\setminus X)$, there is $x$ in $X$ such that $(i_{x})_{*}(v)\neq 0$, where $i_{x}:C_{0}(M\setminus X)\mapsto C_{0}(M\setminus x)$ is the inclusion.
\end{lemma}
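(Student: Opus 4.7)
The plan is to prove the contrapositive: if $v \in K^{-1}(M\setminus X)$ satisfies $(i_x)_\ast v = 0$ for all $x \in X$, then $v = 0$. The approach is via an ``outside'' approximation of $X$ by nice closed neighbourhoods, combined with a K\"onig-style compactness argument to isolate a point of $X$ that witnesses the non-triviality of $v$.

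First I would write $X = \bigcap_{n \in \mathbb{N}} X_n$, where each $X_n$ is a proper closed subset of $M$ that is a finite disjoint union of closed topological disks bounded by pairwise disjoint smooth Jordan curves, each component of $X_{n+1}$ sits inside a component of $X_n$, and $X_{n+1} \subseteq \operatorname{int}(X_n)$. This uses only that $M$ is a second countable Riemann surface admitting a metric; for $M = \mathbb{C}$ or $\mathbb{C}^{\ast}$ one first exhausts by compact sets. Since $C_0(M\setminus X)$ is the inductive limit of $C_0(M\setminus X_n)$ under the extension-by-zero inclusions, continuity of $K$-theory gives $K^{-1}(M\setminus X) = \varinjlim_n K^{-1}(M\setminus X_n)$, so the non-zero class $v$ is represented by some $v_n \in K^{-1}(M\setminus X_n)$, and the image of $v_n$ in $K^{-1}(M\setminus X_m)$ remains non-zero for all $m \geq n$.

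Next I would describe $K^{-1}(M\setminus X_n)$ explicitly. Selecting one interior point per component of $X_n$ yields a finite set $F_n \subseteq X_n$, and the exponential map for $0 \to C_0(M\setminus X_n) \to C_0(M\setminus F_n) \to C_0(X_n\setminus F_n) \to 0$ together with Proposition \ref{Triso} and Lemma \ref{zd} identifies $K^{-1}(M\setminus X_n)$ as the free abelian group on ``boundary loop'' classes $\gamma_C^n$, one per component $C$ of $X_n$ (modulo the single relation $\sum_C \gamma_C^n = 0$ when $M = \hat{\mathbb{C}}$). Under the transition map, $\gamma_C^n \mapsto \sum_{C' \subseteq C} \gamma_{C'}^{n+1}$, so if $v_n = \sum_C c_C^n \gamma_C^n$, then every subcomponent $C' \subseteq C$ inherits the coefficient $c_C^n$. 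This means the set of components with non-zero coefficient at each level forms a sub-tree under ``is contained in''; since $v \neq 0$, this sub-tree is infinite, so by K\"onig's lemma I extract a nested sequence $C_n \supseteq C_{n+1} \supseteq \cdots$ with $c_{C_n}^n \neq 0$ for all sufficiently large $n$.

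Finally, by compactness each $C_n$ is closed in $M$, the intersection $\bigcap_n C_n$ is non-empty and contained in $X$, and I pick $x \in \bigcap_n C_n$. I would then identify $(i_x)_\ast \gamma_{C_n}^n$ with the local generator $v_x$ of $K^{-1}$ at $x$ (up to sign, via the naturality diagram used in the proof of Lemma \ref{zd}): shrinking a disk around $x$ detects the Bott element of a punctured disk chart at $x$. Consequently $(i_x)_\ast v$ carries the coefficient $c_{C_n}^n \neq 0$ and is non-zero, contradicting the hypothesis.

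The main obstacle is the last step, verifying that the ``winding'' around the shrinking components $C_n$ stabilises to a generator of the local $K^{-1}$ at $x$ rather than collapsing in the limit; this is essentially a naturality statement for the exponential map under the co-restriction $M\setminus X_n \hookrightarrow M\setminus \{x\}$, and must be combined with a careful choice of reference component throughout the nested sequence to accommodate the global relation on the sphere when $M = \hat{\mathbb{C}}$.
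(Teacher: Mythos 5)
Your strategy — exterior approximation of $X$ by finite unions of disks, continuity of $K$-theory, and then a K\"onig-type tree argument on components carrying non-zero coefficients — is a genuinely different route from the paper's, and the place you flag as ``the main obstacle'' is indeed where the proof is incomplete. There is also a concrete error in the way you set up the boundary-loop generators. With $F_n\subseteq X_n$ one interior point per component, $C_0(X_n\setminus F_n)$ is the algebra of functions on a disjoint union of punctured closed disks vanishing at the puncture; the sequence $0\to C_0(\overline{\mathbb{D}}\setminus\{0\})\to C(\overline{\mathbb{D}})\to\mathbb{C}\to 0$ together with the fact that restriction $K^0(\overline{\mathbb{D}})\to K^0(\{0\})$ is an isomorphism forces $K^0(X_n\setminus F_n)=K^{-1}(X_n\setminus F_n)=0$. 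So the exponential map of the short exact sequence you chose, $0\to C_0(M\setminus X_n)\to C_0(M\setminus F_n)\to C_0(X_n\setminus F_n)\to 0$, is identically zero and cannot produce the classes $\gamma_C^n$. The sequence you actually want is $0\to C_0(M\setminus X_n)\to C_0(M)\to C(X_n)\to 0$, which, via Proposition~\ref{Triso}, does give $\operatorname{exp}:C(X_n,\mathbb{Z})\xrightarrow{\ \simeq\ }K^{-1}(M\setminus X_n)$ when $M=\mathbb{C}$.

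Once you make this correction, the approximation, the tree, and K\"onig's lemma all become unnecessary. The same exact sequence works directly for the original closed set $X$: after reducing $\hat{\mathbb{C}}$ and $\mathbb{C}^{\ast}$ to $\mathbb{C}$ by the elementary functoriality trick at the start of the paper's proof, $K^{-1}(\mathbb{C})=0$ makes $\operatorname{exp}:C_0(X,\mathbb{Z})\to K^{-1}(\mathbb{C}\setminus X)$ surjective, while the vanishing of $K^0(\mathbb{C})\to C_0(X,\mathbb{Z})$ and $K^0(\mathbb{C})\to K^0(\{x\})$ makes it injective. Naturality of $\operatorname{exp}$ with respect to $\operatorname{ev}_x:C_0(X)\to\mathbb{C}$ then gives $(i_x)_\ast\operatorname{exp}(f)=\operatorname{exp}(f(x))$ on the nose, so choosing any $x$ with $f(x)\neq 0$ finishes. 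This is the paper's argument. Your tree step amounts to re-proving that a non-zero $f\in C_0(X,\mathbb{Z})$ has a point of non-vanishing, and your unresolved ``last step'' — that the class localises to the generator of $K^{-1}(\mathbb{C}\setminus\{x\})$ — is precisely the naturality square for $\operatorname{exp}$, so the direct route is both shorter and eliminates exactly the gap you identified.
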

\begin{proof}
We may assume $|X|\geq 2$, otherwise the lemma is trivial. If $M = \hat{\mathbb{C}}$, by applying a rotation, we may assume without loss of generality that $\infty$ is in $X$. Then, $M\setminus X = \mathbb{C}\setminus (X\setminus \{\infty\})$ and $X\setminus \{\infty\}$ is closed in $\mathbb{C}$. This reduces the case of $M = \hat{\mathbb{C}}$ to the case where $M = \mathbb{C}$.
\par
If $M = \mathbb{C}^{*}$, then $X\cup\{0\}=:X_{0}$ is closed in $\mathbb{C}$ and for $x$ in $X$, $(i_{x})_{*} = (j_{x})_{*}$, where $j_{x}:C_{0}(\mathbb{C}\setminus X_{0})\mapsto C_{0}(\mathbb{C}\setminus \{x, 0\})$ is the inclusion. Let $k_{x}:C_{0}(\mathbb{C}\setminus X_{0})\mapsto C_{0}(\mathbb{C}\setminus \{x\})$ and $l_{x}:C_{0}(\mathbb{C}\setminus \{x,0\})\mapsto C_{0}(\mathbb{C}\setminus \{x\})$ denote the inclusions. For $x$ in $X$, we have $l_{x}\circ j_{x} = k_{x}$ and $k_{0} = l_{0}\circ j_{x}$. By functorality of $K$-theory it follows that if $(k_{x})_{*}(v)\neq 0$ for some $v$ in $K^{-1}(\mathbb{C}^{*}\setminus X))$, then we must have $(j_{x})_{*}(v)\neq 0$. This reduces the case of $M = \mathbb{C}^{*}$ to the case $M = \mathbb{C}$.
\par
So, assume $X$ is a closed and proper subset of $\mathbb{C}$. For $x$ in $X$, the diagram

$$
\begin{tikzcd}
0 \arrow[r] & C_{0}(\mathbb{C}\setminus X) \arrow[r] \arrow[d, "i_{x}"] & C_{0}(\mathbb{C}) \arrow[r, "r_{X}"] \arrow[d, equal] & C_{0}(X) \arrow[r] \arrow[d, "\text{ev}_{x}"] & 0 \\
0 \arrow[r] & C_{0}(\mathbb{C}\setminus \{x\}) \arrow[r]                & C_{0}(\mathbb{C}) \arrow[r]                                         & \mathbb{C} \arrow[r]                          & 0
\end{tikzcd}$$
commutes and has exact rows. Therefore, by naturality of $\text{exp}$, the diagram

$$
\begin{tikzcd}
K^{-1}(\mathbb{C}\setminus X) \arrow[d, "(i_{x})_{*}"] & {C_{0}(X,\mathbb{Z})} \arrow[l, "\text{exp}"'] \arrow[d, "\text{ev}_{x}"] \\
K^{-1}(\mathbb{C}\setminus x)          & K^{-1}(x) = \mathbb{Z}. \arrow[l, "\text{exp}"']                          
\end{tikzcd}$$
Since $K^{-1}(\mathbb{C}) = 0$ the horizontal maps are surjections. $K^{0}(\mathbb{C})\mapsto K^{0}(x)$ and $(r_{X})_{*}:K^{0}(\mathbb{C})\mapsto C_{0}(X,\mathbb{Z})$ both are the zero maps and hence the horizontal maps above are also injections. 
\par
For every $f\neq 0$ in $C_{0}(X,\mathbb{Z})$ there is $x$ in $X$ such that $ev_{x}(f)\neq 0$. Using this and that the horizontal maps above are isomorphisms, the lemma follows.
\end{proof}

\begin{prop}
\label{ncintertwine}
Let $R:M\mapsto M$ be a holomorphic dynamical system such that $\text{deg}(R) > 1$, where $M$ is either $\mathbb{C}^{*}$, $\mathbb{C}$ or $\hat{\mathbb{C}}$. If $J_{R}\neq M$ then the diagram
$$
\begin{tikzcd}
{C_{0}(J_{R},\mathbb{Z})} \arrow[r, "\text{exp}"] \arrow[d, "\Phi"'] & {K^{-1}(F_{R}\setminus C_{R,F_{R}})} \arrow[d, "{\hat{\otimes}_{1}[\mathcal{E}_{R,F_{R}}]}"] \\
{C_{0}(J_{R},\mathbb{Z})} \arrow[r, "\text{exp}"]                        & K^{-1}(F_{R})                                                                           
\end{tikzcd}$$
commutes.
\end{prop}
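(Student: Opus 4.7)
The plan is to first combine the pre-proposition commutative square with naturality of the exponential map to establish the proposition's diagram on the subgroup $C_{0}(J_{R}\setminus C_{R,J_{R}},\mathbb{Z}) \subseteq C_{0}(J_{R},\mathbb{Z})$, and then to extend to the whole group by a local analysis at each critical point of $R$ in $J_{R}$ using Lemma \ref{zd}.

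For the first step, I would apply naturality of $\text{exp}$ to the inclusion of short exact sequences
$$\begin{tikzcd}[column sep=small, row sep=small]
0 \arrow[r] & C_{0}(F_{R}\setminus C_{R,F_{R}}) \arrow[r] \arrow[d, equal] & C_{0}(M\setminus C_{R,M}) \arrow[r] \arrow[d, hook] & C_{0}(J_{R}\setminus C_{R,J_{R}}) \arrow[r] \arrow[d, hook] & 0 \\
0 \arrow[r] & C_{0}(F_{R}\setminus C_{R,F_{R}}) \arrow[r] & C_{0}(M\setminus C_{R,F_{R}}) \arrow[r] & C_{0}(J_{R}) \arrow[r] & 0
\end{tikzcd}$$
so that the top $\text{exp}$ of the target diagram, restricted to the subgroup $C_{0}(J_{R}\setminus C_{R,J_{R}},\mathbb{Z})$, agrees with the top $\text{exp}$ of the pre-proposition diagram. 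Since $\Phi$ on $C_{0}(J_{R},\mathbb{Z})$ tautologically restricts to $\Phi$ on this subgroup, the pre-proposition diagram yields commutativity there.

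For the second step, set $D(f) := \hat{\otimes}_{1}[\mathcal{E}_{R,F_{R}}]\circ\text{exp}(f) - \text{exp}\circ\Phi(f)$, so $D$ vanishes on $C_{0}(J_{R}\setminus C_{R,J_{R}},\mathbb{Z})$ and the task is to show $D \equiv 0$. By Lemma \ref{lem:evk-1} it suffices to show $(i_{x})_{*}D(f) = 0$ in $K^{-1}(M\setminus\{x\})$ for every $x \in J_{R}$ and every $f$. For $x \notin C_{R,J_{R}}$, naturality of $\text{exp}$ for the inclusion of extensions involving $M\setminus\{x\}$ reduces the task to the first step. For $x = c \in C_{R,J_{R}}$ with $d := \text{ind}_{R}(c)$, pick a disk neighbourhood $U$ of $c$ in $M$ with $U \cap C_{R} = \{c\}$ on which $R$ is conformally conjugate to $F_{d}:\mathbb{D}\mapsto\mathbb{D}$, $z \mapsto z^{d}$. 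By Proposition \ref{classnaturality} the pullback of $[\mathcal{E}_{R,F_{R}}]$ to this local picture equals $[\mathcal{E}_{F_{d},\mathbb{D}\setminus\{0\}}]$, and Lemma \ref{zd} identifies its action on the local generator $v_{0} \in K^{-1}(\mathbb{D}\setminus\{0\})$ as multiplication by $d$. This precisely matches the contribution of $\Phi$, which assigns weight $\text{ind}_{R}(c) = d$ at $R(c)$ coming from the preimage $c$, giving $(i_{c})_{*}D(f) = 0$.

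The hard part will be rigorously isolating the local ``$F_{d}$ contribution'' to $(i_{c})_{*}D(f)$: near a critical point $c \in C_{R,J_{R}}$ the Julia set is in general a nontrivial closed subset of $U$ rather than just $\{c\}$, so the global class $\text{exp}(f)$ does not directly restrict to a scalar multiple of $v_{0}$ in the local model on $\mathbb{D}\setminus\{0\}$. Carrying this localization out cleanly will require combining Proposition \ref{classnaturality} with naturality of $\text{exp}$ for a suitable nest of ideal inclusions, so that the only obstruction to commutativity at $c$ is captured by the local branched cover $F_{d}$ of $\mathbb{D}$ where Lemma \ref{zd} applies.
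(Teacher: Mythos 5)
There is a genuine gap in Step 2, and it is a directional one. The Kasparov product $\hat{\otimes}_{1}[\mathcal{E}_{R,F_{R}}]$, like the transfer operator $\Phi$ it mirrors (Proposition \ref{Triso}), pushes classes \emph{forward} along $R$: the contribution of $\text{exp}(f)\hat{\otimes}_{1}[\mathcal{E}_{R,F_{R}}]$ at a point $x\in J_{R}$ is governed by the fiber $R^{-1}(x)$, not by $x$ itself. Hence when you invoke Lemma \ref{lem:evk-1} to reduce to showing $(i_{x})_{*}D(f)=0$ for every $x\in J_{R}$, the critical points that matter are those lying in $R^{-1}(x)$, not $x$. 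Your dichotomy ``$x\in C_{R,J_{R}}$'' versus ``$x\notin C_{R,J_{R}}$'' is therefore the wrong decomposition on both sides. For $x\notin C_{R,J_{R}}$, the fiber $R^{-1}(x)$ may still contain critical points, so the reduction to Step~1 (which handles functions vanishing on $C_{R,J_{R}}$) does not go through. For $x=c\in C_{R,J_{R}}$, the local model you set up -- conjugating $R$ near $c$ to $z\mapsto z^{d}$ on a disk -- computes how classes are pushed \emph{from} a neighbourhood of $c$ \emph{to} a neighbourhood of $R(c)$; combined with Lemma \ref{zd}, this isolates a contribution to $(i_{R(c)})_{*}D(f)$, not to $(i_{c})_{*}D(f)$. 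Your own phrasing (``which assigns weight $\text{ind}_{R}(c)=d$ at $R(c)$ coming from the preimage $c$'') already exposes the mismatch: the computation lands at $R(c)$, but the quantity you claim to kill lives at $c$.

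The paper avoids this by working fiberwise: for each $x\in J_{R}$ (critical or not) it introduces the finite transfer map $\Phi_{x}:C_{0}(R^{-1}(x),\mathbb{Z})\to\mathbb{Z}=K^{0}(x)$, $1_{z}\mapsto\text{ind}_{R}(z)1_{x}$, and the corresponding square with exponential maps into $K^{-1}(M\setminus(R^{-1}(x)\cup C_{R,M}))$ and $K^{-1}(M\setminus\{x\})$, then checks commutativity on the generators $1_{z}$ by localizing around each $z\in R^{-1}(x)$ and applying Lemma \ref{zd} (whose $d=1$ case handles the non-critical preimages uniformly, so no case split is needed). The tools you selected -- Lemma \ref{zd}, Proposition \ref{classnaturality}, naturality of $\text{exp}$, and Lemma \ref{lem:evk-1} -- are exactly the right ones, but they need to be organized around the fibers $R^{-1}(x)$ rather than around the points of $C_{R,J_{R}}$. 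Step~1 then becomes unnecessary scaffolding.
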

\begin{proof}

First, we show for every $x$ in $J_{R}$, the diagram

$$
\begin{tikzcd}[sep = huge]
{C_{0}(R^{-1}(x),\mathbb{Z})} \arrow[d, "\text{exp}"] \arrow[r, "\Phi_{x}"]                                                 & {\mathbb{Z} = K^{0}(x)} \arrow[d, "\text{exp}"] \\
{K^{-1}(M\setminus (R^{-1}(x)\cup C_{R,M}))} \arrow[r, "{\hat{\otimes}_{1}[\mathcal{E}_{R,M\setminus R^{-1}(x)}]}"] & K^{-1}(M\setminus \{x\})                     
\end{tikzcd}$$
commutes, where $\Phi_{x}:C(R^{-1}(F),\mathbb{Z})\mapsto \mathbb{Z}$ is the group homomorphism sending $1_{z}$, $z$ in $R^{-1}(x)$, to $\text{ind}_{R}(z)1_{x}$.
\par
$R^{-1}(x)$ and $C_{R,M}$ are closed and discrete, so for every $z$ in $R^{-1}(x)$, there is a simply connected neighbourhood $U_{z}$ of $z$ such that $U_{z}\cap (R^{-1}(x)\cup C_{R,M}) = \{x\}$, $V_{z}:= R(U_{z})$ is simply connected, with bi-holomorphisms $\varphi:U_{z}\mapsto\mathbb{D}$, $\psi:V_{z}\mapsto\mathbb{D}$ such that $\varphi(z) = 0$, $\psi(x) = 0$ and, for all $w$ in $\mathbb{D}$, $\psi\circ R\circ\varphi^{-1}(w) = w^{\text{ind}_{R}(z)}$.
\par
Lemma \ref{zd} implies $v_{0}\hat{\otimes}_{1}[\mathcal{E}_{\psi\circ R\circ\varphi^{-1},\mathbb{D}\setminus\{0\}}] = \text{ind}_{R}(z)\cdot v_{0}$. 
\par
Naturality of $\text{exp}$ implies $((\varphi^{-1})^{*})_{*}(v_{z}) = v_{0}$ and $(\psi^{*})_{*}(v_{0}) = v_{x}$, so $v_{z}\hat{\otimes}_{1}[\mathcal{E}_{R,U_{z}\setminus\{z\}}] = ((\psi^{-1})^{*})_{*}(v_{0}\hat{\otimes}_{1}[\mathcal{E}_{\psi\circ R\circ\varphi^{-1},\mathbb{D}\setminus\{0\}}]) = \text{ind}_{R}(z)\cdot v_{x}$.
\par
Naturality of $\text{exp}$ and Proposition \ref{classnaturality} then imply $\text{exp}(1_{z})\hat{\otimes}_{1}[\mathcal{E}_{R,M\setminus R^{-1}(x)}] = \text{ind}_{R}(z)v_{x} = \text{exp}(\Phi_{x}(1_{x}))$. As $z$ in $R^{-1}(x)$ was arbitrary, the above identity implies the diagram
$$
\begin{tikzcd}[sep = huge]
{C_{0}(R^{-1}(x),\mathbb{Z})} \arrow[d, "\text{exp}"] \arrow[r, "\Phi_{x}"]                                                 & {\mathbb{Z} = K^{0}(x)} \arrow[d, "\text{exp}"] \\
{K^{-1}(M\setminus (R^{-1}(x)\cup C_{R,M}))} \arrow[r, "{\hat{\otimes}_{1}[\mathcal{E}_{R,U\setminus R^{-1}(F)}]}"] & K^{-1}(M\setminus \{x\})                     
\end{tikzcd}$$
commutes, as claimed.
\par
Let $x$ be in $J_{R}$, and let $i_{x}: K^{-1}(M\setminus (J_{R}\cup C_{R,F_{R}}))\mapsto K^{-1}(
M\setminus (R^{-1}(x)\cup C_{R,F_{R}}))$ and $j_{x}:K^{-1}(M\setminus J_{R})\mapsto K^{-1}(M\setminus \{x\})$ be the maps induced by the respective inclusions. Let $r_{x}:C(J_{R},\mathbb{Z})\mapsto C(R^{-1}(x),\mathbb{Z})$ and $ev_{x}:C(J_{R},\mathbb{Z})\mapsto\mathbb{Z} $ be the restriction/evaluation maps. By the above commutative diagram, for every $g$ in $C(J_{R},\mathbb{Z})$, we have

$ 0 = \text{exp}(r_{x}(g))\hat{\otimes}_{1}[\mathcal{E}_{R,M\setminus R^{-1}(x)}] - \text{exp}(\Phi_{x}(r_{x}(g))).$ 
Note that $\Phi_{x}\circ r_{x} = ev_{x}\circ\Phi$, and naturality of $\text{exp}$ implies $\text{exp}\circ r_{x} = i_{x}\circ\text{exp}$ and $\text{exp}\circ ev_{x} = j_{x}\circ\text{exp}$.

Therefore, $\text{exp}(r_{x}(g))\hat{\otimes}_{1}[\mathcal{E}_{R,M\setminus R^{-1}(x)}] - \text{exp}(\Phi_{x}(r_{x}(g))) = i_{x}(\text{exp}(g))\hat{\otimes}_{1}[\mathcal{E}_{R,M\setminus R^{-1}(x)}] - j_{x}(\text{exp}(\Phi(g))).$ Proposition \ref{classnaturality} implies $i_{x}(\text{exp}(g))\hat{\otimes}_{1}[\mathcal{E}_{R,M\setminus R^{-1}(x)}] = j_{x}(\text{exp}(g)\hat{\otimes}_{1}[\mathcal{E}_{R,F_{R}}])$. So, putting these equalities together, we have $0 = j_{x}(\text{exp}(g)\hat{\otimes}_{1}[\mathcal{E}_{R,M\setminus J_{R}}] - \text{exp}(\Phi(g)))$. As $x$ in $J_{R}$ was arbitrary, Lemma \ref{lem:evk-1} implies $\text{exp}(g)\hat{\otimes}_{1}[\mathcal{E}_{R,F_{R}}] =\text{exp}(\Phi(g))$.
\end{proof}
\subsection{Appendix: Bott projections}
\label{sectionbott}
To compute certain maps in the exact sequences we will work with in this paper, it will be useful to ``orient" the $K^{0}$ (and $K^{-1}$) groups for special subsets of $\hat{\mathbb{C}}$ by finding minimal generating sets that behave well with maps like inclusion and bi-holomorphisms. In this section, given a connected open set $U$ of $\hat{\mathbb{C}}$, we describe such a canonical generator $\beta_{U}$ for $K^{0}(U)$, the \textit{Bott projection of $U$}. When $U = \mathbb{C}$ it is the usual Bott projection. We could not find these results in the literature, but we do not claim any originality as they are likely folklore. The reader may wish to skip this section if they believe Corollary \ref{mainbott}.
\par
Our method is to construct the Bott projection for the open unit disk $\mathbb{D}$, prove some properties about it, then bootstrap up to the general construction (Corollary \ref{mainbott}).
\par
First, let $U$ be a simply connected open set of the complex plane $\mathbb{C}$, and let $\gamma$ be a Jordan curve inside it. 
Denote by $U^{-}_{\gamma}$ the connected component of
$U\setminus\text{im}(\gamma)$ whose closure in $\mathbb{C}$ intersects the boundary $\partial U$, and denote the other component by $U^{+}_{\gamma}$.
\par
For a continuous function $a:\text{im}(\gamma)\mapsto U_{\gamma}^{+}$, let $u_{a,\gamma}:\text{im}(\gamma)\mapsto S^{1}$ be defined, for $z$ in 
$\text{im}(\gamma)$, as $u_{a,\gamma}(z) = \frac{z-a(z)}{|z-a(z)|}$. Any two functions $a,b:\text{im}(\gamma)\mapsto U_{\gamma}^{+}$ are homotopic, 
since $U_{\gamma}^{+}$ is homeomorphic to $\mathbb{D}$. It follows then, by compactness of $\text{im}(\gamma)$, that $u_{a,\gamma}$ and $u_{b,\gamma}$
are homotopic as elements in $C(\text{im}(\gamma), S^{1})$. Hence, the class $u_{\gamma} := [u_{a,\gamma}]$ in $K^{-1}(\text{im}(\gamma))$ is independent of $a$.
\par
Let $\delta_{\gamma}:K^{-1}(\text{im}(\gamma))\mapsto K^{0}(U_{\gamma}^{+})$ be the index map from the 6-term exact sequence associated to the short exact sequence
$$
\begin{tikzcd}
0 \arrow[r] & C_{0}(U_{\gamma}^{+}) \arrow[r] & C_{0}(\overline{U_{\gamma}^{+}}) \arrow[r] & C(\text{im}(\gamma)) \arrow[r] & 0.
\end{tikzcd}$$
Let $\iota_{\gamma}:K^{0}(U_{\gamma}^{+})\mapsto K^{0}(U)$ be the map induced from the inclusion 
$i:C_{0}(U_{\gamma}^{+})\mapsto C_{0}(U)$.
\begin{prop}
\label{simplybott}
For every simply connected open set $U$ properly contained in $\hat{\mathbb{C}}$, there is a unique generator $\beta_{U}$ in $K^{0}(U)\simeq\mathbb{Z}$ such that
\begin{enumerate}
\item if $U\subseteq\mathbb{C}$, then $\beta_{U} = \iota_{\gamma}\delta_{\gamma}(u_{\gamma})$ for any Jordan curve $\gamma$ in $U$,
\item if $T:U\mapsto V$ is a bi-holomorphism, where $V$ is an open subset of $\hat{\mathbb{C}}$, then $((T^{-1})^{*})_{*}\beta_{U} = \beta_{V}$, and
\item if $U\subseteq V$, where $V$ is a simply connected open set properly contained in $\hat{\mathbb{C}}$, and $i:C_{0}(U)\mapsto C_{0}(V)$ is the inclusion, then $i_{*}\beta_{U} = \beta_{V}$.
\end{enumerate}
\end{prop}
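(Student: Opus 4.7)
The plan is to define $\beta_U$ for $U \subseteq \mathbb{C}$ via property $(1)$, establish independence of the Jordan curve, verify $(2)$ and $(3)$ in the planar case, and extend to $U$ containing $\infty$ via uniformization and $(2)$. Since $U$ is simply connected and properly contained in $\hat{\mathbb{C}}$, it is homeomorphic to the open disk, so $K^{0}(U) \simeq \mathbb{Z}$ and uniqueness of $\beta_U$ is automatic once any of the three conditions pins down a generator.

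For $U \subseteq \mathbb{C}$, fix a Jordan curve $\gamma$. The Sch\"onflies theorem gives that $\overline{U_{\gamma}^{+}}$ is a closed topological disk, so the six-term sequence for $0 \to C_{0}(U_{\gamma}^{+}) \to C_{0}(\overline{U_{\gamma}^{+}}) \to C(\mathrm{im}(\gamma)) \to 0$ forces $\delta_{\gamma} : K^{-1}(\mathrm{im}(\gamma)) \to K^{0}(U_{\gamma}^{+})$ to be an isomorphism $\mathbb{Z} \to \mathbb{Z}$. The class $u_{\gamma}$ generates $K^{-1}(\mathrm{im}(\gamma)) \simeq \mathbb{Z}$: taking $a \equiv a_{0} \in U_{\gamma}^{+}$, the function $u_{a, \gamma}(z) = (z - a_{0})/|z - a_{0}|$ is a degree-$\pm 1$ homeomorphism $\mathrm{im}(\gamma) \to S^{1}$. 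Finally, $\iota_{\gamma} : K^{0}(U_{\gamma}^{+}) \to K^{0}(U)$ is an isomorphism: the one-point compactification of $U \setminus U_{\gamma}^{+}$ is a cone on $\mathrm{im}(\gamma)$ (hence contractible), so $K^{*}(U \setminus U_{\gamma}^{+}) = 0$, and the six-term sequence for $0 \to C_{0}(U_{\gamma}^{+}) \to C_{0}(U) \to C_{0}(U \setminus U_{\gamma}^{+}) \to 0$ then forces $\iota_{\gamma}$ to be an iso. Hence $\iota_{\gamma} \delta_{\gamma}(u_{\gamma})$ is a generator of $K^{0}(U)$.

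The main technical step is independence of $\beta_U$ from $\gamma$. Given two Jordan curves $\gamma_{1}, \gamma_{2}$ in $U$, choose a small circle $\gamma_{0}$ around a common interior point so that $U_{\gamma_{0}}^{+} \subseteq U_{\gamma_{1}}^{+} \cap U_{\gamma_{2}}^{+}$, reducing the claim to the case $U_{\gamma_{0}}^{+} \subseteq U_{\gamma}^{+}$. For this I would apply naturality of the index map to the commutative diagram of short exact sequences arising from the inclusion $\overline{U_{\gamma_{0}}^{+}} \hookrightarrow U_{\gamma}^{+} \cup \mathrm{im}(\gamma_{0})$, combined with the fact that the explicit formula for $u_{a, \gamma}$ is compatible with shrinking the Jordan curve (one may take $a$ to land inside $U_{\gamma_{0}}^{+}$). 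An alternative, perhaps simpler approach is a homotopy argument: $\gamma$ and $\gamma_{0}$ are ambient isotopic in $U$, and the resulting continuous family $t \mapsto \iota_{\gamma_{t}} \delta_{\gamma_{t}}(u_{\gamma_{t}})$ takes values in the discrete group $K^{0}(U) \simeq \mathbb{Z}$ and is therefore constant.

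Once $\beta_U$ is well-defined for $U \subseteq \mathbb{C}$, properties $(2)$ and $(3)$ in the planar case follow from functoriality. For $(3)$, a Jordan curve in $U$ is also one in $V$ with the same $U_{\gamma}^{+}$, and $\iota^{V}_{\gamma}$ factors through $\iota^{U}_{\gamma}$ via $i$. For $(2)$, a bi-holomorphism $T : U \to V$ (in $\mathbb{C}$) sends $\gamma$ to a Jordan curve $T \circ \gamma$ in $V$, preserves orientation (so sends $U_{\gamma}^{+}$ to $V_{T \circ \gamma}^{+}$), and intertwines the data $(u_{\gamma}, \delta_{\gamma}, \iota_{\gamma})$ with $(u_{T \circ \gamma}, \delta_{T \circ \gamma}, \iota_{T \circ \gamma})$ by naturality of the index map under $T^{*}$. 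For $U$ containing $\infty$, pick a M\"obius transformation $T : U \to V$ with $V \subseteq \mathbb{C}$ and define $\beta_U := ((T^{-1})^{*})_{*} \beta_V$; independence from the M\"obius choice and verification of $(2)$ and $(3)$ in the general case both reduce to property $(2)$ in the planar case via appropriate compositions. The main obstacle throughout is the Jordan-curve independence, where one must carefully track orientations so that $\iota_{\gamma} \delta_{\gamma}(u_{\gamma})$ yields the \emph{same} generator (not its negative) for every $\gamma$.
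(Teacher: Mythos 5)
Your overall architecture matches the paper's: define $\beta_U$ via a Jordan curve, show independence, verify $(2)$--$(3)$ in the planar case, and bootstrap to domains containing $\infty$ via M\"obius transformations. Your treatment of $\iota_\gamma$ (showing it is an isomorphism because the one-point compactification of $U\setminus U_\gamma^+$ is contractible) is a slight simplification relative to the paper, which instead pins down the generator by comparing to a round circle $\varepsilon_r$. That part is fine.

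There are, however, two gaps, one minor and one substantive.

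\emph{Minor gap (Jordan-curve independence).} You reduce to comparing two curves by choosing a small circle $\gamma_0$ ``around a common interior point,'' but two Jordan curves in $U$ need not have intersecting interiors (e.g.\ disjoint small circles). This is fixable with a chain of overlapping curves, but you have not said so. The paper sidesteps this cleanly by taking the reference circle $\varepsilon_r$ \emph{large} enough to contain $\mathrm{im}(\gamma)$ in $\mathbb{D}_{\varepsilon_r}^+$, which always works for $U = \mathbb{D}$; any Jordan curve is then compared directly to one round circle. Your alternative ``ambient isotopy $\Rightarrow$ constant $K$-class'' argument also needs actual work: you would have to construct a $C([0,1])$-parametrized family of extensions and invoke continuity/homotopy invariance of $K$-theory, which is a nontrivial construction in its own right.

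\emph{Substantive gap (property $(2)$ for bi-holomorphisms).} You assert that a bi-holomorphism $T\colon U\to V$ ``intertwines the data $(u_\gamma,\delta_\gamma,\iota_\gamma)$ with $(u_{T\circ\gamma},\delta_{T\circ\gamma},\iota_{T\circ\gamma})$ by naturality of the index map under $T^*$.'' Naturality handles $\delta_\gamma$ and $\iota_\gamma$, but it does \emph{not} give $T_*u_\gamma = u_{T(\gamma)}$. The class $u_\gamma$ is defined by the explicit formula $u_{a,\gamma}(z) = (z-a(z))/|z-a(z)|$, which uses the ambient coordinate on $\mathbb{C}$; after transport by $T$ one gets $u_{a,\gamma}\circ T^{-1}$, and one must show this is homotopic to some $u_{b,T(\gamma)}$. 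Both represent generators of $K^{-1}(\mathrm{im}(T(\gamma)))\cong\mathbb{Z}$, so they agree up to sign, and determining the sign is exactly where the work is. The paper settles this by showing that the quotient $(u_{a,\gamma}\circ T^{-1})\cdot u_{b,T(\gamma)}^{-1}$ is uniformly close to $(T^{-1})'/|(T^{-1})'|$, which extends continuously over the simply connected region $U^+_{T(\gamma)}$ and is therefore null-homotopic; this is the technical heart of the proposition and it is absent from your proposal. (An alternative degree-theoretic argument --- $T$ is orientation-preserving, so the winding number is preserved --- could also be made rigorous, but it still needs to be stated and checked, not merely invoked as ``naturality.'') Since your entire definition of $\beta_U$ for $U$ containing $\infty$, and your independence from the choice of M\"obius transformation, rest on this unproved lemma, the gap propagates through the rest of the argument.
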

\begin{proof}
We first prove $(1)$ and $(2)$ of the Proposition for $U = \mathbb{D} = V$, and then bootstrap to the general case.
\par
First, we show $\iota_{\gamma}\delta_{\gamma}(u_{\gamma}) = \iota_{\eta}\delta_{\eta}(u_{\eta})$ for any two Jordan curves in $\mathbb{D}$. To prove this, it suffices to show 
$\iota_{\gamma}\delta_{\gamma}(u_{\gamma}) = \iota_{\varepsilon_{r}}\delta_{\varepsilon_{r}}(u_{\varepsilon_{r}})$, where $\varepsilon_{r}(t) = r e^{2\pi i t}$, for $t$ in $[0,1]$, for any $r < 1$ such that $|\gamma| < r$.
\par
Let $\mathbb{A}_{r,\gamma} = \mathbb{D}_{r}\setminus \overline{\mathbb{D}_{\gamma}^{+}}$. Let 
$\delta_{\mathbb{A}}:K^{-1}(\text{im}(\gamma))\oplus K^{-1}(\text{im}(\varepsilon_{r}))\mapsto K^{0}(\mathbb{A}_{r,\gamma})$ be the index map from the 6-term exact 
sequence associated to 
$$
\begin{tikzcd}
0 \arrow[r] & {C_{0}(\mathbb{A}_{r,\gamma})} \arrow[r, "i"] & {C_{0}(\overline{\mathbb{A}_{r,\gamma}})} \arrow[r, "r"] & C(\text{im}(\gamma)\sqcup \text{im}(\varepsilon_{r})) \arrow[r] & 0.
\end{tikzcd}$$
Choose a point $z_{0}$ in $\mathbb{D}_{\gamma}^{+}$, and define $u:\overline{\mathbb{A}_{r,\gamma}}\mapsto S^{1}$, for $z$ in $\overline{\mathbb{A}_{r,\gamma}}$, as $u(z) = \frac{z-z_{0}}{|z-z_{0}|}$.
Since $\mathbb{D}_{\gamma}^{+}\subseteq \mathbb{D}_{\varepsilon_{r}}^{+} = \mathbb{D}_{r}$, it follows that $r_{*}[u] = [u_{z_{0},\gamma}]\oplus [u_{z_{0},\varepsilon_{r}}]$. Hence, by exactness,
$\iota_{\mathbb{A}}\delta_{\mathbb{A}}(u_{\gamma}) = -\iota_{\mathbb{A}}\delta_{\mathbb{A}}(u_{\varepsilon_{r}})$, where 
$\iota_{\mathbb{A}}:K^{0}(\mathbb{A}_{r,\gamma})\mapsto K^{0}(\mathbb{D})$ is the map induced from inclusion.
\par
Let $\delta:K^{-1}(\text{im}(\gamma))\mapsto K^{0}(\mathbb{D}^{+}_{\gamma})\oplus K^{0}(\mathbb{A}_{r,\gamma})$ be the index map from the 6-term exact sequence associated to
$$
\begin{tikzcd}
0 \arrow[r] & {C_{0}(\mathbb{D}^{+}_{\gamma}\sqcup\mathbb{A}_{r,\gamma})} \arrow[r] & C_{0}(\mathbb{D}_{r}) \arrow[r] & C(\text{im}(\gamma)) \arrow[r] & 0.
\end{tikzcd}$$
By naturality of the index map, we have that $\delta(u_{\gamma}) = \delta_{\gamma}(u_{\gamma})\oplus \delta_{\mathbb{A}}(u_{\gamma})$, and so exactness implies
$\iota_{\gamma}\delta_{\gamma}(u_{\gamma}) + \iota_{\mathbb{A}}\delta_{\mathbb{A}}(u_{\gamma}) = 0$. Hence,
$\iota_{\gamma}\delta_{\gamma}(u_{\gamma}) = \iota_{\mathbb{A}}\delta_{\mathbb{A}}(u_{\varepsilon_{r}})$. By naturality of the index map, 
$\iota_{\mathbb{A}}\delta_{\mathbb{A}}(u_{\varepsilon_{r}}) = \iota_{\varepsilon_{r}}\delta_{\varepsilon_{r}}(u_{\varepsilon_{r}})$, proving the claim.
\par
Now, we show $\beta_{\mathbb{D}}:=\iota_{\varepsilon_{r}}\delta_{\varepsilon_{r}}(u_{\varepsilon_{r}})$ generates $K^{0}(\mathbb{D})$. 
Since $C_{0}(\mathbb{D}\setminus\mathbb{D}_{r})$ is contractible to $0$, the 6-term exact sequence of $K$-theory implies the map $\iota_{\varepsilon_{r}}:K^{0}(\mathbb{D}_{r})\mapsto K^{0}(\mathbb{D})$ 
is an isomorphism. Similarily, $C_{0}(\overline{\mathbb{D}_{r}}\setminus\{0\})$ being contractible to $0$ implies 
$K^{0}(\mathbb{\overline{D}}_{r}) = \mathbb{Z}[1_{\overline{\mathbb{D}_{r}}}]$. Hence, $i_{*}:K^{0}(\mathbb{D}_{r})\mapsto K^{0}(\overline{\mathbb{D}_{r}})$
must be zero. It then follows by exactness that $\iota_{\varepsilon_{r}}\delta_{\varepsilon_{r}}:K^{0}(\text{im}(\varepsilon_{r}))\mapsto K^{0}(\mathbb{D})$ is an isomorphism. 
Clearly the class of $u_{0,\varepsilon_{r}}(z) = \frac{z}{r}$, $z$ in $r S^{1}$, generates $K^{-1}(\text{im}(\varepsilon_{r}))$, and therefore 
$\beta_{\mathbb{D}} = \iota_{\varepsilon_{r}}\delta_{\varepsilon_{r}}(u_{\varepsilon_{r}})$ generates $K^{0}(\mathbb{D})$. Hence, $(1)$ is proven for $U = \mathbb{D}$.
\par
Every bi-holomorphism $T:\mathbb{D}\mapsto\mathbb{D}$ is of the form $T(z) = e^{2\pi i\theta}\frac{z-a}{1-\overline{a}z}$, for any $z$ in $\mathbb{D}$, for some $a$ in $\mathbb{D}$, $\theta$ in $\mathbb{R}$, and is therefore homotopic to the identity. 
Therefore, $((T^{-1})^{*})_{*}\beta_{\mathbb{D}} = \beta_{\mathbb{D}}$. This proves $(2)$ in the case that $U = V = \mathbb{D}$.
\par
Now we prove the Proposition in the case that $U$ and $V$ are properly contained in $\mathbb{C}$. By the Riemann mapping Theorem, it follows that there is a 
bi-holomorphism $T:\mathbb{D}\mapsto U$. By naturality of the index map, the diagram
$$
\begin{tikzcd}
K^{-1}(\text{im}(\gamma)) \arrow[r, "T_{*}"] \arrow[d, "\delta_{\gamma}"]      & K^{-1}(\text{im}(T(\gamma))) \arrow[d, "\delta_{T(\gamma)}"] \\
K^{0}(\mathbb{D}^{+}_{\gamma}) \arrow[r, "T_{*}"] \arrow[d, "\iota_{\gamma}"] & K^{0}(U^{+}_{\gamma}) \arrow[d, "\iota_{T(\gamma)}"]        \\
K^{0}(\mathbb{D}) \arrow[r, "T_{*}"]                                          & K^{0}(U)                                                   
\end{tikzcd}$$
commutes. Hence, to prove $(1)$ and $((T^{-1})^{*})_{*}\beta_{\mathbb{D}} = \beta_{U}$, it suffices to prove
$T_{*}u_{\gamma} = u_{T(\gamma)}$ for any Jordan curve $\gamma$ in $\mathbb{D}$.
\par
Since $T$ is a bi-holomorphism and $\text{im}(\gamma)$ is compact, for every $\epsilon > 0$, there is a continuous map $b:\text{im}(T(\gamma))\mapsto U^{+}_{T(\gamma)}$ such that

$$\bigg|\frac{T^{-1}(z) - T^{-1}(b(z))}{z-b(z)}\cdot\frac{|z-b(z)|}{|T^{-1}(z) - T^{-1}(b(z))|}\text{ } -\text{ } \frac{(T^{-1})'(z)}{|(T^{-1})'(z)|}\bigg| <\epsilon, \text{ } \text{for all }z\text{ in } \text{im}(T(\gamma)).$$

Let $a = T^{-1}\circ b\circ T$. Note that, for all $w$ in $\text{im}(T(\gamma))$, $u_{a,\gamma}\circ T^{-1}(w) = \frac{T^{-1}(w) - T^{-1}(b(w))}{|T^{-1}(w) - T^{-1}(b(w))|}$ and 
$u_{b,T(\gamma)}(w) = \frac{w - b(w)}{|w - b(w)|}$. Hence, $\| (u_{a,\gamma}\circ T^{-1})\cdot (u_{b,T(\gamma)})^{-1} - \frac{(T^{-1})'}{|(T^{-1})'|}\| <\epsilon$. So, if we choose $\epsilon < 2$,
then, by \cite[Lemma~2.1.3~(iii)]{Rordam:intro_K-theory}, $(u_{a,\gamma}\circ T^{-1})\cdot (u_{b,T(\gamma)})^{-1}$ is homotopic to $\frac{(T^{-1})'}{|(T^{-1})'|}$ as elements in $C(\text{im}(T(\gamma)),S^{1})$.
\par
The domain of $\frac{(T^{-1})'}{|(T^{-1})'|}:\text{im}(T(\gamma))\mapsto S^{1}$ extends continuously to $U^{+}_{T(\gamma)}$ and is therefore
homotopic, as an element in $C(\text{im}(T(\gamma)),S^{1})$, to a constant. Hence, $T_{*}u_{\gamma} - u_{T(\gamma)} = 
[(u_{a,\gamma}\circ T^{-1})(u_{b,T(\gamma)})^{-1}] = 0$ in $K^{-1}(\text{im}(T(\gamma)))$.
\par
Now, suppose $U$, $V$ are simply connected proper open sets of $\mathbb{C}$. Let $T_{U}:\mathbb{D}\mapsto U$ and $T_{V}:\mathbb{D}\mapsto V$ be bi-holomorphisms. 
If $T:U\mapsto V$ is a bi-holomorphism, then $T_{*}\beta_{U} = (T_{V})_{*}(T^{-1}_{V})_{*}T_{*}(T_{U})_{*}\beta_{\mathbb{D}} = (T_{V})_{*}\beta_{\mathbb{D}} = \beta_{V}$, proving $(2)$ in this case.
\par
If $U\subseteq V$, let $\gamma$ be a Jordan curve in $U$. Denote $\gamma$ by $\gamma_{U}$, $\gamma_{V}$ when thinking of it as a curve in $U$, $V$, respectively. The commutative diagram
$$
\begin{tikzcd}
K^{-1}(\text{im}(\gamma_{U})) \arrow[d, "\delta_{\gamma_{U}}"] \arrow[r, no head, equal] & K^{-1}(\text{im}(\gamma_{V})) \arrow[d, "\delta_{\gamma_{V}}"] \\
K^{0}(U^{+}_{\gamma_{U}}) \arrow[r, no head, equal] \arrow[d, "\iota_{\gamma_{U}}"]             & K^{0}(V^{+}_{\gamma_{V}}) \arrow[d, "\iota_{\gamma_{V}}"]     \\
K^{0}(U) \arrow[r, "i_{*}"]                                                              & K^{0}(V)                                                     
\end{tikzcd}$$
along with the fact that $\iota_{\gamma_{U}}\delta_{\gamma_{U}}(u_{\gamma_{U}}) =\beta_{U}$ and $ \iota_{\gamma_{V}}\delta_{\gamma_{V}}(u_{\gamma_{V}}) = \beta_{V}$ implies $i_{*}\beta_{U} = \beta_{V}$. This proves $(3)$ for
such $U$ and $V$. 
\par
The only case remaining for $(1)$ is when $U = \mathbb{C}$. Since $C_{0}(\mathbb{C})$ is the inductive limit of the proper simply connected open sets of $\mathbb{C}$ (ordered by inclusion) and
$i_{*}\beta_{U} = \beta_{V}$ when $U\subseteq V$ , by continuity of $K^{0}$, we have that $\beta_{\mathbb{C}}:= i_{*}\beta_{U}$ generates $K^{0}(\mathbb{C})$ and is independent of $U$, where $U$ is any simply connected
proper open set of $\mathbb{C}$. Every Jordan curve $\gamma$ in $\mathbb{C}$ is eventually contained in a proper simply connected open set $U$ of $\mathbb{C}$, so naturality of the index map, and the fact that $i_{*}\beta_{U} = \beta_{\mathbb{C}}$,
implies $\iota_{\gamma}\delta_{\gamma}(u_{\gamma}) = \beta_{\mathbb{C}}$. This proves $(1)$, and $(2)$ in the case that $V = \mathbb{C}$.
\par
If $T:\mathbb{C}\mapsto\mathbb{C}$ is a bi-holomorphism, it is affine linear, and hence homotopic to the identity. Therefore, $T_{*}\beta_{\mathbb{C}} = \beta_{\mathbb{C}}$, and so $(3)$ is proven when $U = V = \mathbb{C}$.
\par
Now, for an arbitrary open simply connected set $U$ properly contained in $\hat{\mathbb{C}}$, let $T:\hat{\mathbb{C}}\mapsto \hat{\mathbb{C}}$ be a Mobius 
transformation such that $T^{-1}(U)\subseteq\mathbb{C}$ and define $\beta_{U} = T_{*}\beta_{T^{-1}(U)}$. By the properties of $\beta_{T^{-1}(U)}$ proven above, it is clear that $\beta_{U}$ is independent of $T$, and that 
$(2)$ and $(3)$ of the Proposition are satisfied.
\end{proof}
\begin{cor}
\label{hatbott}
$\beta_{\hat{\mathbb{C}}}:= i_{*}(\beta_{U})$ in $K^{0}(\hat{\mathbb{C}})$ is independent of the choice of simply connected proper open set $U$ in $\hat{\mathbb{C}}$. $\beta_{\hat{\mathbb{C}}}$ and $[1_{\hat{\mathbb{C}}}]$ form a minimal generating set for $K^{0}(\hat{\mathbb{C}})\simeq\mathbb{Z}^{2}.$
\end{cor}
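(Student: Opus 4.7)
The plan is to establish the two assertions of the corollary separately: independence of $\beta_{\hat{\mathbb{C}}} := i_*(\beta_U)$ on the choice of $U$, and the fact that $\{\beta_{\hat{\mathbb{C}}}, [1_{\hat{\mathbb{C}}}]\}$ minimally generates $K^0(\hat{\mathbb{C}})\simeq\mathbb{Z}^2$. For the first I would use M\"obius transformations together with parts (2) and (3) of Proposition~\ref{simplybott}; for the second, a direct computation via the six-term exact sequence associated to $\mathbb{D}\subseteq \hat{\mathbb{C}}$.

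For independence, let $U_1, U_2$ be proper simply connected open subsets of $\hat{\mathbb{C}}$ and choose a small round open disk $D_k\subseteq U_k$ for $k=1,2$. Property~(3) of Proposition~\ref{simplybott} and functoriality of $K$-theory give $i_{D_k,*}\beta_{D_k} = i_{U_k,*}\beta_{U_k}$ in $K^0(\hat{\mathbb{C}})$, so it suffices to prove $i_{D_1,*}\beta_{D_1} = i_{D_2,*}\beta_{D_2}$. Since $PSL(2,\mathbb{C})$ acts transitively on round open disks, pick a M\"obius transformation $T:\hat{\mathbb{C}}\mapsto\hat{\mathbb{C}}$ with $T(D_1) = D_2$. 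The identity $T\circ i_{D_1} = i_{D_2}\circ T|_{D_1}$ yields a commutative square of $*$-homomorphisms; applying $K^0$ and Proposition~\ref{simplybott}(2) gives $i_{D_2,*}\beta_{D_2} = T_*(i_{D_1,*}\beta_{D_1})$. Because $PSL(2,\mathbb{C})$ is path-connected, $T$ is homotopic through homeomorphisms of $\hat{\mathbb{C}}$ to the identity, so $T_* = \mathrm{id}$ on $K^0(\hat{\mathbb{C}})$ by homotopy invariance, closing the argument.

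For the second assertion, specialize to $U = \mathbb{D}$ and let $V = \hat{\mathbb{C}}\setminus\mathbb{D}$. The M\"obius map $z\mapsto 1/z$ identifies $V$ with $\overline{\mathbb{D}}$, so $V$ is contractible, giving $K^0(V) = \mathbb{Z}\cdot [1_V]$ and $K^{-1}(V) = 0$. Combined with $K^{-1}(\hat{\mathbb{C}}) = K_1(C(S^2)) = 0$, $K^{-1}(\mathbb{D}) = 0$, and $K^0(\mathbb{D}) = \mathbb{Z}\cdot\beta_{\mathbb{D}}$, the six-term exact sequence associated to $0\to C_0(\mathbb{D})\to C(\hat{\mathbb{C}})\to C(V)\to 0$ collapses to a split short exact sequence
\[
0\to \mathbb{Z}\cdot\beta_{\mathbb{D}}\xrightarrow{i_*} K^0(\hat{\mathbb{C}})\xrightarrow{r_*}\mathbb{Z}\cdot [1_V]\to 0.
\]
By definition $i_*\beta_{\mathbb{D}} = \beta_{\hat{\mathbb{C}}}$, and clearly $r_*[1_{\hat{\mathbb{C}}}] = [1_V]$, so $\{\beta_{\hat{\mathbb{C}}}, [1_{\hat{\mathbb{C}}}]\}$ is a minimal generating set of $K^0(\hat{\mathbb{C}})\simeq\mathbb{Z}^2$. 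The only subtle point in the entire proof is verifying $T_* = \mathrm{id}$ in the independence step, but this follows immediately from the connectedness of $PSL(2,\mathbb{C})$; no further obstacles are anticipated.
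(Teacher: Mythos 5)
Your proof is correct. For the second assertion (minimal generating set) you do essentially the same thing as the paper, just excising $V = \hat{\mathbb{C}}\setminus\mathbb{D}$ instead of the single point $\{\infty\}$; both choices collapse the six-term sequence to a split short exact sequence $0\to\mathbb{Z}\beta\to K^0(\hat{\mathbb{C}})\to\mathbb{Z}\to 0$, and the conclusion is identical.

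For the independence assertion, however, your route is genuinely different. The paper's argument is purely functorial within the framework of Proposition~\ref{simplybott}: given $U$ and $V$, one chooses smaller simply connected $U'\subseteq U$, $V'\subseteq V$ with $U'\cup V'$ contained in a common simply connected proper open set $W$, and then applies Proposition~\ref{simplybott}(3) along both chains $U'\subseteq U$, $U'\subseteq W$, $V'\subseteq V$, $V'\subseteq W$ to conclude that all four Bott classes push forward to the same element of $K^0(\hat{\mathbb{C}})$. This avoids ever invoking a homeomorphism of the whole sphere. Your argument instead reduces to round disks and uses the transitivity of $PSL(2,\mathbb{C})$ together with Proposition~\ref{simplybott}(2), closing with the observation that connectedness of $PSL(2,\mathbb{C})$ forces $T_*=\mathrm{id}$ on $K^0(\hat{\mathbb{C}})$ by homotopy invariance. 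Your version is arguably cleaner conceptually (it makes the symmetry underlying the statement explicit), at the cost of importing a fact about the Möbius group; the paper's version stays entirely inside the inclusion-compatibility machinery already built in Proposition~\ref{simplybott}, which is convenient since that machinery is reused heavily in Proposition~\ref{bottopenconnect} and Corollary~\ref{mainbott}. Both are fully rigorous.
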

\begin{proof}
Let $U$ and $V$ be simply connected open sets properly contained in $\hat{\mathbb{C}}$. Choose simply connected sets $U'\subseteq U$ and $V'\subseteq V$ such that $U'\cup V'$ is contained in a simply connected proper open set $W$ of $\hat{\mathbb{C}}$. The commutative diagram 
$$
\begin{tikzcd}
K^{0}(U') \arrow[r] \arrow[rd] & K^{0}(U) \arrow[rd] &                         \\
                               & K^{0}(W) \arrow[r]  & K^{0}(\hat{\mathbb{C}}) \\
K^{0}(V') \arrow[r] \arrow[ru] & K^{0}(V) \arrow[ru] &                        
\end{tikzcd}$$
and Proposition \ref{simplybott} $(3)$ implies $i_{*}(\beta_{U}) = i_{*}(\beta_{V}):= \beta_{\hat{\mathbb{C}}}$.
\par
The short exact sequence

$$
\begin{tikzcd}
0 \arrow[r] & K^{0}(\mathbb{C}) \arrow[r, "i_{*}"] & K^{0}(\hat{\mathbb{C}}) \arrow[r] & K^{0}(\infty) \arrow[r] & 0
\end{tikzcd}$$
implies $\beta_{\hat{\mathbb{C}}}$ and $[1_{\hat{\mathbb{C}}}]$ form a minimal generating set for $K^{0}(\hat{\mathbb{C}})\simeq\mathbb{Z}^{2}$.
\end{proof}
\begin{prop}
\label{bottopenconnect}
Let $U$ be a connected open set properly contained in $\hat{\mathbb{C}}$. Then, $K^{0}(U)\simeq\mathbb{Z}$, with a unique generator $\beta_{U}:= i_{*}(\beta_{V})$ in $K^{0}(U)$, for any simply connected open set $V\subseteq U$.
\end{prop}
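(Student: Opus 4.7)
The plan is to reduce to the simply connected case (Proposition~\ref{simplybott}) in two steps: first treat the case when $K := \hat{\mathbb{C}} \setminus U$ has finitely many connected components by induction, and then handle the general case by continuity of $K$-theory along an inner exhaustion of $U$. For well-definedness of $\beta_U$, I would use that $U$ is path-connected: given two simply connected open subsets $V_1, V_2 \subseteq U$, connect a point of $V_1$ to a point of $V_2$ by a path in $U$ and cover it by a finite chain of open disks $D_1, \dots, D_n \subseteq U$ with $V_1 \cap D_1$, each $D_i \cap D_{i+1}$, and $D_n \cap V_2$ non-empty. Picking a simply connected open patch in each overlap and applying Proposition~\ref{simplybott}(3) link by link chains $i_*(\beta_{V_1}) = i_*(\beta_{V_2})$ in $K^0(U)$, so $\beta_U := i_*(\beta_V)$ is independent of $V$.

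When $|\pi_0(K)| < \infty$, I induct on $k := |\pi_0(K)|$. The base $k = 1$ is immediate from Proposition~\ref{simplybott}: a compact connected proper $K$ has simply connected complementary domains, and the connected $U$ is one of them. For $k \geq 2$, pick an isolated component $K_0$ of $K$ (which exists since $K$ has finitely many components) and set $U' := U \cup K_0 = \hat{\mathbb{C}} \setminus (K \setminus K_0)$; this is open (since $K \setminus K_0$ is closed) and connected (since $\partial K_0 \subseteq \overline{U}$), with $\hat{\mathbb{C}} \setminus U'$ having $k - 1$ components, so the induction hypothesis gives $K^0(U') = \mathbb{Z}\beta_{U'}$. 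I would then check that $\hat{\mathbb{C}} \setminus K_0$ is connected: if a component $V_j$ of $\hat{\mathbb{C}} \setminus K_0$ were disjoint from $U$, then $V_j \subseteq K \setminus K_0$ would be contained in some other component $K_m$, forcing $\overline{V_j} \subseteq K_m$ and thus $\partial V_j \subseteq K_0 \cap K_m = \emptyset$, contradicting $\partial V_j \neq \emptyset$. Hence $\hat{\mathbb{C}} \setminus K_0$ is a single simply connected open set, so Proposition~\ref{simplybott} together with the 6-term exact sequence of $0 \to C_0(\hat{\mathbb{C}} \setminus K_0) \to C(\hat{\mathbb{C}}) \to C(K_0) \to 0$ yields $K^0(K_0) = \mathbb{Z}[1_{K_0}]$ and $K^{-1}(K_0) = 0$. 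The 6-term exact sequence of $0 \to C_0(U) \to C_0(U') \to C(K_0) \to 0$ now reads
\[
K^{-1}(K_0) \to K^0(U) \xrightarrow{i_*} K^0(U') \xrightarrow{r} K^0(K_0),
\]
and since $\beta_{U'} = i_*(\beta_V)$ for any simply connected $V \subseteq U \subseteq U' \setminus K_0$, the restriction $r$ vanishes. Combined with $K^{-1}(K_0) = 0$, this makes $i_*$ an isomorphism carrying $\beta_U$ to $\beta_{U'}$, completing the induction.

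For the general case, exhaust $U$ by connected open sets $U_n$, each chosen as a finite connected union of open disks contained in $U$ with $\overline{U_n} \subseteq U_{n+1}$ and $\bigcup_n U_n = U$. Each $U_n$ has finitely many complementary components (as it is a finite union of open disks), so the previous step yields $K^0(U_n) = \mathbb{Z}\beta_{U_n}$, and the inclusions carry $\beta_{U_n} \mapsto \beta_{U_{n+1}}$. Continuity of $K$-theory applied to $C_0(U) = \varinjlim_n C_0(U_n)$ then yields $K^0(U) \cong \varinjlim_n \mathbb{Z}\beta_{U_n} \cong \mathbb{Z}\beta_U$. The hardest step is the inductive reduction, whose crux is the point-set topological lemma that each component of a compact subset of $\hat{\mathbb{C}}$ with connected complement again has connected complement; once this is in hand, the vanishing of the restriction $K^0(U') \to K^0(K_0)$ follows by realizing $\beta_{U'}$ via a simply connected $V$ contained in $U$.
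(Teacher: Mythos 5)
Your proof is correct, and it takes a genuinely different route from the paper's. The paper reduces to $U\subseteq\mathbb{C}$ and attacks the complement $\hat{\mathbb{C}}\setminus U$ directly: it builds a nested exhaustion of $U$ by finite unions of rectangles satisfying a combinatorial compatibility condition, shows by a boundary-tracing argument that each complementary component of each approximant has a Jordan-curve boundary, invokes Carath\'eodory's theorem to identify each such component with $\overline{\mathbb{D}}$, and only then passes to the limit and to the $6$-term sequence of $0\to C_0(U)\to C(\hat{\mathbb{C}})\to C(\hat{\mathbb{C}}\setminus U)\to 0$. Your argument instead works \emph{inside} $U$. You first establish well-definedness of $\beta_U$ by chaining simply connected patches, which is essentially what the paper obtains from Corollary~\ref{hatbott} together with the injectivity of $i_*:K^0(U)\to K^0(\hat{\mathbb{C}})$. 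The real divergence is your inductive fill-in step: when $\hat{\mathbb{C}}\setminus U$ has $k\geq 2$ components, you adjoin one component $K_0$ to $U$ and use the sequence $0\to C_0(U)\to C_0(U')\to C(K_0)\to 0$. Your crucial point-set lemma --- that each component $K_0$ of a compact set with connected complement again has connected complement, so that $\hat{\mathbb{C}}\setminus K_0$ is simply connected --- lets you read off $K^{-1}(K_0)=0$ from the exact sequence of $0\to C_0(\hat{\mathbb{C}}\setminus K_0)\to C(\hat{\mathbb{C}})\to C(K_0)\to 0$ and Corollary~\ref{hatbott}, entirely avoiding Carath\'eodory and any Jordan-curve analysis. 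The restriction $K^0(U')\to K^0(K_0)$ then kills $\beta_{U'}$ for the transparent reason that $\beta_{U'}$ is pushed forward from a simply connected $V\subseteq U$ disjoint from $K_0$. Your exhaustion by finite connected unions of disks (rather than the paper's rectangles with a Jordan-curve condition) is correspondingly lighter, since you only need finitely many complementary components rather than nice boundary structure. What the paper's route buys is a byproduct computation of $K^{-1}(\hat{\mathbb{C}}\setminus U_n)=0$ and the explicit Jordan-curve description of the complementary pieces; what yours buys is a more modular and shorter argument resting on one clean topological lemma. Two small places where you should add a sentence: justify that a finite union of open disks has finitely many complementary components (the circle arrangement has finitely many faces), and justify the existence of a nested connected exhaustion by finite unions of disks (take a compact connected exhaustion $K_n\subseteq\mathrm{int}(K_{n+1})$ and cover each $K_n$ by disks with closures inside $\mathrm{int}(K_{n+1})$; connectedness of $K_n$ forces the union of covering disks to be connected).
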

\begin{proof}
Since $U\neq\hat{\mathbb{C}}$, and $((T^{-1})^{*})_{*}\beta_{V} = \beta_{T(V)}$ for any fractional linear transformation $T$ and simply connected open set $V$ of $\hat{\mathbb{C}}$, by applying a Mobius transformation to $U$, it suffices to prove the Proposition in the case that $U$ is an open set in $\mathbb{C}$.
\par
First, we show that $K^{-1}(\hat{\mathbb{C}}\setminus U) = 0$ and $q_{*}:K^{0}(\hat{\mathbb{C}})\mapsto K^{0}(\hat{\mathbb{C}}\setminus U)$ sends $\beta_{\hat{\mathbb{C}}}$ to $0$, where $q$ is the restriction map. 
\par
We claim that $U$ can be written as a countable union $U = \bigcup_{n\in\mathbb{N}}U_{n}$ where, for each $n$ in $\mathbb{N}$, $U_{n}$ is a finite union of bounded open rectangles $\mathcal{U}_{n} = \{R^{n}_{1},...,R^{n}_{k_{n}}\}$ such that
\begin{enumerate}
    \item $U_{n}$ is connected,
    \item The closure of $U_{n}$ in $\mathbb{C}$ is contained in $U$,
    \item $U_{n}\subseteq U_{n+1}$, and
    \item for any $i,j\leq k_{n}$, $\overline{R^{n}_{i}}\cap\overline{R^{n}_{j}}$ is either empty or a closed rectangle.
\end{enumerate}
Finding a collection $\mathcal{U}_{n} = \{R^{n}_{1},...,R^{n}_{k_{n}}\}_{n\in\mathbb{N}}$ satisfying $(1)$ and $(2)$ is easy. Suppose $\mathcal{U}_{l}$ also satisfy $(3)$ and $(4)$ for $l\leq n-1$. 
\par
Since $\overline{U_{n}}$ is compact and contained in $U$, for each $i\leq k_{n}$ we can dilate $R^{n}_{i}$ to a larger rectangle $\tilde{R}^{n}_{i}$ so that $\{\tilde{R}^{n}_{1},...,\tilde{R}^{n}_{k_{n}}\} = \mathcal{U}'_{n}$ satisfies $(1)$, $(2)$ and $(4)$. Denote $U'_{n} = \bigcup_{i\leq k_{n}}\tilde{R}^{n}_{i}$.
\par
Now, for each $i\leq k_{n}$, cover  $\tilde{R}^{n}_{i}\setminus \overline{R^{n}_{i}}$ by a finite collection of open rectangles $\mathcal{S}_{i} = \{S_{i,1},...,S_{i,m_{i}}\}$ in $U$, and, for $l\geq n+1$, let $U'_{l} = U_{l}\cup\bigcup_{i,j}S_{i,j}$ and $\mathcal{U}'_{l} = \mathcal{U}_{l}\cup\bigcup_{i}\mathcal{S}_{i}$. $\mathcal{U}'_{n+1}$ satisifes $(1)$ and $(2)$, and $U_{n}$ satisfies $(3)$. So, we may induct this process to replace a collection $\{\mathcal{U}_{n}\}_{n\in\mathbb{N}}$ satisfying $(1)$ and $(2)$ with a collection satisfying $(1)-(4)$.
\par
Fix such a collection $\{\mathcal{U}_{n}\}_{n\in\mathbb{N}}$. Let $X$ be a maximal connected component of $\hat{\mathbb{C}}\setminus U_{n}$. We show that $X$ has a (non-empty) simply connected interior with a boundary that is a Jordan curve. 
\par
For each $i\leq k_{n}$, write $\partial R^{n}_{i} = \bigcup_{j=1}^{4}L_{i,j}$, where $L_{i,j}$ is a maximal horizontal or vertical line. Let $x$ be in $\partial X.$ Then, $x$ is in a line $L_{i,j}$ for some $i,j$. Let $\gamma_{x}:[0,t]\mapsto \partial X$ be the unit speed curve following the line $L_{i,j}$ in the counter-clockwise direction, starting at $\gamma_{x}(0) = x$ and ending once the line intersects a new line at $\gamma_{x}(t)$. By condition $(4)$ of $\mathcal{U}_{n}$, this new line is unique. Continue defining $\gamma_{x}$ on this new line, and inductively on the next new lines until $\gamma_{x}$ intersects itself at $\gamma_{x}(t_{*})$. By condition $(4)$ of $\mathcal{U}_{n}$, this first intersection point of $\gamma_{x}$ must be $x$. Since $\gamma_{x}:[0,t_{*}]\mapsto \partial X$ is piece-wise linear with no intersections other than at the endpoints, it is a Jordan curve. 
\par
By the Jordan curve Theorem, $\gamma_{x}$ separates $\hat{\mathbb{C}}$ into two simply connected components, one which contains $U_{n}$, and the other, denoted $A_{x}$, which does not. By maximality of $X$, we must have $\overline{A_{x}}\subseteq X$. By $(4)$ and connectedness of $\partial X$ (which follows from $(1)$), we must have $\partial X = \gamma_{x}([0,t_{*}]) = \partial\overline{A_{x}}$ and hence $\overline{A_{x}} = X$. We have shown $X$ has a non-empty simply connected interior with boundary that is a Jordan curve.
\par
Therefore, by Carathéodory's Theorem, there is a homeomorphism $\varphi:X\mapsto\overline{\mathbb{D}}$, which is holomorphic on the interior of $X$. Hence $K^{-1}(X) = 0$. Since $\hat{\mathbb{C}}\setminus U_{n}$ is a finite disjoint union of its maximal connected components, it follows that $K^{-1}(\hat{\mathbb{C}}\setminus U_{n}) = 0$. 
By Corollary \ref{hatbott}, $i_{*}:K^{0}(A)\mapsto K^{0}(\hat{\mathbb{C}})$ sends $\beta_{A}$, where $A$ is a simply connected component as above, to $\beta_{\hat{\mathbb{C}}}$, and so $q_{*}(\beta_{\hat{\mathbb{C}}}) = j_{*}(\beta_{A})$ in the direct summand $K^{0}(X)$ of $K^{0}(\hat{\mathbb{C}}\setminus U_{n})$, where $j = i:C_{0}(A)\mapsto C(X)$ is the inclusion. Since $\varphi$ is a bi-holomorphism on $A$, Proposition \ref{simplybott} $(2)$ implies $((\varphi^{-1})^{*})_{*}j_{*}(\beta_{A}) = j_{*}(\beta_{\mathbb{D}})$, which is $0$ in $K^{0}(\overline{\mathbb{D}})$. Hence, $q_{*}(\beta_{\mathbb{C}}) = 0$ in $K^{0}(\hat{\mathbb{C}}\setminus U_{n})$.
\par
$C(\hat{\mathbb{C}}\setminus U)$ is the inductive limit of the restriction maps $r_{n}:C(\hat{\mathbb{C}}\setminus U_{n})\mapsto C(\hat{\mathbb{C}}\setminus U_{n+1})$, so, by continuity of $K^{-1}$ and $K^{0}$, it follows that $K^{-1}(\hat{\mathbb{C}}\setminus U) = 0$ and $q_{*}(\beta_{\hat{\mathbb{C}}}) = 0$ in $K^{0}(\hat{\mathbb{C}}\setminus U)$. Note also that $q_{*}([1_{\hat{\mathbb{C}}}]) = [1_{\hat{\mathbb{C}}\setminus U}].$
\par
Hence, the 6 term exact sequence of $K$-theory associated to 
$$
\begin{tikzcd}
0 \arrow[r] & C_{0}(U) \arrow[r, "i"] & C(\hat{\mathbb{C}}) \arrow[r, "q"] & C(\hat{\mathbb{C}}\setminus U) \arrow[r] & 0
\end{tikzcd}$$
implies that $i_{*}:K^{0}(U)\mapsto K^{0}(\hat{\mathbb{C}})$ is an isomorphism onto $\mathbb{Z}\beta_{\hat{\mathbb{C}}}.$ Let $\beta_{U}$ be the unique generator of $K^{0}(U)$ such that $i_{*}(\beta_{U}) = \beta_{\hat{\mathbb{C}}}.$
\par
By Corollary \ref{hatbott}, $j_{*}(\beta_{V}) = \beta_{\hat{\mathbb{C}}}$ for any simply connected open set $V\subseteq U$, where $j:C_{0}(V)\mapsto C(\hat{\mathbb{C}})$ is the inclusion. Hence, $i_{*}(\beta_{V}) = \beta_{U}$.
\end{proof}
\begin{cor}
\label{mainbott}
For any connected open set $U$ of $\hat{\mathbb{C}}$, there is a generator $\beta_{U}$ in $K^{0}(U)$ such that, for $V\subseteq\hat{\mathbb{C}}$ another connected open set,
\begin{enumerate}
    \item if $T:U\mapsto V$ is a bi-holomorphism, then $((T^{-1})^{*})_{*}\beta_{U} = \beta_{V}$, and
    \item if $U\subseteq V$ and $i:C_{0}(U)\mapsto C_{0}(V)$ is the inclusion, then $i_{*}\beta_{U} = \beta_{V}$.
\end{enumerate}
Moreover, if $U$ is not entirely $\hat{\mathbb{C}}$, then $K^{0}(U) = \mathbb{Z}\beta_{U}\simeq\mathbb{Z}$. If $U = \hat{\mathbb{C}}$, then $\beta_{\hat{\mathbb{C}}}$ and  $[1_{\hat{\mathbb{C}}}]$ form a minimal generating set for $K^{0}(\hat{\mathbb{C}})\simeq\mathbb{Z}^{2}$.
\end{cor}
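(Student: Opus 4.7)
The plan is to assemble the corollary directly from the three preceding results, treating it as a case-split on whether the open sets are proper subsets of $\hat{\mathbb{C}}$ or all of $\hat{\mathbb{C}}$. For $U$ properly contained in $\hat{\mathbb{C}}$, define $\beta_U$ via Proposition \ref{bottopenconnect}, which already gives $K^0(U) = \mathbb{Z}\beta_U$. For $U = \hat{\mathbb{C}}$, take $\beta_{\hat{\mathbb{C}}}$ from Corollary \ref{hatbott}, which also supplies the rank-two $K^0$ description. With these definitions in hand, only properties (1) and (2) require verification.

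For property (2), note that if $U \subseteq V$ and $U$ is all of $\hat{\mathbb{C}}$, then so is $V$ and there is nothing to show. If both $U$ and $V$ are proper, pick a simply connected open set $W \subseteq U$. By Proposition \ref{bottopenconnect}, $\beta_U = (i^U_W)_\ast(\beta_W)$ and $\beta_V = (i^V_W)_\ast(\beta_W)$, where $i^U_W$, $i^V_W$ denote the respective inclusions. Since $i^V_W = i \circ i^U_W$, functoriality of $K$-theory gives $i_\ast(\beta_U) = \beta_V$. If $U$ is proper but $V = \hat{\mathbb{C}}$, the same argument works verbatim using the definition of $\beta_{\hat{\mathbb{C}}}$ in Corollary \ref{hatbott}.

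For property (1), a bi-holomorphism $T \colon U \to V$ forces $U$ and $V$ to either both equal $\hat{\mathbb{C}}$ or both be proper. In the proper case, fix a simply connected open $W \subseteq U$, so $T(W) \subseteq V$ is simply connected and open. Proposition \ref{simplybott}(2) applied to the restriction $T|_W \colon W \to T(W)$ gives $((T|_W)^{-1})^\ast_\ast \beta_W = \beta_{T(W)}$, and functoriality applied to the commutative square of inclusions, together with the identification of $\beta_U$, $\beta_V$ via the push-forwards of $\beta_W$, $\beta_{T(W)}$, yields $((T^{-1})^\ast)_\ast \beta_U = \beta_V$. If instead $U = V = \hat{\mathbb{C}}$, then $T$ is a Möbius transformation and lies in the path-connected group $\mathrm{PSL}(2,\mathbb{C})$; hence $T$ is homotopic to $\mathrm{id}_{\hat{\mathbb{C}}}$ and $((T^{-1})^\ast)_\ast$ acts as the identity on $K^0(\hat{\mathbb{C}})$, which in particular fixes $\beta_{\hat{\mathbb{C}}}$.

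No single step is a genuine obstacle — the corollary is essentially a bookkeeping consolidation of Proposition \ref{simplybott}, Corollary \ref{hatbott}, and Proposition \ref{bottopenconnect}. The most delicate check is the $U = V = \hat{\mathbb{C}}$ instance of (1), which hinges on the connectedness of the Möbius group rather than on the explicit Bott-class construction; all other cases reduce to functoriality of $K$-theory and the fact that every connected open set contains a simply connected open subset, so the generator of $K^0$ can always be transported back to the setting of Proposition \ref{simplybott} where it was originally defined.
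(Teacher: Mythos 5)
Your proof is correct and follows essentially the same strategy as the paper: define $\beta_U$ via Proposition \ref{bottopenconnect} (or Corollary \ref{hatbott} when $U=\hat{\mathbb{C}}$), then transport everything through a simply connected open subset $W\subseteq U$ using functoriality and Proposition \ref{simplybott}(2). The one place you deviate --- handling the $U=V=\hat{\mathbb{C}}$ case of (1) by path-connectedness of $\mathrm{PSL}(2,\mathbb{C})$ rather than by pushing forward from $W$ and $T(W)$ through Corollary \ref{hatbott} --- is a valid and slightly more direct argument, but not a structurally different proof.
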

\begin{proof}
The generating properties of $\beta_{U}$ are contained in Corollary \ref{hatbott} and Proposition \ref{bottopenconnect}. 
\par
Let $U'\subseteq U$ be a simply connected open set, $U\subseteq V$ and $T:U\mapsto T(U)$ a bi-holomorphism. By Proposition \ref{bottopenconnect}, the maps induced by inclusion send $\beta_{U'}$ to $\beta_{U}$, $\beta_{U'}$ to $\beta_{V}$, and $\beta_{T(U')}$ to $\beta_{T(U)}$. Hence, the commutative diagrams
$$
\begin{tikzcd}
K^{0}(U') \arrow[r, "T_{*}"] \arrow[d] & K^{0}(T(U')) \arrow[d] & K^{0}(U') \arrow[d] \arrow[rd] &          \\
K^{0}(U) \arrow[r, "T_{*}"]            & K^{0}(T(U))            & K^{0}(U) \arrow[r, "i_{*}"]    & K^{0}(V)
\end{tikzcd}$$
and Proposition \ref{simplybott} $(2)$ imply the Corollary.
\end{proof}

\section{The $K$-theory of a rational function acting on the Riemann sphere}
\label{ratmapc}
We now compute the $K$-theory for an arbitrary rational function acting on the Riemann sphere, which follows easily from Corollary \ref{mainbott}, Proposition \ref{classnaturality}, and the Pimsner-Voiculescu 6-term exact sequence. 
\par
We first record an easy lemma that will be used in this section as well as in Section \ref{ratmapf}.
\begin{lemma}
\label{localE}
Let $R$ be a rational function and $U$ an open set such that $\tilde{R} = R:U\mapsto R(U) = V$ is a homeomorphism. Then, $\beta_{U}\hat{\otimes}_{0}[\mathcal{E}_{\tilde{R},U}] = \beta_{V}$.
\end{lemma}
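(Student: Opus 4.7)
The plan is to exploit that $\tilde{R}:U\mapsto V$ is not merely a homeomorphism but, being the restriction of a rational function, actually a bi-holomorphism. In particular $\operatorname{ind}_{\tilde{R}}\equiv 1$ on $U$, so $C_{\tilde{R},U}=\emptyset$ and $[\mathcal{E}_{\tilde{R},U}]$ already lives in $KK^{0}(C_{0}(U),C_{0}(V))$ with no restriction of the left coefficient algebra needed.

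First I would identify the correspondence $E_{\tilde{R},U}$ explicitly. Since for every $y\in V$ the fibre $\tilde{R}^{-1}(y)$ consists of a single point with index $1$, the linear map $S:E_{\tilde{R},U}\to C_{0}(V)$ given by $S(\psi)=\psi\circ\tilde{R}^{-1}$ is a well-defined unitary of right Hilbert $C_{0}(V)$-modules: the inner product $\langle\psi,\varphi\rangle=\Phi(\overline{\psi}\varphi)$ becomes the standard pointwise inner product, and the right $C_{0}(V)$-action $\psi\cdot g=\psi\cdot(g\circ\tilde{R})$ becomes the standard multiplication. (One checks along the way that $C_{2}(U)=C_{0}(U)$, since $\Phi(|\psi|^{2})=|\psi|^{2}\circ\tilde{R}^{-1}$ lies in $C_{0}(V)$ whenever $\psi\in C_{0}(U)$.) Under this identification, the left $C_{0}(U)$-action on $E_{\tilde{R},U}$ is transported to multiplication by $\varphi(f):=f\circ\tilde{R}^{-1}$, and $\varphi=(\tilde{R}^{-1})^{*}:C_{0}(U)\to C_{0}(V)$ is a $*$-isomorphism.

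Consequently $(\varphi,S,\operatorname{id}_{C_{0}(V)}):(E_{\tilde{R},U},\alpha_{U})\to (C_{0}(V),\text{mult})$ is an isomorphism of correspondences, so in $KK^{0}(C_{0}(U),C_{0}(V))$ we have the identification $[\mathcal{E}_{\tilde{R},U}]=[\varphi]$. Hence
\[
\beta_{U}\hat{\otimes}_{0}[\mathcal{E}_{\tilde{R},U}]=\varphi_{*}(\beta_{U})=\bigl((\tilde{R}^{-1})^{*}\bigr)_{*}(\beta_{U}).
\]

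Finally, since $\tilde{R}:U\to V$ is a bi-holomorphism between connected open subsets of $\hat{\mathbb{C}}$, Corollary \ref{mainbott}(1) applied to $T=\tilde{R}$ gives $((\tilde{R}^{-1})^{*})_{*}(\beta_{U})=\beta_{V}$, which completes the proof. There is no real obstacle here: the argument is essentially a direct unwinding of the definition of $E_{\tilde{R},U}$ in the unramified single-sheeted setting, plus the naturality of the Bott class under bi-holomorphisms recorded in Corollary \ref{mainbott}.
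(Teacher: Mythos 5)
Your proof is correct and follows essentially the same route as the paper: you observe that because $\tilde{R}:U\to V$ is a bi-holomorphism (so $\operatorname{ind}_{\tilde{R}}\equiv 1$ and $C_{\tilde{R},U}=\emptyset$), the Kasparov bimodule $\mathcal{E}_{\tilde{R},U}$ is unitarily isomorphic to the one representing the $*$-isomorphism $(\tilde{R}^{-1})^{*}:C_{0}(U)\to C_{0}(V)$, and then invoke Corollary~\ref{mainbott} to get $((\tilde{R}^{-1})^{*})_{*}\beta_{U}=\beta_{V}$. The paper's proof is the same argument stated more tersely; your explicit unitary $S(\psi)=\psi\circ\tilde{R}^{-1}$ and the check that $C_{2}(U)=C_{0}(U)$ simply spell out what the paper leaves implicit.
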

\begin{proof}
For $f$ in $C_{0}(U)$,  $\psi, \varphi$ in $E_{\tilde{R}, U}$, and $g$ in $C_{0}(V)$, we have $f\cdot\psi = f\psi$, $\langle\psi,\varphi\rangle = (\overline{\psi}\varphi)\circ (\tilde{R})^{-1}$, and 
$\varphi\cdot g = \varphi(g\circ \tilde{R})$. Therefore, the class of $\mathcal{E}_{\tilde{R}, U}$ in $KK^{0}(C_{0}(U), C_{0}(V))$ is equal to the class of the *-isomorphism
$(\tilde{R}^{-1})^{*}$.
\par
$\tilde{R}:U\mapsto V$ is a bi-holomorphism, so by Corollary \ref{mainbott}, we have $((\tilde{R}^{-1})^{*})_{*}\beta_{U} = \beta_{V}$.
\end{proof}
We now compute one of the maps appearing in the Pimsner-Voiculescu 6-term exact sequence of $K$-theory for $\mathcal{O}_{R,\hat{\mathbb{C}}}$.
\begin{prop}
\label{Cmap}
Let $R$ be a rational function. Then,
$\hat{\otimes}[\mathcal{E}_{R, \hat{\mathbb{C}}}] = \iota$ as mappings $K^{0}(\hat{\mathbb{C}}\setminus C_{R,\hat{\mathbb{C}}})\mapsto K^{0}(\hat{\mathbb{C}})$.
\end{prop}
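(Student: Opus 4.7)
The plan is to reduce to computing both maps on a single generator of $K^{0}(\hat{\mathbb{C}}\setminus C_{R,\hat{\mathbb{C}}})$, and then to reduce that computation to a local one on a disk where $R$ is a local homeomorphism. Since $C_{R,\hat{\mathbb{C}}}$ is a finite set, $\hat{\mathbb{C}}\setminus C_{R,\hat{\mathbb{C}}}$ is a connected open proper subset of $\hat{\mathbb{C}}$, so by Proposition \ref{bottopenconnect} we have $K^{0}(\hat{\mathbb{C}}\setminus C_{R,\hat{\mathbb{C}}}) = \mathbb{Z}\beta_{\hat{\mathbb{C}}\setminus C_{R,\hat{\mathbb{C}}}}$. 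It therefore suffices to verify the identity on this Bott generator.

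For the right-hand side, Corollary \ref{mainbott}(2) applied to the inclusion $\hat{\mathbb{C}}\setminus C_{R,\hat{\mathbb{C}}}\subseteq\hat{\mathbb{C}}$ yields $\iota(\beta_{\hat{\mathbb{C}}\setminus C_{R,\hat{\mathbb{C}}}}) = \beta_{\hat{\mathbb{C}}}$. Thus the goal reduces to showing
\[
\beta_{\hat{\mathbb{C}}\setminus C_{R,\hat{\mathbb{C}}}}\,\hat{\otimes}_{0}[\mathcal{E}_{R,\hat{\mathbb{C}}}] \;=\; \beta_{\hat{\mathbb{C}}}.
\]

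To compute the left-hand side, choose a small open disk $U\subseteq\hat{\mathbb{C}}\setminus C_{R,\hat{\mathbb{C}}}$ on which $R$ restricts to a bi-holomorphism $R\colon U\mapsto V:=R(U)$, with $V$ also a simply connected open set of $\hat{\mathbb{C}}$. The inclusion $U\hookrightarrow \hat{\mathbb{C}}\setminus C_{R,\hat{\mathbb{C}}}$ between connected open sets sends $\beta_{U}$ to $\beta_{\hat{\mathbb{C}}\setminus C_{R,\hat{\mathbb{C}}}}$ by Corollary \ref{mainbott}(2), and similarly the inclusion $V\hookrightarrow\hat{\mathbb{C}}$ sends $\beta_{V}$ to $\beta_{\hat{\mathbb{C}}}$. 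Now Proposition \ref{classnaturality} applied to the open inclusion $U\subseteq \hat{\mathbb{C}}$ (with any choice of $V$ containing $R(U)$) says the square
\[
\begin{tikzcd}
K^{0}(U) \arrow[r, "\hat{\otimes}{[\mathcal{E}_{R,U}]}"] \arrow[d] & K^{0}(V) \arrow[d] \\
K^{0}(\hat{\mathbb{C}}\setminus C_{R,\hat{\mathbb{C}}}) \arrow[r, "\hat{\otimes}{[\mathcal{E}_{R,\hat{\mathbb{C}}}]}"] & K^{0}(\hat{\mathbb{C}})
\end{tikzcd}
\]
commutes, and Lemma \ref{localE} gives $\beta_{U}\,\hat{\otimes}_{0}[\mathcal{E}_{R,U}] = \beta_{V}$. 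Chasing $\beta_{U}$ around the square produces $\beta_{\hat{\mathbb{C}}\setminus C_{R,\hat{\mathbb{C}}}}\,\hat{\otimes}_{0}[\mathcal{E}_{R,\hat{\mathbb{C}}}] = \beta_{\hat{\mathbb{C}}}$, completing the proof.

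The argument is essentially a bootstrap from Bott-generator computations on simply connected disks to the global statement; no serious obstacle is anticipated beyond verifying that one can indeed find a disk $U$ disjoint from $C_{R,\hat{\mathbb{C}}}$ on which $R$ is injective, which is immediate since $R$ is a local homeomorphism away from its finitely many critical points.
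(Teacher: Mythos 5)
Your proposal is correct and follows essentially the same route as the paper's own proof: both pick a small open set $U$ disjoint from the critical points on which $R$ restricts to a homeomorphism onto $V = R(U)$, invoke Proposition~\ref{classnaturality} for the restriction-naturality square, Lemma~\ref{localE} for the local identification of $[\mathcal{E}_{R,U}]$ with the homeomorphism class, and Corollary~\ref{mainbott} to track the Bott generators under inclusion. The only cosmetic difference is that you phrase the reduction as ``check on the single generator $\beta_{\hat{\mathbb{C}}\setminus C_{R,\hat{\mathbb{C}}}}$,'' whereas the paper phrases it as ``$(\iota_U)_*$ is an isomorphism, so precompose''; these are the same step.
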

\begin{proof}
Let $U$ be a connected open set in $\hat{\mathbb{C}}$ such that $U\cap C_{R,\hat{\mathbb{C}}} = \emptyset$ and $\tilde{R} = R:U\mapsto R(U) =: V$ is a homeomorphism. By Proposition \ref{classnaturality}, $(\iota_{U})^{*}[\mathcal{E}_{R,\hat{\mathbb{C}}}] = (i_{V})_{*}[\mathcal{E}_{\tilde{R},U}]$.
\par
Corollary \ref{mainbott} implies that $(\iota_{U})_{*}:K^{0}(U)\mapsto K^{0}(\hat{\mathbb{C}}\setminus C_{R,\hat{\mathbb{C}}})$ is an isomorphism. Therefore, to prove the Proposition, it suffices to show $\hat{\otimes}(\iota_{U})^{*}[\mathcal{E}_{R,\hat{\mathbb{C}}}] = \hat{\otimes}(\iota_{U})^{*}\iota$ as mappings $K^{0}(U)\mapsto K^{0}(\hat{\mathbb{C}})$.
\par
Note that 
\begin{enumerate}
    \item $\hat{\otimes}(\iota_{U})^{*}\iota = (i_{U})_{*}$, and
    \item $\hat{\otimes}(\iota_{U})^{*}[\mathcal{E}_{R,\hat{\mathbb{C}}}] = (i_{V})_{*}\circ(\hat{\otimes}_{0}[\mathcal{E}_{\tilde{R},U}])$ (from above).
\end{enumerate}
By Corollary \ref{mainbott}, both $(i_{U})_{*}$ and $(i_{V})_{*}\circ(\hat{\otimes}_{0}[\mathcal{E}_{\tilde{R},U}])$ send $\beta_{U}$ to $\beta_{\hat{\mathbb{C}}}$.
\end{proof}
The above calculation is enough information to determine the $K$-theory of $\mathcal{O}_{R,\hat{\mathbb{C}}}$.
\begin{thm}
\label{CK}
$\delta_{PV}:K_{1}(\mathcal{O}_{R,\hat{\mathbb{C}}})\mapsto K^{0}(\hat{\mathbb{C}}\setminus C_{R,\hat{\mathbb{C}}})$ is an isomorphism, and
$$ 
\begin{tikzcd}
0 \arrow[r] & K^{0}(\hat{\mathbb{C}}) \arrow[r, "i_{*}"] & {K_{0}(\mathcal{O}_{R,\hat{\mathbb{C}}})} \arrow[r, "\text{exp}_{PV}"] & K^{-1}(\hat{\mathbb{C}}\setminus C_{R,\hat{\mathbb{C}}}) \arrow[r] & 0
\end{tikzcd}$$ is a short exact sequence. Consequently, $K_{0}(\mathcal{O}_{R,\hat{\mathbb{C}}})\simeq \mathbb{Z}^{|C_{R,\hat{\mathbb{C}}}|+1}$, with the class of the unit a generator in a minimal generating set for $K_{0}(\mathcal{O}_{R,\hat{\mathbb{C}}})$, and $K_{1}(\mathcal{O}_{R,\hat{\mathbb{C}}})\simeq\mathbb{Z}$.
\end{thm}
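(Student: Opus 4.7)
The plan is to apply the Pimsner--Voiculescu 6-term exact sequence (Proposition \ref{cpexact}) to the full, faithful correspondence $(E_{R,\hat{\mathbb{C}}},\alpha_{\hat{\mathbb{C}}})$, whose ideal of compactly-acting elements equals $J_{(E,\alpha)}=C_{0}(\hat{\mathbb{C}}\setminus C_{R,\hat{\mathbb{C}}})$ by the topological-quiver identification recorded in Section \ref{holcorrespondence}. Two features collapse the sequence. First, Proposition \ref{Cmap} gives $\hat{\otimes}_{0}[\mathcal{E}_{R,\hat{\mathbb{C}}}]=\iota$ on $K^{0}(\hat{\mathbb{C}}\setminus C_{R,\hat{\mathbb{C}}})$, so the even PV map $\iota-\hat{\otimes}_{0}K(E_{R,\hat{\mathbb{C}}},\alpha_{\hat{\mathbb{C}}})$ vanishes. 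Second, $K^{-1}(\hat{\mathbb{C}})=K^{1}(S^{2})=0$, so the odd PV map $\iota-\hat{\otimes}_{1}K$ targets the zero group. Exactness then yields directly the short exact sequence in the statement and makes $\delta_{PV}\colon K_{1}(\mathcal{O}_{R,\hat{\mathbb{C}}})\to K^{0}(\hat{\mathbb{C}}\setminus C_{R,\hat{\mathbb{C}}})$ an isomorphism: surjective because the succeeding map is zero, and injective because it is preceded by $K^{-1}(\hat{\mathbb{C}})=0$.

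Next I would compute the two outer groups by standard topology. The six-term exact sequence of
\[0\to C_{0}(\hat{\mathbb{C}}\setminus C_{R,\hat{\mathbb{C}}})\to C(\hat{\mathbb{C}})\to C(C_{R,\hat{\mathbb{C}}})\to 0,\]
combined with Corollary \ref{mainbott} (giving $K^{0}(\hat{\mathbb{C}})=\mathbb{Z}[1_{\hat{\mathbb{C}}}]\oplus\mathbb{Z}\beta_{\hat{\mathbb{C}}}$ and $K^{-1}(\hat{\mathbb{C}})=0$), reduces the computation to analysing the restriction map $r_{*}$ to the finite set $C_{R,\hat{\mathbb{C}}}$. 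One finds $r_{*}[1_{\hat{\mathbb{C}}}]=(1,\dots,1)$, and $r_{*}\beta_{\hat{\mathbb{C}}}=0$ by representing $\beta_{\hat{\mathbb{C}}}$ as $i_{*}\beta_{U}$ for a simply-connected open disk $U$ disjoint from $C_{R,\hat{\mathbb{C}}}$, so that $\beta_{\hat{\mathbb{C}}}$ factors through the ideal and thus restricts to zero. Since Riemann--Hurwitz forces $c_{R,\hat{\mathbb{C}}}\geq 1$, we obtain $\ker r_{*}=\mathbb{Z}\beta_{\hat{\mathbb{C}}}\cong\mathbb{Z}$ and $\mathrm{coker}\,r_{*}\cong\mathbb{Z}^{c_{R,\hat{\mathbb{C}}}-1}$, hence $K^{0}(\hat{\mathbb{C}}\setminus C_{R,\hat{\mathbb{C}}})\cong\mathbb{Z}$ and $K^{-1}(\hat{\mathbb{C}}\setminus C_{R,\hat{\mathbb{C}}})\cong\mathbb{Z}^{c_{R,\hat{\mathbb{C}}}-1}$.

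Combining the pieces, $K_{1}(\mathcal{O}_{R,\hat{\mathbb{C}}})\cong\mathbb{Z}$, and the extension $0\to\mathbb{Z}^{2}\xrightarrow{i_{*}}K_{0}(\mathcal{O}_{R,\hat{\mathbb{C}}})\to\mathbb{Z}^{c_{R,\hat{\mathbb{C}}}-1}\to 0$ splits by projectivity of the free quotient, giving $K_{0}(\mathcal{O}_{R,\hat{\mathbb{C}}})\cong\mathbb{Z}^{c_{R,\hat{\mathbb{C}}}+1}$. For the class of the unit, $[1_{\mathcal{O}_{R,\hat{\mathbb{C}}}}]=i_{*}[1_{\hat{\mathbb{C}}}]$; since $\{[1_{\hat{\mathbb{C}}}],\beta_{\hat{\mathbb{C}}}\}$ is a basis of $K^{0}(\hat{\mathbb{C}})\cong\mathbb{Z}^{2}$ and $i_{*}$ is a split embedding onto a direct summand, $[1_{\mathcal{O}_{R,\hat{\mathbb{C}}}}]$ can be completed to a minimal generating set for $K_{0}(\mathcal{O}_{R,\hat{\mathbb{C}}})$. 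The substantive content sits upstream in Proposition \ref{Cmap} (the decisive vanishing) and Corollary \ref{mainbott} (the oriented generators); there is no real obstacle here, only a routine assembly.
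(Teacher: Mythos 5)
Your proposal is correct and follows essentially the same route as the paper: the Pimsner--Voiculescu sequence collapses via Proposition \ref{Cmap} and $K^{-1}(\hat{\mathbb{C}})=0$, and the outer groups are read off from Corollary \ref{mainbott}. The only difference is cosmetic: where the paper defers the computation $K^{-1}(\hat{\mathbb{C}}\setminus C_{R,\hat{\mathbb{C}}})\simeq\mathbb{Z}^{c_{R,\hat{\mathbb{C}}}-1}$ to a forward reference in Section \ref{ratmapj}, you carry it out on the spot via the six-term sequence of $0\to C_{0}(\hat{\mathbb{C}}\setminus C_{R,\hat{\mathbb{C}}})\to C(\hat{\mathbb{C}})\to C(C_{R,\hat{\mathbb{C}}})\to 0$, which is exactly the argument the paper uses there.
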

\begin{proof}
By Proposition \ref{Cmap}, and the fact that $K^{-1}(\hat{\mathbb{C}}) = 0$, we may fill in the Pimsner-Voiculescu 6-term exact sequence as follows:
$$ 
\begin{tikzcd}
K^{0}(\hat{\mathbb{C}}\setminus C_{R,\hat{\mathbb{C}}})) \arrow[r, "0"]            & K^{0}(\hat{\mathbb{C}}) \arrow[r, "i"] & {K_{0}(\mathcal{O}_{R,\hat{\mathbb{C}}})} \arrow[d, "exp"] \\
{K_{1}(\mathcal{O}_{R,\hat{\mathbb{C}}})} \arrow[u, "\delta"] & 0 \arrow[l]                            & K^{-1}(\hat{\mathbb{C}}\setminus C_{R,\hat{\mathbb{C}}}). \arrow[l]             
\end{tikzcd}$$
Exactness of the above diagram imply the first two claims of the Proposition.
\par
$K_{1}(\mathcal{O}_{R,\hat{\mathbb{C}}})\simeq\mathbb{Z}$ then follows from Corollary \ref{mainbott}. By the paragraph preceding Proposition \ref{genmappingk1} (this forward reference won't cause any circular arguments), we have that $K^{-1}(\hat{\mathbb{C}}\setminus C_{R,\hat{\mathbb{C}}})\simeq\mathbb{Z}^{|C_{R,\hat{\mathbb{C}}}| - 1}$, and so the short exact sequence in the Corollary splits. Hence, by Corollary \ref{mainbott} in the case $U = \hat{\mathbb{C}}$, we have $K_{0}(\mathcal{O}_{R,\hat{\mathbb{C}}})\simeq K^{0}(\hat{\mathbb{C}})\oplus K^{-1}(\hat{\mathbb{C}}\setminus C_{R,\hat{\mathbb{C}}})\simeq\mathbb{Z}^{|C_{R,\hat{\mathbb{C}}}|+1}$, with the class of the unit a generator in a minimal generating set.
\end{proof}

\section{The $K$-theory of a rational function acting on its Fatou set}
\label{ratmapf}
In this section we compute the $K$-theory of a rational function $R$ acting on its Fatou set. As in the previous section, we do so by calculating the kernel and co-kernel of $\iota - \hat{\otimes}_{i}[\mathcal{E}_{R,F_{R}}]$, $i=0,1$. 
\par
The case for $i=0$ follows similar techniques as in the previous section once we understand how $R$ permutes its Fatou components. Most of the section will be dedicated to $i=1$. The fact that any compact set of $F_{R}$ is eventually mapped into a global ``attractor'' with manageable $K^{-1}$ (essentially Corollary \ref{fatattractor}) makes this calculation possible.
\par
Let's write $F_{R} = \bigcup_{x\in X} U_{x}$, where $U_{x}$ are the maximal connected components of $F_{R}$ and $X$ is a (countable) indexing set. Since $R^{-1}(F_{R}) = F_{R}$, it is easy to see that $R$
maps $U_{x}$ onto another component $U_{\sigma(x)}$, for every $x$ in $X$.
\par
For each $x$ in $X$, denote $\beta_{U_{x}} = f_{x}$ and $\beta_{U_{x}\setminus C_{R,F_{R}}} = e_{x}$. By Corollary \ref{mainbott} we can (and will) identify $K^{0}(F_{R}\setminus C_{R,F_{R}})$ and $K^{0}(F_{R})$  with $\bigoplus_{x\in X}\mathbb{Z}[e_{x}]$ and  $\bigoplus_{x\in X}\mathbb{Z}[f_{x}]$, respectively, via the inclusion maps.
\begin{prop}
\label{localEFK0}
$\hat{\otimes}_{0}[\mathcal{E}_{R, F_{R}}]$ sends $e_{x}$ to $f_{\sigma(x)}$, for all $x$ in $X$.
\end{prop}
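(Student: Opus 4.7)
The plan is to localize the computation to a small disk on which $R$ is a homeomorphism, then transport the known value of $[\mathcal{E}]$ there (Lemma \ref{localE}) back up to $F_R$ using the naturality Proposition \ref{classnaturality} and the compatibility of the Bott classes under inclusion (Corollary \ref{mainbott}).

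Concretely, since $C_{R,F_R}$ is finite and discrete, the connected open set $U_x \setminus C_{R,F_R}$ is non-empty, and on it $R$ is a local homeomorphism. So I would first choose a connected (say, simply connected) open set $W \subseteq U_x \setminus C_{R,F_R}$ small enough that $R|_W$ is a homeomorphism onto its image $V := R(W)$. Since $R$ is open and $R(U_x) = U_{\sigma(x)}$, the set $V$ is open in $F_R$ and contained in $U_{\sigma(x)}$. In particular $R|_W : W \to V$ is branched (it is a homeomorphism) and $R : W \to F_R$ is the corresponding restriction of $R : F_R \to F_R$ as in Proposition \ref{classnaturality}.

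Next, applying Proposition \ref{classnaturality} to the branched function $R : F_R \to F_R$, with the open set $W$ in the domain and $V$ in the codomain, yields the identity
\[
(\iota_W)^{*}[\mathcal{E}_{R, F_R}] \;=\; (i_V)_{*}[\mathcal{E}_{R|_W, W}]
\]
in $KK^0(C_0(W), C_0(F_R))$, where $\iota_W : C_0(W) \to C_0(F_R \setminus C_{R,F_R})$ and $i_V : C_0(V) \to C_0(F_R)$ are the inclusions (note $W \cap C_{R,F_R} = \emptyset$, so $W \setminus C_{R,F_R} = W$). I would then pair both sides with $\beta_W \in K^0(W)$ via the Kasparov product and use associativity. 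The left-hand side becomes $(\iota_W)_{*}(\beta_W) \,\hat{\otimes}_0\, [\mathcal{E}_{R,F_R}]$, and by Corollary \ref{mainbott} applied to the chain $W \subseteq U_x \setminus C_{R,F_R} \subseteq F_R \setminus C_{R,F_R}$ this equals $e_x \,\hat{\otimes}_0\, [\mathcal{E}_{R,F_R}]$. The right-hand side becomes $(i_V)_{*}\bigl(\beta_W \,\hat{\otimes}_0\, [\mathcal{E}_{R|_W, W}]\bigr)$, which by Lemma \ref{localE} equals $(i_V)_{*}(\beta_V)$, and by Corollary \ref{mainbott} applied to $V \subseteq U_{\sigma(x)} \subseteq F_R$ this equals $f_{\sigma(x)}$.

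Combining gives $e_x \,\hat{\otimes}_0\, [\mathcal{E}_{R,F_R}] = f_{\sigma(x)}$, as required. There is no real obstacle here beyond bookkeeping: the substantive inputs (Lemma \ref{localE}, Proposition \ref{classnaturality}, Corollary \ref{mainbott}) have already been proved, and the argument is purely a naturality/localization chase. The only small point requiring care is ensuring that $U_x \setminus C_{R,F_R}$ is connected (so that $e_x = \beta_{U_x \setminus C_{R,F_R}}$ is well-defined as a single generator), but this is automatic since removing finitely many points from a connected open subset of the Riemann sphere leaves it connected.
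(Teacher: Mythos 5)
Your proof is correct and follows essentially the same approach as the paper: localize to a small open set $W$ on which $R$ is a homeomorphism, compute there via Lemma \ref{localE}, and transport back with Proposition \ref{classnaturality} and Corollary \ref{mainbott}. The only cosmetic difference is that you apply Proposition \ref{classnaturality} once directly to $R : F_R \to F_R$, whereas the paper first passes through the intermediate component correspondence $\mathcal{E}_{R, U_x}$ and then assembles the answer as a direct sum over components.
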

\begin{proof}
For each $x$ in $X$, let $U$ be an open set in $U_{x}$ such that $\tilde{R} = R:U\mapsto R(U) = V$ is a homeomorphism. Then, by Proposition \ref{classnaturality}, we have $(\iota_{U})^{*}[\mathcal{E}_{R,U_{x}}] = (i_{V})_{*}[\mathcal{E}_{\tilde{R},U}]$, where $\iota_{U} = i:C_{0}(U)\mapsto C_{0}(U_{x}\setminus C_{R,F_{R}})$ and $i_{V} = i:C_{0}(V)\mapsto C_{0}(U_{\sigma(x)})$ are the inclusions. 
\par
Hence,
$e_{x}\hat{\otimes}_{0}[\mathcal{E}_{R,U_{x}}] = ((\iota_{U})_{*}\beta_{U})\hat\otimes [\mathcal{E}_{R,U_{x}}] = (i_{V})_{*}(\beta_{U}\hat{\otimes}\mathcal{E}_{\tilde{R},U})$. By Lemma \ref{localE}, $\beta_{U}\hat{\otimes}\mathcal{E}_{\tilde{R},U} \\= \beta_{V}$, and, by its definition, $f_{\sigma(x)} = (i_{V})_{*}\beta_{V}$. Therefore, $e_{x}\hat{\otimes}_{0}[\mathcal{E}_{R,U_{x}}] = f_{\sigma(x)}$ for all $x$ in $X$.
\par
Since 
$(\iota_{x})^{*}\mathcal{E}_{R, F_{R}} = (i_{\sigma(x)})_{*}\mathcal{E}_{R, U_{x}}$ for all $x$ in $X$, where
$\iota_{x} = i:C_{0}(U_{x}\setminus C_{R,\hat{\mathbb{C}}})\mapsto C_{0}(F_{R})$ and $i_{\sigma(x)} = i:C_{0}(U_{\sigma(x)})\mapsto C_{0}(F_{R})$ are the inclusions,
we have that, under the identifications made previous to the Corollary,
$\hat{\otimes}_{0}[\mathcal{E}_{R,F_{R}}] = \oplus_{x\in X}\hat{\otimes}_{0}[\mathcal{E}_{R, U_{x}}]$ as mappings from $\bigoplus_{x\in X}\mathbb{Z}[e_{x}]$ to $\bigoplus_{x\in X}\mathbb{Z}[f_{x}]$, proving the result.
\end{proof}
We will call a finite subset $P = \{x_{1},...,x_{n}\}\subseteq X$ a \textit{cycle} if $\sigma(P) = P$ and $\sigma:P\mapsto P$ is minimal, and so, equivalently, $P$ is a periodic orbit of $\sigma:X\mapsto X$. It easy to see that two distinct cycles are disjoint.
We will denote the
collection of cycles to be $\mathcal{F}_{R}$. 
\par
By \cite[Corollary~2]{Shishikura}, $\mathcal{F}_{R}$ must be a finite set. By \cite[Theorem~1]{NWDSullivan}, for every $x$ in $X$, there is $k$ in $\mathbb{N}$ and a cycle $P$ in $\mathcal{F}_{R}$ such that $\sigma^{k}(x)$ is in $P$. So,
$X = \bigsqcup_{P\in\mathcal{F}_{R}}\bigcup_{n\in\mathbb{N}}\sigma^{-n}(P)$.

\begin{cor}
\label{Fkernel}
The mapping $\eta =\iota - \hat{\otimes}_{0}[\mathcal{E}_{R, F_{R}}]:K^{0}(F_{R}\setminus C_{R,\hat{\mathbb{C}}})\mapsto K^{0}(F_{R})$ has kernel generated by the elements
$e_{P} := \sum_{x\in P}e_{x}$, where $P$ is a cycle in $\mathcal{F}_{R}$.
\par
For each cycle $P$, choose an $x_{P}$ in $P$. The subgroup of $K^{0}(F_{R})$ generated by the elements $\{f_{x_{P}}\}_{P\in\mathcal{F}_{R}}$ maps isomorphically onto the co-kernel of $\eta$, via the quotient map.
\end{cor}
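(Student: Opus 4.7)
The plan is to translate $\eta$ into an explicit combinatorial operator on the free abelian group $\mathbb{Z}^{(X)}$ and then reduce the analysis to one cycle basin at a time. By Corollary \ref{mainbott}(2), the inclusion-induced map $\iota$ sends $e_x = \beta_{U_x \setminus C_{R,F_R}}$ to $f_x = \beta_{U_x}$, and by Proposition \ref{localEFK0}, $\hat{\otimes}_{0}[\mathcal{E}_{R,F_R}]$ sends $e_x$ to $f_{\sigma(x)}$. Hence $\eta(e_x) = f_x - f_{\sigma(x)}$; under the identification of both $K^0$-groups with $\mathbb{Z}^{(X)}$, $\eta$ is the operator $\text{id} - \sigma_*$, where $\sigma_*$ denotes the pushforward on finitely supported integer-valued functions on $X$.

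Next I would use the disjoint decomposition $X = \bigsqcup_{P \in \mathcal{F}_R} X_P$, where $X_P = \bigcup_{n \in \mathbb{N}} \sigma^{-n}(P)$ is the basin of the cycle $P$ (available from the Shishikura--Sullivan results already cited in the paper). Since $\sigma$ preserves each $X_P$, the map $\eta$ splits as a direct sum $\bigoplus_{P \in \mathcal{F}_R} \eta_P$ of its restrictions to the corresponding summands, so it suffices to prove $\text{ker}(\eta_P) = \mathbb{Z} \cdot e_P$ and $\text{co-ker}(\eta_P) = \mathbb{Z} \cdot f_{x_P}$ for each $P$.

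For the kernel, I would introduce the height $d(x) = \min\{k \geq 0 : \sigma^k(x) \in P\}$ on $X_P$. A direct check shows that for $x \in X_P \setminus P$, every $z \in \sigma^{-1}(x)$ lies in $X_P$ and satisfies $d(z) = d(x) + 1$. Given a nonzero $n \in \text{ker}(\eta_P)$ with finite support, pick $y$ in the support with $d(y)$ maximal. If $d(y) \geq 1$, then every preimage of $y$ has strictly larger height and thus lies outside the support, so the kernel relation $n(y) = \sum_{z \in \sigma^{-1}(y)} n(z)$ forces $n(y) = 0$, a contradiction. Hence $\text{supp}(n) \subseteq P$, and since each element of $P$ has a unique preimage inside $P$, the kernel relation then forces $n$ to be constant along the cyclic permutation $\sigma|_P$, giving $n = c \cdot e_P$.

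For the cokernel, iterating the congruence $f_x \equiv f_{\sigma(x)}$ modulo $\text{im}(\eta_P)$ yields $f_x \equiv f_{x_P}$ for every $x \in X_P$, so $f_{x_P}$ generates $\text{co-ker}(\eta_P)$. To see it has infinite order (and thereby that the family $\{f_{x_P}\}_{P \in \mathcal{F}_R}$ is linearly independent in $K^0(F_R)$), I would use the augmentation $\Sigma \colon \mathbb{Z}^{(X_P)} \to \mathbb{Z}$, $\sum n_x f_x \mapsto \sum n_x$, which vanishes on $\text{im}(\eta_P)$ (since $\Sigma(f_x - f_{\sigma(x)}) = 0$) and sends $f_{x_P}$ to $1$. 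Summing over cycles gives the result. The only non-bookkeeping step is the max-height argument for the kernel; everything else is routine once the concrete form $\eta(e_x) = f_x - f_{\sigma(x)}$ is in place.
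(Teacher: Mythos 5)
Your proposal is correct and takes essentially the same route as the paper: translate $\eta$ to the combinatorial operator $e_x\mapsto f_x - f_{\sigma(x)}$ via Corollary \ref{mainbott} and Proposition \ref{localEFK0}, decompose along the $\sigma$-invariant basins $X_P$, and analyze each $\eta_P$ by hand, using the augmentation for the cokernel. The one variation is your use of the height function $d(x)$ to pin down the support of a kernel element, which is a cleaner and more explicit way to justify the step the paper handles by observing that the support must be a $\sigma$-invariant subset of the finite set $F$, hence contained in $P$; the two arguments establish the same claim.
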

\begin{proof}
For each $P\in\mathcal{F}_{R}$, denote $X_{P} = \bigcup_{n\in\mathbb{N}}\sigma^{-n}(P)$, and $E_{P}$, $F_{P}$ the subgroups of $K^{0}(F_{R}\setminus C_{R,\hat{\mathbb{C}}})$, $K^{0}(F_{R})$ generated by 
$\{e_{x}\}_{x\in X_{P}}$, $\{f_{x}\}_{x\in X_{P}}$, respectively. By Corollary \ref{mainbott} and Proposition \ref{localEFK0}, $\eta(e_{x}) = f_{x} - f_{\sigma(x)} $ for all $x$ in $X$, so $\eta^{-1}(F_{P}) = E_{P}$ for all $P$ in $\mathcal{F}_{R}$. Denote $\eta_{P} = \eta:E_{P}\mapsto F_{P}$. 
By the above commentary we have $X_{P}\cap X_{P'} = \emptyset$ for distinct cycles $P$, $P'$ and $\bigsqcup_{P\in\mathcal{F}_{R}}X_{P} = X$, so (canonically) $K^{0}(F_{R}\setminus C_{R,\hat{\mathbb{C}}})\simeq \bigoplus_{P\in\mathcal{F}_{R}} E_{P}$, $K^{0}(F_{R})\simeq \bigoplus_{P\in\mathcal{F}_{R}}F_{P}$ and $\eta\simeq \oplus_{P\in\mathcal{F}_{R}}\eta_{P}$. Hence, it suffices to show
$\text{ker}(\eta_{P}) = \mathbb{Z}[e_{P}]$ and $\mathbb{Z}[f_{x}]$ maps isomorphically onto $\text{co-ker}(\eta_{P})$ via the quotient map, for any cycle $P$ in $\mathcal{F}_{R}$ and any $x$ in $P$.
\par
Since $\sigma:P\mapsto P$ is a bijection, we have that $\eta(e_{P}) = \sum_{x\in P}f_{x} - \sum_{x\in P}f_{\sigma(x)}  = 0. $ Suppose $g = \sum_{x\in F} a_{x}e_{x}$, where $F$ is a finite set of $X_{P}$ containing $P$, and $0 = \eta(g) = \sum_{x\in F}a_{x}f_{x} - \sum_{x\in F}a_{x}f_{\sigma(x)}$. Then, $\sum_{x\in F\setminus \sigma(F)} a_{x}f_{x} = 0$, so we may conclude that 
$F' = \{x\in F: a_{x}\neq 0\}$ satisfies $\sigma(F') = F'$. $P$ is the only non-empty cycle contained in $X_{P}$, so either $F' = \emptyset$ $(g = 0)$ or $F' = P$. Let's assume the latter, and write $P = \{x_{1},...,x_{n}\}$, where $\sigma(x_{i}) = x_{i+1}$ mod $n$, for all $i\leq n$. We have that
$0 = \eta(g) =   \sum_{x\in P}a_{x}f_{x} - \sum_{x\in P}a_{x}f_{\sigma(x)} = \sum_{i=1}^{n} (a_{i} - a_{i-1})f_{x_{i}}$, so $a_{i} - a_{i-1} = 0$ mod $n$. Hence, $g = a\cdot e_{P}$ for $a = a_{1}$. Therefore, $\text{ker}(\eta_{P}) = \mathbb{Z}[e_{P}]$.
\par
Now, it remains to show for any $x$ in $P$ and $n$ in $\mathbb{Z}\setminus\{0\}$, $nf_{x}$ is not in $\text{im}(\eta_{P})$, and $\mathbb{Z}[f_{x}] +\text{im}(\eta_{P}) = F_{P}$. Let $\varphi:F_{P}\mapsto\mathbb{Z}$ be the homomorphism satisfying $\varphi(f_{y}) = 1$ for all $y$ in $X_{P}$. Since $\varphi\circ \eta_{P}(e_{y}) = 0$ for all $y$ in $X_{P}$, we have
$\varphi(\text{im}(\eta_{P})) = 0$. Since $0\neq n = \varphi(nf_{x})$, we have that $nf_{x}$ is not in $\text{im}(\eta_{P})$. For any $y$ in $X_{P}$, $0 = \eta(e_{y}) = f_{y} - f_{\sigma(y)}$ mod $\text{im}(\eta_{P})$, so $f_{y} = f_{\sigma^{k}(y)}$ mod $\text{im}(\eta_{P})$ for all $k$ in $\mathbb{N}$. By definition of $X_{P}$, there is a $k$ in $\mathbb{N}$ such that $\sigma^{k}(y) = x$. Hence,
$\mathbb{Z}[f_{x}] + \text{im}(\eta_{P}) = F_{P}$.
\end{proof}
We will now determine the kernel and co-kernel of $\iota - \hat{\otimes}_{1}[\mathcal{E}_{R,F_{R}}]$ acting on $K^{-1}$. For this, we will need to know more about the structure of Fatou components.
\begin{thm}
\label{fatcomclass}
Let $P$ be a cycle of Fatou components of length $n$ for a rational function $R$. Then, either $P$ is an 
\begin{enumerate}
    \item \textit{attracting} cycle: for every $x$ in $P$, $\{R^{\circ nk}:U_{x}\mapsto U_{x}\}_{k\in\mathbb{N}}$ converges on compact sets to an attracting fixed point in $U_{x}$,
    \item \textit{parabolic} cycle: for every $x$ in $P$, $\{R^{\circ nk}:U_{x}\mapsto U_{x}\}_{n\in\mathbb{N}}$ converges on compact sets to a parabolic (see \cite[Section~10]{Milnor:Dynamics_in_one_complex_variable}) fixed point in $\partial U_{x}$,
    \item \textit{Siegel} cycle: for every $x$ in $P$, $R^{\circ n}:U_{x}\mapsto U_{x}$ is conformally conjugate to an irrational rotation on $\mathbb{D}$, or
    \item \textit{Herman} cycle: for every $x$ in $P$, $R^{\circ n}:U_{x}\mapsto U_{x}$ is conformally conjugate to an irrational rotation on $\mathbb{A}_{r} = \{z\in\mathbb{C}: 1< |z|< r\}$ for some $r>1$.
\end{enumerate}
\end{thm}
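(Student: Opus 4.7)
The plan is to reduce the statement to the case of a fixed Fatou component of $R^{\circ n}$: if $U$ is any component in the cycle $P$, then $R^{\circ n}(U) = U$, and normality of $\{R^{\circ nk}|_U\}_{k\in\mathbb{N}}$ in the compact-open topology on $C(U,\hat{\mathbb{C}})$ is immediate from $U \subseteq F_R$. So the whole analysis takes place on a single invariant Fatou component, and by passing to subsequences one obtains a limit $g:U\mapsto \hat{\mathbb{C}}$. The key dichotomy is whether some such subsequential limit $g$ is non-constant or whether every subsequential limit is constant.

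In the constant-limit case, a short argument (using that distinct subsequential limits inside $U$ would force a continuum of fixed points) shows that in fact every subsequential limit must be the same constant $w \in \overline{U}$, so the full sequence $R^{\circ nk}$ converges on compact subsets to $w$. If $w \in U$ then $R^{\circ n}(w) = w$ and the multiplier $(R^{\circ n})'(w)$ satisfies $|(R^{\circ n})'(w)| < 1$ (because $(R^{\circ nk})'(w) \to 0$), giving an attracting cycle. If instead $w \in \partial U$, one shows using Snail-type arguments and the fact that $J_R$ contains no Fatou component points that $w$ must be a fixed point of $R^{\circ n}$ with $(R^{\circ n})'(w)$ a root of unity, i.e.\ parabolic.

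The harder case, and the main obstacle, is when some subsequential limit $g$ is non-constant. Here one first shows that $R^{\circ n}: U \mapsto U$ is a conformal automorphism: the non-constant limit $g$ maps $U$ into $U$ (using that $\partial U \subseteq J_R$ is avoided, by the open mapping theorem and an argument about the image of $g$), and then a standard trick composing $R^{\circ n}$ with a suitable $R^{\circ nk_j}$ converging to $g$ forces $R^{\circ n}|_U$ to have a holomorphic inverse. Once automorphism-hood is established, the Uniformization Theorem gives $\tilde{U} \in \{\hat{\mathbb{C}}, \mathbb{C}, \mathbb{D}\}$; but $\tilde{U} = \hat{\mathbb{C}}$ is excluded (Fatou components are hyperbolic because $J_R$ contains at least three points), and $\tilde{U} = \mathbb{C}$ is excluded as well (an automorphism of $\mathbb{C}/\Gamma$ with bounded forward orbits and no fixed point in $U$ would force $R^{\circ n}|_U$ to be periodic, contradicting non-triviality of its dynamics on $U$).

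Hence $\tilde{U} = \mathbb{D}$, and by the classification of deck transformation groups $U$ is biholomorphic either to $\mathbb{D}$ or to an annulus $\mathbb{A}_r$, with $R^{\circ n}|_U$ lifting to a Möbius automorphism of $\mathbb{D}$ commuting with the deck group. In the first case the lifted automorphism is elliptic with a fixed point, and after conjugation by a Möbius transformation it becomes multiplication by $e^{2\pi i \theta}$; the rotation number $\theta$ must be irrational, otherwise some $R^{\circ nk}$ would be the identity on $U$, forcing $J_R$ to have interior points—this is the Siegel case. In the annulus case there is no interior fixed point, so the automorphism is conjugate to a rotation of $\mathbb{A}_r$, again with irrational rotation number by the same reasoning—this is the Herman case. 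Putting the four mutually exclusive possibilities together yields the theorem.
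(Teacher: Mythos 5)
The paper offers no proof of this theorem: it is the Fatou--Julia--Sullivan classification, and the text cites Theorem~16.1 of Milnor's book directly. Your sketch is a faithful outline of the standard argument found in Milnor (and, e.g., Carleson--Gamelin), so it agrees in spirit with the paper's intended source.

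Two points in your sketch are compressed to the verge of a gap. First, in the non-constant-limit case, after concluding that $R^{\circ n}|_U$ is a conformal automorphism and $\tilde{U}\simeq\mathbb{D}$, it does not follow merely from ``the classification of deck transformation groups'' that $U$ is a disk or a finite annulus. A general hyperbolic Riemann surface (say a triply-punctured sphere or a higher-genus surface) has \emph{discrete} automorphism group, and the needed extra input is precisely the recurrence: because some subsequence $R^{\circ nk_j}|_U$ converges locally uniformly to a non-constant limit, the closure of $\{R^{\circ nk}|_U\}_k$ in $\mathrm{Aut}(U)$ is a compact, infinite, hence positive-dimensional subgroup, and the only hyperbolic surfaces whose automorphism group is non-discrete are $\mathbb{D}$, the punctured disk $\mathbb{D}^*$, $\mathbb{C}^*$, and finite-modulus annuli $\mathbb{A}_r$. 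One must then rule out $\mathbb{D}^*$ and $\mathbb{C}^*$ by observing that a puncture of a Fatou component would be an isolated point of $J_R$ and $J_R$ has none. Second, in the constant-limit case your appeal to a ``Snail-type argument'' is exactly the Snail Lemma, which is what prevents the boundary fixed point from being Cremer or Siegel; it is worth flagging that this is a genuine, separate lemma and not a soft continuity argument. With these two refinements made explicit, the proposal correctly reproduces Milnor's proof.
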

\begin{proof}
See \cite[Theorem~16.1]{Milnor:Dynamics_in_one_complex_variable} for a proof of this Theorem.
\end{proof}
All we need to know from the above Theorem is the following Corollary:
\begin{cor}
\label{fatattractor}
If $P$ is an attracting, parabolic, or Siegel cycle for a rational function $R$, then there is an open set $A_{P}$ contained in a finite union $B$ of pair-wise disjoint simply connected open subsets of $\bigcup_{x\in P}U_{x}:= U_{P}$ such that $R(A_{P})\subseteq A_{P}$ and for every compact set $K\subseteq\bigcup_{x\in X_{P}}U_{x}$, there is a $k$ in $\mathbb{N}$ such that $R^{k}(K)\subseteq A_{P}$. Moreover, $B$ can be chosen so that $R^{l}(B)\subseteq A_{P}$ for some $l$ in $\mathbb{N}$
\end{cor}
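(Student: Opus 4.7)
The plan is to exhibit, in each of the three cases, a family of simply connected open sets $\{D_x\}_{x\in P}$ with $D_x\subseteq U_x$ such that $R(D_x)\subseteq D_{\sigma(x)}$ and such that every compact set $K\subseteq U_x$ eventually satisfies $R^{\circ nk}(K)\subseteq D_x$ for large $k$, where $n=|P|$. Setting $A_P:=\bigsqcup_{x\in P}D_x$, the first property gives $R(A_P)\subseteq A_P$, and $A_P$ is then enlarged to a slightly bigger $B$ with the absorption property $R^{\circ l}(B)\subseteq A_P$.

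For the case analysis: in the \emph{attracting} case each $U_x$ contains an attracting fixed point $p_x$ of $R^{\circ n}$, and Kœnigs/Böttcher linearization furnishes arbitrarily small simply connected (Euclidean) neighborhoods $D_{x_0}\subseteq U_{x_0}$ of $p_{x_0}$ with $R^{\circ n}(\overline{D_{x_0}})\subseteq D_{x_0}$; by shrinking $D_{x_0}$ and using continuity of $R^{\circ i}$ together with $R^{\circ i}(p_{x_0})=p_{x_i}$, one finds small simply connected disks $D_{x_i}$ around $p_{x_i}$ inside $U_{x_i}$ with $R(D_{x_i})\subseteq D_{x_{i+1}}$ (cyclically). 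The definition of attracting basin gives uniform convergence $R^{\circ nk}|_K\to p_x$, so compact sets in $U_x$ land in $D_x$ eventually. In the \emph{parabolic} case, the Leau--Fatou flower theorem provides simply connected attracting petals at the parabolic fixed point $p_x\in\partial U_x$ of $R^{\circ n}$, chosen compatibly across the cycle via Fatou coordinates; absorption of compact subsets of $U_x$ into a petal is the parabolic basin theorem. In the \emph{Siegel} case each $U_x$ is itself simply connected (being conformally equivalent to $\mathbb{D}$), and $R\colon U_x\mapsto U_{\sigma(x)}$ is a biholomorphism since $R^{\circ n}$ is; we may simply take $D_x:=U_x$, and absorption is trivial.

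For components $x\in X_P\setminus P$: any compact $K\subseteq\bigcup_{x\in X_P}U_x$ meets only finitely many Fatou components, and each such component is mapped to a component of $P$ under some iterate of $\sigma$ by definition of $X_P$; thus $R^{\circ M}(K)\subseteq U_P$ for some $M$, and the within-cycle absorption then yields $R^{\circ(M+nk)}(K)\subseteq A_P$ for large $k$. To produce $B$: in the Siegel case take $B:=A_P=U_P$ with $l=1$. In the attracting/parabolic cases, enlarge each $D_x$ to a slightly larger simply connected $B_x\subseteq U_x$ still lying in the attracting basin (respectively, inside a larger attracting petal) of $p_x$; compactness of $\overline{B_x}$ together with the eventual absorption of compact sets into $D_x$ gives $R^{\circ nl}(B_x)\subseteq D_x$ for some uniform $l$, and $B:=\bigsqcup_{x\in P}B_x$ is as required.

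The principal subtlety is arranging the compatibility $R(D_x)\subseteq D_{\sigma(x)}$ simultaneously across the whole cycle, especially in the super-attracting case where some $p_x$ is a critical point of $R$ and $R$ is not locally injective there, and when the $U_x$ may be topologically complicated (not simply connected). This is handled by choosing $D_{x_0}$ sufficiently small: continuity of each $R^{\circ i}$ at $p_{x_0}$, together with $R^{\circ i}(p_{x_0})=p_{x_i}$, forces $R^{\circ i}(D_{x_0})$ into any preassigned Euclidean disk around $p_{x_i}$, sidestepping any failure of injectivity.
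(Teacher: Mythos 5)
Your proof uses the same ingredients as the paper's (K\oe nigs/B\"ottcher linearization in the attracting case, Leau--Fatou petals in the parabolic case, and $A_{P}=B=U_{P}$ in the Siegel case), but in the attracting case your argument for the cyclic compatibility $R(D_{x_{i}})\subseteq D_{x_{i+1}}$ has a gap as written. In your final paragraph you say that shrinking $D_{x_{0}}$ forces $R^{\circ i}(D_{x_{0}})$ into preassigned Euclidean disks $D_{x_{i}}$ around the $p_{x_{i}}$. That controls where the forward images of $D_{x_{0}}$ land, but it does nothing to make the \emph{preassigned} $D_{x_{i}}$ ($i\geq 1$) satisfy $R(D_{x_{i}})\subseteq D_{x_{i+1}}$, and in particular the closing condition $R(D_{x_{n-1}})\subseteq D_{x_{0}}$ fails when $D_{x_{n-1}}$ is fixed while $D_{x_{0}}$ is being shrunk. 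The intended fix is presumably to take $D_{x_{i}}:=R^{\circ i}(D_{x_{0}})$, so that $R(D_{x_{n-1}})=R^{\circ n}(D_{x_{0}})\subseteq D_{x_{0}}$ comes for free from the linearization; but then one has to justify that $R^{\circ i}(D_{x_{0}})$ is simply connected (which holds for small $D_{x_{0}}$ by the local normal form $z\mapsto z^{m}$, but is exactly the injectivity concern your last paragraph intended to ``sidestep'' rather than address).

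The paper avoids this issue altogether by not insisting that $A_{P}$ be a disjoint union of simply connected sets: it takes pre-compact $V_{i}$ with $R^{\circ n}(V_{i})\subseteq V_{i}$, sets $B=\bigsqcup_{i}V_{i}$, and defines $A_{P}=\bigsqcup_{i=1}^{n}\bigl(V_{i}\cap\bigcap_{j=1}^{n-1}R^{\,n-j}(V_{i+j})\bigr)$, which is forward-invariant by a direct index check and sits inside $B$. This sidesteps any shrinking or pushing-forward argument; the cost is only that $A_{P}$ itself need not be simply connected, but the statement of the Corollary only requires $A_{P}\subseteq B$. Your parabolic and Siegel treatments match the paper's (the paper also pushes forward a single petal, $A_{P}=B=\bigcup_{i=0}^{n-1}R^{\circ i}(\mathcal{P}_{x_{1}})$, and takes $A_{P}=B=U_{P}$ respectively). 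The bootstrap to $X_{P}$ and the choice of $B$ are essentially as in the paper, modulo your $B_{x}$-enlargement step, which is harmlessly more elaborate than the paper's observation that $\overline{B}$ is already compact in $U_{P}$.
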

\begin{proof}
Let $n$ denote the length of $P$. Write $P = \{x_{1},...,x_{n}\}$, where $\sigma(x_{i}) = x_{i+1}$ mod $n$. First, suppose $P$ is attracting, and let $\{z_{1},...,z_{n}\}$ be its attracting periodic orbit, where $z_{i}$ is in $U_{x_{i}}$ for $i\leq n$. By Koenig's linearization Theorem (\cite[Theorem~8.2]{Milnor:Dynamics_in_one_complex_variable}) for the geometrically attracting case and Böttcher's Theorem for the super attracting case (\cite[Theorem~9.1]{Milnor:Dynamics_in_one_complex_variable}), for any $i\leq n$, there is a simply connected, open, and pre-compact neighbourhood $V_{i}$ of $z_{i}$ contained in $U_{x_{i}}$ such that $\overline{V_{i}}\subseteq U_{x_{i}}$ and $R^{\circ n}(V_{i})\subseteq V_{i}$. 
\par
Let $A_{P} = \bigsqcup_{i=1}^{n} V_{i}\cap  \bigcap_{j=1}^{n-1}R^{n-j}(V_{i+j})$. Then, $A_{P}$ is contained in the disjoint union of simply connected open sets $\bigsqcup_{i=1}^{n}V_{i} = B$ and $R(A_{P})\subseteq A_{P}$. Since $A_{P}$ is a neighbourhood of the attracting periodic orbit, for every compact set $K\subseteq U_{P}$ there is $k$ in $\mathbb{N}$ such that $R^{\circ k}(K)\subseteq A_{P}$. In particular, this is true for $K = \overline{B}$. Since any compact set of $\bigcup_{x\in X_{P}}U_{x}$ is eventually mapped into $U_{P}$, the result follows in this case.
\par
If $P$ is parabolic, then, by the Parabolic flower Theorem (\cite[Theorem~10.7]{Milnor:Dynamics_in_one_complex_variable}), for every $x$ in $P$ there is an attracting \textit{petal} (see \cite[Definition~10.6]{Milnor:Dynamics_in_one_complex_variable}) $\mathcal{P}_{x}\subseteq U_{x}$ such that for every compact set $K\subseteq U_{x}$, there is $k$ in $\mathbb{N}$ such that $R^{\circ nk}(K)\subseteq \mathcal{P}_{x}$. By definition, $\mathcal{P}_{x_{1}}$ is a simply connected open set which is mapped homeomorphically into itself by $R^{\circ n}$. Therefore, $A_{P} = \bigcup_{i=0}^{n-1}R^{\circ i}(\mathcal{P}_{x_{1}}) = B$ satisfies the conclusion of the Corollary.
\par
If $P$ is a Siegel cycle, then we just let $B = A_{P} = U_{P}$.
\end{proof}
We now explain how we orient the components of a Herman cycle. This will be crucial later on to 
calculate the kernel and co-kernel of the connecting maps between the groups associated to $R:F_{R}\mapsto F_{R}$ and that of $R:\hat{\mathbb{C}}\mapsto \hat{\mathbb{C}}$, which we will need to know if we are to calculate the groups associated to $R:J_{R}\mapsto J_{R}$ from our exact sequences relating all three. For this reason, we must describe part of the kernel and co-kernel of $\iota - \hat{\otimes}_{1}[\mathcal{E}_{R,F_{R}}]:K^{-1}(F_{R}\setminus C_{R,F_{R}})\mapsto K^{-1}(F_{R})$ in terms of this orientation, which we now define.
\par
Let $R$ be a rational function, and suppose $Q$ is a Herman cycle for $R$. Choose a component $x_{Q}$ in $Q$. The boundary $\partial U_{x_{Q}}$ has two connected components (\cite[Lemma~15.7]{Milnor:Dynamics_in_one_complex_variable}); choose one to denote $\partial^{+}U_{x_{Q}}$ and call it the \textit{interior boundary}. Denote the other component $\partial^{-}U_{x_{Q}}$ and call it the \textit{exterior boundary}.
\par
From the 6-term exact sequence of $K$-theory associated to
$$
\begin{tikzcd}
0 \arrow[r] & C_{0}(U_{x_{Q}}) \arrow[r] & C(\overline{U_{x_{Q}}}) \arrow[r] & C(\partial^{+}U_{x_{Q}}\bigsqcup\partial^{-}U_{x_{Q}}) \arrow[r] & 0,
\end{tikzcd}$$
there is a group homomorphism $\text{exp}:K^{0}(\partial^{+}U_{x_{Q}})\oplus K^{0}(\partial^{-}U_{x_{Q}})\mapsto K^{-1}(U_{x_{Q}})$. Since $\overline{U_{x_{Q}}}$, $\partial^{+}U_{x_{Q}}$, and $\partial^{-}U_{x_{Q}}$ are connected, compact and proper subsets of $\hat{\mathbb{C}}$, by Proposition \ref{Triso}, their $K^{0}$ groups are isomorphic to $\mathbb{Z}$ and are generated by $[1_{\overline{U_{x_{Q}}}}], [1_{\partial^{+}U_{x_{Q}}}], [1_{\partial^{-}U_{x_{Q}}}]$, respectively.
Hence, the kernel of $\text{exp}$ is generated by $[1_{\partial^{+}U_{x_{Q}}}] + [1_{\partial^{-}U_{x_{Q}}}]$, the image is generated by $\text{exp}([1_{\partial^{+}U_{x_{Q}}}])$,
and $\text{exp}([1_{\partial^{+}U_{x_{Q}}}]) = -\text{exp}([1_{\partial^{-}U_{x_{Q}}}])$. $U_{x_{Q}}$ is homeomorphic to an open annulus, so $K^{-1}(U_{x_{Q}}))$ is isomorphic to $\mathbb{Z}$, and the image of $\text{exp}$ is isomorphic to $n\cdot\mathbb{Z}$ for some $n>0$.
From the 6-term exact sequence associated to 

$$
\begin{tikzcd}
0 \arrow[r] & C_{0}(\hat{\mathbb{C}}\setminus\overline{U_{x_{Q}}}) \arrow[r] & C(\hat{\mathbb{C}}) \arrow[r] & C(\overline{U}_{x_{Q}}) \arrow[r] & 0,
\end{tikzcd}$$
it is easy to see $K^{-1}(\overline{U_{x_{Q}}})$ contains no isotropy. Therefore, $\text{exp}$ must be surjective, and so $u_{x_{Q}}:= \text{exp}([1_{\partial^{+}U_{x_{Q}}}])$ is a choice of generator for $K^{-1}(U_{x_{Q}})$.
\par
We now orient the boundaries of the rest of the cycle elements as follows. Suppose $Q$ is length $n$, and $0\leq k\leq n-1$. Let $\partial^{+}U_{\sigma^{k}(x_{Q})} = R^{k}(\partial^{+}U_{x_{Q}})$, $\partial^{-}U_{\sigma^{k}(x_{Q})} = R^{k}(\partial^{-}U_{\sigma^{k}(x_{Q}}),$ and $u_{\sigma^{k}(x_{Q})} = \text{exp}([1_{\partial^{+}U_{\sigma^{k}(x_{Q})}}])$.
\par
$R^{\circ n}:U_{x_{Q}}\mapsto U_{x_{Q}}$ is conjugate to an irrational rotation, and is therefore homotopic to the identity. Hence, the induced map $(R^{\circ n*})_{*}$ on $K^{-1}(U_{x_{Q}})$ is equal to $\text{id}$. Therefore, by naturality of $\text{exp}$, we have a commutative diagram

$$
\begin{tikzcd}
{\mathbb{Z}[1_{\partial^{+}U_{x_{Q}}}]} \arrow[d, "\text{exp}"] & {\mathbb{Z}[1_{R^{\circ n}(\partial^{+}U_{x_{Q}})}]} \arrow[l, "(R^{\circ n*})_{*}"'] \arrow[d, "\text{exp}"] \\
K^{-1}(U_{x_{Q}})                                                & K^{-1}(U_{x_{Q}}). \arrow[l, "\text{id}"]                                                                      
\end{tikzcd}$$
Hence, $R^{\circ n}(\partial^{+}U_{x_{Q}}) = \partial^{+}U_{x_{Q}}.$
\par
Similarly, for any $x$ in $Q$, the commutative diagram

$$
\begin{tikzcd}
0 \arrow[r] & C_{0}(U_{x}) \arrow[r]                            & C_{0}(\overline{U_{x}}) \arrow[r]                            & C(\partial^{+}U_{x}\bigsqcup\partial^{-}U_{x}) \arrow[r]                                    & 0 \\
0 \arrow[r] & C_{0}(U_{\sigma(x)}) \arrow[u, "R^{*}"] \arrow[r] & C_{0}(\overline{U_{\sigma(x)}}) \arrow[u, "R^{*}"] \arrow[r] & C(\partial^{+}U_{\sigma(x)}\bigsqcup\partial^{-}U_{\sigma(x)}) \arrow[u, "R^{*}"] \arrow[r] & 0
\end{tikzcd}$$

implies, by naturality of $\text{exp}$, a commutative diagram

$$
\begin{tikzcd}
{\mathbb{Z}[1_{\partial^{+}U_{x}}]} \arrow[d, "\text{exp}"] & {\mathbb{Z}[1_{\partial^{+}U_{\sigma(x)}}]} \arrow[l, "(R^{*})_{*}"'] \arrow[d, "\text{exp}"] \\
K^{-1}(U_{x})                                                & K^{-1}(U_{\sigma(x)}), \arrow[l, "(R^{*})_{*}"]                                                
\end{tikzcd}$$
and so $((R^{\circ -1})^{*})_{*}u_{x} = u_{\sigma(x)}$.
\par
We will call the choice of generators $\{u_{x}\}_{x\in Q}$ for $K^{-1}(U_{Q}) = \bigoplus_{x\in Q}K^{-1}(U_{x})$ (or equivalently a choice of boundary components) an \textit{orientation for $Q$}, and denote $u_{Q} = \sum_{x\in Q}u_{x}$. Note that there are only two possible choices of such an orientation for $Q$. We will call $Q$ equipped with a choice of orientation an \textit{oriented Herman cycle}.
\par
We will also need to understand the relationship between $K^{-1}(F_{R}\setminus C_{R,F_{R}})$ and $K^{-1}(F_{R})$, which is the domain and co-domain of $\hat{\otimes}_{1}[\mathcal{E}_{R,F_{R}}] - \iota$, respectively. The following lemma is all we need.
\begin{lemma}
\label{k1open}
Let $U$ be a proper open set of $\hat{\mathbb{C}}$ and $D\subseteq U$ a finite set. Then, there are open sets $V$ and $W$ contained in $U$ such that
\begin{enumerate}
    \item $V\cap D = \emptyset$, and $V$ contains any connected component of $U$ not intersecting $D$,
    \item $W$ is a disjoint union of simply connected open sets which contain $D$,
    \item $i_{*}:K^{-1}(V)\mapsto K^{-1}(U)$ is an isomorphism, and
    \item $i_{*} + i_{*}:K^{-1}(W\setminus \tilde{D})\oplus K^{-1}(V)\mapsto K^{-1}(U\setminus \tilde{D})$ is an isomorphism, for any $\tilde{D}\subseteq D$.
\end{enumerate}
Denote the image of $i_{*}:K^{-1}(W\setminus D)\mapsto K^{-1}(U\setminus D)$ to be $G(U,D)$.
Assuming $|\hat{\mathbb{C}}\setminus U|\geq 2$, $G(U,D)$ has the property that if $\tilde{W}$ is any open set in $U$ that contains $D$ and is the disjoint union of simply connected open sets, then $i_{*}:K^{-1}(\tilde{W}\setminus D)\mapsto K^{-1}(U\setminus D)$ maps isomorphically onto $G(U,D)$.
\end{lemma}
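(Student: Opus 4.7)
The plan is to construct $V$ and $W$ explicitly by cutting $U$ along a family of closed half-open arcs $\{T_d\}_{d\in D}$, each running from $d$ to the boundary $\partial U$; since $T_d\cong [0,\infty)$ has $C^{*}$-algebra equal to the cone over $\mathbb{C}$, it has vanishing $K$-theory, so excising such arcs from $U$ preserves $K^{-1}$. Explicitly, choose open topological disks $W_d\subset U$ centered at each $d\in D$ so that the closures $\overline{W_d}\subset U$ are pairwise disjoint, and set $W:=\bigsqcup_d W_d$; this gives (2). Let $U_d$ be the connected component of $U$ containing $d$; since $U\subsetneq \hat{\mathbb{C}}$ we have $\partial U_d\neq \emptyset$, so by standard 2-dimensional surface topology we may pick embedded arcs $\gamma_d\colon [0,1)\hookrightarrow U_d$ with $\gamma_d(0)=d$ and $\gamma_d(t)\to \partial U_d$ as $t\to 1^{-}$, simultaneously arranged so that the images $T_d:=\gamma_d([0,1))$ are pairwise disjoint and $T_d\cap W_{d'}=\emptyset$ for $d'\neq d$. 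Define $V:=U\setminus\bigsqcup_d T_d$: since $d\in T_d$, $V\cap D=\emptyset$, and every component of $U$ disjoint from $D$ avoids every $T_d$ and hence lies in $V$, giving (1).

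Condition (3) follows from the six-term sequence of $0\to C_0(V)\to C_0(U)\to C_0(\bigsqcup_d T_d)\to 0$ together with $K^*(T_d)=0$. For (4), fix $\tilde{D}\subseteq D$; the factorization $V\hookrightarrow U\setminus \tilde{D}\hookrightarrow U$ realizes the isomorphism of (3), so $K^{-1}(V)\hookrightarrow K^{-1}(U\setminus \tilde{D})$ splits the surjection $K^{-1}(U\setminus \tilde{D})\twoheadrightarrow K^{-1}(U)$ coming from the six-term sequence of $C_0(U\setminus \tilde{D})\subset C_0(U)$. Hence
$$K^{-1}(U\setminus\tilde{D})\;\cong\;K^{-1}(V)\,\oplus\,\mathrm{im}(\mathrm{exp}_U),$$
where $\mathrm{exp}_U\colon K^0(\tilde{D})\to K^{-1}(U\setminus \tilde{D})$ is the exponential map. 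By naturality applied to $W\hookrightarrow U$, $\mathrm{exp}_U = i_*\circ \mathrm{exp}_W$, with $i_*\colon K^{-1}(W\setminus \tilde{D})\to K^{-1}(U\setminus \tilde{D})$ the inclusion. Since $K^{-1}(W)=0$ and the Bott generator of each $K^0(W_d)$ has rank zero at every point, the six-term sequence for $W$ makes $\mathrm{exp}_W$ an isomorphism. The same evaluation argument applied to $U$, using Corollary \ref{mainbott} to identify $K^0$ of each connected component as a Bott line, shows that $\mathrm{exp}_U$ is injective. Consequently $i_*$ is injective with image $\mathrm{im}(\mathrm{exp}_U)$, which yields (4).

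For the independence statement, let $\tilde{W}=\bigsqcup_i \tilde{W}^i\subseteq U$ be a disjoint union of simply connected open sets containing $D$, and assume $|\hat{\mathbb{C}}\setminus U|\geq 2$. Then $|\hat{\mathbb{C}}\setminus \tilde{W}^i|\geq 2$, so by Riemann uniformization each $\tilde{W}^i$ is biholomorphic to $\mathbb{D}$, and in particular $K^{-1}(\tilde{W}^i)=0$; the same evaluation argument then gives an isomorphism $\mathrm{exp}\colon K^0(D)\xrightarrow{\sim} K^{-1}(\tilde{W}\setminus D)$, and likewise for $W$. Take a common refinement $W':=\bigsqcup_d W'_d$ with $W'_d\subseteq W_d\cap \tilde{W}^{i(d)}$ a small open disk around $d$ and the $\{W'_d\}_d$ pairwise disjoint; naturality of $\mathrm{exp}$ under $W'\hookrightarrow W$ and $W'\hookrightarrow \tilde{W}$ then makes both $K^{-1}(W'\setminus D)\to K^{-1}(W\setminus D)$ and $K^{-1}(W'\setminus D)\to K^{-1}(\tilde{W}\setminus D)$ isomorphisms. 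Therefore the images of $K^{-1}(W\setminus D)$, $K^{-1}(\tilde{W}\setminus D)$, and $K^{-1}(W'\setminus D)$ in $K^{-1}(U\setminus D)$ all coincide with $G(U,D)$.

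The main technical subtlety is the simultaneous construction of the arcs $T_d$ with the required disjointness properties; once that is in place, the entire proof reduces to a coordinated application of the six-term exact sequence, the triviality of $K^*$ for a closed half-line, and the vanishing of the Bott class under evaluation at any point.
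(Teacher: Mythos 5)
Your proof is correct and establishes everything the lemma asserts, but it takes a genuinely different route from the paper's. The paper cuts $U$ along one embedded arc per connected component of $U$ meeting $D$, choosing a simply connected tubular neighbourhood of the open part of each arc as the corresponding piece of $W$; this arranges $V\cup W=U$ and $V\cap W=W'$ (a disjoint union of simply connected sets), so that items (3) and (4) drop out of a Mayer--Vietoris sequence with $K^{-1}(W')=0$. You instead use one arc per point of $D$ and take small disjoint disks around each point, so $V\cup W\neq U$ in general, and you replace Mayer--Vietoris by a splitting argument: $K^{-1}(V)$ splits the surjection $K^{-1}(U\setminus\tilde D)\twoheadrightarrow K^{-1}(U)$, the kernel is $\mathrm{im}(\exp_U)$ by exactness, and $\exp_U = i_*\circ\exp_W$ with $\exp_W$ an isomorphism and $\exp_U$ injective (both via the rank-zero property of Bott classes, i.e.\ Corollary~\ref{mainbott}). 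For the independence of $G(U,D)$, the paper intersects $W$ with $\tilde W$, requiring the $|\hat{\mathbb{C}}\setminus U|\geq 2$ hypothesis to verify that $W\cap\tilde W$ remains a disjoint union of simply connected sets; your common-refinement-by-small-disks argument is cleaner and sidesteps that topological verification, using the hypothesis only to ensure each component of $\tilde W$ has vanishing $K^{-1}$ (which, as you note, really only needs $U\subsetneq\hat{\mathbb{C}}$). One step worth making explicit: the claim that $i_*\colon K^{-1}(\tilde W\setminus D)\to K^{-1}(U\setminus D)$ is an \emph{isomorphism} onto $G(U,D)$ — not merely a surjection — needs injectivity, which follows since $i_*\circ\exp_{\tilde W}=\exp_U$ with $\exp_{\tilde W}$ bijective and $\exp_U$ injective; you argue only that the images coincide.
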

\begin{proof}
Let $U_{1},...,U_{k}$ be the maximal connected components of $U$ that contain $D$. For each $i\leq k$, let $L_{i}$ be the image of a smooth non-self-intersecting curve $\gamma_{i}:[0,1]\mapsto \hat{\mathbb{C}}$ which passes through $U_{i}\cap D$ and $L_{i}\cap U_{i} = \gamma_{i}([0,1))$. We may also assume $\gamma_{i}(0)$ is in $D$, for all $i\leq k$. Let $W_{i}$ be a simply connected open neighbourhood of $\gamma_{i}([0,1))$ in $U_{i}$ (which can be found, for instance, by applying the tubular neighbourhood Theorem to the embedding $\gamma_{i}((0,1))\subseteq U_{i})$. Then,
$W'_{i} = W_{i}\setminus \gamma_{i}([0,1))$ is also simply connected ($\hat{\mathbb{C}}\setminus W'_{i}$ and $W'_{i}$ are both connected).
\par
Let $V = U\setminus (\bigcup_{i=1}^{k}L_{i})$, $W = \bigcup_{i=1}^{k}W_{i}$, and $W' = \bigcup_{i=1}^{k}W'_{i}$. Then, $V\cap D = \emptyset$, $V$ contains all connected components of $U$ not intersecting $D$, and $D\subseteq W$.
\par
Since $U\setminus V$ is homeomomorphic to the disjoint union of $k$ half-open intervals, $C_{0}(U\setminus V)$ is contractible to $0$, so by the 6-term exact sequence of $K$-theory associated to the extension 
$$
\begin{tikzcd}
0 \arrow[r] & C_{0}(V) \arrow[r] & C_{0}(U) \arrow[r] & C_{0}(U\setminus V)) \arrow[r] & 0,
\end{tikzcd}$$
we have that $i_{*}:K^{-1}(V)\mapsto K^{-1}(U)$ is an isomorphism.
\par
Let $\tilde{D}$ be a subset of $D$. Since $V\cup (W\setminus \tilde{D}) = U\setminus \tilde{D}$ and $V\cap (W\setminus \tilde{D}) = W'$ , by \cite[Theorem~4.19]{Karoubi:K-theory}, we have the following exact sequence:

$$
\begin{tikzcd}
K^{0}(W') \arrow[r, "i_{*}\oplus-i_{*}"] & K^{0}(W\setminus \tilde{D})\oplus K^{0}(V) \arrow[r, "i_{*}+i_{*}"]  & K^{0}(U\setminus \tilde{D})                        \\
K^{-1}(U\setminus \tilde{D}) \arrow[u]                       & K^{-1}(W\setminus \tilde{D})\oplus K^{-1}(V) \arrow[l, "i_{*}+i_{*}"] & K^{-1}(W'). \arrow[l, "i_{*}\oplus-i_{*}"]
\end{tikzcd}$$
By Corollary \ref{mainbott}, $K^{0}(W')$, $K^{0}(W\setminus \tilde{D})$ are free abelian groups generated by $\{\beta_{W'_{i}}\}_{i=1}^{k}$, $\{\beta_{W_{i}\setminus \tilde{D}}\}_{i=1}^{k}$, and $i_{*}(\beta_{W'_{i}}) = \beta_{W_{i}\setminus \tilde{D}}$ for all $i\leq k$. Therefore, the left-most horizontal map in the above diagram is injective. Exactness then implies that $i_{*} + i_{*}:K^{-1}(W\setminus \tilde{D})\oplus K^{-1}(V)\mapsto K^{-1}(U\setminus \tilde{D})$ is surjective. Since $W'$ is the disjoint union of simply connected open sets, $K^{-1}(W') = 0$. Exactness of the diagram then implies $i_{*} + i_{*}:K^{-1}(W\setminus \tilde{D})\oplus K^{-1}(V)\mapsto K^{-1}(U\setminus \tilde{D})$ is injective.
\par
We now show $G(U,D)$ has the stated property.
\par
If $W_{1}$ and $W_{2}$ are open proper subsets of $\hat{\mathbb{C}}$ that are the disjoint union of simply connected sets and $D$ is a finite set such that $D\subseteq W_{1}\subseteq W_{2}$, then $K^{-1}(W_{j}) = 0$, $j=0,1$, and by Corollary \ref{mainbott}, $i_{*}:K^{0}(W_{j}\setminus D)\mapsto K^{0}(W_{j})$ is an isomorphism, for $j=0,1$. Therefore, by naturality of the exponential maps associated to the short exact sequences

$$
\begin{tikzcd}
0 \arrow[r] & C_{0}(W_{j}\setminus D) \arrow[r] & C_{0}(W_{j}) \arrow[r] & C(D) \arrow[r] & 0,
\end{tikzcd}$$
we have a commutative diagram

$$
\begin{tikzcd}
K^{0}(D) \arrow[r, equal] \arrow[d, "\text{exp}"] & K^{0}(D) \arrow[d, "\text{exp}"] \\
K^{-1}(W_{1}\setminus D) \arrow[r,"i_{*}"]                               & K^{-1}(W_{2}\setminus D)         
\end{tikzcd}$$
 with vertical arrows being isomorphisms. Hence,
$i_{*}:K^{-1}(W_{1}\setminus D))\mapsto K^{-1}(W_{2}\setminus D)$ is an isomorphism.
\par
Now, suppose $\tilde{W}$ is the disjoint union of simply connected open sets contained in $U$ and containing $D$. Since $|\hat{\mathbb{C}}\setminus U|\geq 2$, we may regard $W$ and $\tilde{W}$ as the disjoint unions of simply connected, proper open sets of the complex plane $\mathbb{C}$. Therefore, $W\cap\tilde{W}$ is also the disjoint union of simply connected open sets contained in $U$ and containing $D$ ($\hat{\mathbb{C}}\setminus W$ and $\hat{\mathbb{C}}\setminus\tilde{W}$ are connected, contain the common point $\infty$, and hence $\hat{\mathbb{C}}\setminus W\cap\tilde{W} = \hat{\mathbb{C}}\setminus W\cup \hat{\mathbb{C}}\setminus\tilde{W}$ is connected). We have a commututative diagram
$$
\begin{tikzcd}
K^{-1}(\tilde{W}\cap W\setminus D)) \arrow[r] \arrow[d] & K^{-1}(W\setminus D) \arrow[d] \\
K^{-1}(\tilde{W}\setminus D) \arrow[r]                  & K^{-1}(U\setminus D).          
\end{tikzcd}$$
The top horizontal and left-most vertical map have been shown to be isomorphisms, so commutativity implies $i_{*}:K^{-1}(\tilde{W}\setminus D)\mapsto K^{-1}(U\setminus D)$ maps injectively onto $G(U,D)$.
\end{proof}
We can now describe the kernel and co-kernel of $\iota - \hat{\otimes}_{1}[\mathcal{E}_{R,F_{R}}]:K^{-1}(F_{R}\setminus C_{R,F_{R}})\mapsto K^{-1}(F_{R})$.
\begin{prop}
\label{Fkernel1}
Let $R$ be a rational function, denote $\mathcal{H}_{R}$ to be the set of Herman cycles for $R$, and fix an orientation for every $P$ in $\mathcal{H}_{R}$. The mapping $\gamma =\iota - \hat{\otimes}_{1}[\mathcal{E}_{R,F_{R}}]:K^{-1}(F_{R}\setminus C_{R,F_{R}})\mapsto K^{-1}(F_{R})$ has kernel generated by the elements $u_{P}:= \sum_{x\in P}u_{x}$, where $P$ is a Herman cycle, and the subgroup $G(F_{R},C_{R,F_{R}})$ of $K^{-1}(F_{R}\setminus C_{R,F_{R}})$.
\par
For each $P$ in $\mathcal{H}_{P}$, choose an $x_{P}$ in $P$. The subgroup of $K^{-1}(F_{R})$ generated by the elements $\{u_{x_{P}}\}_{P\in\mathcal{H}_{P}}$ maps isomorphically onto the co-kernel of $\gamma$ via the quotient map.
\end{prop}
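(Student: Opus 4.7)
The plan is to mimic the $K^0$ argument of Corollary \ref{Fkernel} but with the wrinkle that $K^{-1}(U_x)=0$ for simply connected components, while $K^{-1}(U_x)\simeq\mathbb{Z}$ for the annular components of Herman cycles. I would first apply Lemma \ref{k1open} with $U=F_R$ and $D=C_{R,F_R}$ to split $K^{-1}(F_R\setminus C_{R,F_R})=i_{*}K^{-1}(W\setminus C_{R,F_R})\oplus K^{-1}(V)$, with $i_{*}:K^{-1}(V)\xrightarrow{\sim}K^{-1}(F_R)$. To see $G(F_R,C_{R,F_R})\subseteq\ker\gamma$, I would check both terms of $\gamma=\iota-\hat{\otimes}_{1}[\mathcal{E}_{R,F_R}]$ vanish on the first summand: $\iota$ factors through $K^{-1}(W)=0$ because $W$ is a disjoint union of simply connected sets, while the correspondence term factors, via Proposition \ref{classnaturality} applied to $R|_W$, through $K^{-1}(W')$ for any open $W'\supseteq R(W)$; shrinking $W$ so that $W'$ may also be chosen as a disjoint union of simply connected sets kills this term.

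Next, I would decompose by grand orbit: setting $F_R^{(P)}:=\bigcup_{x\in X_P}U_x$ for $P\in\mathcal{F}_R$ gives $F_R=\bigsqcup_P F_R^{(P)}$, and since $R$ preserves grand orbits the map $\gamma$ splits as $\bigoplus_P\gamma_P$. Under the isomorphism $i_{*}$, the restriction $\gamma|_{K^{-1}(V)}$ becomes an endomorphism $\text{id}-\bar T$ of $K^{-1}(F_R)=\bigoplus_P K^{-1}(F_R^{(P)})$, where $\bar T:=\hat{\otimes}_{1}[\mathcal{E}_{R,F_R}]|_{K^{-1}(V)}\circ i_{*}^{-1}$. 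I would then analyse $\text{id}-\bar T_P$ on each cycle summand separately.

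For a non-Herman cycle $P$, Corollary \ref{fatattractor} supplies an attractor $A_P$ sitting inside a finite disjoint union $B$ of simply connected open sets, with the property that every compact subset of $F_R^{(P)}$ is eventually mapped by $R$ into $A_P$. Any $v\in K^{-1}(F_R^{(P)})$ is represented on a compact set, so there is $n$ with $\bar T^n v$ factoring through $K^{-1}(B)=0$, hence $\bar T^n v=0$. The telescoping identity $v=\bar T^n v+(\text{id}-\bar T)(v+\bar Tv+\cdots+\bar T^{n-1}v)$ yields $v\in\text{im}(\text{id}-\bar T_P)$, proving surjectivity; injectivity follows because $(\text{id}-\bar T_P)v=0$ forces $v=\bar T v=\cdots=\bar T^n v=0$. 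Non-Herman cycles therefore contribute nothing beyond the $G$-summand.

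For a Herman cycle $Q$, the same telescoping shows that modulo $\text{im}(\text{id}-\bar T_Q)$ every class is equivalent to some $\bar T^n v$, which for $n$ large is supported on the cyclic part $\bigoplus_{x\in Q}K^{-1}(U_x)=\bigoplus_{x\in Q}\mathbb{Z}u_x$ (since the grand orbit $X_Q$ is eventually in $Q$). On this part each $R:U_x\to U_{\sigma(x)}$ is a bi-holomorphism (Herman components avoid critical points), so, as in the proof of Lemma \ref{localE}, $[\mathcal{E}_{R,U_x}]$ equals the class of $((R|_{U_x})^{-1})^{*}$ in $KK^0$, which by the chosen orientation sends $u_x$ to $u_{\sigma(x)}$. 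Thus $\bar T_Q$ restricts to a cyclic shift on $\bigoplus_{x\in Q}\mathbb{Z} u_x$, whose kernel is $\mathbb{Z}u_Q$ and whose cokernel is generated by any $u_{x_Q}$. The main obstacle will be making this telescoping rigorous in the Herman case: the pre-cycle components $U_y$ ($y\in X_Q\setminus Q$) can have arbitrarily complicated topology by Riemann--Hurwitz (branched preimages of annuli may be multiply connected), so one must verify carefully that their $K^{-1}$-classes are all pushed into the cyclic part by $\bar T^n$ for large $n$, and that iterating $\bar T$ on $K^{-1}(F_R)$ correctly models the $n$-fold correspondence $[\mathcal{E}_{R^{\circ n},F_R}]$ despite the cut-and-identify procedure hidden in the definition of $\bar T$.
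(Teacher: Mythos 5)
Your overall strategy — split off $G(F_R,C_{R,F_R})$ as a kernel summand, decompose by grand orbit, and use a telescoping identity to push classes into the attractor $A_P$, where the answer depends on whether $P$ is Herman — is indeed the paper's strategy. But the central technical step, the claim that $\bar T^n v$ eventually factors through $K^{-1}(B)=0$, is asserted rather than proved, and it is not a formality. The operator $\bar T = (\hat{\otimes}_1[\mathcal{E}_{R,F_R}])\circ i_V\circ i_*^{-1}$ is well-defined on $K^{-1}(F_R)$, but iterating it does \emph{not} automatically model the $n$-fold push-forward: after each application one passes back to $K^{-1}(V)$ via $i_*^{-1}$, and there is no clean relation between $\bar T^n$ and $[\mathcal{E}_{R^{\circ n},F_R}]$ in $KK$-theory, because $[\mathcal{E}_{R,F_R}]$ has source $C_0(F_R\setminus C_{R,F_R})$ and target $C_0(F_R)$, so the Kasparov product does not compose with itself directly. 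Moreover $i_*^{-1}$ does not preserve supports in any obvious sense, so ``represented on a compact set $K$, hence $\bar T^n v$ is pushed into $A_P$'' needs an argument that actually tracks where the class lives at each stage. You flag the Herman case as the obstacle, but the same issue is already present for attracting and parabolic cycles.

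The paper resolves these gaps by first passing to an exhaustion of $F(P)$ by forward-invariant precompact open sets $V_n$ satisfying $R^{k_n}(V_n)\subseteq A_P$, so the argument happens on a single $V_n$ at a time (this also handles a possibly infinite critical set). Inside $V_n$ it builds the filtration $V^0\subseteq\cdots\subseteq V^{k_n}=V_n\setminus C_n$ by removing successive preimages of critical values, with $R(V^i)\subseteq V^{i+1}$, so the restricted correspondence classes $[\mathcal{E}_{\tilde R_i,V^i}]$ \emph{do} compose in $KK$-theory to give $\Psi^m$. The crucial identity $\Psi^m = j_*\Delta^m$, proved by induction using Proposition \ref{classnaturality} and associativity of the Kasparov product, identifies $\Psi^{k_n}$ with a class pushed into $X_{k_n}\subseteq A_P\setminus C_n$, which is precisely the shrinking-support statement your sketch asserts. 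The telescoping identity then appears in the form $\varepsilon_{k_n}(u)-\varepsilon_0(u)\hat{\otimes}\Psi^{k_n}\in G(V_n,C_n)$. Your Herman-cycle concern about multiply connected pre-cycle components turns out to be a non-issue once the argument is set up this way, because one never needs $K^{-1}(F_R^{(P)})$ to decompose as $\bigoplus_x K^{-1}(U_x)$: the filtration argument bypasses the topology of the pre-cycle components entirely and only sees $A_P$ at the end. The extra work the paper does at the end of the Herman case (showing $a\cdot u_{x_P}$ is never in $\text{im}(\gamma_n)$ by examining the filtration for the cokernel) is also missing from your sketch, though the cyclic-shift observation is correct.
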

\begin{proof}
For each cycle $P$ in $\mathcal{F}_{R}$, We will denote $U_{P} = \bigcup_{x\in P}U_{x}$, $X_{P} = \bigcup_{n\in\mathbb{N}_{0}}\sigma^{-n}(P)$, and $F(P) = \bigcup_{x\in X_{P}}U_{x}$. Then, $F(P)\cap F(P')=\emptyset$ for distinct cycles $P$, $P'$, $F_{R} = \bigcup_{P\in\mathcal{F}_{R}}F(P)$, and $R^{-1}(F(P)) = F(P)$ for any cycle $P$. In particular,
we can identify $K^{-1}(F_{R})$, $K^{-1}(F_{R}\setminus C_{R,F_{R}})$ with $\bigoplus_{P\in\mathcal{F}_{R}}K^{-1}(F(P))$, $\bigoplus_{P\in\mathcal{F}_{R}}K^{-1}(F(P)\setminus C_{R,F(P)})$, respectively, via the inclusion maps, and (by Proposition \ref{classnaturality}) $(\iota_{F(P)})^{*}[\mathcal{E}_{R,F_{R}}] = (i_{F(P)})_{*}[\mathcal{E}_{R,F(P)}]$
for any cycle $P$ in $\mathcal{F}_{R}$. Therefore, to prove Proposition \ref{Fkernel1}, it suffices to show the mapping
$\gamma_{P} = \iota - \hat{\otimes}_{1}[\mathcal{E}_{R,F(P)}]:K^{-1}(F(P)\setminus C_{R,F(P)})\mapsto K^{-1}(F(P))$ has kernel generated by $G(F(P),C_{R,F(P)})$, along with $u_{P}$ if $P$ is a Herman cycle, and co-kernel generated by $u_{x_{P}}$ if $P$ is a Herman cycle, with co-kernel 0 otherwise.
\par
Let $A_{P}$ and $B$ be as in Proposition \ref{fatattractor}, and denote $B = A_{P} = U_{P}$ when $P$ is a Herman cycle. Let $\{V'_{n}\}_{n\in\mathbb{N}}$ be a family of pre-compact open sets in $F(P)$ such that $\overline{V'_{n}}\subseteq F(P)$, $V'_{n}\subseteq V'_{n+1}$ for all $n$ in $\mathbb{N}$, and $\bigcup V'_{n} = F(P).$ By Proposition \ref{fatattractor}, for every $n$ in $\mathbb{N}$ there is a $k_{n}$ in $\mathbb{N}$ such that $R^{k_{n}}(V'_{n}\cup B)\subseteq A_{P}$. Define $V_{n} = (\bigcup_{i=0}^{k_{n}}R^{i}(V'_{n}\cup B))\cup A_{P}$ for all $n$ in $\mathbb{N}$. Then, $\bigcup_{n\in\mathbb{N}} V_{n} = F(P)$ and, for all $n$, $V_{n}$ has the properties 
\begin{enumerate}
    \item $A_{P}\subseteq B\subseteq V_{n}\subseteq V_{n+1}$,
    \item $R(V_{n})\subseteq V_{n}$, and
    \item $R^{k_{n}}(V_{n})\subseteq A_{P}$.
\end{enumerate}
By $(1)$ above, $K^{-1}(F(P)\setminus C_{R, F(P)})$ and $K^{-1}(F(P))$ are the inductive limits of the (maps induced by) the inclusions $(\iota_{n})^{*}:K^{-1}(V_{n}\setminus C_{R,F(P)})\mapsto K^{-1}(V_{n+1}\setminus C_{R,F(P)})$, and $(i_{n})_{*}:K^{-1}(V_{n})\mapsto K^{-1}(V_{n+1})$, respectively. Denote $R_{n} = R:V_{n}\mapsto V_{n}$.
By Proposition \ref{classnaturality}, we have that $(\iota_{n})^{*}[\mathcal{E}_{R_{n+1},V_{n+1}}] = (i_{n})_{*}[\mathcal{E}_{R_{n}, V_{n}}]$ for all $n$ in $\mathbb{N}$, and
$(\nu_{n})^{*}[\mathcal{E}_{R,F(P)}] = (\mu_{n})_{*}[\mathcal{E}_{R_{n}, V_{n}}]$, where $\nu_{n} = i:C_{0}(V_{n}\setminus C_{R,F(P)})\mapsto C_{0}(F(P)\setminus C_{R, F(P)})$ and $\mu_{n} =i: C_{0}(V_{n})\mapsto C_{0}(F(P))$, for all $n$ in $\mathbb{N}$. Therefore, $\gamma_{P}$ is the inductive limit of the maps $\gamma_{n} = \iota - \hat{\otimes}_{1}[\mathcal{E}_{R_{n},V_{n}}]:K^{-1}(V_{n}\setminus C_{R,F(P)})\mapsto K^{-1}(V_{n})$, so it suffices to show, for all $n$ in $\mathbb{N}$, that $\gamma_{n}$ has kernel generated by $G(V_{n},C_{R,F(P)}\cap V_{n})$, along with $u_{P}$ if $P$ is a Herman cycle, and co-kernel generated by $u_{x_{P}}$ if $P$ is a Herman cycle, with co-kernel 0 otherwise.
\par
First, we prove the following lemma.
\begin{lemma}
\label{kernellemma}
for any open set $A$ of $F_{R}$, and finite set $D\subseteq A$, $G(A, D\cup (A\cap C_{R,F_{R}}))$ is in the kernel of $\hat{\otimes}_{1}[\mathcal{E}_{R, A\setminus D}]:K^{-1}(A\setminus (D\cup C_{R,F_{R}}))\mapsto K^{-1}(F_{R})$.
\end{lemma}
\begin{proof}
Denote $D' = D\cup (A\cap C_{R,F_{R}})$ Since $R$ is a rational function, for every $c$ in $D'$, we can find local co-ordinates about $\phi$, $\psi$ about $c$, $R(c)$, respectively such that $\phi(c)= 0$, $\psi(R(c)) = 0$, and $\psi(R(\phi^{-1}(z))) = z^{m}$, for all $z$ in a neighbourhood of $0$, for some $m$ in $\mathbb{N}$. Therefore, for every $c$ in $D'$, there is a simply connected open set $W_{c}$ containing $c$ and contained in $A$ such that $W_{c}\cap W_{c'} = \emptyset$ for all distinct $c$, $c'$ in $D'$ and $R(W_{c})$ is simply connected.
\par
By Lemma \ref{k1open},
$\sum (j_{c})_{*}:\bigoplus_{c\in D'}K^{-1}(W_{c}\setminus c)\mapsto G(A, D')$ is an isomorphism, where $j_{c} = i:C_{0}(W_{c}\setminus c)\mapsto C_{0}(A\setminus D')$. By Proposition \ref{classnaturality}, we have $(j_{c})^{*}[\mathcal{E}_{R,A\setminus D}] = (i_{R(W_{c})})_{*}[\mathcal{E}_{R, W_{c}}]$. Since $K^{-1}(R(W_{c})) = 0,$ it follows that $\hat{\otimes}(i_{R(W_{c})})_{*}[\mathcal{E}_{R, W_{c}}]= 0$ for all $c$ in $D$.
This proves the lemma.
\end{proof}
Denote $C_{n} = V_{n}\cap C_{R, F(P)}$. As a special case of Lemma \ref{kernellemma}, we have that $G(V_{n}, C_{n})$ is in the kernel of $\hat{\otimes}_{1}[\mathcal{E}_{R_{n}, V_{n}}]$. The diagram

$$
\begin{tikzcd}
{G(V_{n}, C_{n})} \arrow[r]        & K^{-1}(V_{n})      \\
K^{-1}(W\setminus C_{n}) \arrow[u] \arrow[r] & K^{-1}(W) \arrow[u]
\end{tikzcd}$$
commutes, where $W$ is any disjoint union of simply connected open sets containing $C_{n}$ and contained in $V_{n}$, and the maps are induced by inclusion. The left vertical map is an isomorphism by Lemma \ref{k1open}, and the bottom right group is zero. Hence $G(V_{n},C_{n})$ is also in the kernel of $\iota$, so that $G(V_{n},C_{n})\subseteq\text{ker}(\gamma_{n}).$
\par
We now determine the rest of the kernel, but first we set some notation. For $0\leq i\leq k_{n}$, let $V^{i} = V_{n}\setminus \bigcup_{j=0}^{k_{n}-i}R^{-j}(C_{n})$. Then, $R(V_{i})\subseteq V_{i+1}$, for all $0\leq i\leq k_{n}-1$. let $\tilde{R}_{i} = R:V^{i}\mapsto V^{i+1}$. 
\par
Denote by $V$ the (same labelled) open set from Lemma \ref{k1open} applied in the case that  $U = V_{n}$, $D = \bigcup_{i=0}^{k_{n}}F^{-1}(C_{n})$, and, for every $0\leq i\leq k_{n}$, let $\varepsilon_{i}:K^{-1}(V)\mapsto K^{-1}(V^{i})$ be the (map induced by) inclusion. Then, by Lemma \ref{k1open}, $\varepsilon_{i}+j_{*}:K^{-1}(V)\oplus G(V_{n}, \bigcup_{j=0}^{k_{n}-i}F^{-j}(C_{n}))\mapsto K^{-1}(V^{i})$ is an isomorphism, for every $0\leq i\leq k_{n}$.
\par
For every $0\leq i\leq k_{n}$, we shall denote $\Psi = [\mathcal{E}_{\tilde{R}_{i},V^{i}}]$ and
$\iota = j_{*}:K^{-1}(V^{i})\mapsto K^{-1}(V_{n})$. It will always be clear the domain of these maps, so no confusion from this notation ambiguity will arise. Moreover, for $i,m$ in $\mathbb{N}$ such that $i+m\leq k_{n}$, we will denote $\Psi^{m} = $\\ $ [\mathcal{E}_{\tilde{R}_{i},V^{i}}]\hat{\otimes}[\mathcal{E}_{\tilde{R}_{i+1},V^{i+1}}]...\hat{\otimes}[\mathcal{E}_{\tilde{R}_{i+m-1},V^{i+m-1}}].$
\par
Now, by Proposition \ref{classnaturality}, we have, for every $0\leq i\leq k_{n}$ and $u$ in $K^{-1}(V)$, that $\varepsilon_{k_{n}}(u)\hat{\otimes}_{1}[\mathcal{E}_{R_{n},V_{n}}] = \iota(\varepsilon_{i}(u)\hat{\otimes}_{1}\Psi)$. So, if $\varepsilon_{k_{n}}(u)$ is in the kernel of $\gamma_{n}$, then $\iota(\varepsilon_{i}(u) - \varepsilon_{i-1}(u)\hat{\otimes}\Psi)$\\$ = 0$ for all $1\leq i\leq k_{n}$. It is easy to see from Lemma \ref{k1open} that, for every $0\leq i\leq k_{n}$, the kernel of $\iota:K^{-1}(V^{i})\mapsto K^{-1}(V_{n})$ is precisely $G(V_{n},\bigcup_{j=0}^{k_{n}-i}F^{-j}(C_{n})).$ So, we may conclude that $\varepsilon_{i}(u) - \varepsilon_{i-1}(u)\hat{\otimes}\Psi$ is in $G(V_{n},\bigcup_{j=0}^{k_{n}-i}F^{-j}(C_{n}))$.
\par
We now prove that $\varepsilon_{i}(u) - \varepsilon_{0}(u)\hat{\otimes}\Psi^{i}$ is in $G(V_{n},\bigcup_{j=0}^{k_{n}-i}F^{-j}(C_{n}))$ for all $1\leq i\leq k_{n}$. From directly above, we know this is true for $i = 1$. Suppose we know it to be true for $i\leq k_{n}-1$. By Lemma \ref{kernellemma}, we then have that
$0 = (\varepsilon_{i}(u) - \varepsilon_{0}(u)\hat{\otimes}\Psi^{i})\hat{\otimes}\Psi = \varepsilon_{i}(u)\hat{\otimes}\Psi - \varepsilon_{0}(u)\hat{\otimes}\Psi^{i+1}$. From directly above, we then know that $(\varepsilon_{i+1}(u) - \varepsilon_{i}(u)\hat{\otimes}\Psi) + (\varepsilon_{i}(u)\hat{\otimes}\Psi - \varepsilon_{0}(u)\hat{\otimes}\Psi^{i+1}) = \varepsilon_{i+1}(u) - \varepsilon_{0}(u)\hat{\otimes}\Psi^{i+1}$ is in $G(V_{n},\bigcup_{j=0}^{k_{n}-i-1}F^{-j}(C_{n}))$. By induction, the result holds. In particular, $\varepsilon_{k_{n}}(u) - \varepsilon_{0}(u)\hat{\otimes}\Psi^{k_{n}}$ is in $G(V_{n}, C_{n})$. 
\par
We will now show for any $g$ in $K^{-1}(V^{k_{n}})$, there is $b$ in $K^{-1}(B\setminus C_{n})$ such that $g\hat{\otimes}\Psi^{k_{n}} = i_{*}(b)$, where $j$ is the inclusion map.
\par
Denote $X_{i} = R^{i}(V_{n})\setminus \bigcup_{j=0}^{k_{n}-i}R^{-j}(C_{n})$ for $0\leq i\leq k_{n}$, $F_{i} = R:X_{i}\mapsto X_{i+1}$ for $0\leq i\leq k_{n}-1$, and $\Delta^{i} = [\mathcal{E}_{F_{0},X_{0}}]\hat{\otimes}[\mathcal{E}_{F_{1},X_{1}}]\hat{\otimes}...\hat{\otimes}[\mathcal{E}_{F_{i-1},X_{i-1}}]$ for $1\leq i\leq k_{n}$. We claim $j_{*}\Delta^{m} = \Psi^{m}$, where $j = i:C_{0}(X_{m})\mapsto C_{0}(V^{m})$ for all $1\leq m\leq k_{n}$.
\par
We prove this by induction. Note that $X_{0} = V^{0}$. By Proposition \ref{classnaturality}, $\Psi = j_{*}\Delta$. Now suppose the claim is true for $m\leq k_{n}-1.$ Then, $\Psi^{m+1} = (j_{*}\Delta^{m})\hat{\otimes}[\mathcal{E}_{\tilde{R}_{m},V^{m}}]$. By the properties of the Kasparov product (see the second last paragraph of Section \ref{kk}),  $(j_{*}\Delta^{m})\hat{\otimes}[\mathcal{E}_{\tilde{R}_{m},V^{m}}] = \Delta^{m}\hat{\otimes}[j^{*}\mathcal{E}_{\tilde{R}_{m},V^{m}}]$. Since $R(X_{m})\subseteq X_{m+1}\subseteq V^{m+1}$, by Proposition \ref{classnaturality}, we have that $j^{*}[\mathcal{E}_{\tilde{R}_{m},V^{m}}] = j_{*}[\mathcal{E}_{F_{m}, X_{m}}]$. Therefore $\Psi^{m+1} = \Delta^{m}\hat{\otimes}j_{*}\Delta = j_{*}\Delta^{m+1}$. By induction, we have proven the claim.
\par
In particular, $\Psi^{k_{n}} = j_{*}\Delta^{k_{n}}$ for $j = i:C_{0}(X_{k_{n}})\mapsto C_{0}(V_{n}\setminus C_{n})$. Since $X_{k_{n}}\subseteq A_{P}\setminus C_{n}\subseteq B\setminus C_{n}$, we can factor $j_{*}$ as $j_{*} = i_{*}\circ \tilde{i}_{*}$, for $\tilde{i}_{*} = K^{-1}(X_{k_{n}})\mapsto K^{-1}(B\setminus C_{n})$ and $i_{*} = K^{-1}(B\setminus C_{n})\mapsto K^{-1}(V_{n}\setminus C_{n})$. Therefore, for any $g$ in $K^{-1}(V^{k_{n}}),$ $b = g\hat{\otimes}\tilde{i}_{*}\Delta^{k_{n}}$ satisfies $g\hat{\otimes}\Psi^{k_{n}} = i_{*}(b)$.
\par
So far, we have shown for every $u$ in $K^{-1}(V)$ such that $\varepsilon_{k_{n}}(u)$ is in $\text{ker}(\gamma_{n})$, there is $b$ in $K^{-1}(B\setminus C_{n})$ such that $\varepsilon_{k_{n}}(u) -i_{*}(b)$ is in $G(V_{n},C_{n})$. We must now separate the analysis into two cases, the case when $P$ is a Herman cycle, and the case when it isn't.
\par
Suppose $P$ is not a Herman cycle. Then, for $i_{*}:K^{-1}(B\setminus C_{n})\mapsto K^{-1}(V_{n}\setminus C_{n})$, we can factor $\iota\circ i_{*}$ as $\iota\circ i_{*} = (i_{1})_{*}(i_{2})_{*}$, where $i_{2} = i:C_{0}(B\setminus C_{n})\mapsto C_{0}(B)$. Since in this case $B$ is the disjoint union of simply connected sets, $K^{-1}(B) = 0$. Hence, $\iota(i_{*}(b)) = 0$, which implies $\iota(\varepsilon_{k_{n}}(u)) = 0$. By Lemma \ref{k1open}, $\iota\circ\varepsilon_{k_{n}} = i:K^{-1}(V)\mapsto K^{-1}(V_{n})$ is injective, and therefore $u = 0$. So, when $P$ is not a Herman cycle, the kernel of $\gamma_{n}$ is equal to $G(V_{n},C_{n})$.
\par
Now, suppose $P$ is a Herman cycle. Then $B\setminus C_{n} = U_{P}$. Moreover, $U_{P}$ does not intersect $D = \bigcup_{i=0}^{k_{n}}F^{-i}(C_{n})$ (otherwise an irrational rotation would contain a critical point), and so by $(1)$ of Lemma \ref{k1open}, $U_{P}\subseteq V$. Therefore, $i_{*}(b) = \varepsilon_{k_{n}}(j_{*}(b))$, where $j:C_{0}(U_{P})\mapsto C_{0}(V)$ is the inclusion. So, we can write $\varepsilon_{k_{n}}(u) - i_{*}(b) = \varepsilon_{k_{n}}(u - j_{*}(b))$. Since the intersection of the image of $\varepsilon_{k_{n}}$ and $G(V_{n},C_{n})$ is zero, it follows that $\varepsilon_{k_{n}}(u) = i_{*}(b)$. We can write $i_{*}(b) = \sum_{x\in P}a_{x}u_{x}$, for some $a_{x}$ in $\mathbb{Z}$, $x$ in $P$. Since $R_{P} = R:U_{P}\mapsto U_{P}$ is a homeomorphism, the class of $[\mathcal{E}_{R_{P},U_{P}}]$ is equal to the class of $(R_{P}^{-1})^{*}$. Hence, by the definition of $u_{x}$, for $x$ in $P$, we have $u_{x}\hat{\otimes}_{1}[\mathcal{E}_{R_{n}, V_{n}}] = u_{\sigma(x)}$ for all $x$ in $P$. Therefore,
$0 = \iota(\varepsilon_{k_{n}}(u)) - \varepsilon_{k_{n}}(u)\hat{\otimes}_{1}[\mathcal{E}_{R_{n}, V_{n}}] = \sum_{x\in P}(a_{x} - a_{\sigma(x)})u_{x}$. Hence, $a_{x} = a_{y} =: a$ for all $x, y$ in $P$. Therefore, the kernel of $\gamma_{n}$ in the Herman cycle case is equal to $G(V_{n}, C_{n}) +\mathbb{Z}\cdot u_{P}$.
\par
We will now determine the co-kernel of $\gamma_{n}$.
\par
First, note that for any $0\leq i\leq k_{n}-1$ and $g$ in $K^{-1}(V^{i})$, we have $\iota(g\otimes_{i}\Psi) = j_{*}(g)\hat{\otimes}_{1}[\mathcal{E}_{R_{n},V_{n}}]$, where $j:C_{0}(V^{i})\mapsto C_{0}(V_{n}\setminus C_{n})$ is the inclusion. Therefore, for any $0\leq i\leq k_{n}-1$ and $g$ in $K^{-1}(V^{i})$, $\iota(g - g\hat{\otimes}\Psi)$ is in the image of $\gamma_{n}$.
\par
Let $v$ be in $K^{-1}(V_{n})$. Then by Lemma \ref{k1open}, there is $u$ in $K^{-1}(V)$ such that $\iota(\varepsilon_{k_{n}}(u)) = v$. By the above note, $\iota(\varepsilon_{k_{n}-i}(u)\hat{\otimes}\Psi^{i} - \varepsilon_{k_{n}-(i+1)}(u)\Psi^{(i+1)})$ is in the image of $\gamma_{n}$ for all $0\leq i\leq k_{n}-1$. Therefore, $v$ is equal to $\iota(\varepsilon_{k_{n}}(u)) +\sum_{i=0}^{k_{n}-1}\iota(\varepsilon_{k_{n}-(i+1)}(u)\Psi^{(i+1)} - \varepsilon_{k_{n}-i}\hat{\otimes}\Psi^{i}) = \iota(\varepsilon_{0}(u)\hat{\otimes}\Psi^{k_{n}})$ modulo the image of $\gamma_{n}$.
\par
We have already shown while describing the kernel that $\iota(\varepsilon_{0}(u)\hat{\otimes}\Psi^{k_{n}}) = 0$ if $P$ is not a Herman cycle. Therefore, in this this case, the co-kernel of $\gamma_{n}$ is zero. 
\par
If $P$ is a Herman cycle, then $ \iota(\varepsilon_{0}(u)\hat{\otimes}\Psi^{k_{n}}) = \sum_{x\in P}a_{x}u_{x}$, for some $a_{x}$ in $\mathbb{Z}$, $x$ in $P$. Since $\gamma_{n}(u_{x}) = u_{x} - u_{\sigma(x)}$ for all $x$ in $P$, it follows that $ \sum_{x\in P}a_{x}u_{x}$ (and hence $v$) is equivalent to $a\cdot u_{x_{P}}$ modulo the image of $\gamma_{n}$, where $a:= \sum_{x\in P}a_{x}$.
\par
We now show that for any $a$ in $\mathbb{Z}$, $a\cdot u_{x_{P}}$ is not in the image of $\gamma_{n}$, which will complete the proof of this Proposition.
\par
First, assume $u$ is an element of $K^{-1}(V\setminus U_{P})$ such that $\iota(\varepsilon_{i}(u) - \varepsilon_{i-1}(u)\hat{\otimes}\Psi) = w$ for some $w$ in $K^{-1}(U_{P})$. Therefore, for every $0\leq i\leq k_{n}-1$, there is $h_{i}$ in $G(V_{n}, \bigcup_{j=0}^{i}F^{-j}(C_{n}))$ such that
$\varepsilon_{k_{n}-i}(u) - \varepsilon_{k_{n}-(i+1)}(u)\hat{\otimes}\Psi + h_{i} = w$. Since $K^{-1}(U_{P})$ is invariant under the action of $\Psi$, it follows that 
$\varepsilon_{k_{n}}(u) - \varepsilon_{0}(u)\hat{\otimes}\Psi^{k_{n}} + h_{0} = \sum_{i=0}^{k_{n}-1} \varepsilon_{k_{n}-i}(u)\hat{\otimes}\Psi^{i} - \varepsilon_{k_{n}-(i+1)}(u)\Psi^{(i+1)} = w'$ for some $w'$ in $K^{-1}(U_{P})$. $\varepsilon_{0}(u)\hat{\otimes}\Psi^{k_{n}}$ is also in $K^{-1}(U_{P})$, and therefore $\varepsilon_{k_{n}}(u)$ is in $(K^{-1}(U_{P}) + G(V_{n}, C_{n}))\cap\varepsilon_{k_{n}}(K^{-1}(V\setminus U_{P})) =\{0\}.$ Hence, $u = 0$ whenever $u$ is in $K^{-1}(V\setminus U_{P})$ and $\iota(\varepsilon_{i}(u) - \varepsilon_{i-1}(u)\hat{\otimes}\Psi)$ is in $K^{-1}(U_{P})$.
\par
This implies that if $\iota(g) - g\hat{\otimes}_{1}[\mathcal{E}_{R_{n}, V_{n}}] = a\cdot u_{x_{P}}$ for some $g$ in $K^{-1}(V_{n}\setminus C_{n})$ and $a$ in $\mathbb{Z}$, then $\iota(g) - g\hat{\otimes}_{1}[\mathcal{E}_{R_{n}, V_{n}}] = \sum_{x\in P}(a_{x} - a_{\sigma(x)})u_{x}$ for some $a_{x}$ in $\mathbb{Z}$, $x$ in $P$. Therefore, $a = \sum_{x\in P}(a_{x} - a_{\sigma(x)}) = 0$.
\end{proof}
We can now compute the $K$-theory of $R$ acting on $F_{R}$.
\begin{thm}
\label{FK}
Let $R$ be a rational function. Denote by $\mathcal{F}_{R}$ the set of Fatou cycles for $R$ and $\mathcal{H}_{R}$ the set of Herman cycles for $R$. Let $G(F_{R},C_{R,F_{R}})\subseteq K^{-1}(F_{R}\setminus C_{R,F_{R}})$ be the group in Lemma \ref{k1open} applied to the case $U = F_{R}$, $D = C_{R,F_{R}}$.
\par
Let $\delta_{PV}:K_{1}(\mathcal{O}_{R, F_{R}})\mapsto K^{0}(F_{R}\setminus C_{R,F_{R}})$ and $\text{exp}_{PV}:K_{0}(\mathcal{O}_{R, F_{R}})\mapsto K^{-1}(F_{R}\setminus C_{R,F_{R}})$ be
the same-labelled maps appearing in the Pimsner-Voiculescu 6-term exact sequence for $R:F_{R}\mapsto F_{R}$. Then, we have short exact sequences

$$
\begin{tikzcd}
0 \arrow[r] & {\bigoplus_{P\in\mathcal{F}_{R}}\mathbb{Z}[f_{x_{P}}]} \arrow[r, "i_{*}"]   & {K_{0}(\mathcal{O}_{R,F_{R}})} \arrow[r, "\text{exp}_{PV}"] & {G(F_{R}, C_{R,F_{R}})\oplus\bigoplus_{Q\in\mathcal{H}_{R}}\mathbb{Z}\cdot u_{Q}} \arrow[r] & 0 \\
0 \arrow[r] & \bigoplus_{Q\in\mathcal{H}_{R}}\mathbb{Z}\cdot u_{x_{P}} \arrow[r, "i_{*}"] & {K_{1}(\mathcal{O}_{R,F_{R}})} \arrow[r, "\delta_{PV}"]     & {\bigoplus_{P\in\mathcal{F}_{R}}\mathbb{Z}[e_{P}]} \arrow[r]                          & 0,
\end{tikzcd}$$

where $x_{P}$ is a choice of an element in the cycle $P$, for all $P$ in $\mathcal{F}_{R} $ (or $\mathcal{H}_{R}$ ). Hence, $K_{0}(\mathcal{O}_{R, F_{R}})\simeq\mathbb{Z}^{|\mathcal{F}_{R}| + |\mathcal{H}_{R}| + |C_{R,F_{R}}|}$ and $K_{1}(\mathcal{O}_{R,F_{R}})\simeq\mathbb{Z}^{|\mathcal{F}_{R}| + |\mathcal{H}_{R}|}$.
\end{thm}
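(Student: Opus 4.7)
My plan is to apply the Pimsner--Voiculescu 6-term exact sequence (Proposition \ref{cpexact}) to the full, faithful $C_{0}(F_{R})$--$C_{0}(F_{R})$ correspondence $(E_{R,F_{R}},\alpha_{F_{R}})$ (full because $R\colon F_{R}\to F_{R}$ is surjective) and then identify each connecting map with the computations already in hand. Breaking that $6$-term sequence at the maps $i_{*}$ yields two short exact sequences
\begin{align*}
0&\to \mathrm{coker}\bigl(\iota-\hat{\otimes}_{0}[\mathcal{E}_{R,F_{R}}]\bigr)\xrightarrow{i_{*}} K_{0}(\mathcal{O}_{R,F_{R}})\xrightarrow{\exp_{PV}}\mathrm{ker}\bigl(\iota-\hat{\otimes}_{1}[\mathcal{E}_{R,F_{R}}]\bigr)\to 0,\\
0&\to \mathrm{coker}\bigl(\iota-\hat{\otimes}_{1}[\mathcal{E}_{R,F_{R}}]\bigr)\xrightarrow{i_{*}} K_{1}(\mathcal{O}_{R,F_{R}})\xrightarrow{\delta_{PV}}\mathrm{ker}\bigl(\iota-\hat{\otimes}_{0}[\mathcal{E}_{R,F_{R}}]\bigr)\to 0,
\end{align*}
so that the theorem reduces to plugging in the known kernels and cokernels.

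Corollary \ref{Fkernel} supplies the kernel $\bigoplus_{P\in\mathcal{F}_{R}}\mathbb{Z}[e_{P}]$ and identifies $\bigoplus_{P\in\mathcal{F}_{R}}\mathbb{Z}[f_{x_{P}}]\subseteq K^{0}(F_{R})$ with the cokernel via the quotient map, for the $K^{0}$-level map. Proposition \ref{Fkernel1} supplies the kernel $G(F_{R},C_{R,F_{R}})+\bigoplus_{Q\in\mathcal{H}_{R}}\mathbb{Z}\cdot u_{Q}$ and identifies $\bigoplus_{Q\in\mathcal{H}_{R}}\mathbb{Z}\cdot u_{x_{Q}}\subseteq K^{-1}(F_{R})$ with the cokernel, for the $K^{-1}$-level map. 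The one point requiring verification is that the kernel on the $K^{-1}$ side really is a \emph{direct} sum $G(F_{R},C_{R,F_{R}})\oplus\bigoplus_{Q\in\mathcal{H}_{R}}\mathbb{Z}\cdot u_{Q}$. For this I will invoke Lemma \ref{k1open} together with the fact that a Herman cycle contains no critical points (since $R^{\circ n}|_{U}$ is conjugate to an irrational rotation): one may choose the disjoint union $W$ of simply connected neighbourhoods of $C_{R,F_{R}}$ and its complementary open set $V$ so that $V$ contains every Herman cycle component. Then $G(F_{R},C_{R,F_{R}})$ and $\bigoplus_{Q}\mathbb{Z}\cdot u_{Q}$ lie in the complementary summands of the decomposition $K^{-1}(W\setminus C_{R,F_{R}})\oplus K^{-1}(V)\simeq K^{-1}(F_{R}\setminus C_{R,F_{R}})$, so the sum is direct.

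Substituting these identifications into the two short exact sequences above gives exactly the sequences displayed in the theorem. For the final rank count, I note that $G(F_{R},C_{R,F_{R}})\simeq\mathbb{Z}^{|C_{R,F_{R}}|}$: by the last property in Lemma \ref{k1open} we may choose each component of $W$ to contain a single critical point, and a simply connected proper open subset of $\hat{\mathbb{C}}$ minus one point is homotopy equivalent to $S^{1}$; the other building blocks are plainly free abelian of the stated ranks. Both short exact sequences thus have finitely generated free abelian right-hand terms, so they split, yielding $K_{0}(\mathcal{O}_{R,F_{R}})\simeq\mathbb{Z}^{|\mathcal{F}_{R}|+|\mathcal{H}_{R}|+|C_{R,F_{R}}|}$ and $K_{1}(\mathcal{O}_{R,F_{R}})\simeq\mathbb{Z}^{|\mathcal{F}_{R}|+|\mathcal{H}_{R}|}$. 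The real work is already absorbed into Proposition \ref{Fkernel1}; the present argument is an assembly via Pimsner--Voiculescu, and the only non-bookkeeping step is the directness check described above.
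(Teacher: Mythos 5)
Your proposal is correct and follows the same route as the paper: apply the Pimsner--Voiculescu sequence, break at the $i_{*}$ maps, and substitute the kernel/cokernel identifications from Corollary \ref{Fkernel} and Proposition \ref{Fkernel1}. The directness check for $G(F_{R},C_{R,F_{R}})\oplus\bigoplus_{Q}\mathbb{Z}\cdot u_{Q}$ (via the decomposition $K^{-1}(W\setminus C_{R,F_{R}})\oplus K^{-1}(V)$ in Lemma \ref{k1open}, with the Herman annuli placed in $V$) and the direct rank computation $G(F_{R},C_{R,F_{R}})\simeq\mathbb{Z}^{|C_{R,F_{R}}|}$ are welcome explicitations of points the paper handles implicitly --- the latter by a forward reference to the exponential-map argument at the start of Section \ref{ratmapj} --- but the assembly is essentially identical.
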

\begin{proof}
By Corollary \ref{Fkernel} and Proposition \ref{Fkernel1}, we can fill in the Pimsner-Voiculescu 6-term exact sequence as follows:

$$\begin{tikzcd}
\bigoplus_{P\in\mathcal{F}_{R}}\mathbb{Z}[e_{P}] \arrow[r, "0"]                    & \bigoplus_{P\in\mathcal{F}_{R}}\mathbb{Z}[f_{x_{P}}] \arrow[r, "i_{*}"] & {K_{0}(\mathcal{O}_{R, F_{R}})} \arrow[d, "\text{exp}"] \\
{K_{1}(\mathcal{O}_{R,F_{R}})} \arrow[u, "\delta"] & \bigoplus_{Q\in\mathcal{H}_{R}}\mathbb{Z}\cdot u_{x_{Q}} \arrow[l, "i_{*}"]                               & G(F_{R},C_{R,F_{R}})\oplus\bigoplus_{Q\in\mathcal{H}_{R}}\mathbb{Z}\cdot u_{Q} \arrow[l, "0"]                   
\end{tikzcd}$$
Exactness of the above diagram concludes the proof that we have short exact sequences as claimed.
\par
The 2nd paragraph of Section \ref{ratmapj} implies $G(F_{R},C_{R,F_{R}})\simeq\mathbb{Z}^{|C_{R,F_{R}}|}$, and so both the short exact sequences above are split exact (this forward reference won't cause a circular argument).
\end{proof}
\section{The $K$-theory of a rational function acting on its Julia set}
\label{ratmapj}
In this section we compute the kernel and co-kernel of $\iota - \hat{\otimes}[\mathcal{E}_{R,J_{R}}]$ (in both degrees), as well as some related groups. As a Corollary, we will have determined the $K$-theory of a rational function acting on its Julia set. First, we orient some $K^{-1}$ groups.
\par
Let $W$ be a union of pairwise disjoint open simply connected proper sets of $\hat{\mathbb{C}}$, and $D\subseteq W$ a finite set. By Corollary \ref{mainbott}, $i_{*}:K^{0}(W\setminus D)\mapsto K^{0}(W)$ is an isomorphism. Since $W$ is a disjoint union of simply connected open sets, we have that $K^{-1}(W) = 0$. These two facts, imply the exponential map $\text{exp}:K^{0}(D)\mapsto K^{-1}(W\setminus D)$ from the $6$-term exact sequence of $K$-theory associated to 
$$
\begin{tikzcd}
0 \arrow[r] & C_{0}(W\setminus D) \arrow[r] & C_{0}(W) \arrow[r] & C(D) \arrow[r] & 0
\end{tikzcd}$$
is an isomorphism. For each $d$ in $D$, we will denote $\text{exp}([1_{d}]) = v_{d}$. The free basis $\{v_{d}\}_{d\in D}$ for $K^{-1}(W\setminus D)$ will be our canonical choice of generators, or ``orientation''.
\par
Similarly, the short exact sequence
$$
\begin{tikzcd}
0 \arrow[r] & C_{0}(\hat{\mathbb{C}}\setminus D) \arrow[r] & C_{0}(\hat{\mathbb{C}}) \arrow[r] & C(D) \arrow[r] & 0
\end{tikzcd}$$
gives a surjection $\text{exp}:K^{0}(D)\mapsto K^{-1}(\hat{\mathbb{C}})$. In this case Corollary \ref{mainbott} implies $\mathbb{Z}[1_{\hat{\mathbb{C}}}]$ maps isomorphically onto the co-kernel of $i_{*}:K^{0}(\hat{\mathbb{C}}\setminus D)\mapsto K^{0}(\hat{\mathbb{C}})$, via the quotient map. Therefore, the kernel of $\text{exp}$ is $i_{*}[1_{\hat{\mathbb{C}}}] = \sum_{d\in D}[1_{d}]$. Denote $\text{exp}([1_{d}]) = v_{d}$, for $d$ in $D$. $K^{-1}(\hat{\mathbb{C}}\setminus D)$ is then the group generated by $\{v_{d}\}_{d\in D}$ satisfying the relation $\sum_{d\in D}v_{d} = 0$. Therefore, $\{v_{d}\}_{d\in D\setminus \{d'\}}$ is a free basis for $K^{-1}(\hat{\mathbb{C}}\setminus D)$, for any $d'$ in $D$, but we will prefer to work with the whole generating set modulo its relation.
\par
These generators behave well with respect to inclusion in the following sense.
\begin{prop}
\label{genmappingk1}
Let $W_{1}, W_{2}$ be unions of pairwise disjoint simply connected (not necessarily proper) open sets in $\hat{\mathbb{C}}$ such that $W_{1}\subseteq W_{2}$, and suppose $D,K,C$ are finite sets such that $D\subseteq W_{1}$, $K\subseteq W_{1}\setminus D$, and $C\subseteq W_{2}\setminus W_{1}$. Then,
$i_{*}:K^{-1}(W_{1}\setminus (K\cup D))\mapsto K^{-1}(W_{2}\setminus (D\cup C))$ sends
$v_{d}$ to $v_{d}$, for all $d$ in $D$, and sends $v_{k}$ to $0$, for all $k$ in $K$.
\end{prop}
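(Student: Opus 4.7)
The plan is to deduce this entirely from naturality of the exponential map, applied to a suitable morphism between the two defining short exact sequences. Recall that $v_d$ and $v_k$ in $K^{-1}(W_1 \setminus (K \cup D))$ are the images of the basis elements $[1_d], [1_k] \in K^0(K \cup D)$ under the exponential map of
$$0 \longrightarrow C_0(W_1 \setminus (K \cup D)) \longrightarrow C_0(W_1) \longrightarrow C(K \cup D) \longrightarrow 0,$$
while $v_d \in K^{-1}(W_2 \setminus (D \cup C))$ is the image of $[1_d] \in K^0(D \cup C)$ under the exponential map of
$$0 \longrightarrow C_0(W_2 \setminus (D \cup C)) \longrightarrow C_0(W_2) \longrightarrow C(D \cup C) \longrightarrow 0.$$

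First, I would build a morphism between these two extensions using the inclusions. Since $C \cap W_1 = \emptyset$, we have $W_1 \setminus (K \cup D) \subseteq W_2 \setminus (D \cup C)$, which yields the left vertical arrow (extension by zero), and $W_1 \subseteq W_2$ yields the middle one. The key point is that these two arrows are compatible with the quotient maps and hence induce a unique vertical arrow $\phi: C(K \cup D) \to C(D \cup C)$ on the right: tracing a function $f \in C_0(W_1)$ through $C_0(W_2)$ and restricting, one finds that $\phi(g)(d) = g(d)$ for $d \in D$ and $\phi(g)(c) = 0$ for $c \in C$, because $C$ is disjoint from $W_1$.

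Next, I would compute $\phi_*$ on $K^0$. As $K^0$ of a finite set is the free abelian group on the point masses, $\phi_*([1_d]) = [1_d]$ for each $d \in D$, while $\phi_*([1_k]) = 0$ for each $k \in K$ (the restriction of $1_k$ to $D$ is zero). Then naturality of the exponential map gives $i_* \circ \text{exp} = \text{exp} \circ \phi_*$ on the relevant groups, so
$$i_* v_d = \text{exp}(\phi_*[1_d]) = \text{exp}([1_d]) = v_d \quad \text{and} \quad i_* v_k = \text{exp}(\phi_*[1_k]) = 0,$$
as required.

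I do not anticipate a genuine obstacle: the argument is pure naturality and the only thing to verify is the existence and formula for $\phi$, which reduces to checking that the square involving the quotient maps commutes. The cases where $W_1$ or $W_2$ equals $\hat{\mathbb{C}}$ are absorbed without difficulty, since naturality of $\text{exp}$ does not require it to be an isomorphism — the generators $v_d, v_k$ are defined as exponentials of $[1_d], [1_k]$ in every case considered, and this is all the argument uses.
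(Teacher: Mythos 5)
Your proof is correct and is essentially the same as the paper's: both build a morphism of the two defining short exact sequences, identify the induced map $\varphi$ on $C(K\cup D)\to C(D\cup C)$ as restriction to $D$ followed by extension by zero, and conclude by naturality of $\text{exp}$. Your write-up just spells out the verification of the commuting square and the formula for $\varphi$ a bit more explicitly than the paper does.
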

\begin{proof}
By naturality of $\text{exp}$, we have a commutative diagram

$$
\begin{tikzcd}
K^{0}(K\cup D) \arrow[r, "\varphi"] \arrow[d, "\text{exp}"] & K^{0}(D\cup C) \arrow[d, "\text{exp}"] \\
K^{-1}(W_{1}\setminus (K\cup D)) \arrow[r, "i_{*}"]          & K^{-1}(W_{2}\setminus (D\cup C)),         
\end{tikzcd}$$
where $\varphi$ sends $1_{d}$ to $1_{d}$, for all $d$ in $D$, and $1_{k}$ to $0$, for all $k$ in $K$.
\end{proof}
For $c_{1}$ and $c_{2}$ in $C_{R,J_{R}}$, we will write $c_{1}\sim c_{2}$ if $\{c_{1},c_{2}\}$ is contained in a connected subset of $J_{R}$. Clearly $\sim$ is an equivalence relation. For $c$ in $C_{R,J_{R}}$, we will denote its equivalence class by $[c]$ and the group element $\sum_{d\in C_{R,J_{R}}:d\sim c}v_{d}$ in $K^{-1}(\hat{\mathbb{C}}\setminus C_{R,\hat{\mathbb{C}}})$ by $v_{[c]}$.
The collection of distinct equivalence classes will be denoted $[C_{R,J_{R}}]$.
\begin{prop}
\label{iimage}
Let $R$ be a rational function. The image of $i_{*}:K^{-1}(F_{R}\setminus C_{R,F_{R}})\mapsto K^{-1}(\hat{\mathbb{C}}\setminus C_{R,J_{R}})$ is generated by $\{v_{c}\}_{c\in C_{R,F_{R}}}$ together with $\{v_{[c]}\}_{[c]\in [C_{R,J_{R}}]}$.
\end{prop}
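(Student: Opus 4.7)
The plan is to identify the image of $i_{*}$ with the kernel of the restriction map $r_{*}:K^{-1}(\hat{\mathbb{C}}\setminus C_{R,J_{R}})\mapsto K^{-1}(J_{R}\setminus C_{R,J_{R}})$ coming from the $6$-term exact sequence associated to $0\to C_{0}(F_{R})\to C_{0}(\hat{\mathbb{C}}\setminus C_{R,J_{R}})\to C_{0}(J_{R}\setminus C_{R,J_{R}})\to 0$. First, since $K^{-1}$ of the finite set $C_{R,F_{R}}$ vanishes, the inclusion $F_{R}\setminus C_{R,F_{R}}\hookrightarrow F_{R}$ induces a surjection on $K^{-1}$, so the image of $i_{*}$ coincides with the image of the map $K^{-1}(F_{R})\mapsto K^{-1}(\hat{\mathbb{C}}\setminus C_{R,J_{R}})$, which by exactness equals $\ker(r_{*})$.

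Next, I would use naturality of the exponential map applied to the restriction-of-functions morphism between the short exact sequences $0\to C_{0}(\hat{\mathbb{C}}\setminus C_{R,J_{R}})\to C(\hat{\mathbb{C}})\to C(C_{R,J_{R}})\to 0$ and $0\to C_{0}(J_{R}\setminus C_{R,J_{R}})\to C(J_{R})\to C(C_{R,J_{R}})\to 0$ to conclude that $r_{*}(v_{d}) = \mathrm{exp}'([1_{d}])$ for each $d$ in $C_{R,J_{R}}$, where $\mathrm{exp}'$ denotes the exponential map of the bottom sequence. Then $\sum n_{d}v_{d}$ lies in $\ker(r_{*})$ precisely when $(n_{d})_{d\in C_{R,J_{R}}}$ is in $\ker(\mathrm{exp}')$, which by exactness is the image of the evaluation $K^{0}(J_{R})\mapsto K^{0}(C_{R,J_{R}})$.

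The central step is to identify this image. Assuming $J_{R}\neq\hat{\mathbb{C}}$ (the other case being vacuous since $F_{R}=\emptyset$), Proposition \ref{Triso} gives $K^{0}(J_{R})\simeq C(J_{R},\mathbb{Z})$, so the image consists of those tuples extending to locally constant integer-valued functions on $J_{R}$; such a function is necessarily constant on each connected component of $J_{R}$, hence on each equivalence class $[c]$. For the converse, I would exploit that in a compact Hausdorff space distinct connected components can be separated by clopen sets (since quasi-components coincide with connected components) to construct pairwise disjoint clopen neighborhoods of the connected components housing the equivalence classes, whose characteristic functions assemble to the desired locally constant extension. Consequently $\ker(r_{*})$ is generated by $\{v_{[c]}\}_{[c]\in [C_{R,J_{R}}]}$.

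Finally, I would verify that each $v_{c}$ with $c\in C_{R,F_{R}}$, interpreted in $K^{-1}(\hat{\mathbb{C}}\setminus C_{R,J_{R}})$ as the pushforward of the corresponding class from $K^{-1}(F_{R}\setminus C_{R,F_{R}})$, is in fact zero: a small loop around $c$ bounds a disk contained in $F_{R}\subseteq\hat{\mathbb{C}}\setminus C_{R,J_{R}}$, and more formally this follows from applying the $6$-term exact sequence to $0\to C_{0}(\hat{\mathbb{C}}\setminus C_{R,\hat{\mathbb{C}}})\to C_{0}(\hat{\mathbb{C}}\setminus C_{R,J_{R}})\to C(C_{R,F_{R}})\to 0$ together with naturality of the exponential map. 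Therefore the $\{v_{c}\}$ listed are redundant generators, and the subgroup generated by $\{v_{c}\}\cup\{v_{[c]}\}$ is precisely $\ker(r_{*})$, which equals $\mathrm{image}(i_{*})$. The hard step will be the clopen-separation argument: producing \emph{simultaneously} disjoint clopen neighborhoods containing the full connected component of each equivalence-class representative, which requires a careful finite induction using the quasi-component characterization in the compact Hausdorff setting.
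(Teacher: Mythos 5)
There is a genuine gap, and it stems from a typo in the statement that your proof silently exploits instead of flagging. The codomain in the statement should be $K^{-1}(\hat{\mathbb{C}}\setminus C_{R,\hat{\mathbb{C}}})$, not $K^{-1}(\hat{\mathbb{C}}\setminus C_{R,J_{R}})$: the generators $v_{c}$ for $c\in C_{R,F_{R}}$ and $v_{[c]}$ were both defined (in the paragraphs preceding the proposition) as elements of $K^{-1}(\hat{\mathbb{C}}\setminus C_{R,\hat{\mathbb{C}}})$, and this is also the group that Proposition~\ref{k1j} applies the result to when it identifies $\ker(r_{*})$. You observed near the end of your argument that all of the $v_{c}$, $c\in C_{R,F_{R}}$, vanish in $K^{-1}(\hat{\mathbb{C}}\setminus C_{R,J_{R}})$; that should have been a red flag that the target was misprinted rather than evidence that half the listed generators are redundant.

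With the corrected codomain, the first reduction in your proof fails. You factor the inclusion as $F_{R}\setminus C_{R,F_{R}}\hookrightarrow F_{R}\hookrightarrow \hat{\mathbb{C}}\setminus C_{R,J_{R}}$ and use surjectivity of the first arrow on $K^{-1}$ to replace the source by $K^{-1}(F_{R})$. But $F_{R}\not\subseteq\hat{\mathbb{C}}\setminus C_{R,\hat{\mathbb{C}}}$ (since $C_{R,F_{R}}\subseteq F_{R}$), so there is no such factoring into the correct target, and the image of $i_{*}$ cannot be identified with the image of $K^{-1}(F_{R})$ there. This is precisely why the paper's proof does not go through $K^{-1}(F_{R})$ at all; instead it decomposes $K^{-1}(F_{R}\setminus C_{R,F_{R}})$ as $K^{-1}(W\setminus C_{R,F_{R}})\oplus K^{-1}(V)$ using Lemma~\ref{k1open}, with $W$ a disjoint union of simply connected neighbourhoods of $C_{R,F_{R}}$ and $V$ disjoint from $C_{R,F_{R}}$. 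The $W$-summand contributes exactly $\{v_{c}\}_{c\in C_{R,F_{R}}}$ via Proposition~\ref{genmappingk1} (these are nonzero in $K^{-1}(\hat{\mathbb{C}}\setminus C_{R,\hat{\mathbb{C}}})$), while the $V$-summand is where your $\ker(r_{*})$/$\mathrm{exp}\circ r$ analysis takes place and yields $\{v_{[c]}\}$. Your handling of the second piece is in the right spirit --- using naturality of the exponential map and the fact that integer-valued continuous functions on $J_{R}$ are constant on connected components --- and the clopen-separation worry you raise at the end is real but routine; the missing idea is the $W\oplus V$ split, without which the $v_{c}$ contribution never appears.
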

\begin{proof}
By Lemma \ref{k1open} in the case that $U = F_{R}$ and $D = C_{R,F_{R}}$ there are open subsets $W$ and $V$ of $F_{R}$ such that $W$ is a disjoint union of simply connnected open sets containing $C_{R,F_{R}}$, $V$ is disjoint from $C_{R,F_{R}}$, and the mappings $i_{*}+i_{*}:K^{-1}(W\setminus C_{R,F_{R}})\oplus K^{-1}(V)\mapsto K^{-1}(F_{R}\setminus C_{R,F_{R}})$, $i_{*}:K^{-1}(V)\mapsto K^{-1}(F_{R})$ are isomorphisms. Therefore, the image of $i_{*}:K^{-1}(F_{R}\setminus C_{R,F_{R}})\mapsto K^{-1}(\hat{\mathbb{C}}\setminus C_{R,\hat{\mathbb{C}}})$ is equal to the image of $i_{*}:K^{-1}(W\setminus C_{R,F_{R}})\mapsto K^{-1}(\hat{\mathbb{C}}\setminus C_{R,\hat{\mathbb{C}}})$ plus the image of $i_{*}:K^{-1}(V)\mapsto K^{-1}(\hat{\mathbb{C}}\setminus C_{R,\hat{\mathbb{C}}})$.
\par
By Proposition \ref{genmappingk1}, the image of $i_{*}:K^{-1}(W\setminus C_{R,F_{R}})\mapsto K^{-1}(\hat{\mathbb{C}}\setminus C_{R,\hat{\mathbb{C}}})$ is generated by $\{v_{c}\}_{c\in C_{R,F_{R}}}$.
\par
We have a commutative diagram
$$ 
\begin{tikzcd}
K^{-1}(V) \arrow[r, "i_{*}"] \arrow[d, "i_{*}"]                   & {K^{-1}(\hat{\mathbb{C}}\setminus C_{R,\hat{\mathbb{C}}})} \arrow[d, "j_{*}"] \\
K^{-1}(F_{R}) \arrow[r, "i_{*}"]                                  & {K^{-1}(\hat{\mathbb{C}}\setminus C_{R,J_{R}})}                               \\
{C(J_{R},\mathbb{Z})} \arrow[u, "\text{exp}"'] \arrow[r, "r"] & {C(C_{R,J_{R}},\mathbb{Z}),} \arrow[u, "\text{exp}"']                         
\end{tikzcd}$$
where the map labelled $r$ is the restriction map, and the maps labelled $i_{*}, j_{*}$ are induced from the inclusions.
\par
Since $K^{-1}(\hat{\mathbb{C}}) = 0$, exactness implies $\text{exp}:C(J_{R},\mathbb{Z})\mapsto K^{-1}(F_{R})$ is surjective. Therefore, commutativity of the above diagram implies the image of $i_{*}:K^{-1}(F_{R})\mapsto K^{-1}(\hat{\mathbb{C}}\setminus C_{R,J_{R}})$ is equal to the image of $\text{exp}\circ r$.
\par
Functions in $C(J_{R},\mathbb{Z})$ must be constant on connected subsets of $J_{R}$, so the image of $r$ is generated by the elements $\{1_{[c]}:= \sum_{d\in [c]}1_{d}\}_{[c]\in [C_{R,J_{R}}]}$. By definition, $v_{[c]} = \text{exp}(1_{[c]})$. Therefore, the image of $i_{*}:K^{-1}(F_{R})\mapsto K^{-1}(\hat{\mathbb{C}}\setminus C_{R,J_{R}})$ is generated by $\{v_{[c]}\}_{[c]\in C_{R,J_{R}}}$.
\par
$i_{*}:K^{-1}(V)\mapsto K^{-1}(F_{R})$ is an isomorphism, so commutativity of the diagram implies the image of $j_{*}\circ i_{*}$ is generated by $\{v_{[c]}\}_{[c]\in C_{R,J_{R}}}$. By Proposition \ref{genmappingk1}, the kernel of $j_{*}$ is generated by $\{v_{c}\}_{c\in C_{R,F_{R}}}$, and $j_{*}(v_{[c]}) = v_{[c]}$ for all $[c]$ in $[C_{R,J_{R}}].$ Therefore, the image of $i_{*}:K^{-1}(W\setminus C_{R,F_{R}})\mapsto K^{-1}(\hat{\mathbb{C}}\setminus C_{R,\hat{\mathbb{C}}})$ plus the image of $i_{*}:K^{-1}(V)\mapsto K^{-1}(\hat{\mathbb{C}}\setminus C_{R,\hat{\mathbb{C}}})$ is generated by $\{v_{c}\}_{c\in C_{R,F_{R}}}$ and $\{v_{[c]}\}_{[c]\in [C_{R,J_{R}}]}$.
\end{proof}

We will now compute the kernel and co-kernel of $\iota - \hat{\otimes}_{1}[\mathcal{E}_{R,J_{R}}]$ acting on $K^{-1}$.
\begin{prop}
\label{k1j}
Let $R$ be a rational function of degree $d>1$. Let $c_{R,J}$ be the size of $C_{R,J_{R}}$, $k_{R,J}$ the size of $[C_{R,J_{R}}]$, $h_{R}$ the number of Herman cycles, $f_{R}$ the number of Fatou cycles, and $\omega_{R}$ the greatest common divisor of their cycle lengths.
\par
\begin{itemize}
    \item If $J_{R} = \hat{\mathbb{C}}$, the mapping $\iota - \hat{\otimes}_{1}[\mathcal{E}_{R,J_{R}}]:K^{-1}(J_{R}\setminus C_{R,J_{R}})\mapsto K^{-1}(J_{R})$ has kernel isomorphic to $\mathbb{Z}^{c_{R,J}-1}$ and co-kernel equal to $0$.
    \item If $J_{R}\neq\hat{\mathbb{C}}$, the mapping $\iota - \hat{\otimes}_{1}[\mathcal{E}_{R,J_{R}}]:K^{-1}(J_{R}\setminus C_{R,J_{R}})\mapsto K^{-1}(J_{R})$ has kernel isomorphic to $\mathbb{Z}^{f_{R} + c_{R,J} - k_{R,J}-1}$ and co-kernel isomorphic to $\mathbb{Z}/\omega_{R}\mathbb{Z}\oplus\mathbb{Z}^{f_{R}-1}$.
\end{itemize}
\end{prop}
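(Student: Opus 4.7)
The plan is to apply Corollary~\ref{Maindiagram} to the $J$-exact sequence
$$0 \to (E_{R,F_R}, \alpha_{F_R}) \to (E_{R,\hat{\mathbb{C}}}, \alpha_{\hat{\mathbb{C}}}) \to (E_{R,J_R}, \alpha_{J_R}) \to 0$$
furnished by Corollary~\ref{holmorph}. This yields a commutative three-dimensional diagram whose horizontal faces are the $6$-term exact sequences of $K$-theory associated to the extensions $0 \to C_0(F_R \setminus C_{R,F_R}) \to C_0(\hat{\mathbb{C}} \setminus C_{R,\hat{\mathbb{C}}}) \to C_0(J_R \setminus C_{R,J_R}) \to 0$ and $0 \to C_0(F_R) \to C(\hat{\mathbb{C}}) \to C(J_R) \to 0$, with vertical maps $\eta_{X,i} := \iota - \hat{\otimes}_i [\mathcal{E}_{R,X}]$. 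Two immediate simplifications: Proposition~\ref{Cmap} gives $\eta_{\hat{\mathbb{C}},0} = 0$, and $K^{-1}(\hat{\mathbb{C}}) = 0$ forces $\eta_{\hat{\mathbb{C}},1} = 0$. The case $J_R = \hat{\mathbb{C}}$ follows at once: the codomain $K^{-1}(J_R)$ vanishes, so the cokernel is trivial and the kernel is all of $K^{-1}(\hat{\mathbb{C}} \setminus C_{R,\hat{\mathbb{C}}}) \simeq \mathbb{Z}^{c_{R,J}-1}$.

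Assume henceforth $J_R \neq \hat{\mathbb{C}}$. For $\ker(\eta_{J,1})$, first observe $\eta_{J,1} \circ r_* = r_* \circ \eta_{\hat{\mathbb{C}},1} = 0$, so $\text{im}(r_*) \subseteq \ker(\eta_{J,1})$; meanwhile, if $y \in \ker(\eta_{J,1})$ then commutativity of the $\text{exp}$ squares forces $\text{exp}(y) \in \ker(\eta_{F,0})$. Combining with exactness of the top row yields a short exact sequence
$$0 \to \text{im}(r_*) \to \ker(\eta_{J,1}) \xrightarrow{\text{exp}} \ker(\eta_{F,0}) \cap M \to 0,$$
where $M := \text{im}(\text{exp}) = \ker(i_*: K^0(F_R \setminus C_{R,F_R}) \to K^0(\hat{\mathbb{C}}\setminus C_{R,\hat{\mathbb{C}}}))$. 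Since $i_*(e_x) = \beta_{\hat{\mathbb{C}}\setminus C_{R,\hat{\mathbb{C}}}}$ for every Fatou component (Corollary~\ref{mainbott}), $M$ is the subgroup of coefficient-sum zero; combined with $\ker(\eta_{F,0}) = \bigoplus_{P \in \mathcal{F}_R} \mathbb{Z} e_P$ (Corollary~\ref{Fkernel}), this gives $\ker(\eta_{F,0}) \cap M = \{(a_P)_P : \sum_P a_P|P| = 0\} \simeq \mathbb{Z}^{f_R-1}$. Proposition~\ref{iimage} then identifies $\text{im}(r_*) = K^{-1}(\hat{\mathbb{C}} \setminus C_{R,\hat{\mathbb{C}}})/\text{im}(i_*)$ with $\mathbb{Z}^{c_{R,J} - k_{R,J}}$: the generators $v_c$ with $c\in C_{R,F_R}$ are killed, and the remaining $c_{R,J}$ generators are subject to the relations $v_{[c]} = 0$ indexed by the $k_{R,J}$ equivalence classes of $\sim$. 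Freeness of the quotient splits the sequence, producing $\ker(\eta_{J,1}) \simeq \mathbb{Z}^{f_R + c_{R,J} - k_{R,J} - 1}$.

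For the cokernel, $K^{-1}(\hat{\mathbb{C}}) = 0$ renders $\text{exp}: K^{-1}(J_R) \hookrightarrow K^0(F_R)$ injective with image $L := \ker(i_*) = \{\sum b_x f_x : \sum b_x = 0\}$, and the identity $\text{exp} \circ \eta_{J,1} = \eta_{F,0} \circ \text{exp}$ yields $\text{coker}(\eta_{J,1}) \simeq L/\eta_{F,0}(M)$. Since $\text{im}(\eta_{F,0}) \subseteq L$ automatically, this sits in
$$0 \to \text{im}(\eta_{F,0})/\eta_{F,0}(M) \to L/\eta_{F,0}(M) \to L/\text{im}(\eta_{F,0}) \to 0.$$
The coefficient-sum map $K^0(F_R\setminus C_{R,F_R}) \to \mathbb{Z}$ has kernel $M$ and sends $\ker(\eta_{F,0})$ onto $\omega_R \mathbb{Z}$, so the left term is $\mathbb{Z}/\omega_R\mathbb{Z}$. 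The right term embeds via Corollary~\ref{Fkernel} into $K^0(F_R)/\text{im}(\eta_{F,0}) \simeq \bigoplus_P \mathbb{Z} f_{x_P}$ as the zero-sum subgroup $\mathbb{Z}^{f_R-1}$. The sequence splits, delivering $\text{coker}(\eta_{J,1}) \simeq \mathbb{Z}/\omega_R\mathbb{Z} \oplus \mathbb{Z}^{f_R-1}$.

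The main obstacle will be the careful bookkeeping needed to identify $\text{im}(r_*)$ as $\mathbb{Z}^{c_{R,J}-k_{R,J}}$: Proposition~\ref{iimage} supplies generators of $\text{im}(i_*)$, but translating this into the quotient requires checking that the ambient relation $\sum_c v_c = 0$ becomes redundant once the per-class relations $v_{[c]} = 0$ are imposed, and that those relations are mutually independent. An analogous vigilance is required for the coefficient-sum identification in the cokernel, where isolating the torsion $\mathbb{Z}/\omega_R\mathbb{Z}$ rests on the precise fact that $\ker(\eta_{F,0})$ is spanned by the sums $e_P = \sum_{x\in P} e_x$ of $|P|$ terms.
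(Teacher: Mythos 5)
Your proposal is correct and follows essentially the same route as the paper: the paper invokes the Snake Lemma on the commutative diagram furnished by Corollary~\ref{Maindiagram}, whereas you carry out the equivalent diagram chase by hand, but the key ingredients (Proposition~\ref{Cmap} to kill $\eta_{\hat{\mathbb{C}},0}$, Corollary~\ref{Fkernel} to identify $\ker(\eta_{F,0})$ and $\text{coker}(\eta_{F,0})$, Corollary~\ref{mainbott} for the coefficient-sum description of $i_*$, and Proposition~\ref{iimage} to pin down $\text{im}(r_*)\simeq\mathbb{Z}^{c_{R,J}-k_{R,J}}$) are identical. The short exact sequences you extract coincide with those the Snake Lemma produces, and your ``main obstacle'' remark on the redundancy of $\sum_c v_c=0$ after imposing the class relations $v_{[c]}=0$ is exactly the bookkeeping the paper performs when it presents $r_*(K^{-1}(\hat{\mathbb{C}}\setminus C_{R,\hat{\mathbb{C}}}))$ by generators $\{v_c\}_{c\in C_{R,J_R}}$ and relations $\sum_{d\sim c}v_d=0$.
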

\begin{proof}
First, assume $J_{R} = \hat{\mathbb{C}}$. Then $K^{-1}(J_{R}) = 0$, so $\text{ker}(\iota - \hat{\otimes}_{1}[\mathcal{E}_{R,F_{R}}]) = K^{-1}(\hat{\mathbb{C}}\setminus C_{R,J_{R}})\simeq\mathbb{Z}^{c_{R,J} - 1}$ and $\text{co-ker}(\iota - \hat{\otimes}_{1}[\mathcal{E}_{R,F_{R}}]) = 0$.
\par
Now, assume $J_{R}\neq\hat{\mathbb{C}}$. By  Corollary \ref{Maindiagram}, we have a commutative diagram
$$
\begin{tikzcd}
{K^{-1}(J_{R}\setminus C_{R,J})} \arrow[r, "\text{exp}"] \arrow[d, "{\iota - \hat{\otimes}_{1}[\mathcal{E}_{R,J_{R}}]}"] & {K^{0}(F_{R}\setminus C_{R,F_{R}})} \arrow[d, "{\iota - \hat{\otimes}_{0}[\mathcal{E}_{R,F_{R}}]}"] \\
K^{-1}(J_{R}) \arrow[r, "\text{exp}"]                                                                                & K^{0}(F_{R}).                                                                                   
\end{tikzcd}$$
The bottom horizontal map is injective because $K^{-1}(\hat{\mathbb{C}}) = 0$.
\par
By Corollary \ref{Maindiagram}, we have a commutative diagram

$$
\begin{tikzcd}
{K^{0}(F_{R}\setminus C_{R,F_{R}})} \arrow[r, "i_{*}"] \arrow[d, "{\iota - \hat{\otimes}_{0}[\mathcal{E}_{R,F_{R}}]}"] & {K^{0}(\hat{\mathbb{C}}\setminus C_{R,\hat{\mathbb{C}}})} \arrow[d, "{\iota - \hat{\otimes}_{0}[\mathcal{E}_{R,\hat{\mathbb{C}}}]}"] \\
K^{0}(F_{R}) \arrow[r, "i_{*}"]                                                                                   & K^{0}(\hat{\mathbb{C}}).                                                                                                         
\end{tikzcd}$$
The top horizontal map is surjective by Corollary \ref{mainbott}. Putting these two diagrams together yields a commutative diagram

$$
\begin{tikzcd}
            & {K^{-1}(J_{R}\setminus C_{R,J_{R}})} \arrow[r] \arrow[d, "{\iota - \hat{\otimes}_{1}[\mathcal{E}_{R,J_{R}}]}"] & {K^{0}(F_{R}\setminus C_{R,F_{R}})} \arrow[r, "i_{*}"] \arrow[d, "{\iota - \hat{\otimes}_{0}[\mathcal{E}_{R,F_{R}}]}"] & {K^{0}(\hat{\mathbb{C}}\setminus C_{R,\hat{\mathbb{C}}})} \arrow[d, "{\iota - \hat{\otimes}_{0}[\mathcal{E}_{R,\hat{\mathbb{C}}}]}"] \arrow[r] & 0 \\
0 \arrow[r] & K^{-1}(J_{R}) \arrow[r]                                                                                    & K^{0}(F_{R}) \arrow[r, "i_{*}"]                                                                                    & K^{0}(\hat{\mathbb{C}})                                                                                                                    &  
\end{tikzcd}$$
with exact rows. Applying the Snake Lemma to this diagram yields a boundary map $\partial:\text{ker}(\iota - \hat{\otimes}_{0}[\mathcal{E}_{R,\hat{\mathbb{C}}}])\mapsto\text{co-ker}(\iota - \hat{\otimes}_{1}[\mathcal{E}_{R,J_{R}}])$ and an exact sequence

$$
\begin{tikzcd}
{\text{ker}(\iota - \hat{\otimes}_{1}[\mathcal{E}_{R,J_{R}}])} \arrow[r, "\delta"] & {\text{ker}(\iota - \hat{\otimes}_{0}[\mathcal{E}_{R,F_{R}}])} \arrow[r, "i_{*}"]            & {\text{ker}(\iota - \hat{\otimes}_{0}[\mathcal{E}_{R,\hat{\mathbb{C}}}])} \arrow[d, "\partial"]   \\
{\text{co-ker}(\iota - \hat{\otimes}_{0}[\mathcal{E}_{R,\hat{\mathbb{C}}}])}           & {\text{co-ker}(\iota - \hat{\otimes}_{0}[\mathcal{E}_{R,F_{R}}])} \arrow[l, "\tilde{i_{*}}"] & {\text{co-ker}(\iota - \hat{\otimes}_{1}[\mathcal{E}_{R,J_{R}}]),} \arrow[l, "\tilde{\delta}"]
\end{tikzcd}$$
where $\tilde{\delta}$ and $\tilde{i_{*}}$ are the descent maps of $\delta$ and $i_{*}$, respectively.
\par
 By Proposition \ref{Cmap}, $\text{co-ker}(\iota - \hat{\otimes}_{0}[\mathcal{E}_{R,\hat{\mathbb{C}}}]) = K^{0}(\hat{\mathbb{C}})$ and $\text{ker}(\iota - \hat{\otimes}_{0}[\mathcal{E}_{R,\hat{\mathbb{C}}}]) = K^{0}(\hat{\mathbb{C}}\setminus C_{R,\hat{\mathbb{C}}})$. By Corollary \ref{Fkernel}, $\text{co-ker}(\iota - \hat{\otimes}_{0}[\mathcal{E}_{R,F_{R}}]) = \oplus_{P\in\mathcal{F}_{R}}\mathbb{Z}[f_{x_{P}}]$ and  $\text{ker}(\iota - \hat{\otimes}_{0}[\mathcal{E}_{R,F_{R}}]) = \oplus_{P\in\mathcal{F}_{R}}\mathbb{Z}[e_{P}]$, respectively. By Corollary \ref{mainbott}, $\tilde{i}_{*}(f_{x_{P}}) = \beta_{\hat{\mathbb{C}}}$ and $i_{*}(e_{P}) = |P|\beta_{\hat{\mathbb{C}}\setminus C_{R,\hat{\mathbb{C}}}}$ for every $P$ in $\mathcal{F}_{R}$, so the kernel of $i_{*}:\text{ker}(\iota - \hat{\otimes}_{0}[\mathcal{E}_{R,F_{R}}])\mapsto \text{ker}(\iota - \hat{\otimes}_{0}[\mathcal{E}_{R,\hat{\mathbb{C}}}])$ is isomorphic to $\mathbb{Z}^{f_{R}-1}$, with an image equal to $\sum_{P\in\mathcal{F}_{R}}|P|\mathbb{Z} = \omega_{R}\mathbb{Z}$. Therefore, the co-kernel of $i_{*}$ is isomorphic to $\mathbb{Z}/\omega_{R}\mathbb{Z}$. The kernel of $\tilde{i_{*}}:\text{co-ker}(\iota - \hat{\otimes}_{0}[\mathcal{E}_{R,F_{R}}])\mapsto \text{co-ker}(\iota - \hat{\otimes}_{0}[\mathcal{E}_{R,\hat{\mathbb{C}}}])$ is also isomorphic to $\mathbb{Z}^{f_{R}-1}$ by the same reasoning.
\par
Exactness of the above diagram and the above computations imply we have exact sequences

$$
\begin{tikzcd}
0 \arrow[r] & G \arrow[r]            & {\text{ker}(\iota - \hat{\otimes}_{1}[\mathcal{E}_{R,J_{R}}])} \arrow[r, "\delta"] & \mathbb{Z}^{f_{R}-1} \arrow[r] & 0 \\
0 \arrow[r] & \mathbb{Z}/\omega_{R}\mathbb{Z} \arrow[r] & {\text{co-ker}(\iota - \hat{\otimes}_{1}[\mathcal{E}_{R,J_{R}}])} \arrow[r, "\tilde{\delta}"]   & \mathbb{Z}^{f_{R}-1} \arrow[r]   & 0,
\end{tikzcd}$$
where $G$ is the kernel of $\delta:\text{ker}(\iota-\hat{\otimes}_{1}[\mathcal{E}_{R,J_{R}}])\mapsto \text{ker}(\iota-\hat{\otimes}_{0}[\mathcal{E}_{R,F_{R}}]).$ Both sequences split, so $\text{ker}(\iota-\hat{\otimes}_{1}[\mathcal{E}_{R,J_{R}}])\simeq\text{ker}(\text{exp})\oplus\mathbb{Z}^{f_{R}-1}$ and $\text{co-ker}(\iota - \hat{\otimes}_{1}[\mathcal{E}_{R,J_{R}}])\simeq\mathbb{Z}/\omega_{R}\mathbb{Z}\oplus\mathbb{Z}^{f_{R}-1}.$
\par
We now compute $G$. By
Corollary \ref{Maindiagram}, the diagram

$$
\begin{tikzcd}
{K^{-1}(\hat{\mathbb{C}}\setminus C_{R,\hat{\mathbb{C}}})} \arrow[r, "r_{*}"] \arrow[d, "{\iota-\hat{\otimes}_{1}[\mathcal{E}_{R,\hat{\mathbb{C}}}]}"] & {K^{-1}(J_{R}\setminus C_{R,J_{R}})} \arrow[d, "{\iota-\hat{\otimes}_{1}[\mathcal{E}_{R,J_{R}}]}"] \\
K^{-1}(\hat{\mathbb{C}}) \arrow[r, "r_{*}"]                                                                                                        & K^{-1}(J_{R})                                                                                 
\end{tikzcd}$$
commutes. Therefore, $r_{*}(K^{-1}(\hat{\mathbb{C}}\setminus C_{R,\hat{\mathbb{C}}})) =  r_{*}(\text{ker}(\iota - \hat{\otimes}_{1}[\mathcal{E}_{R,\hat{\mathbb{C}}}]))\subseteq \text{ker}(\iota - \hat{\otimes}_{1}[\mathcal{E}_{R,J_{R}}])$. By exactness, $G$ is equal to $r_{*}(K^{-1}(\hat{\mathbb{C}}\setminus C_{R,\hat{\mathbb{C}}}))\cap\text{ker}(\iota - \hat{\otimes}_{1}[\mathcal{E}_{R,J_{R}}]) = r_{*}(K^{-1}(\hat{\mathbb{C}}\setminus C_{R,\hat{\mathbb{C}}}))$.
\par
Exactness implies the kernel of $r_{*}$ is equal the image of $i_{*}:K^{-1}(F_{R}\setminus C_{R,F_{R}})\mapsto K^{-1}(\hat{\mathbb{C}}\setminus C_{R,\hat{\mathbb{C}}})$, which, by Proposition \ref{iimage}, is generated by $\{v_{c}\}_{c\in C_{R,F_{R}}}$ and $\{v_{[c]}\}_{[c]\in[C_{R,J_{R}}]}$. Therefore, $r_{*}(K^{-1}(\hat{\mathbb{C}}))$ is isomorphic to the group generated by $\{v_{c}\}_{c\in C_{R,J_{R}}}$ satisfying the relations $\sum_{d\sim c}v_{d} = 0$, for all $c$ in $C_{R,J_{R}}$. Hence, $G\simeq\mathbb{Z}^{c_{R,J} - k_{R,J}}.$ Therefore, $\text{ker}(\iota - \hat{\otimes}_{1}[\mathcal{E}_{R,J_{R}}])\simeq \mathbb{Z}^{c_{R,J} +f_{R} - k_{R,J} - 1}.$

\end{proof}

Recall from Section \ref{ratmapf} the notion of an orientation for a Herman cycle $Q$. We now compute, for any $x$ in $Q$, $i_{*}:K^{-1}(U_{x})\mapsto K^{-1}(\hat{\mathbb{C}}\setminus C_{R,\hat{\mathbb{C}}})$ using the descriptions of the domains and co-domains provided by their orientations.
\begin{lemma}
\label{index}
Let $R$ be a rational function with an oriented Herman cycle $Q$, and let $x$ be in $Q$. Let $D$ be a finite set not intersecting $U_{x}$, and let $D_{x}^{+}$ be the points in $D$ contained in the connected component of $\hat{\mathbb{C}}\setminus U_{x}$ containing $\partial^{+}U_{x}$. Then, $i_{*}:K^{-1}(U_{x})\mapsto K^{-1}(\hat{\mathbb{C}}\setminus D)$ sends $u_{x}$ to $\sum_{d\in D_{x}^{+}}v_{d}$.
\end{lemma}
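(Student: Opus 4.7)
The plan is to factor $i_*$ through an intermediate open set and apply naturality of the exponential map twice. Let $A^{\pm}$ denote the connected component of $\hat{\mathbb{C}} \setminus U_x$ containing $\partial^{\pm} U_x$, so $\hat{\mathbb{C}} = U_x \sqcup A^+ \sqcup A^-$, and set $W := \hat{\mathbb{C}} \setminus (A^- \cup D_x^+)$, an open set with $U_x \subset W \subset \hat{\mathbb{C}} \setminus D$. The inclusion-induced map $i_*$ then factors as $K^{-1}(U_x) \to K^{-1}(W) \to K^{-1}(\hat{\mathbb{C}} \setminus D)$, both maps coming from extensions by zero.

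First, I would re-identify $u_x$ via the short exact sequence
$$0 \to C_0(U_x) \to C_0(\hat{\mathbb{C}} \setminus A^-) \to C(A^+) \to 0,$$
showing that $u_x = \mathrm{exp}([1_{A^+}])$. Note that $A^+$ is a continuum in $\hat{\mathbb{C}}$ with connected complement $U_x \cup A^-$, so by the same argument as in the proof of Proposition~\ref{Triso}, $K^0(A^+) = \mathbb{Z}[1_{A^+}]$ and $K^{-1}(A^+) = 0$. The identification follows by applying naturality of $\mathrm{exp}$ to morphisms of short exact sequences connecting this SES, an intermediate SES $0 \to C_0(U_x) \to C_0(\overline{U_x} \setminus \partial^- U_x) \to C(\partial^+ U_x) \to 0$, and the defining SES $0 \to C_0(U_x) \to C(\overline{U_x}) \to C(\partial U_x) \to 0$ for $u_x$, in each case with the connecting maps being restrictions and extensions-by-zero; one uses that $[1_{A^+}]$ restricts to $[1_{\partial^+ U_x}]$ and the latter includes into $[1_{\partial^+ U_x}] \in K^0(\partial U_x)$.

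Second, I would apply naturality of $\mathrm{exp}$ to the morphism of short exact sequences
$$\begin{tikzcd}[column sep=small]
0 \arrow[r] & C_0(U_x) \arrow[r] \arrow[d, hookrightarrow] & C_0(\hat{\mathbb{C}} \setminus A^-) \arrow[r] \arrow[d, "="] & C(A^+) \arrow[r] \arrow[d, "r"] & 0 \\
0 \arrow[r] & C_0(W) \arrow[r] & C_0(\hat{\mathbb{C}} \setminus A^-) \arrow[r] & C(D_x^+) \arrow[r] & 0
\end{tikzcd}$$
where the left vertical is extension-by-zero, the middle is the identity, and $r$ is restriction to $D_x^+$. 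Since $r([1_{A^+}]) = \sum_{d \in D_x^+} [1_d]$, naturality gives that the image of $u_x$ in $K^{-1}(W)$ equals $\sum_{d \in D_x^+} v'_d$, where $v'_d := \mathrm{exp}([1_d])$ is the analogous generator of the bottom SES. A further application of naturality to the morphism from this bottom SES to $0 \to C_0(\hat{\mathbb{C}} \setminus D) \to C_0(\hat{\mathbb{C}}) \to C(D) \to 0$ (all vertical maps being extensions-by-zero, well-defined because functions in $C_0(\hat{\mathbb{C}} \setminus A^-)$ vanish near $A^-$ and hence near $D_x^-$) sends $v'_d \mapsto v_d$. Composing these two pushforwards yields $i_*(u_x) = \sum_{d \in D_x^+} v_d$.

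The main obstacle is the re-identification of $u_x$ in the first step: the defining SES for $u_x$ uses the compact space $\overline{U_x}$, whereas the productive SES involves the non-compact open set $\hat{\mathbb{C}} \setminus A^-$, so bridging the two requires the intermediate SES on $\overline{U_x} \setminus \partial^- U_x$ together with a careful verification that the relevant restriction maps are well-defined (using that $C_0$ functions on $\hat{\mathbb{C}} \setminus A^-$ automatically die at $\partial^- U_x \subset A^-$). Once this identification is secured, the remaining two naturality arguments are routine commutativity checks for morphisms of short exact sequences whose entries are all identities, restrictions, or extensions by zero.
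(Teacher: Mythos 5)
Your proof is correct and takes essentially the same route as the paper's: both work inside the simply connected open set $U_x^+ = \hat{\mathbb{C}}\setminus A^-$, re-identify $u_x$ as $\mathrm{exp}([1_{A^+}])$ for the extension $0 \to C_0(U_x) \to C_0(U_x^+) \to C(A^+) \to 0$, and propagate the restriction $[1_{A^+}]|_{D_x^+} = \sum_{d\in D_x^+}[1_d]$ through naturality of the exponential map. The only presentational difference is that the paper packages the intermediate naturality steps into a single three-row commutative diagram of extensions and defers the final inclusion step to Proposition~\ref{genmappingk1}, whereas you spell out the naturality applications one at a time and are a bit more explicit about the re-identification of $u_x$ from the defining extension on $\overline{U_x}$.
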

\begin{proof}
Let $U_{x}^{+}$ be the union of $U_{x}$ with the connected component of $\hat{\mathbb{C}}\setminus U_{x}$ containing $\partial^{+}U_{x}$. Since the complement of $U_{x}^{+}$ is connected and closed, $U_{x}^{+}$ is a simply connected open set such that $U_{x}^{+}\cap D = D_{x}^{+}$.
\par
The diagram
$$ 
\begin{tikzcd}
0 \arrow[r] & C_{0}(U_{x}^{+}\setminus D_{x}^{+}) \arrow[r]               & C_{0}(U_{x}^{+}) \arrow[r]                                          & C(D_{x}^{+}) \arrow[r]                                    & 0 \\
0 \arrow[r] & C_{0}(U_{x}) \arrow[r] \arrow[u] \arrow[d, no head, equal] & C_{0}(U_{x}^{+}) \arrow[r] \arrow[u, no head, equal] \arrow[d] & C(U_{x}^{+}\setminus U_{x}) \arrow[r] \arrow[u] \arrow[d] & 0 \\
0 \arrow[r] & C_{0}(U_{x}) \arrow[r]                                          & C_{0}(U_{x}\bigsqcup \partial^{+}U_{x}) \arrow[r]                   & C(\partial^{+}U_{x}) \arrow[r]                            & 0
\end{tikzcd}$$
commutes, and has exact rows. Therefore, by naturality of $\text{exp}$, we have a commutative diagram

$$
\begin{tikzcd}
K^{0}(\partial^{+}U_{x}) \arrow[d, "\text{exp}"] & K^{0}(U_{x}^{+}\setminus U_{x}) \arrow[l] \arrow[d, "\text{exp}"] \arrow[r] & K^{0}(D_{x}^{+}) \arrow[d, "\text{exp}"] \\
K^{-1}(U_{x})                                       & K^{-1}(U_{x}) \arrow[l, no head, equal] \arrow[r, "i^{+}_{*}"]          & K^{-1}(U_{x}^{+}\setminus D_{x}^{+}),   
\end{tikzcd}$$
where $i^{+}:C_{0}(U_{x})\mapsto C_{0}(U_{x}^{+}\setminus D^{+}_{x})$ is the inclusion. Hence, $i_{*}^{+}( u_{x}) = i_{*}^{+}(\text{exp}([1_{U_{x}^{+}\setminus U_{x}}])) = \sum_{d\in D_{x}^{+}}\text{exp}([1_{d}]) = \sum_{d\in D_{x}^{+}}v_{d}$.
\par
Let $j_{*}:K^{-1}(U_{x}^{+}\setminus D_{x}^{+})\mapsto K^{-1}(\hat{\mathbb{C}}\setminus D)$ be the map induced from the inclusion. From the above calculation and Proposition \ref{genmappingk1}, we have $i_{*}(u_{x}) = j_{*}(i_{*}^{+}(u_{x})) = \sum_{d\in D_{x}^{+}}v_{d}$.
\end{proof}

For $Q$ in $\mathcal{H}_{R}$ and $x$ in $Q$, let $(\hat{\mathbb{C}}\setminus U_{x})^{+}$, 
$(\hat{\mathbb{C}}\setminus U_{x})^{-}$ denote the connected component of $\hat{\mathbb{C}}\setminus U_{x}$ containing
$\partial^{+}U_{x}$, $\partial^{-}U_{x}$, respectively. Denote $J_{x}^{\pm}:= (\hat{\mathbb{C}}\setminus U_{x})^{\pm}\cap J_{R}$ 
and $\overline{U_{x}}^{\pm}:= U_{x}\cup \partial^{\pm}U_{x}$.
\begin{lemma}
\label{exphr}
For $Q$ in $\mathcal{H}_{R}$ and $x$ in $Q$, the homomorphism $\text{exp}:C(J_{R},\mathbb{Z})\mapsto K^{-1}(F_{R})$ satisfies $\text{exp}(1_{J_{x}^{+}}) = u_{x}$.
\end{lemma}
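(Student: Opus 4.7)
The approach is to exhibit a single self-adjoint lift $g:\hat{\mathbb{C}}\mapsto [0,1]$ whose associated unitary $e^{2\pi i g}$ simultaneously witnesses both classes in the desired equality: its class in $K^{-1}(F_R)$ will be $\text{exp}(1_{J_x^+})$ by construction, while its restriction to the clopen piece $U_x$ of $F_R$ will be, by the very definition of $u_x$, exactly $u_x$. Since both descriptions refer to the same unitary, equality of the two classes follows.

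Concretely, the two components $(\hat{\mathbb{C}}\setminus U_x)^+$ and $(\hat{\mathbb{C}}\setminus U_x)^-$ are disjoint closed subsets of the compact Hausdorff space $\hat{\mathbb{C}}$, so by Urysohn's lemma there exists a continuous $g:\hat{\mathbb{C}}\mapsto [0,1]$ with $g\equiv 1$ on $(\hat{\mathbb{C}}\setminus U_x)^+$ and $g\equiv 0$ on $(\hat{\mathbb{C}}\setminus U_x)^-$. Because $J_R\subseteq \hat{\mathbb{C}}\setminus U_x$ and $J_R\cap (\hat{\mathbb{C}}\setminus U_x)^\pm = J_x^\pm$, the restriction $g|_{J_R}$ is precisely the characteristic function $1_{J_x^+}$; in particular $J_x^+$ is clopen in $J_R$ and $1_{J_x^+}$ is a genuine projection in $C(J_R)$. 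Thus $g$ is a self-adjoint lift of $1_{J_x^+}$ for the extension $0\to C_0(F_R)\to C(\hat{\mathbb{C}})\to C(J_R)\to 0$, and the standard formula for the exponential map yields $\text{exp}(1_{J_x^+}) = [e^{2\pi i g}|_{F_R}]$ in $K^{-1}(F_R)$.

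Because $g$ is integer valued off $U_x$, the unitary $e^{2\pi i g}|_{F_R}$ is identically $1$ away from $U_x$, so $e^{2\pi i g}|_{F_R}-1\in C_0(U_x)\subseteq C_0(F_R)$; hence $[e^{2\pi i g}|_{F_R}]$ equals the image of $[e^{2\pi i g}|_{U_x}]\in K^{-1}(U_x)$ under the inclusion $C_0(U_x)\hookrightarrow C_0(F_R)$. On the other hand, $g|_{\overline{U_x}}\in C(\overline{U_x})$ is a self-adjoint lift of $1_{\partial^+ U_x}\in C(\partial U_x)$, since $g=1$ on $\partial^+ U_x$ and $g=0$ on $\partial^- U_x$. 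By the very construction of $u_x$ from the extension $0\to C_0(U_x)\to C(\overline{U_x})\to C(\partial U_x)\to 0$, this produces $u_x = \text{exp}([1_{\partial^+ U_x}]) = [e^{2\pi i g}|_{U_x}]$. Combining the two identifications gives $\text{exp}(1_{J_x^+}) = u_x$ in $K^{-1}(F_R)$, as claimed. I do not anticipate a genuine obstacle here; the whole argument reduces to applying the explicit $e^{2\pi i(\cdot)}$ formula to one Urysohn function chosen to separate the two components of $\hat{\mathbb{C}}\setminus U_x$.
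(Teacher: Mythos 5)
Your argument is correct. The underlying idea is the same as the paper's — the key observation is that $1_{J_x^+}$ and $1_{\partial^+ U_x}$ share a common lift to the sphere — but you implement it quite differently. The paper works abstractly: it sets up a commuting ladder of extensions with the quotient $C(J_R)\to C(\partial U_x)$ on the right and invokes naturality of $\exp$ to obtain $r_*\exp(1_{J_x^+}) = \exp(1_{\partial^+ U_x}) = u_x$; it then needs a \emph{second} application of naturality, for each Fatou component $U\neq U_x$, to argue that $q_U(\exp(1_{J_x^+}))=0$, and only after that case analysis is $\exp(1_{J_x^+})$ identified with $u_x$ inside the full direct sum $K^{-1}(F_R\setminus C_{R,F_R})$. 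You instead compute $\exp$ from the standard unitary formula applied to a single Urysohn lift $g$ separating the two components of $\hat{\mathbb{C}}\setminus U_x$. The payoff is that $e^{2\pi i g}-1$ is visibly supported in $C_0(U_x)$, so the class $[e^{2\pi i g}]$ lands in the image of $K^{-1}(U_x)\hookrightarrow K^{-1}(F_R)$ automatically, and the identification with $u_x$ drops out at once; the ``other components vanish'' step disappears. One more small difference worth noting: the lemma as stated has codomain $K^{-1}(F_R)$, which is literally the extension you use; the paper's proof actually works with the extension ending in $C_0(F_R\setminus C_{R,F_R})$ (and its second row, as printed with $\overline{U_x}^+$ and $\partial^+ U_x$, does not give well-defined restriction $*$-homomorphisms in the middle column — it should read $C(\overline{U_x})\to C(\partial U_x)$, the extension that actually defined $u_x$). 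Your choice of extensions sidesteps that wrinkle entirely, so your version of the proof is not only valid but cleaner. The only thing worth making explicit, if this were to be polished for publication, is the sign convention used for the exponential map — you use the same convention and the same lift $g$ in both computations, so the signs cancel, but a reader will appreciate the remark.
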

\begin{proof}
The diagram
$$
\begin{tikzcd}
0 \arrow[r] & {C_{0}(F_{R}\setminus C_{R,F_{R}})} \arrow[r] \arrow[d, "r"] & {C_{0}(\hat{\mathbb{C}}\setminus C_{R,F_{R}})} \arrow[r] \arrow[d, "r"] & C(J_{R}) \arrow[r] \arrow[d, "r"] & 0 \\
0 \arrow[r] & C_{0}(U_{x}) \arrow[r]                                       & C_{0}(\overline{U_{x}}^{+}) \arrow[r]                                   & C(\partial^{+}U_{x}) \arrow[r]    & 0
\end{tikzcd}$$
commutes and has exact rows, where the vertical maps are the restrictions. Naturality of $\text{exp}$ then implies
the diagram
$$
\begin{tikzcd}
{C(J_{R},\mathbb{Z})} \arrow[r, "\text{exp}"] \arrow[d, "r"] & {K^{-1}(F_{R}\setminus C_{R,F_{R}})} \arrow[d, "r_{*}"] \\
{C(\partial^{+}U_{x},\mathbb{Z})} \arrow[r, "\text{exp}"]    & K^{-1}(U_{x})                                          
\end{tikzcd}$$
commutes. We have, by definition, $\text{exp}(1_{\partial^{\pm}U_{x}}) = \pm u_{x}$, so commutativity
of the diagram implies $\text{exp}(1_{J_{x}^{\pm}})$ satisfies $r_{*}(\text{exp}(1_{J_{x}^{\pm}})) = \pm u_{x}$.
Since $U_{x}$ is a connected component of $F_{R}\setminus C_{R,F_{R}}$, $r_{*}$ is the projection
onto the direct summand $K^{-1}(U_{x})$ of $K^{-1}(F_{R}\setminus C_{R,F_{R}})$; let's denote this
projection $q_{U_{x}}$.
\par
Now, let $U\neq U_{x}$ be a Fatou component, and $q_{U}$ denote the projection of
$K^{-1}(F_{R}\setminus C_{R,F_{R}})$ onto the direct summand $K^{-1}(U\setminus C_{R,F_{R}})$. To prove the lemma, it remains
to show $q_{U}(\text{exp}(u_{x})) = 0$. Denote $J_{U} = J_{R}\cup\{U\setminus C_{R,F_{R}}\}$ The diagram
$$
\begin{tikzcd}
0 \arrow[r] & C_{0}(F_{R}\setminus C_{R,F_{R}}) \arrow[r] \arrow[d, "r"]         & C_{0}(\hat{\mathbb{C}}\setminus C_{R,F_{R}}) \arrow[r] \arrow[d, "r"] & C(J_{R}) \arrow[r] \arrow[d, equal] & 0 \\
0 \arrow[r] & C_{0}(U\setminus C_{R,F_{R}}) \arrow[r] \arrow[d, equal] & C_{0}(J_{U}) \arrow[r] \arrow[d, "r"]            & C(J_{R}) \arrow[r] \arrow[d, "r"]                 & 0 \\
0 \arrow[r] & C_{0}(U\setminus C_{R,F_{R}}) \arrow[r]                                & C_{0}(\overline{U}\setminus C_{R,F_{R}}) \arrow[r]                    & C(\partial U) \arrow[r]                           & 0
\end{tikzcd}$$
commutes, and has exact rows, where the vertical maps labelled $r$ are the restrictions. If we denote
the exponential map of the middle and bottom row by $\text{exp}_{U}$ and $\text{exp}_{\partial U}$, respectively, then
naturality of $\text{exp}$ implies $q_{U}\circ \text{exp} = \text{exp}_{U} = \text{exp}_{\partial U}\circ r$.
\par
Either
$\overline{U}\subseteq (\hat{\mathbb{C}}\setminus U_{x})^{+}$ or 
$\overline{U}\subseteq (\hat{\mathbb{C}}\setminus U_{x})^{-}$.
In the first case, $\partial U\cap J_{x}^{-} = \emptyset$. Therefore, 
$r(1_{J^{-}_{x}}) = 0$ and hence
$-q_{U}(\text{exp}(1_{J^{+}_{x}})) = 
q_{U}(\text{exp}(1_{J^{-}_{x}})) = 
\text{exp}_{\partial U}(r(1_{J^{-}_{x}})) = 0$.
In the second case, $r(1_{J^{+}_{x}}) = 0$, so that $q_{U}(\text{exp}(1_{J^{+}_{x}})) = 
\text{exp}_{\partial U}(r(1_{J^{+}_{x}})) = 0$.
\end{proof}
If $Q$ is a oriented Herman cycle and $d$ is a point not in $U_{Q}$, then we let $H_{Q}(d)$ be the number of $x$ in $Q$ for which $d$ is in the connected component of $\hat{\mathbb{C}}\setminus U_{x}$ containing $\partial^{+}U_{x}$. Lemma \ref{index} implies that if $D$ is a finite set not intersecting $U_{Q}$, then $i_{*}:K^{-1}(U_{Q})\mapsto K^{-1}(\hat{\mathbb{C}}\setminus D)$ sends $u_{Q}$ to $\sum_{d\in D}H_{Q}(d)v_{d}.$ The functions $\{H_{Q}\}_{Q\in\mathcal{H}_{R}}$ will also play a role in describing $\text{ker}(\text{id} - \Phi)$, as the following Corollary to the above lemma may suggest. 
\begin{cor}
\label{Hconst}
$H_{Q}$ is locally constant on $\hat{\mathbb{C}}\setminus U_{Q}$, and $H_{Q} - \Phi(H_{Q})$ is constant on $J_{R}$.
\end{cor}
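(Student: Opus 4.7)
The plan is to split the proof into two pieces. Local constancy of $H_{Q}$ on $\hat{\mathbb{C}}\setminus U_{Q}$ is a purely topological fact; constancy of $H_{Q} - \Phi(H_{Q})$ on $J_{R}$ will follow from a $K$-theoretic identity derived from Proposition \ref{ncintertwine}, Lemma \ref{exphr}, and the orientation machinery developed in Section \ref{ratmapf}.

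For the first assertion, I will observe that for each $x\in Q$, the Herman property makes $U_{x}$ conformally an annulus, so by \cite[Lemma~15.7]{Milnor:Dynamics_in_one_complex_variable} the compact set $\hat{\mathbb{C}}\setminus U_{x}$ has exactly two connected components. In a compact Hausdorff space with finitely many components each component is clopen, so the indicator function of $(\hat{\mathbb{C}}\setminus U_{x})^{+}$ is continuous on $\hat{\mathbb{C}}\setminus U_{x}$. The sum of these finitely many integer-valued indicators, restricted to $\hat{\mathbb{C}}\setminus U_{Q}$, is then continuous with values in $\mathbb{Z}$, hence locally constant.

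For the second assertion I will prove the stronger statement $\operatorname{exp}(H_{Q} - \Phi(H_{Q})) = 0$ in $K^{-1}(F_{R})$, where $\operatorname{exp}\colon C(J_{R},\mathbb{Z}) \to K^{-1}(F_{R})$ is the connecting map of the extension $0\to C_{0}(F_{R})\to C_{0}(\hat{\mathbb{C}})\to C(J_{R})\to 0$ (valid since a Herman cycle forces $F_{R}\neq \emptyset$, hence $J_{R}\neq\hat{\mathbb{C}}$). Since $H_{Q}|_{J_{R}} = \sum_{x\in Q}\mathbf{1}_{J_{x}^{+}}$, Lemma \ref{exphr} gives $\operatorname{exp}(H_{Q}) = u_{Q}$. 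Proposition \ref{ncintertwine} then yields $\operatorname{exp}(\Phi(H_{Q})) = u_{Q}\hat{\otimes}_{1}[\mathcal{E}_{R,F_{R}}]$. To see $u_{Q}\hat{\otimes}_{1}[\mathcal{E}_{R,F_{R}}] = u_{Q}$, I will compute summand by summand: Proposition \ref{classnaturality} reduces the action on $u_{x}\in K^{-1}(U_{x})$ to the correspondence $[\mathcal{E}_{R,U_{x}}]$ of the \emph{homeomorphism} $R\colon U_{x}\to U_{\sigma(x)}$, which is the class of the $\ast$-isomorphism $(R|_{U_{x}}^{-1})^{\ast}$, and the defining propagation of orientation along $Q$ sends $u_{x}$ to $u_{\sigma(x)}$. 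Summing over the bijection $\sigma|_{Q}$ recovers $u_{Q}$, as desired.

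To finish I will identify the kernel of $\operatorname{exp}\colon C(J_{R},\mathbb{Z})\to K^{-1}(F_{R})$. Since $K^{-1}(\hat{\mathbb{C}}) = 0$, exactness gives $\ker(\operatorname{exp}) = \operatorname{im}(r_{\ast}\colon K^{0}(\hat{\mathbb{C}})\to K^{0}(J_{R}))$. By Corollary \ref{mainbott}, $K^{0}(\hat{\mathbb{C}}) = \mathbb{Z}[1_{\hat{\mathbb{C}}}]\oplus\mathbb{Z}\beta_{\hat{\mathbb{C}}}$; the first generator restricts under Proposition \ref{Triso} to $1_{J_{R}}$, while $\beta_{\hat{\mathbb{C}}}$ may be realized as the image of $\beta_{U}$ for a simply connected open $U\subseteq F_{R}$ and therefore restricts to $0$. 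Hence $\ker(\operatorname{exp}) = \mathbb{Z}\cdot 1_{J_{R}}$, forcing $H_{Q} - \Phi(H_{Q})$ to be constant on $J_{R}$. The main obstacle is the chain of identifications in the third paragraph: matching the topologically-defined function $H_{Q}$ with the $K$-theoretic class $u_{Q}$ through Lemma \ref{exphr}, and verifying that the orientation propagation on Herman cycles exactly cancels the action of $[\mathcal{E}_{R,F_{R}}]$; every other step is either a topological observation or a direct invocation of the tools already in place.
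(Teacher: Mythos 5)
Your proof is correct and follows essentially the same route as the paper: express $H_{Q}$ as a sum of indicators of clopen sets for local constancy, then show $\exp(H_{Q}-\Phi(H_{Q}))=0$ using Lemma~\ref{exphr}, Proposition~\ref{ncintertwine}, and the fact that $u_{Q}\hat{\otimes}_{1}[\mathcal{E}_{R,F_{R}}]=u_{Q}$, and finally identify $\ker(\exp)=\mathbb{Z}\cdot 1_{J_{R}}$. The only difference is that where the paper simply cites Proposition~\ref{Fkernel1} for the identity $u_{Q}\hat{\otimes}_{1}[\mathcal{E}_{R,F_{R}}]=u_{Q}$ and asserts $\ker(\exp)=\mathbb{Z}\cdot1_{J_{R}}$ without comment, you re-derive both from scratch (the first by localizing to the Herman components via Proposition~\ref{classnaturality} and the orientation propagation, the second from $K^{-1}(\hat{\mathbb{C}})=0$ and the vanishing of $r_{*}\beta_{\hat{\mathbb{C}}}$); both are correct and are in fact the arguments used inside the cited results. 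One small notational care: Proposition~\ref{ncintertwine} intertwines two different connecting maps (to $K^{-1}(F_{R}\setminus C_{R,F_{R}})$ on top and to $K^{-1}(F_{R})$ on the bottom), so the step ``$\exp(\Phi(H_{Q}))=u_{Q}\hat{\otimes}_{1}[\mathcal{E}_{R,F_{R}}]$'' requires $\exp'(H_{Q})=u_{Q}$ for the $K^{-1}(F_{R}\setminus C_{R,F_{R}})$-valued $\exp'$, which is what the proof of Lemma~\ref{exphr} actually establishes (and what the paper also uses), not just the $K^{-1}(F_{R})$-valued statement literally displayed there.
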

\begin{proof}
$H_{Q} = \sum_{x\in Q}1_{(\hat{\mathbb{C}}\setminus U_{x})^{+}}$ and is therefore locally constant. We have $H_{Q}|_{J_{R}} = \sum_{x\in Q}1_{J^{+}_{x}}$, so, by Lemma \ref{exphr}, we have $\text{exp}(H_{Q}) = u_{Q}$, where $\text{exp} = \text{exp}:C(J_{R},\mathbb{Z})\mapsto K^{-1}(F_{R}\setminus C_{R,F_{R}})$.
\par
Therefore, Proposition \ref{ncintertwine} and \ref{Fkernel1} (respectively) imply $\text{exp}(H_{Q} - \Phi(H_{Q})) = \iota(u_{Q})-u_{Q}\hat{\otimes}_{1}[\mathcal{E}_{R,F_{R}}]$ and $\iota(u_{Q}) - u_{Q}\hat{\otimes}_{1}[\mathcal{E}_{R,F_{R}}] = 0$. Since the kernel of $\text{exp}:C(J_{R},\mathbb{Z})\mapsto K^{-1}(F_{R})$ equals $\mathbb{Z}[1_{J_{R}}]$, it follows that $H_{Q} - \Phi(H_{Q})$ must be constant on $J_{R}$.
\end{proof}
Let $\alpha_{R}:\mathbb{Z}[\mathcal{H}_{Q}]\oplus\mathbb{Z}\mapsto \mathbb{Z}$ be the homomorphism sending $1$ to $1-d$ and $Q$ to $H_{Q} - \Phi(H_{Q})$, for all
$Q$ in $\mathcal{H_{Q}}$. Denote the greatest common divisor of $\{H_{Q} - \Phi(H_{Q})\}_{Q\in\mathcal{H}_{R}}\cup\{1-d\}$ by $a_{R}$. We now compute the kernel and co-kernel of $\text{id}-\Phi:C(J_{R},\mathbb{Z})\mapsto C(J_{R},\mathbb{Z})$
\par
\begin{prop}
\label{k0nc}
Let $R$ be a rational function of degree $d>1$. Then,
\begin{enumerate}
    \item The homomorphism $\varphi_{R}:\mathbb{Z}^{\mathcal{H}_{R}}\oplus\mathbb{Z}\mapsto C(J_{R},\mathbb{Z})$ sending $1$ to $1_{J_{R}}$ and $Q$ to $H_{Q}$, for all $Q$ in $\mathcal{H_{R}}$, is an isomorphism onto $(\text{id} - \Phi)^{-1}(\mathbb{Z}[1_{J_{R}}])$. Hence, $\varphi_{R}$ maps $\text{ker}(\alpha_{R})$ isomorphically onto $\text{ker}(\text{id}-\Phi)\simeq\mathbb{Z}^{h_{R}}$.
    \item The isotropy degree of $1_{J_{R}}$ in $\text{co-ker}(\text{id} - \Phi)$ is $a_{R}$, and  for any choice of $x_{Q}$ in $Q$, for $Q$ in $\mathcal{H}_{R}$, the homomorphism $\phi_{R}:\mathbb{Z}^{\mathcal{H}_{R}}\oplus\mathbb{Z}/a_{R}\mathbb{Z}\mapsto\text{co-kernel}(\text{id}-\Phi)$ sending $1$ to the image of $1_{J_{R}}$ in the co-kernel and $Q$ to the image of $1_{J_{x_{Q}}^{+}}$, for all $Q$ in $\mathcal{H}_{R}$ in the co-kernel is an isomorphism onto $\text{co-ker}(\text{id}-\Phi)$.
\end{enumerate}
\end{prop}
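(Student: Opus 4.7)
The key is to exploit the commutative diagram from Proposition \ref{ncintertwine},
$$\begin{tikzcd}
C(J_R,\mathbb{Z})\arrow[r,"e_1"]\arrow[d,"\text{id}-\Phi"]&K^{-1}(F_R\setminus C_{R,F_R})\arrow[d,"\gamma"]\\
C(J_R,\mathbb{Z})\arrow[r,"e_2"]&K^{-1}(F_R),
\end{tikzcd}$$
where $\gamma=\iota-\hat\otimes_1[\mathcal{E}_{R,F_R}]$ and $e_1,e_2$ are the exponential maps from the 6-term sequences of $0\to C_0(F_R\setminus C_{R,F_R})\to C_0(\hat{\mathbb{C}}\setminus C_{R,F_R})\to C(J_R)\to 0$ and $0\to C_0(F_R)\to C_0(\hat{\mathbb{C}})\to C(J_R)\to 0$ respectively, related by $e_2=\iota_\ast\circ e_1$. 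I will first record two structural observations: (i) $e_2$ is surjective with $\ker e_2=\mathbb{Z}[1_{J_R}]$, because $K^{-1}(\hat{\mathbb{C}})=0$ and the restriction $K^0(\hat{\mathbb{C}})\to K^0(J_R)=C(J_R,\mathbb{Z})$ has image $\mathbb{Z}[1_{J_R}]$ (both $[1_{\hat{\mathbb{C}}}]$ and $\beta_{\hat{\mathbb{C}}}$ have trace $1$, hence restrict to $[1_{J_R}]$); and (ii) $\ker\iota_\ast\subseteq G(F_R,C_{R,F_R})$, obtained from naturality of $\text{exp}$ applied to the inclusion of a small disjoint union $W$ of open disks around $C_{R,F_R}$ into $F_R$, which factors the generators of $\ker\iota_\ast$ through $K^{-1}(W\setminus C_{R,F_R})$. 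I will also use Lemma \ref{exphr}, which gives $e_1(1_{J_x^+})=u_x$ and hence $e_1(H_Q)=u_Q=\sum_{x\in Q}u_x$. The degenerate case $J_R=\hat{\mathbb{C}}$ (where $F_R=\emptyset$, $\mathcal{H}_R=\emptyset$, $a_R=d-1$) is handled by direct computation: $C(J_R,\mathbb{Z})=\mathbb{Z}[1_{J_R}]$ and $\text{id}-\Phi$ is multiplication by $1-d$.

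For part (1), $\varphi_R$ is well-defined by Corollary \ref{Hconst} together with $(\text{id}-\Phi)(1_{J_R})=(1-d)1_{J_R}$. Injectivity: if $\varphi_R(n_0,(n_Q))=0$, apply $e_2$ to obtain $\sum_Q n_Q u_Q=0$ in $K^{-1}(F_R)=\bigoplus_U K^{-1}(U)$; as the $u_Q$ lie in pairwise disjoint direct summands (distinct Herman cycles comprise disjoint Fatou components), all $n_Q=0$, and then $n_0=0$. For the image equality, given $f\in(\text{id}-\Phi)^{-1}(\mathbb{Z}[1_{J_R}])$, commutativity forces $\gamma(e_1(f))=0$, so by Proposition \ref{Fkernel1} I may write $e_1(f)=g+\sum_Q n_Q u_Q$ with $g\in G(F_R,C_{R,F_R})$. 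Setting $f'=f-\sum_Q n_Q H_Q$ yields $e_1(f')=g$, hence $e_2(f')=\iota_\ast(g)=0$ by (ii), so $f'\in\mathbb{Z}[1_{J_R}]$ and $f\in\text{image}(\varphi_R)$. The identification $\ker(\text{id}-\Phi)\simeq\ker\alpha_R\simeq\mathbb{Z}^{h_R}$ then follows because $\alpha_R(1)=1-d\neq 0$ forces $\text{image}(\alpha_R)$ to have rank one.

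For part (2), the isotropy of $[1_{J_R}]$ equals $a_R$: if $n\cdot 1_{J_R}=(\text{id}-\Phi)(f)$, then by (1) $f\in\text{image}(\varphi_R)$, so $n\in\text{image}(\alpha_R)=a_R\mathbb{Z}$, and Bezout realizes $a_R$. Injectivity of $\phi_R$: if $n_0 1_{J_R}+\sum_Q n_Q 1_{J_{x_Q}^+}=(\text{id}-\Phi)(g)$, applying $e_2$ gives $\sum_Q n_Q u_{x_Q}\in\text{image}(\gamma)$; since $\{u_{x_Q}\}_Q$ map isomorphically onto $\text{co-ker}(\gamma)$ (Proposition \ref{Fkernel1}), all $n_Q=0$, and the residual $n_0\cdot 1_{J_R}\in\text{image}(\text{id}-\Phi)$ yields $a_R\mid n_0$. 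For surjectivity, given $f$, split $e_2(f)=\gamma(h)+\sum_Q n_Q u_{x_Q}$ via Proposition \ref{Fkernel1} and set $f''=f-\sum_Q n_Q 1_{J_{x_Q}^+}$; choose $g\in C(J_R,\mathbb{Z})$ with $e_2(g)=\iota_\ast(h)$ (possible since $e_2$ is surjective). Then $\iota_\ast(e_1(g)-h)=0$ together with (ii) places $e_1(g)-h\in\ker\gamma$, so $\gamma(e_1(g))=\gamma(h)=e_2(f'')$, whence $f''-(\text{id}-\Phi)(g)\in\ker e_2=\mathbb{Z}[1_{J_R}]$ and $[f]=\phi_R(n_0,(n_Q))$. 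The principal obstacle is the careful orchestration of $e_1$ and $e_2$; the crucial enabling fact is the containment $\ker\iota_\ast\subseteq G(F_R,C_{R,F_R})$, which prevents the critical-point part of $\ker\gamma$ from interfering with either the injectivity of $\varphi_R$ or the surjectivity of $\phi_R$.
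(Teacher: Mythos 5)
Your proposal is correct and takes a genuinely more direct route than the paper. The paper builds the larger commutative diagram with rows the K-theory of the extensions and the Pimsner--Voiculescu columns, applies the Snake Lemma to obtain an exact sequence involving $(\text{id}-\Phi)^{-1}(\mathbb{Z}[1_{J_R}])$, and then splits into the cases $C_{R,F_R}=\emptyset$ and $C_{R,F_R}\neq\emptyset$ to identify generators via the auxiliary elements $w_Q = u_Q - \sum_c H_Q(c)v_c$. You bypass the Snake Lemma entirely and chase elements directly through the single commutative square $e_2\circ(\text{id}-\Phi)=\gamma\circ e_1$ together with $e_2=\iota_*\circ e_1$, Lemma \ref{exphr}, and the description of $\ker\gamma$ and $\operatorname{co-ker}\gamma$ from Proposition \ref{Fkernel1}. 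This removes the case split and is, in my view, cleaner; the underlying inputs (the exponential-map naturality, the $u_Q$ and $u_{x_Q}$ generators, the role of $G(F_R,C_{R,F_R})$) are the same.

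Two small corrections. First, your observation (ii) is stated as $\ker\iota_*\subseteq G(F_R,C_{R,F_R})$, but in part (1) you invoke the reverse inclusion $G(F_R,C_{R,F_R})\subseteq\ker\iota_*$ to conclude $\iota_*(g)=0$, and in part (2) you invoke the stated inclusion. In fact Lemma \ref{k1open} gives the equality $\ker\iota_*=G(F_R,C_{R,F_R})$: the generators of $G$ factor through $K^{-1}(W)=0$ so $G\subseteq\ker\iota_*$, and conversely the isomorphism $K^{-1}(W\setminus D)\oplus K^{-1}(V)\simeq K^{-1}(U\setminus D)$ combined with the isomorphism $K^{-1}(V)\simeq K^{-1}(U)$ shows $\ker\iota_*\subseteq G$. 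You should state (ii) as this equality. Second, the parenthetical justification for (i) is wrong: the trace of $\beta_{\hat{\mathbb{C}}}$ is $0$, not $1$ (indeed $\beta_{\hat{\mathbb{C}}}$ lies in the image of $K^0(\mathbb{C})\to K^0(\hat{\mathbb{C}})$ and so its restriction to any compact connected proper subset vanishes, as used in the proof of Proposition \ref{bottopenconnect}). Since $r_*(\beta_{\hat{\mathbb{C}}})=0$ and $r_*([1_{\hat{\mathbb{C}}}])=[1_{J_R}]$, the image of $K^0(\hat{\mathbb{C}})\to C(J_R,\mathbb{Z})$ is still $\mathbb{Z}[1_{J_R}]$, so your conclusion $\ker e_2=\mathbb{Z}[1_{J_R}]$ and the rest of the argument stand unchanged.
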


\begin{proof}
First, assume $J_{R} = \hat{\mathbb{C}}$. Then, $C(J_{R},\mathbb{Z}) = \mathbb{Z} 1_{J_{R}}$ and $\mathcal{H}_{R}=\emptyset$. Hence, $(\text{id} - \Phi)^{-1}(\mathbb{Z}[1_{J_{R}}]) = \mathbb{Z}[1_{J_{R}}]$, $a_{R} = 1-d$, and $\text{co-ker}(\text{id} - \Phi) = \mathbb{Z}[1_{J_{R}}]/(1-d)\mathbb{Z}[1_{J_{R}}]$.
\par
Now, assume $J_{R}\neq\hat{\mathbb{C}}$.
$K^{-1}(\hat{\mathbb{C}}) = 0$, so the diagram
$$
\begin{tikzcd}
{K^{-1}(F_{R}\setminus C_{R,F_{R}})} \arrow[d, "{\iota - \hat{\otimes}_{1}[\mathcal{E}_{R,F_{R}}]}"] \arrow[r, "i_{*}"] & {K^{-1}(\hat{\mathbb{C}}\setminus C_{R,F_{R}})} \arrow[d] \\
K^{-1}(F_{R}) \arrow[r, "i_{*}"]                                                                                    & 0                                                        
\end{tikzcd}$$
automatically commutes. By Proposition \ref{iimage}, the subgroup $G(F_{R},C_{R})$ of $K^{-1}(F_{R}\setminus C_{R,F_{R}})$ 
has $\mathbb{Z}$-linear independent generators $\{v_{c}\}_{c\in C_{R,F_{R}}}$ such that $i_{*}(v_{c}) = v_{c}$, for
all $c$ in $C_{R,F_{R}}$. Therefore, $i_{*}:K^{-1}(F_{R}\setminus C_{R,F_{R}})\mapsto K^{-1}(\hat{\mathbb{C}}\setminus C_{R,F_{R}})$ is surjective.
\par
By Proposition \ref{ncintertwine}, the diagram
$$
\begin{tikzcd}
{C(J_{R},\mathbb{Z})} \arrow[r, "\text{exp}"] \arrow[d, "\text{id} - \Phi"] & {K^{-1}(F_{R}\setminus C_{R,F_{R}})} \arrow[d, "{\iota - \hat{\otimes}_{1}[\mathcal{E}_{R,F_{R}}]}"] \\
{C(J_{R},\mathbb{Z})} \arrow[r, "\text{exp}"]                               & K^{-1}(F_{R})                                                                                   
\end{tikzcd}$$
commutes. The bottom map has kernel equal to $\mathbb{Z}[1_{J_{R}}]$. Hence, the diagram
\par
$$
\begin{tikzcd}
            & {C(J_{R},\mathbb{Z})} \arrow[r, "\text{exp}"] \arrow[d, "\widetilde{\text{id} - \Phi}"] & {K^{-1}(F_{R}\setminus C_{R,F_{R}})} \arrow[d, "{\iota - \hat{\otimes}_{1}[\mathcal{E}_{R,F_{R}}]}"] \arrow[r, "i_{*}"] & {K^{-1}(\hat{\mathbb{C}}\setminus C_{R,F_{R}})} \arrow[d] \arrow[r] & 0 \\
0 \arrow[r] & {C(J_{R},\mathbb{Z})/\mathbb{Z}[1_{J_{R}}]} \arrow[r, "\widetilde{\text{exp}}"]         & K^{-1}(F_{R}) \arrow[r]                                                                                             & 0                                                                   &  
\end{tikzcd}$$
commutes, and has exact rows, where $\widetilde{\text{id} - \Phi}$ and $\widetilde{\text{exp}}$ denote the descent
maps of $\text{id} - \Phi$, $\text{exp}$, respectively.
\par
By the Snake Lemma, there is a boundary map $\partial:K^{-1}(\hat{\mathbb{C}}\setminus C_{R,F_{R}})\mapsto 
\text{co-ker}(\text{id} -\Phi)/\mathbb{Z}[1_{J_{R}}]$ making the sequence
$$
\begin{tikzcd}
{(\text{id} - \Phi)^{-1}(\mathbb{Z}[1_{J_{R}}])} \arrow[r, "\text{exp}"] & {\text{ker}(\iota - \hat{\otimes}_{1}[\mathcal{E}_{R,F_{R}}])} \arrow[r, "i_{*}"] & {K^{-1}(\hat{\mathbb{C}}\setminus C_{R,F_{R}})} \arrow[d, "\partial"]                   \\
0                                                                        & {\text{co-ker}(\iota - \hat{\otimes}_{1}[\mathcal{E}_{R,F_{R}}])} \arrow[l]       & {\text{co-ker}(\text{id} - \Phi)/\mathbb{Z}[1_{J_{R}}]} \arrow[l, "\widetilde{\text{exp}}"]
\end{tikzcd}$$
exact.
\par
By Proposition \ref{Fkernel1}, 
$\text{ker}(\iota - \hat{\otimes}_{1}[\mathcal{E}_{R,F_{R}}]) = G(F_{R},C_{R,F_{R}})
\oplus(\bigoplus_{Q\in\mathcal{H}_{R}}\mathbb{Z}[u_{Q}]).$ By Proposition \ref{index}, 
$i_{*}(u_{Q}) = \sum_{c\in C_{R,F_{R}}} H_{Q}(c)v_{c}$, 
for all $Q$ in $\mathcal{H}_{R}$. So, $i_{*}$ is surjective with kernel freely generated by the elements
$w_{Q}:= u_{Q} - \sum_{c\in C_{R,F_{R}}}H_{Q}(c)v_{c}$, for all $Q$ in $\mathcal{H}_{R}$, and $\sum_{c\in C_{R,F_{R}}}v_{c}$. 
Surjectivity of $i_{*}$ implies, by exactness of the above diagram, the sequences
\begin{equation}
\tag{*}
\label{snake}
\begin{tikzcd}
{(\text{id} - \Phi)^{-1}(\mathbb{Z}[1_{J_{R}}])} \arrow[r, "\text{exp}"] & {\mathbb{Z}[\sum_{c\in C_{R,F_{R}}}v_{c}]\bigoplus_{Q\in\mathcal{H_{R}}}\mathbb{Z}[w_{Q}]} \arrow[r]            & 0                                                               &                                   \\
0                                                                            & {\text{co-ker}(\iota - \hat{\otimes}_{1}[\mathcal{E}_{R,F_{R}}])} \arrow[l] & \text{co-ker}(\text{id} - \Phi) \arrow[l, "\tilde{\text{exp}}"] & {\mathbb{Z}[1_{J_{R}}]} \arrow[l]
\end{tikzcd}\end{equation}
are exact.
\par
We now prove $(1)$. It suffices to show $\{H_{Q}\}_{Q\in\mathcal{H}_{R}}\cup\{1_{J_{R}}\}$ is a $\mathbb{Z}$-linear independent generating set for $(\text{id} - \Phi)^{-1}(\mathbb{Z}[1_{J_{R}}])$.
\par
First, assume $C_{R,F_{R}}=\emptyset$. In this case, $w_{Q} = u_{Q}$, for all $Q$ in $\mathcal{H}_{R}$, and exactness implies $\text{exp}$ has kernel equal to $\mathbb{Z}[1_{J_{R}}]$. From above, $\text{exp}:(\text{id} - \Phi)^{-1}(\mathbb{Z}[1_{J_{R}}])\mapsto\bigoplus_{Q\in\mathcal{H}_{R}}\mathbb{Z}[w_{Q}]$ is a surjection, and, by Lemma \ref{exphr}, $H_{Q} = \sum_{x\in Q}1_{J_{x}^{+}}$ satisfies $\text{exp}(H_{Q}) = \sum_{x\in Q}u_{x} = u_{Q} = w_{Q}$. Therefore, the functions $\{H_{Q}\}_{Q\in\mathcal{H}_{R}}\cup\{1_{J_{R}}\}$ generate $(\text{id} - \Phi)^{-1}(\mathbb{Z}[1_{J_{R}}])$ and are $\mathbb{Z}$-linearly independent (since $\{u_{Q}\}_{Q\in\mathcal{H}_{R}}$ are).
\par
Assume $C_{R,F_{R}}\neq\emptyset$. Exactness then implies $\text{exp}$ is injective, so that $\text{exp}:(\text{id} - \Phi)^{-1}(\mathbb{Z}[1_{J_{R}}])\mapsto\bigoplus_{Q\in\mathcal{H}_{R}}\mathbb{Z}[w_{Q}]\oplus\mathbb{Z}[\sum_{c\in C_{R,F_{R}}}v_{c}]$ is an isomorphism.
\par
Naturality implies the diagram
$$
\begin{tikzcd}
{C(J_{R},\mathbb{Z})} \arrow[r, "\text{exp}"] \arrow[rd, "\text{exp}"] & {K^{-1}(F_{R}\setminus C_{R,F_{R}})} \arrow[d, "i_{*}"] \\
                                                                       & K^{-1}(F_{R})                                          
\end{tikzcd}$$
where $i:C_{0}(F_{R}\setminus C_{R,F_{R}})\mapsto C_{0}(F_{R})$ is the inclusion. Lemma \ref{k1open} implies $i_{*}(w_{Q}) = u_{Q}$ for all $Q$ in $\mathcal{H_{R}}$ and $i_{*}(\sum_{c\in C_{R,F_{R}}}v_{c}) = 0$. Therefore, $i_{*}\text{exp}(H_{Q}) = u_{Q}$ implies $\text{exp}(H_{Q}) = w_{Q} + a_{Q}(\sum_{c\in C_{R,F_{R}}}v_{c})$, for some $a_{Q}$ in $\mathbb{Z}$, for all $Q$ in $\mathcal{H}_{R}$. It remains to show $\text{exp}(1_{J_{R}}) = -\sum_{c\in C_{R,F_{R}}}v_{c}$.
\par
The diagram
$$
\begin{tikzcd}
0 \arrow[r] & {C_{0}(F_{R}\setminus C_{R,F_{R}})} \arrow[r] \arrow[d, equal] & {C_{0}(\hat{\mathbb{C}}\setminus C_{R,F_{R}})} \arrow[r] \arrow[d] & C(J_{R}) \arrow[r] \arrow[d]         & 0 \\
0 \arrow[r] & {C_{0}(F_{R}\setminus C_{R,F_{R}})} \arrow[r]                                & C(\hat{\mathbb{C}}) \arrow[r]                                      & {C(J_{R}\cup C_{R,F_{R}})} \arrow[r] & 0 \\
0 \arrow[r] & {C_{0}(F_{R}\setminus C_{R,F_{R}})} \arrow[u,equal] \arrow[r]                      & C_{0}(F_{R}) \arrow[u] \arrow[r]                                   & {C(C_{R,F_{R}})} \arrow[u] \arrow[r] & 0
\end{tikzcd}$$
commutes, and has exact rows. Naturality of $\text{exp}$ then implies the diagram
$$
\begin{tikzcd}
{C(J_{R},\mathbb{Z})} \arrow[r] \arrow[rd, "\text{exp}"'] & {C(J_{R}\cup C_{R,F_{R}},\mathbb{Z})} \arrow[d, "\text{exp}"] & {C(C_{R,F_{R}},\mathbb{Z})} \arrow[l] \arrow[ld, "\text{exp}"] \\
                                                              & K^{-1}(F_{R}\setminus C_{R,F_{R}})                            &                                                               
\end{tikzcd}$$
commutes. By exactness, $\text{exp}(1_{J_{R}} + 1_{C_{R,F_{R}}}) = 0,$ so that $\text{exp}(1_{J_{R}}) = -\text{exp}(1_{C_{R,F_{R}}}) = -\sum_{c\in C_{R,F_{R}}}v_{c}$. This finishes the proof of $(1)$.

\par

Consequently, the subgroup  of $C(J_{R},\mathbb{Z})$ generated by $\{H_{Q}\}_{Q\in\mathcal{H_{R}}}\cup\{1_{J_{R}}\}$ surjects onto $\text{im}(\text{id} - \Phi)\cap\mathbb{Z}[1_{J_{R}}]$ via $\text{id} - \Phi$. Therefore, the isotropy degree of $1_{J_{R}}$ in $\text{co-ker}(\text{id} - \Phi)$ is equal to $a_{R}$, the greatest common divisor of $\{H_{Q} - \Phi(H_{Q})\}_{Q\in\mathcal{H_{R}}}\cup\{1-d\}$.
\par
We now prove $(2)$. By Proposition \ref{Fkernel1}, $\text{co-ker}(\iota - \hat{\otimes}_{1}[\mathcal{E}_{R,F_{R}}])\simeq\mathbb{Z}^{h_{R}}$, and the elements 
$\{u_{x_{Q}}\}_{Q\in\mathcal{H}_{R}}$ under the quotient map form a $\mathbb{Z}$-linear independent generating set. exactness of the bottom sequence in diagram \eqref{snake} then implies we have a  short exact sequence
$$
\begin{tikzcd}
0 & {\sum_{Q\in\mathcal{H}_{R}}\mathbb{Z}[u_{x_{Q}}]} \arrow[l] & \text{co-ker}(\text{id} - \Phi) \arrow[l, "\text{exp}"'] & {\mathbb{Z}[1_{J_{R}}]/a_{R}\mathbb{Z}[1_{J_{R}}]} \arrow[l] & 0. \arrow[l]
\end{tikzcd}$$
This short exact sequence splits and, by Lemma \ref{exphr}, $\text{exp}(1_{J^{+}_{x_{Q}}}) = u_{x_{Q}}$ for all $Q$ in $\mathcal{H}_{R}$. Therefore, $\phi_{R}$ is equal to the direct sum of the inclusion of the kernel of $\widetilde{\text{exp}}$ and a splitting, and is hence an isomorphism.
\end{proof}
For every $Q$ in $\mathcal{H}_{R}$, Corollary \ref{Hconst} implies $H_{Q}(c) = H_{Q}(d)$ for any $c,d$ in $C_{R,J_{R}}$ such that $c\sim d$. Denote $H_{Q}([c]) := H_{Q}(c)$, for all $[c]$ in $[C_{R,J_{R}}]$.
\par
We let $[H_{R}]$ be the following matrix with rows indexed by elements in $[C_{R,J_{R}}]\cup\{u\}$ and columns indexed by $\mathcal{H}_{R}\cup\{u\}$
\begin{itemize}
\item $([H_{R}])_{[c],Q} = H_{Q}(c)$ for all $[c]$ in $[C_{R,J_{R}}]$ and $Q$ in $\mathcal{H}_{R}$,
\item $([H_{R}])_{[c],u} = 1$, for all $[c]$ in $[C_{R,J_{R}}]$,
\item $([H_{R}])_{u,Q} = \Phi(H_{Q}) - H_{Q}$, for all $Q$ in $\mathcal{H}_{R}$, and
\item $([H_{R}])_{u,u} = \text{deg}(R)-1$.
\end{itemize}
\par
We will write $(\text{id}-\Phi)|_{C_{0}(J_{R}\setminus C_{R,J_{R}},\mathbb{Z})} = \iota - \Phi_{0}$.
\begin{prop}
\label{transfer0}
 Let $R$ be a rational function of degree $d>1$. Then,
\begin{enumerate}
\item $(\iota - \Phi_{0})^{-1}(\mathbb{Z}[1_{J_{R}}])\simeq [H_{R}]^{-1}(\mathbb{Z}[u])$ and $\text{ker}(\iota - \Phi_{0})\simeq \text{ker}([H_{R}])$.

\item $\text{co-ker}(\iota - \Phi_{0})\simeq \text{co-ker}([H_{R}])\oplus\mathbb{Z}^{h_{R}}$ and the isomorphism maps the class of $1_{J_{R}}$ to the class of $u$.
\end{enumerate}

\end{prop}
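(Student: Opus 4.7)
The plan is to deduce both statements from Proposition \ref{k0nc} by exploiting the factorization $\iota - \Phi_{0} = (\text{id}-\Phi)\circ i$, where $i: C_{0}(J_{R}\setminus C_{R,J_{R}},\mathbb{Z})\hookrightarrow C(J_{R},\mathbb{Z})$ is the inclusion, combined with the split short exact sequence
$$0\to C_{0}(J_{R}\setminus C_{R,J_{R}},\mathbb{Z})\xrightarrow{i}C(J_{R},\mathbb{Z})\xrightarrow{r}\mathbb{Z}[[C_{R,J_{R}}]]\to 0,$$
where $\mathbb{Z}[[C_{R,J_{R}}]]$ denotes the image of $r$, identified via the proof of Proposition \ref{iimage} with the free abelian group on $[C_{R,J_{R}}]$.

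For part (1), the factorization immediately gives $\text{ker}(\iota-\Phi_{0}) = \text{ker}(\text{id}-\Phi)\cap C_{0}(J_{R}\setminus C_{R,J_{R}},\mathbb{Z})$ and analogously for the preimage of $\mathbb{Z}[1_{J_{R}}]$. By Proposition \ref{k0nc}(1), the ambient group $(\text{id}-\Phi)^{-1}(\mathbb{Z}[1_{J_{R}}])$ is parametrized by the isomorphism $\varphi_{R}:\mathbb{Z}[\mathcal{H}_{R}]\oplus\mathbb{Z}\to(\text{id}-\Phi)^{-1}(\mathbb{Z}[1_{J_{R}}])$, $(b,a)\mapsto a\cdot 1_{J_{R}} + \sum_{Q}b_{Q}H_{Q}$. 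The condition that $\varphi_{R}((b,a))$ vanishes on $C_{R,J_{R}}$ reduces via Corollary \ref{Hconst} to $a + \sum_{Q}b_{Q}H_{Q}([c])=0$ for every $[c]\in[C_{R,J_{R}}]$, which is precisely the vanishing of the $[c]$-rows of $[H_{R}]$ on $(b,a)$. Thus $\varphi_{R}$ identifies $(\iota-\Phi_{0})^{-1}(\mathbb{Z}[1_{J_{R}}])$ with $[H_{R}]^{-1}(\mathbb{Z}[u])$. Additionally imposing $(b,a)\in\text{ker}(\alpha_{R})$, which up to a sign is the vanishing of the $u$-row of $[H_{R}]$, identifies $\text{ker}(\iota-\Phi_{0})$ with $\text{ker}([H_{R}])$.

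For part (2), I would apply the snake lemma to the short exact sequence of two-term cochain complexes whose columns are $\iota-\Phi_{0}$, $\text{id}-\Phi$, and the zero map $\mathbb{Z}[[C_{R,J_{R}}]]\to 0$, connected horizontally by $(i,\text{id})$ and $(r,0)$. The resulting long exact sequence reads
$$0 \to \text{ker}(\iota-\Phi_{0}) \to \text{ker}(\text{id}-\Phi) \xrightarrow{r} \mathbb{Z}[[C_{R,J_{R}}]] \xrightarrow{\partial} \text{co-ker}(\iota-\Phi_{0}) \to \text{co-ker}(\text{id}-\Phi) \to 0,$$
with $\partial(v) = [(\text{id}-\Phi)(g)]$ for any lift $g\in C(J_{R},\mathbb{Z})$ of $v$. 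By Proposition \ref{k0nc}(2), $\text{co-ker}(\text{id}-\Phi)\simeq\mathbb{Z}^{h_{R}}\oplus\mathbb{Z}/a_{R}\mathbb{Z}$, with free summand generated by $\{[1_{J_{x_{Q}}^{+}}]\}_{Q\in\mathcal{H}_{R}}$ and torsion summand generated by $[1_{J_{R}}]$. Their lifts $[1_{J_{x_{Q}}^{+}}]\in\text{co-ker}(\iota-\Phi_{0})$ remain $\mathbb{Z}$-linearly independent (as they already are after projecting to $\text{co-ker}(\text{id}-\Phi)$), so they span a free direct summand, producing a decomposition $\text{co-ker}(\iota-\Phi_{0})\simeq\mathbb{Z}^{h_{R}}\oplus G_{2}$, where $G_{2}$ is the preimage of $\mathbb{Z}/a_{R}\mathbb{Z}$.

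It remains to identify $G_{2}$ with $\text{co-ker}([H_{R}])$, sending $[u]\leftrightarrow[1_{J_{R}}]$. Define $\Xi:\mathbb{Z}[[C_{R,J_{R}}]]\oplus\mathbb{Z}[u]\to G_{2}$ by $\Xi(v)=\partial(v)$ on the first summand and $\Xi(u)=[1_{J_{R}}]$ on the second. Surjectivity follows from the snake sequence: $\partial$ surjects onto the kernel of $G_{2}\to\mathbb{Z}/a_{R}\mathbb{Z}$, and $[1_{J_{R}}]$ lifts a generator of $\mathbb{Z}/a_{R}\mathbb{Z}$. The decisive step is showing $\text{ker}(\Xi)=\text{im}([H_{R}])$: if $\Xi(v+nu)=0$, unwinding the boundary yields some $h\in C_{0}(J_{R}\setminus C_{R,J_{R}},\mathbb{Z})$ and a lift $g$ of $v$ with $(\text{id}-\Phi)(g) + n\cdot 1_{J_{R}} = (\text{id}-\Phi)(h)$; setting $f=g-h$ produces $f\in(\text{id}-\Phi)^{-1}(\mathbb{Z}[1_{J_{R}}])$ with $r(f)=v$ and $\Phi(f)-f=n\cdot 1_{J_{R}}$, so writing $f=\varphi_{R}((b,a))$ via Proposition \ref{k0nc}(1), the defining formulae for the rows of $[H_{R}]$ give $[H_{R}]((b,a))=v+nu$ directly; the converse is obtained by running this chain in reverse. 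The hardest part is precisely this last identification, which boils down to recognizing that the map $f\mapsto(r(f),\text{const}(\Phi(f)-f)\cdot u)$ from $(\text{id}-\Phi)^{-1}(\mathbb{Z}[1_{J_{R}}])$ to $\mathbb{Z}[[C_{R,J_{R}}]]\oplus\mathbb{Z}[u]$ realizes $[H_{R}]$ through the isomorphism $\varphi_{R}$.
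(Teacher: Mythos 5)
Your proof is correct, and part (2) takes a genuinely different and more elementary route than the paper. For part (1) you use the same idea as the paper: pull back through $\varphi_{R}$ and translate the vanishing condition on $C_{R,J_{R}}$ into the vanishing of the $[c]$-rows of $[H_{R}]$, with $\alpha_{R}$ accounting for the $u$-row. For part (2), however, the paper works entirely on the K-theory side: it pushes the computation through the exponential maps $\mathrm{exp}:C(J_{R},\mathbb{Z})\to K^{-1}(F_{R}\setminus C_{R,F_{R}})$, identifies $\mathrm{im}(i_{*})$ with the subgroup $\tilde{W}_{R}$ of $K^{-1}(\hat{\mathbb{C}}\setminus C_{R,\hat{\mathbb{C}}})$, applies the Snake Lemma to the resulting K-theoretic diagram, and then introduces the intermediate quotient $V_{R}$ with a boundary map $\partial:V_{R}\to\mathrm{co\text{-}ker}(\iota-\Phi_{0})$ whose kernel is identified with $\mathrm{im}(\hat{H}_{R})$ by an explicit (and somewhat fiddly) manipulation with the clopen sets $Y_{[c]}$. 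Your argument instead applies the Snake Lemma purely within abelian groups, to the two-term complexes built from $\iota-\Phi_{0}$, $\mathrm{id}-\Phi$, and $0$, connected by $(i,\mathrm{id})$ and $(r,0)$; this gives the long exact sequence
$$0 \to \ker(\iota-\Phi_{0}) \to \ker(\mathrm{id}-\Phi) \xrightarrow{r} \mathbb{Z}[[C_{R,J_{R}}]] \xrightarrow{\partial} \mathrm{co\text{-}ker}(\iota-\Phi_{0}) \to \mathrm{co\text{-}ker}(\mathrm{id}-\Phi) \to 0$$
directly, and the identification $\ker(\Xi)=\mathrm{im}([H_{R}])$ then falls out cleanly from the formula $\partial(v)=[(\mathrm{id}-\Phi)(g)]$ together with Proposition \ref{k0nc}(1) and Corollary \ref{Hconst}, with no reference to K-theory at all. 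The trade-off: your route is shorter and makes it more transparent that the answer is governed by the matrix $[H_{R}]$, at the cost of not exhibiting the relationship between $\mathrm{co\text{-}ker}(\iota-\Phi_{0})$ and the K-theory maps $i_{*}$ and $\mathrm{exp}$ that the paper goes on to exploit in Theorem \ref{jk}; but as a proof of Proposition \ref{transfer0} as stated, yours is self-contained and arguably cleaner.
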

\begin{proof}
First, let's assume $J_{R} = \hat{\mathbb{C}}$. In this case, $\iota - \Phi_{0} = 0$ and $C(J_{R},\mathbb{Z}) = \mathbb{Z}[1_{J_{R}}]$. Therefore, $(\iota - \Phi)^{-1}(\mathbb{Z}[1_{J_{R}}]) = 0$ and $\text{co-ker}(\iota - \Phi_{0}) = \mathbb{Z}[1_{J_{R}}]\simeq\mathbb{Z}$. 
\par
Since $|[C_{R,J_{R}}]| = 1$ and $\mathcal{H}_{R} = \emptyset$, we have that $[H_{R}]:\mathbb{Z}\mapsto \mathbb{Z}\oplus\mathbb{Z}$ is the map sending $u$ to $[c] + u$. Hence, $[H_{R}]^{-1}(\mathbb{Z}[u]) = 0$ and $\text{co-ker}([H_{R}])\simeq\mathbb{Z}$ and is generated by $u$. 
\par
Therefore, $\text{ker}(\iota - \Phi_{0}) = (\iota - \Phi_{0})^{-1}(\mathbb{Z}[1_{J_{R}}]) = 0 = [H_{R}]^{-1}(\mathbb{Z}[u]) = \text{ker}([H_{R}])$ and $\text{co-ker}(\iota - \Phi_{0})\simeq \text{co-ker}([H_{R}])\simeq \mathbb{Z}$ via the map sending the class of $1_{J_{R}}$ to the class of $u$.
\par
Now, we assume throughout the proof that $J_{R}\neq\hat{\mathbb{C}}$. For $g = m\cdot u + \sum_{Q\in\mathcal{H}_{R}}a_{Q}\cdot Q$ in $\mathbb{Z}^{\mathcal{H}_{R}}\oplus\mathbb{Z}$, we have $[H_{R}](g) =-\alpha_{R}(g)u + \sum_{Q\in\mathcal{H}_{R}}(H_{Q}([c]) + m)[c]$. So, $[H_{R}](g)$ is in $\mathbb{Z}[u]$ if and only if $\sum_{Q\in\mathcal{H}_{R}}H_{Q}(c) + m = 0$, for all $c$ in $C_{R,J_{R}}$. 
\par
Let $\varphi_{R}$ be the isomorphism appearing in Proposition \ref{k0nc}. By Proposition \ref{k0nc}, we have $(\iota -\Phi_{0})^{-1}(\mathbb{Z}[1_{J_{R}}]) = (\text{id}-\Phi)^{-1}(\mathbb{Z}[1_{J_{R}}])\cap C_{0}(J_{R}\setminus C_{R,J_{R}},\mathbb{Z}) = \{ m +\sum_{Q\in\mathcal{H}_{R}}a_{Q}H_{Q}: m +\sum_{Q\in\mathcal{H}_{R}}a_{Q}H_{Q}(c) = 0\text{ }\forall c\in C_{R,J_{R}}\}$. Hence, $\varphi_{R}((\iota -\Phi_{0})^{-1}(\mathbb{Z}[1_{J_{R}}])) = [H_{R}]^{-1}(\mathbb{Z}[u])$.
\par
Since $[H_{R}](g) = - \alpha_{R}(g)u$, for all $g$ in $[H_{R}]^{-1}(\mathbb{Z}[u])$, it also follows that $\varphi^{-1}_{R}(\text{ker}(H_{R})) = \varphi^{-1}_{R}([H_{R}]^{-1}\cap\text{ker}(\alpha_{R}))$. 
By Proposition \ref{k0nc} and the above equalities, we have $\varphi^{-1}_{R}([H_{R}]^{-1}\cap\text{ker}(\alpha_{R})) = (\iota - \Phi_{0})^{-1}(\mathbb{Z}[1_{J_{R}}])\cap \text{ker}(\text{id} - \Phi) = \text{ker}(\iota - \Phi_{0})$. This proves $(1)$
\par
We now prove $(2)$. We first determine the image of $i_{*}:K^{-1}(F_{R}\setminus C_{R,F_{R}})\mapsto K^{-1}(\hat{\mathbb{C}}\setminus C_{R,\hat{\mathbb{C}}})$. Naturality of $\text{exp}$ implies the diagram
$$
\begin{tikzcd}
{K^{0}(C_{R,\hat{\mathbb{C}}})} \arrow[r, "q"] \arrow[d, "\text{exp}"]        & {K^{0}(C_{R,J_{R}})} \arrow[d, "\text{exp}"] \\
{K^{-1}(\hat{\mathbb{C}}\setminus C_{R,\hat{\mathbb{C}}})} \arrow[r, "r_{*}"] & {K^{-1}(J_{R}\setminus C_{R,J_{R}})}        
\end{tikzcd}$$
commutes, where the vertical maps are the restrictions. Therefore, $\text{im}(i_{*}) = \text{ker}(r_{*}) = \text{exp}(q^{-1}(\text{exp}^{-1}(0)))$. By exactness,
the kernel of $\text{exp}:C(C_{R,J_{R}},\mathbb{Z})\mapsto K^{-1}(J_{R}\setminus C_{R,J_{R}})$ is the image of the restriction map $r:C(J_{R},\mathbb{Z})\mapsto C(C_{R,J_{R}},\mathbb{Z})$, which is generated by the functions
$\{1_{[c]}\}_{[c]\in[C_{R,J_{R}}]}$. So, $\text{ker}(r_{*}) = \text{exp}(q^{-1}(\sum_{[c]\in[C_{R,J_{R}}]}\mathbb{Z}[1_{[c]}])) = 
\text{exp}(\sum_{[c]\in[C_{R,J_{R}}]}\mathbb{Z}[1_{[c]}] + \sum_{c\in C_{R,F_{R}}}\mathbb{Z}[1_{c}]) = \sum_{[c]\in[C_{R,J_{R}}]}\mathbb{Z}[v_{[c]}] + \sum_{c\in C_{R,F_{R}}}\mathbb{Z}[v_{c}] =:\tilde{W}_{R}$. The diagram
$$
\begin{tikzcd}
{K^{-1}(F_{R}\setminus C_{R,F_{R}})} \arrow[r, "i_{*}"] \arrow[d, "{\iota - \hat{\otimes}_{1}[\mathcal{E}_{R,F_{R}}]}"] & \tilde{W}_{R} \arrow[d] \\
K^{-1}(F_{R}) \arrow[r]                                                                                             & 0                      
\end{tikzcd}$$
automatically commutes and, from the above calculation, the top row map is surjective.  By Corollary \ref{Maindiagram}, the diagram
$$
\begin{tikzcd}
{C_{0}(J_{R}\setminus C_{R,J_{R}},\mathbb{Z})} \arrow[d, "\iota - \Phi_{0}"] \arrow[r, "\text{exp}"] & {K^{-1}(F_{R}\setminus C_{R,F_{R}})} \arrow[d, "{\iota - \hat{\otimes}_{1}[\mathcal{E}_{R,F_{R}}]}"] \\
{C(J_{R},\mathbb{Z})} \arrow[r, "\text{exp}"]                                                        & K^{-1}(F_{R})                                                                                   \end{tikzcd}$$
commutes. The bottom map has kernel equal to $\mathbb{Z}[1_{J_{R}}]$. Hence, the diagram
$$
\begin{tikzcd}
            & {C(J_{R}\setminus C_{R,J_{R}},\mathbb{Z})} \arrow[r, "\text{exp}"] \arrow[d, "\widetilde{\iota - \Phi_{0}}"] & {K^{-1}(F_{R}\setminus C_{R,F_{R}})} \arrow[d, "{\iota - \hat{\otimes}_{1}[\mathcal{E}_{R,F_{R}}]}"] \arrow[r, "i_{*}"] & \tilde{W}_{R} \arrow[d] \arrow[r] & 0 \\
0 \arrow[r] & {C(J_{R},\mathbb{Z})/\mathbb{Z}[1_{J_{R}}]} \arrow[r, "\widetilde{\text{exp}}"]                              & K^{-1}(F_{R}) \arrow[r]                                                                                             & 0                                 &  
\end{tikzcd}$$
commutes, and has exact rows, where $\widetilde{\iota - \Phi}$ and $\widetilde{\text{exp}}$ denote the descent
maps of $\iota - \Phi$, $\text{exp}$, respectively.
\par
By the Snake Lemma, there is a boundary map $\tilde{\partial}:\tilde{W}_{R}\mapsto 
\text{co-ker}(\iota -\Phi)/\mathbb{Z}[1_{J_{R}}]$ making the sequence
$$
\begin{tikzcd}
{(\iota - \Phi_{0})^{-1}(\mathbb{Z}[1_{J_{R}}])} \arrow[r, "\text{exp}"] & {\text{ker}(\iota - \hat{\otimes}_{1}[\mathcal{E}_{R,F_{R}}])} \arrow[r, "i_{*}"] & \tilde{W}_{R} \arrow[d, "\tilde{\partial}"]                                                     \\
0                                                                        & {\text{co-ker}(\iota - \hat{\otimes}_{1}[\mathcal{E}_{R,F_{R}}])} \arrow[l]       & {\text{co-ker}(\iota - \Phi_{0})/\mathbb{Z}[1_{J_{R}}]} \arrow[l, "\widetilde{\text{exp}}"]
\end{tikzcd}$$
exact. We will first determine $i_{*}$ on generators. By Proposition \ref{Fkernel1}, 
$\text{ker}(\iota - \hat{\otimes}_{1}[\mathcal{E}_{R,F_{R}}]) = G(F_{R},C_{R,F_{R}})
\oplus(\bigoplus_{Q\in\mathcal{H}_{R}}\mathbb{Z}[u_{Q}].$ By the definition of $G(F_{R},C_{R})$
and Proposition \ref{genmappingk1}, this group is freely generated by elements $\{v_{c}\}_{c\in C_{R,F_{R}}}$ with the 
property that $i_{*}(v_{c}) = v_{c},$ for all $c$ in $C_{R,F_{R}}.$ We also have, by Proposition \ref{index}, 
$i_{*}(u_{Q}) = \sum_{c\in C_{R,\hat{\mathbb{C}}}} H_{Q}(c)v_{c}$, 
for all $Q$ in $\mathcal{H}_{R}$. So, for every $Q$ in $\mathcal{H}_{R}$, $w_{Q}:= u_{Q} - \sum_{c\in C_{R,F_{R}}}H_{Q}(c)v_{c}$ satisfies $i_{*}(w_{Q}) = 
\sum_{[c]\in [C_{R,J_{R}}]}H_{Q}([c])v_{[c]}$.
\par

Therefore, the image of $i_{*}$ is equal to $\text{im}(\tilde{H}_{R}) +\sum_{c\in C_{R,F_{R}}}\mathbb{Z}[v_{c}]$, where $\tilde{H}_{R}:\mathbb{Z}^{\mathcal{H}_{R}}\mapsto \tilde{W}_{R}$ is the homomorphism
such that $\tilde{H}_{R}(Q) = \sum_{[c]\in[C_{R,J_{R}}]}H_{Q}[c]v_{[c]}$ for all $Q$ in $\mathcal{H}_{R}$.
\par
Let $V_{R}$ be the abelian group with generators $\{[c]\}_{[c]\in [C_{R,J_{R}}]}\cup\{u\}$ satisfying the relation $\sum_{[c]\in [C_{R,J_{R}}]}[c] = (1-d)u$.
\par
Let $\{X_{[c]}\}_{[c]\in[C_{R,J_{R}}]}$ be a clopen partitition of $J_{R}$ such that $X_{[c]}\cap C_{R,J_{R}} = [c]$, for all $[c]$ in $[C_{R,J_{R}}]$. Define the homomorphism
$\partial:V_{R}\mapsto \text{co-ker}(\iota - \Phi_{0})$ on generators as $\partial(u) = \overline{1_{J_{R}}}$ and $\partial([c]) = \overline{1_{X_{[c]}} - \Phi(X_{[c]})}$, for all $[c]$ in $[C_{R,J_{R}}]$. Since $\partial((1-d)u) = \overline{1_{J_{R}} - \Phi(1_{J_{R}})} = \sum_{[c]\in[C_{R,J_{R}}]}\overline{1_{X_{[c]}} - \Phi(1_{X_{[c]}})} = \partial(\sum_{[c]\in[C_{R,J_{R}}]}[c])$, this homomorphism is well-defined. Let $q:\text{co-ker}(\iota - \Phi_{0})\mapsto \text{co-ker}(\iota - \Phi)/\mathbb{Z}[1_{J_{R}}]$ be the quotient map. 
\par
We show that $q(\partial([c])) = \tilde{\partial}(v_{[c]})$ for all $[c]$ in $[C_{R,J_{R}}]$. By the definition of the boundary map from the Snake Lemma, $\tilde{\partial}(v_{[c]}) = \overline{a}$, for any $a$ in $C(J_{R},\mathbb{Z})$ such that $\text{exp}(a) = \iota(w) - w\hat{\otimes}_{1}[\mathcal{E}_{R,F_{R}}]$, for any $w$ in $K^{-1}(F_{R}\setminus C_{R,F_{R}})$ such that $i_{*}(w) = v_{[c]}$.
\par
First, we choose $w$. The diagram

$$
\begin{tikzcd}
0 \arrow[r] & {C_{0}(F_{R}\setminus C_{R,F_{R}})} \arrow[r] \arrow[d]             & {C_{0}(\hat{\mathbb{C}}\setminus C_{R,F_{R}})} \arrow[r] \arrow[d] & C(J_{R}) \arrow[r] \arrow[d, "k"]          & 0 \\
0 \arrow[r] & {C_{0}(\hat{\mathbb{C}}\setminus C_{R,\hat{\mathbb{C}}})} \arrow[r] & C(\hat{\mathbb{C}}) \arrow[r]                                                 & {C(C_{R,\hat{\mathbb{C}}})} \arrow[r] & 0
\end{tikzcd}$$ commutes, where the two left-most vertical maps are inclusion and the rightmost vertical map is restriction to $C(C_{R,J_{R}})$, followed by the inclusion $C(C_{R,J_{R}})\mapsto C(C_{R,\hat{\mathbb{C}}}).$

Naturality of $\text{exp}$ implies the diagram
$$
\begin{tikzcd}
{C(J_{R},\mathbb{Z})} \arrow[r, "k"] \arrow[d, "\text{exp}"] & {C(C_{R,J_{R}},\mathbb{Z})} \arrow[d, "\text{exp}"]        \\
{K^{-1}(F_{R}\setminus C_{R,F_{R}})} \arrow[r, "i_{*}"]      & {K^{-1}(\hat{\mathbb{C}}\setminus C_{R,\hat{\mathbb{C}}})}
\end{tikzcd}$$
commutes. So, $w:= \text{exp}(1_{X_{[c]}})$ satisfies $i_{*}(w) = \text{exp}(1_{[c]}) = v_{[c]}$. 
\par
Let $a = 1_{X_{[c]}} - \Phi(1_{X_{[c]}})$. Then, by Proposition \ref{ncintertwine} and naturality of $\text{exp}$, $\text{exp}(a) = \iota(\text{exp}(1_{X_{[c]}})) - \text{exp}(1_{X_{[c]}})\hat{\otimes}_{1}[\mathcal{E}_{R,F_{R}}] = \iota(w) - w\hat{\otimes}_{1}[\mathcal{E}_{R,F_{R}}].$ Therefore,
$\tilde{\partial}(v_{[c]}) = \overline{1_{X_{[c]}} - \Phi(1_{X_{[c]}})} = q(\partial([c])).$
\par
Since
$$
\begin{tikzcd}
\tilde{W}_{R} \arrow[r, "\tilde{\partial}"] & {\text{co-ker}(\iota - \Phi_{0})/\mathbb{Z}[1_{J_{R}}]} \arrow[r, "\widetilde{\text{exp}}"] & {\text{co-ker}(\iota - \hat{\otimes}_{1}[\mathcal{E}_{R,F_{R}}])} \arrow[r] & 0
\end{tikzcd}$$
is exact and $\text{exp}(1_{J_{R}}) = 0,$ $q^{-1}(\text{im}(\tilde{\partial}))$ equals the kernel of the surjection $\text{exp}:\text{co-ker}(\iota - \Phi_{0})\mapsto \text{co-ker}(\iota - \hat{\otimes}_{1}[\mathcal{E}_{R,F_{R}}])$.  We have $\mathbb{Z}[1_{J_{R}}]\subseteq\text{im}(\partial)$ and, from above, $q(\text{im}(\partial)) = \tilde{\partial}(\sum_{[c]\in[C_{R,J_{R}}]}\mathbb{Z}[v_{[c]}]) = \text{im}(\tilde{\partial}).$ Therefore, $\text{im}(\partial) = q^{-1}(\text{im}(\tilde{\partial})).$ So, 
$$
\begin{tikzcd}
V_{R} \arrow[r, "\partial"] & \text{co-ker}(\iota - \Phi_{0}) \arrow[r, "\text{exp}"] & {{\text{co-ker}(\iota - \hat{\otimes}_{1}[\mathcal{E}_{R,F_{R}}])}} \arrow[r] & 0
\end{tikzcd}$$ is exact.
Let $\tilde{V}_{R}$ be the group generated by elements $\{v_{[c]}\}_{[c]\in [C_{R,J_{R}}]}$ satisfying the relation $\sum_{[c]\in [C_{R,J_{R}}]}v_{[c]} = 0$; this group is canonically isomorphic to $\tilde{W}_{R}/\mathbb{Z}[\{v_{c}\}_{c\in C_{R,F_{R}}}]$ and $V_{R}/\mathbb{Z}[u]$.  Let $\iota:\bigoplus_{Q\in\mathcal{H}_{R}}\mathbb{Z}[w_{Q}]\mapsto V_{R}$ be the homomorphism sending $w_{Q}$ to $\sum_{[c]\in [C_{R,J_{R}}]}H_{Q}([c])[c]$, and let $p:V_{R}\mapsto \tilde{V}_{R}$ be the hommorphism sending $u$ to $0$, $[c]$ to $v_{[c]}$, for all $[c]$ in $[C_{R,J_{R}}]$. 
Since $i_{*}(G(F_{R},C_{R})) = \mathbb{Z}[\{v_{c}\}_{c\in C_{R,F_{R}}}]\subseteq \text{ker}(\tilde{\partial})$ the diagram
$$
\begin{tikzcd}
{\bigoplus_{Q\in\mathcal{H}_{R}}\mathbb{Z}[w_{Q}]} \arrow[r] \arrow[rd] & \tilde{V}_{R} \arrow[r, "\tilde{\partial}"] & {\text{co-ker}(\iota - \Phi_{0})/\mathbb{Z}[1_{J_{R}}]} \\
                                                                        & V_{R} \arrow[r, "\partial"] \arrow[u, "p"'] & \text{co-ker}(\iota - \Phi_{0}) \arrow[u, "q"']        
\end{tikzcd}$$
commutes, and the top row is exact. Hence, $\partial^{-1}(\mathbb{Z}[1_{J_{R}}]) = \partial^{-1}(q^{-1}(0)) = p^{-1}(\tilde{\partial}^{-1}(0)) = p^{-1}(\text{im}(i_{*})) = \text{im}(\iota) +\mathbb{Z}[u].$ 
\par
For each $[c]$ in $[C_{R,J_{R}}]$, let $Y_{[c]} = X_{[c]}\cap H_{Q}^{-1}(H_{Q}([c]))$. Since $Y_{[c]}$ is a clopen set such that $[c]\subseteq Y_{[c]}$, $1_{X_{[c]}\setminus Y_{[c]}}$ is in $C_{0}(J_{R}\setminus C_{R,J_{R}},\mathbb{Z})$. Hence,
$\partial([c]) = \overline{1_{Y_{[c]}} - \Phi(1_{Y_{[c]}})} + \overline{1_{X_{[c]}\setminus Y_{[c]}} - \Phi(1_{X_{[c]}\setminus Y_{[c]}})} = \overline{1_{Y_{[c]}} - \Phi(1_{Y_{[c]}})}$. By construction, if we let $Y = \bigcup_{[c]\in[C_{R,J_{R}}]}Y_{[c]}$, then $H_{Q} = \sum_{[c]\in [C_{R,J_{R}}]}H_{Q}([c])1_{Y_{[c]}} + H_{Q}1_{J_{R}\setminus Y}$, for all $Q$ in $\mathcal{H}_{R}$. Since $H_{Q}1_{J_{R}\setminus Y}$ is in $C_{0}(J_{R}\setminus C_{R,J_{R}},\mathbb{Z}),$ a similar calculation to that as above shows $\partial(\sum_{[c]\in [C_{R,J_{R}}]}H_{Q}([c])[c]) = \overline{H_{Q} - \Phi(H_{Q})}$, for all $Q$ in $\mathcal{H}_{R}$.
\par
Therefore every element $g$ in $V_{R}$ of the form $g = \sum_{[c]\in[C_{R,J_{R}}]}\sum_{Q\in\mathcal{H}_{R}}a_{Q}H_{Q}([c])[c] + \sum_{Q\in\mathcal{H}_{R}}a_{Q}(\Phi(H_{Q}) - H_{Q})$ is in $\text{ker}(\partial)$. Let us show every element in $\text{ker}(\partial)$ is of this form.
\par
Let $g$ be in $\text{ker}(\partial)$. From above, $g$ is in $\text{im}(\iota) +\mathbb{Z}[u]$, so we can write $g = mu + \sum_{Q\in\mathcal{H}_{R}}a_{Q}\sum_{[c]\in [C_{R,J_{R}}]}H_{Q}([c])[c]$ for some $\{a_{Q}\}_{Q\in\mathcal{H}_{R}}\cup\{m\}\subseteq\mathbb{Z}$.
Therefore, $0 = \partial(g) = \overline{m + \sum_{Q\in\mathcal{H}_{R}} H_{Q} - \Phi(H_{Q})}$. Therefore, there are integers $\{b_{Q}\}_{Q\in\mathcal{H}_{R}}\cup\{n\}\subseteq\mathbb{Z}$ such that $n+ \sum_{Q\in\mathcal{H}_{R}}b_{Q}H_{Q}[c] = 0$ for all $c$ in $C_{R,J_{R}}$ and $(1-d)n+ \sum_{Q\in\mathcal{H}_{R}}b_{Q}(H_{Q} - \Phi(H_{Q})) = m + \sum_{Q\in\mathcal{H}_{R}} H_{Q} - \Phi(H_{Q})$.
\par
Using the relation $(d-1)nu = -n\sum_{[c]\in[C_{R,J_{R}}]}[c]$, we may write $g = (m + (d-1)n)u + \sum_{[c]\in[C_{R,J_{R}}]}(\sum_{Q\in\mathcal{H}_{R}}a_{Q}H_{Q}([c]) + n)[c]$. Then, using the fact that $\sum_{Q\in\mathcal{H}_{R}}b_{Q}H_{Q}(c) = -n$ for all $c$ in $C_{R,J_{R}}$, we may write 

$\sum_{[c]\in[C_{R,J_{R}}]}(\sum_{Q\in\mathcal{H}_{R}}a_{Q}H_{Q}([c]) + n)[c] =\sum_{[c]\in[C_{R,J_{R}}]}\sum_{Q\in\mathcal{H}_{R}}(a_{Q}-b_{Q})H_{Q}([c])[c]$. Hence, setting $a'_{Q} = a_{Q} - b_{Q}$, and using that $\sum_{Q\in\mathcal{H}_{R}}a'_{Q}(\Phi(H_{Q}) - H_{Q}) = m + (d-1)n$, we may write $g = \sum_{[c]\in[C_{R,J_{R}}]}\sum_{Q\in\mathcal{H}_{R}}a'_{Q}H_{Q}([c])[c] + \sum_{Q\in\mathcal{H}_{R}}a'_{Q}(\Phi(H_{Q}) - H_{Q})$.
\par
By the above description of $\text{ker}(\partial)$, we have that $\text{ker}(\partial) = \text{im}(\hat{H}_{R})$, where $\hat{H}_{R}:\mathbb{Z}^{\mathcal{H}_{R}}\mapsto V_{R}$ is the homomorphism sending $Q$ to $(\Phi(H_{Q}) - H_{Q})u + \sum_{[c]\in [C_{R,J_{R}}]}H_{Q}([c])[c]$. By Proposition \ref{Fkernel1}, we have $\text{co-kernel}(\iota - \hat{\otimes}_{1}[\mathcal{E}_{R,F_{R}}])\simeq\mathbb{Z}^{h_{R}}$. Hence, we have an exact sequence
$$
\begin{tikzcd}
0 \arrow[r] & \text{im}(\hat{H}_{R}) \arrow[r] & V_{R} \arrow[r, "\partial"] & \text{co-ker}(\iota - \Phi_{0}) \arrow[r] & \mathbb{Z}^{h_{R}} \arrow[r] & 0.
\end{tikzcd}$$
Therefore, $\text{co-kernel}(\iota-\Phi_{0})\simeq V_{R}/\text{im}(\hat{H}_{R})\oplus\mathbb{Z}^{h_{R}}$, via an isomorphim sending the class of $u$ to the class of $1_{J_{R}}$. Since $\text{im}([H_{R}]) = q^{-1}(\text{im}(\hat{H}_{R}))$, where $q: \mathbb{Z}^{[C_{R,J_{R}}]}\oplus\mathbb{Z}/\mapsto V_{R}$ is the quotient map, it follows that $V_{R}/\text{im}(\hat{H}_{R})\simeq\text{co-ker}([H_{R}])$ via an isomorphism mapping the class of $u$ to the class of $u$.
\end{proof}
As a Corollary to our above calculations, we can describe the $K$-theory of $\mathcal{O}_{R,J_{R}}$.
\begin{thm}
\label{jk}
Let $R$ be a rational function of degree $d>1$. Then, $K_{1}(\mathcal{O}_{R,J_{R}})\simeq\text{ker}(H_{R})\oplus\mathbb{Z}/\omega_{R}\mathbb{Z}\oplus\mathbb{Z}^{|f_{R}-1|}$ and $K_{0}(\mathcal{O}_{R,J_{R}})\simeq \text{co-ker}(H_{R})\oplus\mathbb{Z}^{|f_{R} +h_{R} -1|}$, with class of the unit corresponding to the class of basis element $u$ in $\text{co-ker}(H_{R})$.
\end{thm}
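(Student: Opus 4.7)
The plan is to apply the Pimsner--Voiculescu six-term exact sequence from Proposition \ref{cpexact} to the $C(J_R)$-$C(J_R)$ correspondence $(E_{R,J_R}, \alpha_{J_R})$, whose ideal of compact-action elements is $C_0(J_R \setminus C_{R,J_R})$. Exactness produces, for $i = 0, 1$, short exact sequences
\[
0 \to \text{coker}(\iota - \hat{\otimes}_i[\mathcal{E}_{R,J_R}]) \to K_i(\mathcal{O}_{R,J_R}) \to \text{ker}(\iota - \hat{\otimes}_{1-i}[\mathcal{E}_{R,J_R}]) \to 0,
\]
both of which will split since their rightmost groups are free abelian (as subgroups of $C_0(J_R\setminus C_{R,J_R}, \mathbb{Z})$ or $K^{-1}(J_R \setminus C_{R,J_R})$ when $J_R \neq \hat{\mathbb{C}}$, or by direct inspection using Corollary \ref{mainbott} otherwise).

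Next I would assemble the kernel and cokernel computations. Proposition \ref{k1j} already handles $\iota - \hat{\otimes}_1[\mathcal{E}_{R,J_R}]$ in both regimes. For the degree-zero map with $J_R \neq \hat{\mathbb{C}}$, the trace isomorphism of Proposition \ref{Triso} identifies $\iota - \hat{\otimes}_0[\mathcal{E}_{R,J_R}]$ with $\iota - \Phi_0$ acting on integer-valued functions, whose kernel and cokernel are computed in Proposition \ref{transfer0} in terms of the reduced matrix $[H_R]$ (rows indexed by $[C_{R,J_R}] \cup \{u\}$), with the class of $1_{J_R}$ in the cokernel corresponding to $u$. When $J_R = \hat{\mathbb{C}}$ the trace is no longer an isomorphism, but Proposition \ref{Cmap} shows that $\hat{\otimes}_0[\mathcal{E}_{R,\hat{\mathbb{C}}}]$ equals the inclusion map, so the map vanishes; Corollary \ref{mainbott} gives its kernel as $\mathbb{Z}$ and cokernel as $K^0(\hat{\mathbb{C}}) \simeq \mathbb{Z}^2$.

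Substituting into the split Pimsner--Voiculescu sequences when $J_R \neq \hat{\mathbb{C}}$ yields
\[
K_0(\mathcal{O}_{R,J_R}) \simeq \text{coker}([H_R]) \oplus \mathbb{Z}^{h_R} \oplus \mathbb{Z}^{f_R + c_{R,J} - k_{R,J} - 1},
\]
and $K_1(\mathcal{O}_{R,J_R}) \simeq \mathbb{Z}/\omega_R\mathbb{Z} \oplus \mathbb{Z}^{f_R - 1} \oplus \text{ker}([H_R])$, where $k_{R,J} := |[C_{R,J_R}]|$. To recast these in terms of the full matrix $H_R$, I would use that any two critical points in the same $\sim$-class contribute identical rows of $H_R$, so $\text{ker}(H_R) \simeq \text{ker}([H_R])$ and $\text{coker}(H_R) \simeq \text{coker}([H_R]) \oplus \mathbb{Z}^{c_{R,J} - k_{R,J}}$, with the class of $u$ unchanged; combining with the $\mathbb{Z}^{f_R + c_{R,J} - k_{R,J} - 1}$ summand above collapses to $\text{coker}(H_R) \oplus \mathbb{Z}^{f_R + h_R - 1}$, matching the formula. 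The case $J_R = \hat{\mathbb{C}}$ enters with $f_R = h_R = 0$, $\omega_R = 1$, absolute-value exponents equal to $1$, and a direct check that $\text{coker}(H_R) \simeq \mathbb{Z}^{c_{R,J}}$ and $\text{ker}(H_R) = 0$, so the formulas again yield what the Pimsner--Voiculescu computation produced.

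The statement about the class of the unit follows by tracing $[1_{J_R}]$ through $i_*: K_0(C(J_R)) \to K_0(\mathcal{O}_{R,J_R})$: it lands in the $\text{coker}$ summand of the $K_0$ splitting, where Proposition \ref{transfer0}(2) identifies it with the class of $u$ in $\text{coker}([H_R])$, hence with $u$ in $\text{coker}(H_R)$. The hard work has already been compressed into Propositions \ref{k1j} and \ref{transfer0}; the remaining obstacle in this proof is simply packaging the data uniformly across the two cases $J_R = \hat{\mathbb{C}}$ and $J_R \neq \hat{\mathbb{C}}$, since the trace map and the role of $H_R$ play subtly different roles in the exceptional case.
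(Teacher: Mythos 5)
Your proof is correct and follows essentially the same route as the paper's: extract split short exact sequences from the Pimsner--Voiculescu six-term sequence, identify the degree-zero map with $\iota - \Phi_0$ via the trace isomorphism of Proposition~\ref{Triso}, plug in Propositions~\ref{k1j} and~\ref{transfer0}, and pass from $[H_R]$ to $H_R$ by absorbing the extra $\mathbb{Z}^{c_{R,J}-k_{R,J}}$ summand. The only cosmetic difference is that the paper simply cites Theorem~\ref{CK} for the $J_R = \hat{\mathbb{C}}$ case rather than rederiving it from Proposition~\ref{Cmap} and Corollary~\ref{mainbott} as you do.
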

\begin{proof}
First, suppose $J_{R} = \hat{\mathbb{C}}$. By Theorem \ref{CK}, we have $K_{1}(\mathcal{O}_{R,J_{R}})\simeq \mathbb{Z}$ and $K_{0}(\mathcal{O}_{R,J_{R}})\simeq\mathbb{Z}^{c_{R,J}+1}$, with class of the unit corresponding to a generator in a minimal generating set for $\mathbb{Z}^{c_{R,J}+1}$.
\par
In this case, $H_{R}$ is the mapping $\mathbb{Z}\mapsto \mathbb{Z}^{C_{R,J_{R}}}\oplus\mathbb{Z}$ sending $u$ to $(d-1)u +\sum_{c\in C_{R, J_{R}}}c$. Hence, $\text{ker}(H_{R}) = 0$ and $\text{co-ker}(H_{R})\simeq\mathbb{Z}^{c_{R,J_{R}}}$, with the class of $u$ corresponding to a generator in a minimal generating set for $\mathbb{Z}^{c_{R,J}}$.
\par
$\mathbb{Z}^{|f_{R} + h_{R} - 1|} = \mathbb{Z}^{|f_{R}-1|} = \mathbb{Z}$ and $\mathbb{Z}/\omega_{R}\mathbb{Z} = 0$, so that $\text{ker}(H_{R})\oplus \mathbb{Z}/\omega_{R}\mathbb{Z}\oplus \mathbb{Z}^{|f_{R}-1|}\simeq \mathbb{Z}$ and $\text{co-ker}(H_{R})\oplus\mathbb{Z}^{|f_{R} +h_{R} -1|}\simeq\mathbb{Z}^{c_{R,J} +1}$, with the class of $u$ corresponding to a generator in a minimal generating set for $\mathbb{Z}^{c_{R,J} +1}$.
\par
Now, let us assume $J_{R}\neq\hat{\mathbb{C}}$. By Proposition \ref{Triso}, we therefore have $\text{ker}(\iota - \hat{\otimes_{0}}[\mathcal{E}_{R,J_{R}}])\simeq \text{ker}(\iota - \Phi_{0})$ and $\text{co-ker}(\iota - \hat{\otimes}_{0}[\mathcal{E}_{R,J_{R}}])\simeq\text{co-ker}(\iota - \Phi_{0})$, with the class of the unit corresponding to the class of $1_{J_{R}}$. By the Pimsner-Voiculescu 6-term exact sequence and the above isomorphisms, we have $K_{1}(\mathcal{O}_{R,J_{R}})\simeq \text{ker}(\iota - \Phi_{0})\oplus\text{co-ker}(\iota - \hat{\otimes}_{1}[\mathcal{E}_{R,J_{R}}])$ and 
$K_{0}(\mathcal{O}_{R,J_{R}})\simeq \text{ker}(\iota - \hat{\otimes}_{1}[\mathcal{E}_{R,J_{R}}])\oplus \text{co-ker}(\iota - \Phi_{0})$, with the class of the unit in $K_{0}$ corresponding to the class of $1_{J_{R}}$ in $\text{co-ker}(\iota - \Phi_{0})$.
\par
By Proposition \ref{k1j} and Proposition \ref{transfer0}, we have $\text{ker}(\iota - \Phi_{0})\oplus\text{co-ker}(\iota - \hat{\otimes}_{1}[\mathcal{E}_{R,J_{R}}])\simeq \text{ker}([H_{R}])\oplus \mathbb{Z}/\omega_{R}\mathbb{Z}\oplus\mathbb{Z}^{f_{R}-1}$ and $\text{ker}(\iota - \hat{\otimes}_{1}[\mathcal{E}_{R,J_{R}}])\oplus \text{co-ker}(\iota - \Phi_{0})\simeq \mathbb{Z}^{(f_{R} -1) + (c_{R,J} - k_{R,J})}\oplus\mathbb{Z}^{h_{R}}\oplus\text{co-ker}([H_{R}])$, with the class of $1_{J_{R}}$ in $\text{co-ker}(\iota - \Phi_{0})$ corresponding to the class of $u$ in $\text{co-ker}([H_{R}])$.
\par
Since $(H_{R})_{c,Q} = (H_{R})_{d,Q} = ([H_{R}])_{[c],Q}$ for all $Q$ in $\mathcal{H}_{R}$ and $(H_{R})_{c,u} = (H_{R})_{d,u} = ([H_{R}])_{[c],u}$ for every $c\sim d$, we have that $\text{ker}(H_{R}) = \text{ker}([H_{R}])$ and $\text{co-ker}(H_{R})\simeq\mathbb{Z}^{c_{R,J}-k_{R,J}}\oplus \text{co-ker}([H_{R}])$, with the class of $u$ in $\text{co-ker}(H_{R})$ corresponding to the class of $u$ in $\text{co-ker}([H_{R}])$. Therefore, $\text{ker}([H_{R}])\oplus \mathbb{Z}/\omega_{R}\mathbb{Z}\oplus\mathbb{Z}^{f_{R}-1}\simeq \text{ker}(H_{R})\oplus\mathbb{Z}/\omega_{R}\mathbb{Z}\oplus\mathbb{Z}^{|f_{R}-1|}$ and $\mathbb{Z}^{(f_{R} -1) + (c_{R,J} - k_{R,J})}\oplus\mathbb{Z}^{h_{R}}\oplus\text{co-ker}([H_{R}])\simeq \text{co-ker}(H_{R})\oplus\mathbb{Z}^{|f_{R} +h_{R} -1|}$ with the class of $u$ in $\text{co-ker}[(H_{R}])$ corresponding to the class of $u$ in $\text{co-ker}(H_{R})$.
\end{proof}

\section{Applications}
\label{app}
\subsection{A conjugacy invariant for rational functions}
\label{appconj}

By the \textit{Fatou cycle length data of $R$} we shall mean the tuple $L_{R} = (|P|)_{P\in \mathcal{F}_{R}}$, where the entries are ordered in non-decreasing order. Similarly, the \text{Herman cycle length data of $R$} shall mean the the tuple $T_{R} = (|Q|)_{Q\in\mathcal{H}_{R}}$ with entries also ordered in non-decreasing order.  We show that $L_{R}$ and $T_{R}$ are conjugacy invariants for $R:J_{R}\mapsto J_{R}$ amongst all rational functions. This is not surprising due to the rigid nature of rational dynamics, but it is not clear how to prove it directly by a dynamical argument.
\par
Since $J_{R^{\circ n}} = J_{R}$ for any $n$ in $\mathbb{N}$ (\cite[Lemma~4.4]{Milnor:Dynamics_in_one_complex_variable}), $R$ and $S$ being conjugate on their Julia sets also implies $R^{\circ n}$ and $S^{\circ n}$ are conjugate on their Julia sets, for all $n$ in $\mathbb{N}$. Therefore, $\text{co-ker}(\iota-\hat{\otimes}_{1}[\mathcal{E}_{R^{\circ n},J_{R}}])\simeq \text{co-ker}(\iota-\hat{\otimes}_{1}[\mathcal{E}_{S^{\circ n},J_{S}}])$ and $\text{ker}(\text{id}-\Phi_{R^{\circ n}})\simeq \text{ker}(\text{id}-\Phi_{S^{\circ n}})$ for all $n$ in $\mathbb{N}$. By Proposition \ref{k1j} and Proposition \ref{k0nc}, it follows that $f_{R^{\circ n}} = f_{S^{\circ n}}$ and $h_{R^{\circ n}} = h_{S^{\circ n}}$ for all $n$ in $\mathbb{N}$. 
\par
We will now show that the sequences $\{f_{R^{\circ n}}\}_{n\in\mathbb{N}}$ and $\{h_{R^{\circ n}}\}_{n\in\mathbb{N}}$ are equivalent to the Fatou and Herman cycle length data of $R$. respectively.
\par
The first observation to make is that every cycle of $R^{\circ n}$ must be contained in a cycle $P$ for $R$ of the same type, and that the number of distinct cycles of $R^{\circ n}$ contained in $P$ in $\mathcal{F}_{R}$ is equal to the greatest common divisor between $|P|$ and $n$, denoted $(|P|,n)$. Moreover, each such cycle has length $\frac{|P|}{(|P|, n)}$.
Hence, $f_{R^{\circ n}} = \sum_{P\in\mathcal{F}_{R}}(|P|,n)$ and $h_{R^{\circ n}} = \sum_{Q\in\mathcal{H}_{R}}(|Q|,n)$.
\par
The rest of the argument showing equivalency is contained in a relevant lemma about elementary number theory. First, define 
$$\mathcal{A} = \{(a_{1},...,a_{k})\in\mathbb{N}^{k}:k\in \mathbb{N}\text{ }, a_{i}\leq a_{i+1}\text{ }\forall i\leq k-1\}.$$ 
For $A = (a_{1},...,a_{k})$ in $\mathcal{A}$ and $n$ in $\mathbb{N}$, define $(A,n) = \sum_{i=1}^{k}(a_{i},n)$.
\begin{lemma}
\label{number}
Suppose $A$, $B$ are tuples in $\mathcal{A}$. If $(A,n) = (B,n)$ for all $n$ in $\mathbb{N}$, then $A = B$.
\end{lemma}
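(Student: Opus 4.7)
The plan is to reduce the problem, via M\"obius inversion, to the statement that a finite multiset of positive integers is determined by the function counting how many of its members are divisible by a given integer $d$. The key identity is $(a,n)=\sum_{d\mid (a,n)}\phi(d)$, which follows from $\sum_{d\mid m}\phi(d)=m$ applied to $m=(a,n)$. Summing this over the entries of $A=(a_{1},\dots,a_{k})$ gives
\[
(A,n)\;=\;\sum_{i=1}^{k}\sum_{\substack{d\mid a_{i}\\ d\mid n}}\phi(d)\;=\;\sum_{d\mid n}\phi(d)\,m_{A}(d),
\]
where $m_{A}(d):=|\{i: d\mid a_{i}\}|$. So if we set $f_{A}(n)=(A,n)$ and $g_{A}(d)=\phi(d)m_{A}(d)$, we have $f_{A}(n)=\sum_{d\mid n}g_{A}(d)$.

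Assuming $(A,n)=(B,n)$ for all $n\in\mathbb{N}$, i.e.\ $f_{A}=f_{B}$, M\"obius inversion yields $g_{A}(n)=\sum_{d\mid n}\mu(n/d)f_{A}(d)=g_{B}(n)$ for every $n$. Since $\phi(d)>0$ for all $d\geq 1$, this forces $m_{A}(d)=m_{B}(d)$ for every positive integer $d$. Thus it suffices to show that the multiset underlying an element of $\mathcal{A}$ is recovered from its divisibility-counting function $m$.

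For this, let $N(a)$ denote the multiplicity of the value $a$ in the multiset associated to $A$. First observe that if $M$ is strictly greater than every entry of $A$, then $m_{A}(M)=0$; conversely, if $M$ is the largest entry of $A$, then $m_{A}(M)=N(M)\geq 1$ because $M\mid a_{i}$ together with $a_{i}\leq M$ forces $a_{i}=M$. Hence $\max A=\max\{d: m_{A}(d)>0\}$, and $N(\max A)=m_{A}(\max A)$, so the top value and its multiplicity are determined by $m_{A}$, and the same holds for $B$, giving equality at the maximum. Proceeding by downward induction on $a$: once $N(a')$ is known for all $a'>a$, the identity
\[
m_{A}(a)\;=\;\sum_{a'\,:\,a\mid a'}N(a')\;=\;N(a)+\!\!\sum_{\substack{a'>a\\a\mid a'}}\!N(a')
\]
determines $N(a)$. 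Therefore $m_{A}=m_{B}$ implies $A$ and $B$ have the same underlying multiset, and since both tuples are listed in non-decreasing order, $A=B$.

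There is no real obstacle here: the only step that requires a moment of care is the downward induction, which terminates because the supports of $m_{A}$ and $m_{B}$ are finite subsets of $\mathbb{N}$.
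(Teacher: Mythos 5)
Your proof is correct, and it takes a genuinely different route from the paper's. The paper proceeds by induction on the tuple length $k$, working prime by prime: it shows the prime support and maximal exponents agree, forms the normalized difference quotient $\bigl((C,p^{n_p}m)-(C,p^{n_p-1}m)\bigr)/(p^{n_p}-p^{n_p-1})$ to extract the subtuple of entries divisible by $p^{n_p}$, and applies the inductive hypothesis to both that subtuple and its complement. Your argument instead linearizes the whole problem at once: writing $(A,n)=\sum_{d\mid n}\phi(d)\,m_A(d)$ with $m_A(d)=|\{i:d\mid a_i\}|$ puts $(A,\cdot)$ and $m_A$ in M\"obius correspondence, so $(A,\cdot)=(B,\cdot)$ forces $m_A=m_B$ outright (using $\phi>0$), and then a short downward induction on $a$ recovers the multiplicity function from $m$. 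The two approaches trade-off as follows: yours invokes a modest amount of classical multiplicative number theory (the $\sum_{d\mid m}\phi(d)=m$ identity and M\"obius inversion) in exchange for eliminating the nested induction and all bookkeeping with prime-power subtuples; the paper's argument is self-contained and elementary but substantially more intricate. Your version also isolates a reusable intermediate statement — that a finite multiset of positive integers is determined by its divisibility-counting function $m_A$ — which the paper's proof leaves implicit inside the induction.
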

\begin{proof}
Note that the tuple length of $A$ is equal to $(A,1)$. We will prove this lemma via induction on the tuple length $k = (A,1) = (B,1)$.
\par
If $k = 1$, then $a_{1} = \max_{n\in\mathbb{N}}(A,n) = \max_{n\in\mathbb{N}}(B,n) = b_{1}$, and the lemma is proved.
\par
Now, suppose $k>1$ and we know the lemma is true for all $m\leq k-1$. We prove for $m = k$, but first we will set some notation.
\par
For $a,b$ in $\mathbb{N}$, let $a|b$ mean $a$ divides $b$, and $a\not| \; b$ mean $a$ doesn't divide $b$. For $C$ in $\mathcal{A}$ such that $C = (c_{1},...,c_{k})$, let $\mathcal{P}_{C}$ denote the set of all primes appearing in the numbers $\{c_{i}\}_{i=1}^{k}$. For each $i\leq k$ and $p$ in $\mathcal{P}_{C}$, let $n_{p,C,i} = \text{max}\{l\in\mathbb{N}_{0}:p^{l}|c_{i}\}$. Hence, $c_{i} = \Pi_{p\in \mathcal{P}_{C}}p^{n_{p,C,i}}$ for all $i\leq k$.
\par
Let $n_{p,C} = \max_{i\leq k}n_{p,C,i}$. The smallest number $l$ for which $(C,l) = \max_{n\in\mathbb{N}}(C,n) = \sum_{i=1}^{k}c_{i}$ is denoted $m_{C}$, and it is easy to see it is equal to $\Pi_{p\in \mathcal{P}_{C}}p^{n_{p,C}}$.
\par
Now, suppose $A$, $B$ in $\mathcal{A}$ are such that $(A,n) = (B,n)$ for all $n$ in $\mathbb{N}$ and $(A,1) = (B,1) = k$. Then,
$\Pi_{p\in \mathcal{P}_{A}}p^{n_{p,A}} = m_{A} = m_{B} = \Pi_{p\in \mathcal{P}_{B}}p^{n_{p,B}}$, so $\mathcal{P}_{A} = \mathcal{P}_{B}=:\mathcal{P}$ and, for every $p$ in $\mathcal{P}$, we have $n_{p,A} = n_{p,B}=:n_{p}$.
\par
Now, for $C = A$ and $C = B$, for every $m$ in $\mathbb{N}$, and $p$ in $\mathcal{P}$, we have 
\begin{equation*}
\begin{split}
&(C, p^{n_{p}}m) - (C, p^{n_{p}-1}m) =\\
&\sum_{i=1}^{k}p^{n_{p, C,i}}(\frac{c_{i}}{p^{n_{p,C,i}}}, m) - \sum_{i=1}^{k}p^{\min\{n_{p,C,i},n_{p}-1\}}(\frac{c_{i}}{p^{\min\{n_{p,C,i},n_{p}-1\}}}, m) =\\
&\sum_{i:p^{n_{p}}|c_{i}}p^{n_{p}}(\frac{c_{i}}{p^{n_{p}}}, m) - \sum_{i:p^{n_{p}}|c_{i}}p^{n_{p}-1}(\frac{c_{i}}{p^{n_{p}-1}}, m)
\end{split}
\end{equation*}

So, if we let $C_{p}$ be the ordered tuple $(\frac{c_{i}}{p^{n_{p}}})_{i:p^{n_{p}}|c_{i}}$, and for every $m$ in $\mathbb{N}$, let $m_{p} = \frac{m}{p^{l}}$, where $l = \text{max}\{s\in\mathbb{N}_{0}:p^{s}|m\}$, then
$$ \frac{(C,p^{n_{p}}m_{p}) - (C, p^{n_{p}-1}m_{p})}{p^{n_{p}}-p^{n_{p}-1}} = (C_{p},m),\text{ for every }m \text{ in }\mathbb{N}.$$
Thus, $(A_{p},m) = (B_{p},m)$ for all $p$ in $\mathcal{P}$ and $m$ in $\mathbb{N}$. There are two cases.
\par
The first case is that $k = (A_{p},1) = (B_{p},1)$ for all $p$ in $\mathcal{P}$, in which case $a_{i} = m_{A} = m_{B} = b_{i}$ for all $i\leq k$, and the lemma is proved.
\par
The second case is that $k > (A_{p},1) = (B_{p},1)$ for some $p$ in $\mathcal{P}$. By induction, it follows that $A_{p} = B_{p}$, and hence $A'_{p} := (a_{i})_{i:p^{n_{p}}|a_{i}} = (b_{i})_{i:p^{n_{p}}|b_{i}} =: B'_{p}$. For $C = A$ and $C = B$, denote $C^{p} = (c_{i})_{i:p^{n_{p}}\not\text{ } | \text{ }c_{i}}$. Then, for every $m$ in $\mathbb{N}$, we have
$$(A^{p},m) = (A,m) - (A'_{p},m) = (B,m) - (B'_{p},m) = (B^{p},m).$$
By induction, it follows that $A^{p} = B^{p}$. Hence, $A = B$.
\end{proof}
\begin{cor}
\label{appconjboot}
Let $R$ and $S$ be rational functions. If $R$ and $S$ are conjugate on their Julia sets, then $L_{R} = L_{S}$ and $T_{R} = T_{S}$.
\end{cor}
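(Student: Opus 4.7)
The plan is to upgrade the single topological conjugacy on Julia sets to a family of conjugacies for every iterate, extract the Fatou and Herman cycle counts from the $C^{*}$-correspondences as isomorphism invariants, and then invoke the arithmetic Lemma~\ref{number} on the resulting sequences.

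First, since $J_{R^{\circ n}} = J_R$ for every $n$ by \cite[Lemma~4.4]{Milnor:Dynamics_in_one_complex_variable}, the given conjugacy $\varphi: J_R \to J_S$ with $\varphi \circ R = S \circ \varphi$ also conjugates $R^{\circ n}|_{J_R}$ with $S^{\circ n}|_{J_S}$ for every $n \in \mathbb{N}$. By Corollary~\ref{conjugacyiso}, $\varphi$ induces an isomorphism of correspondences $(E_{R^{\circ n}, J_R}, \alpha) \simeq (E_{S^{\circ n}, J_S}, \alpha)$. I would then extract $f_{R^{\circ n}}$ and $h_{R^{\circ n}}$ purely from these correspondences: Proposition~\ref{k0nc}(1) identifies $h_{R^{\circ n}}$ as the rank of $\text{ker}(\text{id} - \Phi_{R^{\circ n}})$ on $C(J_R, \mathbb{Z})$, which by Proposition~\ref{Triso} is the $K$-theoretic pushforward by $[\mathcal{E}_{R^{\circ n}, J_R}]$ and hence preserved by any isomorphism of correspondences; Proposition~\ref{k1j} shows that the free rank of $\text{co-ker}(\iota - \hat{\otimes}_1[\mathcal{E}_{R^{\circ n}, J_R}])$ equals $f_{R^{\circ n}} - 1$ when $J_R \neq \hat{\mathbb{C}}$ (and the case $J_R = \hat{\mathbb{C}}$ is vacuous, since then $\mathcal{F}_R = \mathcal{H}_R = \emptyset$). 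Thus $f_{R^{\circ n}} = f_{S^{\circ n}}$ and $h_{R^{\circ n}} = h_{S^{\circ n}}$ for every $n$.

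The combinatorial bridge is that each Fatou cycle $P$ of $R$ splits under $R^{\circ n}$ into exactly $(|P|, n)$ cycles, each of length $|P|/(|P|, n)$; since the rotation number of $R^{\circ |P|}|_U$ scales by the nonzero integer $|P|/(|P|,n)$ under passing to the appropriate iterate, Herman-ness is preserved and the same splitting applies to Herman cycles. This yields $f_{R^{\circ n}} = \sum_{P \in \mathcal{F}_R}(|P|, n)$ and $h_{R^{\circ n}} = \sum_{Q \in \mathcal{H}_R}(|Q|, n)$, with the analogous formulas for $S$. With $(L_R, n) = (L_S, n)$ and $(T_R, n) = (T_S, n)$ for all $n$ in hand, Lemma~\ref{number} delivers $L_R = L_S$ and $T_R = T_S$.

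The only conceptual move is translating a single topological conjugacy into isomorphic correspondences at every iterate level, which is immediate from Corollary~\ref{conjugacyiso} applied to $R^{\circ n}$ and $S^{\circ n}$. Some minor care will be needed to separate the free rank of $\text{co-ker}(\iota - \hat{\otimes}_1[\mathcal{E}])$ from the torsion contribution $\mathbb{Z}/\omega_{R^{\circ n}}\mathbb{Z}$, but no serious technical obstacle is expected; the real content lies in the preparatory computations of Section~\ref{ratmapj} and in Lemma~\ref{number}.
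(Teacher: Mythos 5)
Your proposal is correct and follows essentially the same route as the paper: pass from the single conjugacy to conjugacies of all iterates on the (common) Julia set, invoke Corollary~\ref{conjugacyiso} to get isomorphisms of the correspondences for $R^{\circ n}$ and $S^{\circ n}$, read off $h_{R^{\circ n}}$ from the rank of $\ker(\operatorname{id}-\Phi_{R^{\circ n}})$ (Proposition~\ref{k0nc}(1)) and $f_{R^{\circ n}}$ from the free rank of $\operatorname{co-ker}(\iota-\hat{\otimes}_{1}[\mathcal{E}_{R^{\circ n},J_R}])$ (Proposition~\ref{k1j}), then use the cycle-splitting identity $f_{R^{\circ n}}=\sum_{P\in\mathcal{F}_R}(|P|,n)$ and its Herman analogue to reduce to Lemma~\ref{number}. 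The one cosmetic remark: the detour through Proposition~\ref{Triso} to argue invariance of $\Phi$ is unnecessary, since the isomorphism $\varphi^*:C(J_S,\mathbb{Z})\to C(J_R,\mathbb{Z})$ intertwines the transfer operators directly once Corollary~\ref{conjugacyiso} guarantees $\operatorname{ind}_S\circ\varphi=\operatorname{ind}_R$; but this does not affect the correctness of the argument.
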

\begin{proof}
As remarked above Lemma \ref{number}, $R$ and $S$ conjugate on $J$ implies $(L_{R},n) = (L_{S},n)$ and $(T_{R},n) = (T_{S},n)$, for all $n$ in $\mathbb{N}$. Lemma \ref{number} then implies that $L_{R} = L_{S}$ and $T_{R} = T_{S}$.
\end{proof}
\begin{cor}
\label{apphgroup}
Let $R$ and $S$ be rational functions. If $R$ and $S$ are topologically conjugate on their Julia sets, then $\text{ker}(H_{R})\simeq \text{ker}(H_{S})$ and there is a bijection $b:C_{R,J_{R}}\cup\{u\}\mapsto C_{S,J_{S}}\cup\{u\}$ mapping $u$ to $u$ and inducing an isomorphism $b:\text{co-ker}(H_{R})\mapsto \text{co-ker}(H_{S})$.
\end{cor}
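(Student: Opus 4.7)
The plan is to use the topological conjugacy $\varphi\colon J_R\to J_S$ to induce the bijection $b$, and to read off the required isomorphism from the explicit basis of $(\text{id}-\Phi)^{-1}(\mathbb{Z}[1_{J_R}])$ supplied by Proposition \ref{k0nc}. By Corollary \ref{conjugacyiso}, the intertwining $S\circ\varphi=\varphi\circ R$ forces $\text{ind}_S\circ\varphi=\text{ind}_R$, so $\varphi$ restricts to a bijection $C_{R,J_R}\to C_{S,J_S}$. First I would define $b$ by $b|_{C_{R,J_R}}=\varphi$ and $b(u)=u$. The identity of indices also gives $\Phi_R\circ\varphi^{*}=\varphi^{*}\circ\Phi_S$ on $C(J_S,\mathbb{Z})$, so $\varphi^{*}$ restricts to a group isomorphism $(\text{id}-\Phi_S)^{-1}(\mathbb{Z}[1_{J_S}])\to(\text{id}-\Phi_R)^{-1}(\mathbb{Z}[1_{J_R}])$ sending $1_{J_S}$ to $1_{J_R}$.

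By Proposition \ref{k0nc}(1), both $\{H_Q\}_{Q\in\mathcal{H}_R}\cup\{1_{J_R}\}$ and $\{\varphi^{*}H_{Q'}\}_{Q'\in\mathcal{H}_S}\cup\{1_{J_R}\}$ are $\mathbb{Z}$-bases of the same group. Comparing ranks gives $h_R=h_S$, and there exist an integer matrix $A=(a_{Q,Q'})$ in $GL_{h_R}(\mathbb{Z})$ and integers $\{n_{Q'}\}$ with
\[
\varphi^{*}(H_{Q'})=n_{Q'}\cdot 1_{J_R}+\sum_{Q\in\mathcal{H}_R}a_{Q,Q'}\,H_Q
\]
as functions on $J_R$. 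For each fixed $Q\in\mathcal{H}_R$, I would take $(\mathbf{b}_{Q'})_{Q'}$ to be the $Q$-th row of $A^{-1}$ and set $k:=-\sum_{Q'}\mathbf{b}_{Q'}n_{Q'}$; inverting yields $H_Q(x)=\sum_{Q'}\mathbf{b}_{Q'}H_{Q'}(\varphi(x))+k$ for every $x\in J_R$. Applying $\Phi_R-\text{id}$ to this identity and using $\Phi_R\circ\varphi^{*}=\varphi^{*}\circ\Phi_S$ automatically produces the matching $u$-coordinate relation $\Phi_R(H_Q)-H_Q=\sum_{Q'}\mathbf{b}_{Q'}(\Phi_S(H_{Q'})-H_{Q'})+k(\text{deg}(R)-1)$.

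The bookkeeping step is to verify that $b$ maps each column of $H_R$ into $\text{im}(H_S)$. Since $\text{deg}(R)=\text{deg}(S)$ is a conjugacy invariant, the $u$-column of $H_R$ is sent to the $u$-column of $H_S$; and for $Q\in\mathcal{H}_R$, the two identities above combine to give $b(\text{col}_Q(H_R))=\sum_{Q'}\mathbf{b}_{Q'}\,\text{col}_{Q'}(H_S)+k\cdot\text{col}_u(H_S)$, which lies in $\text{im}(H_S)$. The symmetric argument with $R$ and $S$ swapped yields the reverse inclusion, so $b$ descends to the desired isomorphism $\text{co-ker}(H_R)\to\text{co-ker}(H_S)$, and $b(u)=u$ by construction.

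For the kernel, $\varphi^{*}$ also restricts to an isomorphism $\text{ker}(\iota-\Phi_{0,S})\simeq\text{ker}(\iota-\Phi_{0,R})$ (because $\varphi$ sends $C_{R,J_R}$ bijectively to $C_{S,J_S}$), which by Proposition \ref{transfer0}(1) gives $\text{ker}([H_R])\simeq\text{ker}([H_S])$. A short linear-algebra check, using that each $H_Q$ is constant on each $\sim$-equivalence class of $C_{R,J_R}$ and therefore that the coefficients of any image $H_R(g)$ agree on $\sim$-related critical points, shows $\text{ker}(H_R)=\text{ker}([H_R])$ and similarly for $S$; chaining the isomorphisms gives $\text{ker}(H_R)\simeq\text{ker}(H_S)$. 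The main obstacle is the matching of $u$-coordinates in the previous step: the coefficients $(\mathbf{b}_{Q'},k)$ are extracted from the critical-point rows alone, and the reason they automatically satisfy the $u$-row identity is that the basis expansion of $\varphi^{*}H_{Q'}$ is valid on all of $J_R$, so applying the intertwining $\Phi_R\circ\varphi^{*}=\varphi^{*}\circ\Phi_S$ forces simultaneous compatibility.
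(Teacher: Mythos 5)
Your proof is correct, and for the co-kernel part it takes a genuinely different route from the paper. Both arguments handle the kernel identically: the intertwining $(\iota-\Phi_0^R)\circ\varphi^* = \varphi^*\circ(\iota-\Phi_0^S)$ combined with Proposition \ref{transfer0}(1) gives $\ker([H_R])\simeq\ker([H_S])$, and the observation that $H_Q$ is constant on $\sim$-classes yields $\ker(H_R)=\ker([H_R])$. For the co-kernel, however, the paper works through the reduced matrix $[H_R]$: it unwinds the Snake Lemma boundary map $\partial^R\colon V_R\to\text{co-ker}(\iota-\Phi_0^R)$ from the proof of Proposition \ref{transfer0}, shows $\varphi^*$ intertwines $\partial^S$ with $\partial^R$ on generators, and then asserts it is "routine" to lift from $\text{co-ker}([H_R])$ to $\text{co-ker}(H_R)$. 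You instead invoke Proposition \ref{k0nc}(1) to get two $\mathbb{Z}$-bases $\{H_Q\}_{Q\in\mathcal{H}_R}\cup\{1_{J_R}\}$ and $\{\varphi^*H_{Q'}\}_{Q'\in\mathcal{H}_S}\cup\{1_{J_R}\}$ of $(\text{id}-\Phi_R)^{-1}(\mathbb{Z}[1_{J_R}])$, read off the triangular change-of-basis matrix (which also gives $h_R=h_S$), invert it, and verify row-by-row that $b$ carries each column of $H_R$ into $\text{im}(H_S)$, with the $u$-row relation falling out by applying $\Phi_R-\text{id}$ and using $\Phi_R\circ\varphi^*=\varphi^*\circ\Phi_S$. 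This sidesteps the boundary-map machinery entirely and works directly with $H_R$ rather than $[H_R]$, which avoids the unstated lifting step in the paper. The cost is more index bookkeeping; the gain is that every step is an explicit identity rather than a diagram chase, and the symmetry-in-$R,S$ argument for the reverse inclusion is cleaner than the paper's appeal to "routine." I would only suggest spelling out, when you assert $\Phi_R\circ\varphi^*=\varphi^*\circ\Phi_S$, that it uses both $S\circ\varphi=\varphi\circ R$ and $\text{ind}_S\circ\varphi=\text{ind}_R$ (the latter from Corollary \ref{conjugacyiso}), and that $\Phi_S(H_{Q'})-H_{Q'}$ being constant is what lets $\varphi^*$ pass through it in the $u$-row identity.
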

\begin{proof}
 We will denote by $\iota - \Phi_{0}^{R} := \iota - \Phi_{0}:C_{0}(J_{R}\setminus C_{R,J_{R}},\mathbb{Z})\mapsto C(J_{R},\mathbb{Z})$. Let $\varphi:J_{R}\mapsto J_{S}$ be a homeomorphism such that $S\circ \varphi = \varphi\circ R$. Then, $(\iota - \Phi_{0}^{R})\circ \varphi^{*} = \varphi^{*}\circ (\iota - \Phi_{0}^{S})$ so Proposition \ref{transfer0} implies $\text{ker}([H_{S}])\simeq \text{ker}(\iota - \Phi_{0}^{S})\simeq \text{ker}(\iota - \Phi_{0}^{R})\simeq \text{ker}([H_{R}])$. Since $\text{ker}([H_{R}])\simeq\text{ker}(H_{R})$ for any rational function $R$, this proves the first claim.
 \par
 $\varphi:J_{R}\mapsto J_{S}$ restricts to a bijection $b:C_{R,J_{R}}\mapsto C_{S,J_{S}}$, so if $Y_{b(c)}$ is a connected component of $J_{S}$ containing only $[b(c)]$, for $c$ in $J_{R}$, then $Y_{c}:=\varphi^{-1}(Y_{b(c)})$ is a connected component of $J_{R}$ containing only $[c]$. It follows then by the definition of the map $\partial^{R} = \partial:V_{R}\mapsto\text{co-ker}(\iota - \Phi^{R}_{0})$ in the proof of Proposition \ref{transfer0} that $\varphi^{*}\circ \partial^{S}([b(c)]) = \varphi^{*}(\overline{1_{Y_{b(c)}} - \Phi(1_{Y_{b(c)}})}) = 1_{Y_{c}} - \Phi(Y_{c}) = \partial^{R}([c])$. Hence, $\varphi^{*}(\text{im}(\partial^{S})) = \text{im}(\partial^{R})$, and if we identify $\text{im}(\partial^{S})$ and $\text{im}(\partial^{R})$ with $\text{co-ker}([H_{S}])$ and $\text{co-ker}([H_{R}])$ as in the proof of Proposition \ref{transfer0}, the isomorphism $\varphi^{*}:\text{im}(\partial^{S})\mapsto \text{im}(\partial^{R})$ becomes $b^{-1}:\text{co-ker}([H_{S}])\mapsto \text{co-ker}([H_{R}])$. It is routine to see this isomorphism lifts to an isomorphism $b^{-1}:\text{co-ker}(H_{S})\mapsto \text{co-ker}(H_{R})$.
\end{proof}
\subsection{$K$-theory for polynomials}
\label{kpol}
When $R$ is a polynomial, the $K$-theory takes on an especially simple form. We first describe some general properties of polynomial dynamics.
\par
If $R$ is a degree $d$ polynomial, then there are constants $\lambda, M>0$ such that $R(z) > \lambda|z|^{d}$ for $|z|>M$, so $\mathcal{A}_{\infty} = \{z\in\hat{\mathbb{C}}:\lim_{n\to\infty}R^{\circ n}(z) = \infty\}$ is a non-empty open set of $F_{R}$ containing the critical point $\infty$. It is called the \textit{attracting basin at infinity}. By \cite[Lemma~9.4]{Milnor:Dynamics_in_one_complex_variable}, $\mathcal{A}_{\infty}$ is connected, and it is easy to see $R^{-1}(\mathcal{A}_{\infty}) = \mathcal{A}_{\infty}$. Therefore, the attracting basin at infinity is a Fatou cycle of cycle length one. Hence, $f_{R}\geq 1$ and $\omega_{R} = 1$. 
\par
Also, $\mathcal{H}_{R} = \emptyset$ by the maximum modulus principle. When $\mathcal{H}_{R} = \emptyset$, $H_{R}$ is the mapping $\mathbb{Z}\mapsto \mathbb{Z}^{C_{R,J_{R}}}\oplus\mathbb{Z}$ sending $u$ to $(d-1)u + \sum_{c\in C_{R,J_{R}}}c$. So, $\text{ker}(H_{R}) = 0$, and $\text{co-ker}(H_{R})\simeq\mathbb{Z}^{c_{R,J}}$ if $c_{R,J} \neq 0$ and $\text{co-ker}(H_{R})\simeq\mathbb{Z}/(d-1)\mathbb{Z}$ otherwise, with the class of $u$ in both cases corresponding to a generator in a minimal generating set for the group. Gathering these calculations and applying Theorem \ref{jk} yields the following Corollary.
\begin{cor}
\label{apppol}
Let $P$ be a polynomial of degree $d>1$. Then, $K_{1}(\mathcal{O}_{P,J_{P}})\simeq\mathbb{Z}^{f_{R}-1}$. If $c_{R,J_{R}} = 0$, then $K_{0}(\mathcal{O}_{R,J_{R}})\simeq\mathbb{Z}/(d-1)\mathbb{Z}\oplus\mathbb{Z}^{f_{R}-1}$, with the class of $1_{J_{R}}$ generating the isotropy. Otherwise, $K_{0}(\mathcal{O}_{R,J_{R}})\simeq\mathbb{Z}^{c_{R,J} +f_{R}-1}$, with the class of the unit corresponding to a generator in a minimal generating set for $\mathbb{Z}^{c_{R,J} +f_{R}-1}$.
\end{cor}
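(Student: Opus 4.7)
The proof is essentially a direct specialization of Theorem \ref{jk} using the dynamical facts for polynomials recorded just before the Corollary statement, so my plan is to organize the argument into three short steps.

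First, I would record the two crucial structural facts about a non-linear polynomial $P$ of degree $d$: that $\mathcal{A}_{\infty}$ is a Fatou component of cycle length one (so $f_{P}\geq 1$ and consequently $\omega_{P}=1$), and that $\mathcal{H}_{P}=\emptyset$ by the maximum modulus principle (so $h_{P}=0$). These are already noted in the paragraph preceding the statement, so I would just cite them rather than re-prove them.

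Next, I would compute the kernel and cokernel of the matrix $H_{P}$ explicitly. Since $\mathcal{H}_{P}=\emptyset$, the columns of $H_{P}$ collapse to a single column indexed by $u$, so $H_{P}\colon \mathbb{Z}[u]\to \mathbb{Z}[C_{P,J_{P}}\cup\{u\}]$ sends $u\mapsto (d-1)u + \sum_{c\in C_{P,J_{P}}} c$. This element is nonzero (since $d>1$), so $\ker(H_{P})=0$. For the cokernel I would split into the two cases:
\begin{itemize}
    \item If $C_{P,J_{P}}=\emptyset$, the image is $(d-1)\mathbb{Z}[u]$, giving $\operatorname{co-ker}(H_{P})\simeq \mathbb{Z}/(d-1)\mathbb{Z}$, with the class of $u$ generating the isotropy.
    \item If $C_{P,J_{P}}\neq \emptyset$, pick any $c_{0}\in C_{P,J_{P}}$ and use the relation $c_{0} = -(d-1)u - \sum_{c\neq c_{0}} c$ in the quotient to eliminate the generator $c_{0}$. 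This identifies $\operatorname{co-ker}(H_{P})$ with the free abelian group on $(C_{P,J_{P}}\setminus\{c_{0}\})\cup\{u\}$, which is $\mathbb{Z}^{c_{P,J}}$, and the class of $u$ is one of these basis elements.
\end{itemize}

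Finally, I would plug these computations into Theorem \ref{jk}. Using $h_{P}=0$ and $\omega_{P}=1$:
\[ K_{1}(\mathcal{O}_{P,J_{P}})\simeq \ker(H_{P})\oplus \mathbb{Z}/\omega_{P}\mathbb{Z}\oplus \mathbb{Z}^{|f_{P}-1|} = \mathbb{Z}^{f_{P}-1}, \]
\[ K_{0}(\mathcal{O}_{P,J_{P}})\simeq \operatorname{co-ker}(H_{P})\oplus \mathbb{Z}^{|f_{P}+h_{P}-1|} = \operatorname{co-ker}(H_{P})\oplus \mathbb{Z}^{f_{P}-1}, \]
with the class of the unit corresponding to the class of $u$ in the $\operatorname{co-ker}(H_{P})$ summand. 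The two cases for the $K_{0}$ statement follow immediately from the cokernel computation above, and in the $c_{R,J}=0$ case the unit generates the $\mathbb{Z}/(d-1)\mathbb{Z}$ isotropy while in the $c_{R,J}\neq 0$ case the unit sits as one generator in a minimal generating set for $\mathbb{Z}^{c_{R,J}+f_{R}-1}$.

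There is no real obstacle here; the entire content is (i) a linear-algebra cokernel computation for a single-column matrix and (ii) tracking the class of $u$ through the isomorphisms of Theorem \ref{jk}. The heavy lifting was done in Theorem \ref{jk} and in the dynamical observations $\mathcal{H}_{P}=\emptyset$ and $f_{P}\geq 1$; the Corollary is a clean bookkeeping consequence.
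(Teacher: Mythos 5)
Your proof is correct and follows exactly the same route as the paper: record $\mathcal{H}_{R}=\emptyset$, $f_{R}\geq 1$, $\omega_{R}=1$, compute $\ker(H_{R})=0$ and $\operatorname{co-ker}(H_{R})$ from the single-column description of $H_{R}$, then substitute into Theorem \ref{jk}. The elimination of a generator $c_{0}$ with unit coefficient is the standard move justifying the identification $\operatorname{co-ker}(H_{R})\simeq\mathbb{Z}^{c_{R,J}}$ when $C_{R,J_{R}}\neq\emptyset$, and matches the paper's remarks immediately preceding the corollary.
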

By the above Corollary, we can describe the isomorphism type of $\mathcal{O}_{R,J_{R}}$ rather easily.
\begin{thm}
\label{appcharpol}
Let $R$ and $S$ be polynomials. Then, $\mathcal{O}_{R,J_{R}}$ is isomorphic to $\mathcal{O}_{S,J_{S}}$ if and only if either
\begin{itemize}
    \item $c_{R,J} = c_{S,J} = 0$, $\text{deg}(R) = \text{deg}(S)$, and $f_{R} = f_{S}$, or 
    \item $c_{R,J} = c_{S,J}\neq 0$ and $f_{R} = f_{S}$.
\end{itemize}
Consequently, for every $d>1$ the number of isomorphism types of $\mathcal{O}_{R,J_{R}}$ when the degree of $R$ is $d$ is bounded above by $d(2d-2)$.
\end{thm}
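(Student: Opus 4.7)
The plan is to reduce the question to a matching problem on $K$-theory invariants via Kirchberg-Phillips. Since each $\mathcal{O}_{R,J_R}$ is a unital UCT Kirchberg algebra, an isomorphism $\mathcal{O}_{R,J_R} \simeq \mathcal{O}_{S,J_S}$ exists if and only if there is a group isomorphism $K_0(\mathcal{O}_{R,J_R}) \to K_0(\mathcal{O}_{S,J_S})$ sending $[1_{J_R}]$ to $[1_{J_S}]$, together with an isomorphism on $K_1$. Corollary \ref{apppol} gives these invariants explicitly, so the entire proof is a matter of bookkeeping.

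For the forward direction, suppose $\mathcal{O}_{R,J_R} \simeq \mathcal{O}_{S,J_S}$. Comparing ranks of $K_1 \simeq \mathbb{Z}^{f_R - 1}$ immediately yields $f_R = f_S$. Next I would split on whether $c_{R,J_R}$ and $c_{S,J_S}$ vanish. If both are zero, the torsion subgroup of $K_0$ is $\mathbb{Z}/(\deg R - 1)\mathbb{Z}$ on one side and $\mathbb{Z}/(\deg S - 1)\mathbb{Z}$ on the other, forcing $\deg R = \deg S$. If both are nonzero, both $K_0$'s are free, and comparing ranks (using $f_R = f_S$) gives $c_{R,J_R} = c_{S,J_S}$. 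To exclude the mixed case, say $c_{R,J_R} = 0$ and $c_{S,J_S} \neq 0$: the torsion of $K_0(\mathcal{O}_{R,J_R})$ must vanish (so $\deg R = 2$), and then equal ranks plus $f_R = f_S$ forces $c_{S,J_S} = 0$, a contradiction.

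For the converse, in each of the two cases the triple $(K_0, [1_{J}], K_1)$ is given in a canonical form by Corollary \ref{apppol}, and I would write down the matching isomorphism directly: in case one, the torsion subgroup is characteristic and both $[1_{J_R}]$, $[1_{J_S}]$ generate $\mathbb{Z}/(d-1)\mathbb{Z}$, so send torsion generator to torsion generator and pick any isomorphism of free parts; in case two, $[1_{J_R}]$ and $[1_{J_S}]$ are both primitive in the respective free abelian groups of the same rank, so extend each to a basis and map basis to basis. Apply Kirchberg-Phillips.

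For the consequence on counting, I would use that a degree $d$ polynomial has at most $d-1$ critical points in $\mathbb{C}$ (zeros of $R'$) and that $\infty$ lies in the attracting basin $\mathcal{A}_{\infty} \subseteq F_R$, giving $c_{R,J_R} \in \{0, 1, \ldots, d-1\}$, and that Shishikura's bound (\cite[Corollary~2]{Shishikura}) gives $f_R \leq 2d - 2$, with $f_R \geq 1$ because $\mathcal{A}_{\infty}$ is always a Fatou cycle. The case $c_{R,J_R} = 0$ contributes at most $2d - 2$ distinct isomorphism types (parametrized by $f_R$, for the fixed value $d$), and $c_{R,J_R} \neq 0$ contributes at most $(d-1)(2d-2)$, summing to $d(2d-2)$. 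There is no serious obstacle: all the heavy lifting is in Corollary \ref{apppol}, and the only mild care needed is to rule out the mixed torsion/torsion-free case by combining the torsion and rank comparisons.
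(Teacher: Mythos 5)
Your proof is correct and takes essentially the same route as the paper: both reduce to the Kirchberg--Phillips invariant $(K_0, [1], K_1)$ and read off the answer from Corollary \ref{apppol}, with the counting via Shishikura's bound $f_R \le 2d-2$ and $c_{R,J} \le d-1$. The paper compresses the bookkeeping to ``follows directly from the above Corollary,'' whereas you spell out the rank/torsion comparisons (including ruling out the mixed case $c_{R,J}=0$, $c_{S,J}\neq 0$), but the argument is the same.
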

\begin{proof}
The characterization of the isomorphism types follows directly from the above Corollary. By \cite[Corollary~2]{Shishikura}, $f_{R}\leq 2d-2$. Similarily, $c_{R,J}\leq d-1$. Therefore, the number of possible isomorphism types of degree $d$ is bounded above by $(2d-2) + (d-1)(2d-2) = d(2d-2)$
\end{proof}
Let us show the above inequality is sharp in the case when $d =2$ and describe the four isomorphism types.
First, we will record the following lemma, which is a collection of several results in complex dynamics.
\begin{lemma}
\label{compcrit}
Assume $R$ is a rational function and $P$ is a Fatou cycle.
\begin{enumerate}
    \item If $P$ is an attracting cycle, then it contains a critical point of $R$.
    \item If $P$ is a parabolic cycle, then it contains a critical point of $R$.
    \item If $P$ is a Siegel cycle, then its boundary is contained in the closure of the set of forward orbits for the critical points of $R$.
    \item If $P$ is a Herman cycle, then its boundary is contained in the closure of the set of forward orbits for the critical points of $R$.
\end{enumerate}
\end{lemma}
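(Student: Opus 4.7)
The plan is to assemble this lemma from classical results, as all four statements are well-known in the complex dynamics literature, and so the proof is essentially bibliographic. The underlying principle common to all four parts is that maximal linearization (or the maximal extent of an attracting basin or petal) is obstructed precisely by the forward orbit of a critical point. I would separate the argument into the ``basin'' cases (1)--(2) and the ``rotation'' cases (3)--(4).

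For (1), set $n = |P|$, $f = R^{\circ n}$, and let $U$ in $P$ contain the attracting fixed point $p$ of $f$. Suppose for contradiction $U$ contains no critical point of $f$. Then $f:U\mapsto U$ is a proper holomorphic covering map from a hyperbolic Riemann surface to itself, hence a local isometry for the hyperbolic metric on $U$. On the other hand, the Schwarz--Pick lemma applied at $p$ gives strict contraction, since $|f'(p)|<1$ (geometrically attracting) or $f'(p)=0$ (super-attracting). This contradiction forces a critical point of $f$ into $U$; replacing it by its lowest forward iterate still inside $P$ gives a critical point of $R$ in $P$. The details are in \cite[Theorem~8.6]{Milnor:Dynamics_in_one_complex_variable}. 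For (2), one runs the same argument on an attracting petal $\mathcal{P}\subseteq U$ supplied by the Parabolic Flower Theorem (\cite[Theorem~10.7]{Milnor:Dynamics_in_one_complex_variable}): a simply connected domain on which $f$ is conjugate to $z\mapsto z+1$ on a half-plane. The forward orbit of any interior point converges to the parabolic fixed point on $\partial U$, so a covering/isometry contradiction as above produces a critical point in the immediate parabolic basin; see \cite[Theorem~10.15]{Milnor:Dynamics_in_one_complex_variable}.

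For (3) and (4), let $U$ in $P$ and let $\varphi$ be the conformal conjugacy of $f = R^{\circ n}$ on $U$ to an irrational rotation (of $\mathbb{D}$ in the Siegel case, of $\mathbb{A}_{r}$ in the Herman case). Fix a boundary component $\partial_{0}U$ of $\partial U$ and suppose, for contradiction, there exists $z\in\partial_{0}U$ and a disk $V$ around $z$ disjoint from the closure of the forward orbits of the critical points of $R$. Then on $V$ every branch of $f^{-1}$ is defined and holomorphic, so we can analytically continue $\varphi^{-1}$ across $\partial_{0}U$ by reflecting through the rotation: the orbit of any point near $z$ under $f^{\circ k}$ is equidistributed on the invariant curve through $z$, and this orbit, together with the pulled-back rotation, gives a holomorphic extension of $\varphi^{-1}$ past $z$. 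This enlarges $U$, contradicting its maximality as a Fatou component. Hence every boundary point of $U$ lies in the closure of the post-critical set, and the same holds for every other component of $P$ since they are images of $U$ under $R$.

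The main obstacle is the last step: making rigorous the ``reflect through the rotation'' extension in (3)--(4) requires Douady's lemma that a Siegel disk (respectively Herman ring) boundary is accumulated by post-critical points, and in full rigor this is the Fatou--Shishikura argument. Rather than reproduce it, I would simply cite \cite[Theorem~11.17]{Milnor:Dynamics_in_one_complex_variable} (together with the standard extension to Herman rings obtained by applying the Siegel-disk version to each of the two boundary components separately after passing to the universal cover of the annulus).
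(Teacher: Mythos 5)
Your proof is correct and takes essentially the same approach as the paper: both reduce the lemma to a citation of the classical results in Milnor's book. Two minor remarks on the citations. For (1) you cite \cite[Theorem~8.6]{Milnor:Dynamics_in_one_complex_variable} while the paper cites the preceding \cite[Lemma~8.5]{Milnor:Dynamics_in_one_complex_variable}, but these are the same Schwarz--Pick argument. For (4), rather than deriving the Herman-ring case from the Siegel-disk theorem via passage to the universal cover (a heuristic step that still needs some care, since the lift of the return map on the strip is not a disk rotation and one must control both boundary components simultaneously), it is cleaner to cite the dedicated statement \cite[Lemma~15.7]{Milnor:Dynamics_in_one_complex_variable}, which is what the paper does.
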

\begin{proof}
The proofs in the four cases can (respectively) be found in the proofs of \cite[Lemma~8.5]{Milnor:Dynamics_in_one_complex_variable}, \cite[Theorem~10.15]{Milnor:Dynamics_in_one_complex_variable}, \cite[Theorem~11.17]{Milnor:Dynamics_in_one_complex_variable}, and \cite[Lemma 15.7]{Milnor:Dynamics_in_one_complex_variable}.
\end{proof}

The complement $\hat{\mathbb{C}}\setminus\mathcal{A}_{\infty}$ is denoted $K_{P}$ and is called the \textit{filled Julia set}. Note that 
$K_{P}$ consists of all points in $\mathbb{C}$ with a bounded forward $P$-orbit. By \cite[Theorem~9.5]{Milnor:Dynamics_in_one_complex_variable}, $K_{P}$ is connected if and only if $C_{P,\mathbb{C}}\subseteq K_{P}$, and $K_{P}$ is connected if and only if $J_{P}$ is connected. Also, $\partial K_{P} = J_{P}$.
\par
It is easy to see that any quadratic map is conjugate to one of the form $f_{c}(z) = z^{2}+c$, for some $c$ in $\mathbb{C}$. Its critical points are thus $0$ and $\infty$. Denote its Julia set by $J_{c}$, its filled Julia set by $K_{c}$, and the Cuntz-Pimsner algebra $\mathcal{O}_{f_{c},J_{c}}$ by $\mathcal{O}_{c,J}$.
\par
We show that the $K$-theory of a quadratic $f_{c}$ depends only on the location of $c$ in $\mathbb{C}$ relative to the filled Julia set $K_{c}$ of $f_{c}$. Equivalently, the $K$-theory depends only on the type of bounded Fatou cycle $f_{c}$ admits.
\par
\begin{cor}
\label{appclassquad1}
Let $f_{c}(z) = z^{2} + c$. Then there are four isomorphism types for $\mathcal{O}_{c,J}$, dependent on the location of $0$ relative to the filled Julia set.
\begin{enumerate}
    \item[] \textbf{Case 0} $(0\notin K_{c}):$ Then, $K_{1}(\mathcal{O}_{c,J}) = K_{0}(\mathcal{O}_{c,J}) = 0$.
    \par
    \item[] \textbf{Case 1}  $(0\in\text{int}(K_{c})):$ Then, $K_{1}(\mathcal{O}_{c,J})\simeq K_{0}(\mathcal{O}_{c,J})\simeq\mathbb{Z}$ and $[1_{J_{c}}] = 0$.
    
    \item[] \textbf{Case 2} $(0\in \partial K_{c} = J_{c}$,  $\text{int}(K_{c})\neq\emptyset):$ Then $K_{1}(\mathcal{O}_{c,J})\simeq\mathbb{Z}$, $K_{0}(\mathcal{O}_{c,J})\simeq\mathbb{Z}^{2}$ and $[1_{J_{c}}]$ is a generator in a minimal generating set for $\mathbb{Z}^{2}$.
    
    \item[] \textbf{Case 3} $(0\in \partial K_{c} =J_{c}$, $\text{int}(K_{c}) = \emptyset):$ Then $K_{1}(\mathcal{O}_{c,J}) = 0$, $K_{0}(\mathcal{O}_{c,J})\simeq\mathbb{Z}$ and $[1_{J_{c}}]$ is a generator.
    
\end{enumerate}
\end{cor}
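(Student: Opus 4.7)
The plan is to reduce each of the four cases to Corollary \ref{apppol} by computing the pair $(f_{f_c}, c_{f_c, J_c})$. Since the critical points of $f_c$ in $\hat{\mathbb{C}}$ are exactly $\{0, \infty\}$, and $\infty$ is always a super-attracting fixed point lying inside the attracting basin at infinity (a Fatou cycle of length one), we immediately have $c_{f_c, J_c} = 1$ when $0 \in J_c$ and $c_{f_c, J_c} = 0$ otherwise. This pins down $c_{f_c, J_c} = 0$ in Cases 0 and 1, and $c_{f_c, J_c} = 1$ in Cases 2 and 3.

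Computing $f_{f_c}$ requires a little more care. First I would record two standard facts specific to quadratic polynomials: a polynomial admits no Herman cycles (an invariant annular Fatou component for a polynomial would contradict the maximum modulus principle applied to the bounded component of its complement), and the Fatou--Shishikura inequality specialized to $d = 2$ allows at most one bounded non-repelling cycle in addition to $\infty$. Hence $f_{f_c} \in \{1, 2\}$, with $f_{f_c} = 2$ exactly when there exists a bounded Fatou cycle. In Case 0, the postcritical set of $f_c$ in $\mathbb{C}$ is a discrete sequence escaping to $\infty$, so by Lemma \ref{compcrit}(1)--(3) neither an attracting, parabolic, nor Siegel cycle can exist in $\mathbb{C}$ (a Siegel boundary is a Jordan curve in the postcritical closure). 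Thus $f_{f_c} = 1$. In Case 1, $0$ lies in a bounded Fatou component $U$, and the return map $R^{\circ n}\colon U \mapsto U$ has a critical point and cannot be conformally conjugate to an irrational rotation; hence the cycle of $U$ is attracting or parabolic, and $f_{f_c} = 2$. In Case 2, $\mathrm{int}(K_c) \neq \emptyset$ provides a bounded Fatou cycle, but since $0 \in J_c$ no critical point can inhabit a bounded Fatou component, ruling out attracting and parabolic cycles by Lemma \ref{compcrit}(1)--(2); the only surviving possibility is a Siegel cycle, so $f_{f_c} = 2$. In Case 3, $\mathrm{int}(K_c) = \emptyset$ leaves no bounded Fatou components whatsoever, giving $f_{f_c} = 1$.

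Substituting the computed pairs into Corollary \ref{apppol} with $d = 2$ (noting $\mathbb{Z}/(d-1)\mathbb{Z} = 0$, so that in the $c_{f_c,J_c} = 0$ cases $[1_{J_c}]$ becomes zero) yields the four stated $K$-groups together with the identification of the class of the unit. The main obstacle I anticipate is the Case 0 argument excluding a bounded Siegel disk purely from the postcritical structure, and the Case 2 argument verifying that the bounded cycle is Siegel rather than attracting or parabolic; both reduce to a careful bookkeeping with Lemma \ref{compcrit} and the classification in Theorem \ref{fatcomclass}, using that the only remaining critical point $0$ already belongs to $J_c$ in Case 2 and escapes to $\infty$ in Case 0.
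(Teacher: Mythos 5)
Your proposal is correct and follows essentially the same reduction as the paper: compute the pair $(c_{f_{c},J_{c}}, f_{f_{c}})$ from the location of $0$ and feed it into Corollary~\ref{apppol} with $d=2$. The paper's own proof of this Corollary does not bother identifying the cycle type of any bounded Fatou cycle (it only needs that one exists, then invokes Shishikura's bound $f_{f_{c}}\leq 2$); the cycle-type classification you carry out is exactly the content of the subsequent Corollary~\ref{appclassquad2}, so you have effectively absorbed that later result into the present one, which is harmless but not required.

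One small imprecision worth flagging: in your Case~0 argument you write that ``a Siegel boundary is a Jordan curve in the postcritical closure.'' It is \emph{not} a theorem that Siegel disk boundaries are Jordan curves in general. What Lemma~\ref{compcrit}(3) does give is that $\partial U_{P}$ is contained in the closure of the critical orbit, and you can finish without any topological regularity of $\partial U_{P}$: in Case~0 the forward orbits of $0$ and $\infty$ accumulate only at $\infty$, so their closure lies in $\mathcal{A}_{\infty}\cup\{\infty\}\subseteq F_{c}$, whereas $\partial U_{P}\subseteq J_{c}$ is disjoint from $F_{c}$ and is a nondegenerate continuum; that is the contradiction. (Alternatively: the postcritical closure is countable while $\partial U_{P}$ is uncountable.) With this corrected, the rest of your argument is sound.
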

\begin{proof}
\textbf{Case 0}: If $0$ is not in $K_{c}$, then $0$ can't be in a bounded Fatou component, and the only limit point of $\{f_{c}^{\circ n}(0)\}_{n\in\mathbb{N}}$ is $\infty$. Thus, Lemma \ref{compcrit} implies $K_{c} = J_{c}$. Hence $c_{f_{c},J_{c}} = 0$, $p_{f_{c}} = 1$, and $\omega_{f_{c}} = 1$. Case 2 of Theorem \ref{jk} then implies $K_{1}(\mathcal{O}_{c,J}) = 0 = K_{0}(\mathcal{O}_{c,J})$.
\par
\textbf{Case 1}: If $0$ is in $\text{int}(K_{c})$, then $c_{f_{c},J_{c}} = 0$, and the Fatou set $F_{c}$ contains a cycle which is distinct from the attracting basin at $\infty$. Hence, $p_{f_{c}}\geq 2$. By \cite[Corollary~2]{Shishikura}, a rational function of degree $d$ has at most $2d-2$ distinct Fatou cycles. Therefore, $p_{f_{c}} = 2$, and $c_{f_{c},J_{c}} = 0$. Now, case 2 of Theorem \ref{jk} applies to show $K_{1}(\mathcal{O}_{c,J})\simeq\mathbb{Z}$, $K_{0}(\mathcal{O}_{c,J})\simeq\mathbb{Z}/\mathbb{Z}\oplus\mathbb{Z} = \mathbb{Z}$, and $[1_{J_{c}}] = 0$.
\par
\textbf{Case 2}: If $0$ is in $J_{c}$ and $\text{int}(K_{c})\neq\emptyset$, then $c_{f_{c},J_{c}} = 1$, and $p_{f_{c}} = 2$ by the same reasoning as before. Case 3 of Theorem \ref{jk} applies to show $K_{1}(\mathcal{O}_{c,J})\simeq\mathbb{Z}$, $K_{0}(\mathcal{O}_{c,J})\simeq\mathbb{Z}^{2}$, and $[1_{J_{c}}]$ is a generator in a minimal generating set for $\mathbb{Z}^{2}$.
\par
\textbf{Case 3}: If $0$ is in $J_{c}$ and $\text{int}(K_{c}) = \emptyset$, then the only Fatou cycle is the attracting basin at infinity. Hence, $c_{f_{c},J_{c}} = 1 = p_{f_{c}}$. Case $3$ of Theorem \ref{jk} applies to show $K_{1}(\mathcal{O}_{c,J}) = 0$, $K_{0}(\mathcal{O}_{c,J})\simeq\mathbb{Z}$, and $[1_{J_{c}}]$ is a generator.
\end{proof}
We now characterize the four isomorphism types of $\mathcal{O}_{c,J}$ above by the dynamics of $f_{c}$ on its Fatou set $F_{c}$.
\begin{cor}
\label{appclassquad2}
Let $f_{c}(z) = z^{2} + c$. Then there are four isomorphism types for $\mathcal{O}_{c,J}$, dependent on the dynamics of $f_{c}$ on the Fatou set.
\begin{enumerate}
    \item[] \textbf{Case 0}: If $\mathcal{A}_{\infty}$ is not simply connected, then $K_{1}(\mathcal{O}_{c,J}) = K_{0}(\mathcal{O}_{c,J}) = 0$.
    \par
    \item[] \textbf{Case 1}: If $F_{c}$ contains a hyperbolic or parabolic cycle of bounded Fatou components, then $K_{1}(\mathcal{O}_{c,J})\simeq K_{0}(\mathcal{O}_{c,J})\simeq\mathbb{Z}$ and $[1_{J_{c}}] = 0$.
    
    \item[] \textbf{Case 2}: If $F_{c}$ contains a cycle of irrational rotations, then $K_{1}(\mathcal{O}_{c,J})\simeq\mathbb{Z}$, $K_{0}(\mathcal{O}_{c,J})\simeq\mathbb{Z}^{2}$ and $[1_{J_{c}}]$ is a generator.
    
    \item[] \textbf{Case 3}: If $F_{c} = \mathcal{A}_{\infty}$ and is simply connected, then $K_{1}(\mathcal{O}_{c,J}) = 0$, $K_{0}(\mathcal{O}_{c,J})\simeq\mathbb{Z}$ and $[1_{J_{c}}]$ is a generator.
    
\end{enumerate}
\end{cor}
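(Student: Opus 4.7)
The plan is to deduce Corollary \ref{appclassquad2} from Corollary \ref{appclassquad1} by matching each of the four location conditions on $0$ relative to the filled Julia set $K_c$ with the corresponding dynamical description of $F_c$ listed above. The only critical points of $f_c$ are $0$ and $\infty$, and $\infty$ is a super-attracting fixed point, so $\mathcal{A}_\infty$ is always one Fatou cycle. By \cite[Corollary~2]{Shishikura} there are at most $2d-2 = 2$ Fatou cycles, so there is at most one additional cycle besides $\mathcal{A}_\infty$, and such an additional cycle is automatically bounded.

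For the extremes I would invoke the classical dichotomy — a consequence of \cite[Theorem~9.5]{Milnor:Dynamics_in_one_complex_variable} — that $K_c$ is connected if and only if $0 \in K_c$ if and only if $\mathcal{A}_\infty$ is simply connected, and that otherwise $K_c$ is a Cantor set, $\mathcal{A}_\infty$ is not simply connected, and $F_c = \mathcal{A}_\infty$. This makes Case 0 of Corollary \ref{appclassquad1} ($0 \notin K_c$) equivalent to $\mathcal{A}_\infty$ being not simply connected. Case 3 ($0 \in J_c$, $\mathrm{int}(K_c) = \emptyset$) forces $K_c = J_c$ and hence $F_c = \mathcal{A}_\infty$; moreover $0 \in K_c$ gives that $\mathcal{A}_\infty$ is simply connected. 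Conversely, $F_c = \mathcal{A}_\infty$ simply connected implies $\mathrm{int}(K_c) = \emptyset$ and $0 \in K_c$, hence $0 \in J_c$.

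For the intermediate cases, any bounded Fatou cycle $P$ must, by the Fatou-Julia-Sullivan classification (Theorem \ref{fatcomclass}) together with the absence of Herman cycles for polynomials (as noted above Corollary \ref{apppol}, via the maximum modulus principle), be attracting, parabolic, or Siegel. In Case 1 of Corollary \ref{appclassquad1}, $0 \in \mathrm{int}(K_c)$ means $0$ lies in a bounded Fatou component, which by Lemma \ref{compcrit}(3) rules out Siegel, so $P$ is attracting or parabolic. Conversely, Lemma \ref{compcrit}(1)--(2) places the critical point $0$ inside any attracting or parabolic cycle, giving $0 \in \mathrm{int}(K_c)$. In Case 2, $\mathrm{int}(K_c) \neq \emptyset$ forces the existence of a bounded cycle $P$; if $0 \in J_c$ then $P$ cannot contain $0$, so $P$ is Siegel. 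Conversely, a Siegel cycle $P$ contributes $\mathrm{int}(K_c) \supseteq P \neq \emptyset$, and by Shishikura's bound $P$ is the unique bounded cycle, so the orbit of $0$ cannot lie in any bounded Fatou component. Since Lemma \ref{compcrit}(3) shows the boundary of the Siegel cycle is accumulated by the forward orbit of $0$, this orbit is bounded, so $0 \in K_c$, and combined with $0 \notin \mathrm{int}(K_c)$ this gives $0 \in J_c$.

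Combining these equivalences with Corollary \ref{appclassquad1} yields the stated $K$-theory in each case. The only genuinely subtle point is the last implication — showing that the presence of a Siegel cycle forces $0 \in J_c$, not merely $0 \notin P$ — but this is handled by the Shishikura bound together with the orbit-boundedness coming from Lemma \ref{compcrit}(3). Everything else is a bookkeeping exercise combining the classification of Fatou components with the well-known behaviour of $K_c$ versus $\mathcal{A}_\infty$ for quadratic polynomials.
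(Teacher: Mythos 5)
Your overall plan — reduce to Corollary \ref{appclassquad1} by matching each dynamical description of $F_c$ to the corresponding condition on the location of $0$ relative to $K_c$ — is exactly the paper's approach, and your handling of Cases 0, 1, and 3 is correct and essentially identical.

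However, there is a genuine gap in your treatment of the converse direction of Case 2, which you yourself flag as ``the only genuinely subtle point.'' You write that the presence of a Siegel cycle $P$ implies, ``by Shishikura's bound,'' that the orbit of $0$ cannot lie in any bounded Fatou component. But Shishikura's bound only tells you that $P$ is the \emph{unique} bounded cycle; it does not by itself exclude $0$ from lying in a bounded Fatou component $U$ that is a strict preimage of $P$. If $0 \in U$ then, by Sullivan's no-wandering-domains theorem, the orbit of $0$ eventually enters $U_P$, and this is precisely the possibility that has to be ruled out. The ``orbit-boundedness from Lemma \ref{compcrit}(3)'' that you invoke afterward only establishes $0 \in K_c$; it says nothing yet about $0 \notin \operatorname{int}(K_c)$, so your two ingredients together give $0 \in K_c$, not $0 \in J_c$. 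The missing argument — which the paper supplies — is that if the orbit of $0$ eventually entered $U_P$, then, because $f_c^{\circ |P|}$ restricted to each component of $U_P$ is conjugate to an irrational rotation, the closure of that orbit would be a compact subset of $U_P$ disjoint from $\partial U_P$; this directly contradicts Lemma \ref{compcrit}(3), which forces $\partial U_P$ to lie inside the orbit closure. Until you insert that step, the assertion ``the orbit of $0$ cannot lie in any bounded Fatou component'' is unsupported.
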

\begin{proof}
Note that $\mathcal{A}_{\infty}$ is simply connected if and only if $K_{c} = \hat{\mathbb{C}}\setminus\mathcal{A}_{\infty}$ is connected, which is true if and only if $0$ is in $K_{c}$. So, case $0$ above is equivalent to case $0$ of Corollary \ref{appclassquad1}.
\par
The only other case that doesn't follow immediately from Corollary \ref{appclassquad1} and Lemma \ref{compcrit} is \textbf{Case 2}, which we show. By Theorem \ref{jk}, it suffices to show $0$ is in $J_{c}$.
\par
Suppose the contrary. Then, Lemma \ref{compcrit} $(3)$ implies $0$ must be in a bounded Fatou component. If $P$ is the Siegel cycle, then it follows that there is some $k$ in $\mathbb{N}$ such that \{$f_{c}^{\circ n}(0)\}_{n\geq k}\subseteq U_{P}$. Since $f_{c}^{\circ |P|}:U_{x}\mapsto U_{x}$ is conjugate to an irrational rotation, for all $x$ in $P$, it follows that $\overline{\{f_{c}^{\circ n}(0)\}_{n\geq k}}\subseteq U_{P}$. But Lemma \ref{compcrit} $(3)$ then implies $\partial U_{P}\subseteq \overline{\{f_{c}^{\circ n}(0)\}_{n\geq k}}\subseteq U_{P}$, a contradiction.
\end{proof}
Case 0,1,3 above are realized by $c = 1, 0, -2$, respectively, while case 2 is realized by the quadratic $z^{2} + e^{2\pi i\varphi}z$, where $\varphi$ is the golden ratio.
\par
A complex number $c$ is said to be \textit{hyperbolic} (\textit{parabolic}) if $\{f_{c}^{n}(0)\}_{n\in\mathbb{N}}$ converges to an attracting (parabolic) periodic orbit. We get the following $K$-theory characterization of when $c$ is ``bolic''.
\begin{cor}
\label{bolicchar}
$c$ in $\mathbb{C}$ is hyperbolic or parabolic if and only if $[1_{J_{c}}] = 0$ in $K_{0}(\mathcal{O}_{c,J})$.
\end{cor}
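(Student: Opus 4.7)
The plan is to deduce this from Corollary \ref{appclassquad1}, which splits the quadratic family into four cases by the location of $0$ relative to $K_c$. In that corollary, $[1_{J_c}] = 0$ occurs exactly in Case 0 ($0 \notin K_c$) and Case 1 ($0 \in \text{int}(K_c)$), while in Cases 2 and 3 the class is a generator in a minimal generating set. So the task reduces to matching these two dynamical conditions on $0$ with the hyperbolic/parabolic dichotomy, which will follow from the Fatou--Julia classification (Theorem \ref{fatcomclass}) and the critical-point-capture statements of Lemma \ref{compcrit}.

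For the forward direction, suppose first that $c$ is hyperbolic, so the orbit of $0$ converges to an attracting cycle. By Lemma \ref{compcrit}(1) this cycle contains a critical point of $f_c$, and the only critical points are $0$ and $\infty$. If the cycle contains $\infty$, then $0 \in \mathcal{A}_{\infty}$, so $0 \notin K_c$ and we are in Case 0. Otherwise the attracting cycle is bounded and contains $0$, placing $0 \in \text{int}(K_c)$ and landing in Case 1. If $c$ is parabolic, the orbit of $0$ converges to a parabolic cycle $P$; since $\infty$ is super-attracting, $P$ must be bounded, and by Lemma \ref{compcrit}(2) it contains $0$, so again $0 \in \text{int}(K_c)$ and we are in Case 1. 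In all subcases $[1_{J_c}] = 0$.

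For the reverse direction, assume $[1_{J_c}] = 0$, so we are in Case 0 or Case 1. In Case 0, $0 \notin K_c$ means the forward orbit $\{f_c^{\circ n}(0)\}$ is unbounded; total invariance of $\mathcal{A}_\infty$ then forces $f_c^{\circ n}(0) \to \infty$, and since $\infty$ is an (attracting) fixed point, $c$ is hyperbolic. In Case 1, $0$ lies in a bounded Fatou component $U$; by Sullivan's no wandering domains theorem, $U$ is eventually mapped into a cycle $P$, and Theorem \ref{fatcomclass} classifies $P$ as attracting, parabolic, Siegel, or Herman. The Herman case is excluded because polynomials have $\mathcal{H}_R = \emptyset$ (noted in Section \ref{kpol}). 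If $P$ is attracting or parabolic, the orbit of $0$ converges to it, so $c$ is hyperbolic or parabolic respectively.

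The key obstacle is ruling out the Siegel case, which I will handle by essentially reusing the argument from the proof of Corollary \ref{appclassquad2}, Case 2. If $P$ were a Siegel cycle, then once the orbit of $0$ enters $U_P$ it remains there, and the conjugacy of $f_c^{\circ |P|}|_{U_x}$ to an irrational rotation forces $\overline{\{f_c^{\circ n}(0)\}_{n \geq k}}$ to be a compact subset of $U_P$ (a union of invariant circles). But Lemma \ref{compcrit}(3) gives $\partial U_P \subseteq \overline{\{f_c^{\circ n}(0)\}_{n \geq k}} \subseteq U_P$, contradicting $\partial U_P \cap U_P = \emptyset$. This completes the reverse direction and hence the corollary.
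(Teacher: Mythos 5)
Your proof is correct and follows essentially the same route as the paper: both directions reduce to the $K$-theory classification of quadratics (\ref{appclassquad1}/\ref{appclassquad2}) combined with the Fatou--Julia--Sullivan classification and the critical-orbit statements of Lemma~\ref{compcrit}, with the Siegel-exclusion argument lifted verbatim from the proof of Corollary~\ref{appclassquad2}. The only difference is cosmetic: you enter through Corollary~\ref{appclassquad1} and re-derive the dynamical dichotomy rather than citing Corollary~\ref{appclassquad2} directly, and you spell out the converse direction that the paper dismisses as straightforward.
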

\begin{proof}
If $[1_{J_{c}}] = 0$ then $f_{c}$ is a quadratic covered in Case 0 or Case 1 of Corollary \ref{appclassquad2}. 
\par
In Case 0, $0$ is in $\mathcal{A}_{\infty}$ and hence $\{f_{c}^{n}(0)\}_{n\in\mathbb{N}}$ converges to the attracting fixed point $\infty$. 
\par
In case $1$, $F_{c}$ contains either an attracting or parabolic cycle, and so Lemma \ref{compcrit} $(1)$ and $(2)$ imply $\{f_{c}^{n}(0)\}_{n\in\mathbb{N}}$ is eventually contained in a cycle of hyperbolic or parabolic Fatou components. This proves one direction of the Corollary. The converse is straightforward as well.
\end{proof}
Let us describe the corresponding $C^{*}$-algebra to each case. Case $0$ corresponds to the Cuntz algebra $\mathcal{O}_{2}$ considered first in \cite{C77}. This is the universal $C^{*}$-algebra generated by two isometries $s_{1}$, $s_{2}$ such that $s_{1}s^{*}_{1} + s_{2}s^{*}_{2} = 1$. The $C^{*}$-algebra $\mathcal{O}_{2}$ and its generalizations for $n = 3,...\infty$ have played a central role in the classification theory of purely infinite $C^{*}$-algebras \cite{Phillips:classification} and to the general theory of $C^{*}$-algebras. For instance, a seperable $C^{*}$-algebra is exact if and only if it embeds into $\mathcal{O}_{2}$ (see \cite{KP00}). The Cuntz algebras have also found applications in wavelet theory \cite{BJ97} and were influential in the discovery of the Doplicher-Roberts Theorem \cite{DE89}, which characterizes the representations of a compact group as an abstract catgory.
\par
Case $1$ corresponds to the 2-adic ring $C^{*}$-algebra $\mathcal{Q}_{2}$ studied in \cite{LL12}, and the author thanks Chris Bruce for this identification. It also appears in other contexts; see \cite[remark~3.2]{LL12}. It is shown in \cite{LL12} that $\mathcal{Q}_{2}$ is the universal $C^{*}$-algebra generated by a unitary $u$ and isometry $s$ satisfying $su = u^{2}s$ and $ss^{*} + uss^{*}u^{*} = 1$. Note that the isometries $s$ and $us$ generate $\mathcal{O}_{2}$ as a sub-$C^{*}$-algebra of $\mathcal{Q}_{2}$. The representations of $\mathcal{O}_{2}$ that extend to $\mathcal{Q}_{2}$ are characterized in \cite{LL12}. From this characterization, the authors of \cite{LL12} motivate viewing $\mathcal{Q}_{2}$ as a symmetrized version of $\mathcal{O}_{2}$.
\par
Case $3$ corresponds to the Cuntz algebra $\mathcal{O}_{\infty}$, see the above remarks concerning Cuntz algebras. This is the universal $C^{*}$-algebra generated by isometries $\{s_{i}\}^{\infty}_{i=1}$ satisfying $\sum_{i=1}^{n} s_{i}s_{i}^{*}\leq 1$ for every $n$ in $\mathbb{N}$. Like $\mathcal{O}_{2}$, it is special amongst the Cuntz algebras. Kirchberg showed that a simple, seperable, unital and nuclear $C^{*}$-algebra $A$ is purely infinite if and only if $A\otimes \mathcal{O}_{\infty}\simeq A$ (see \cite{KP00}).
\par
Not much is known about the $C^{*}$-algebra representing case $2$. It is isomorphic to the $C^{*}$-algebra of the partial dynamical system $z^{2}:S^{1}\setminus \{1\}\mapsto S^{1}$. This follows by an application of the theory developed in Section \ref{morphcorr}. From this description, it follows that it is the universal $C^{*}$-algebra generated by a unitary $u$ and isometry $s$ satisfying $su = u^{2}s$ and $ss^{*} + uss^{*}u^{*} = \frac{u + u^{*}}{2}$. We shall denote it by $\mathcal{Q}_{2,\infty}$, as it shares properties of both $\mathcal{Q}_{2}$ and $\mathcal{O}_{\infty}$.
\par
In all $4$ cases, the $C^{*}$-algbera is isomorphic to a graph $C^{*}$-algebra. We follow the southern convention for graph $C^{*}$-algebras as in \cite{R05}. The below figure shows the graph corresponding to each case (the numbers in the graphs represent multiple edges):
\begin{table}[hbt!]
\label{table}
        \begin{tabular}{l | l | l}
        Parameter $c$ & $C^{*}$-algebra & Graph \\
        \hline \hline
        Outside $\mathcal{M}$ & $\mathcal{O}_{2}$ & $\begin{tikzcd}
\bullet \arrow["2"', loop, distance=2em, in=215, out=145]
\end{tikzcd}$\\
        Hyperbolic or parabolic in $\mathcal{M}$ & $\mathcal{Q}_{2}$ & $\begin{tikzcd}
\bullet \arrow["2"', loop, distance=2em, in=215, out=145] \arrow[r, bend left] & \bullet \arrow[l, bend left] \arrow["2"', loop, distance=2em, in=35, out=325]
\end{tikzcd}$\\
        In $\mathcal{M}$ and $\mathbb{C}\setminus J_{c}$ connected & $\mathcal{O}_{\infty}$ & $\begin{tikzcd}
&[12pt]& \bullet \arrow["\infty"', loop, distance=2em, in=35, out=325]
\end{tikzcd}$\\
        $F_{c}$ contains a Siegel cycle & $\mathcal{Q}_{2,\infty}$ & $\begin{tikzcd}
\bullet \arrow[r, bend left] \arrow["2"', loop, distance=2em, in=215, out=145] \arrow[d] & \bullet \arrow["\infty"', loop, distance=2em, in=35, out=325] \arrow[l, bend left] \arrow[ld] \\
\bullet \arrow[loop, distance=2em, in=215, out=145] \arrow[ru, bend right=49]            &                                                                                              
\end{tikzcd}$
       \end{tabular}
\end{table}
\par

Recall that the Mandelbrot set is 
$\mathcal{M} := \{c\in\mathbb{C}:0\in K_{c}\}$. Equivalently, by Corollary \ref{appclassquad1}, we have that
$\mathcal{M} = \{c\in\mathbb{C}:K_{0}(\mathcal{O}_{c,J})\neq 0\}$.
\par
One of the most important open problems in holomorphic dynamics is the Density of Hyperbolicity Conjecture, stated below.
\begin{conjec}
\label{mandelconjec}
$\mathcal{H}:= \{c\in\mathcal{M}:f_{c}(z) = z^{2} + c\text{ is hyperbolic}\}$ is dense in $\mathcal{M}$.
\end{conjec}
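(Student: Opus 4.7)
I should emphasize at the outset that the Density of Hyperbolicity Conjecture for the quadratic family is one of the most important open problems in holomorphic dynamics, and no complete proof is currently known. The plan below outlines the standard reduction used in the field, together with the known partial results and the remaining obstacle; it is not a full proof.

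The plan is first to reduce the conjecture to the Mandelbrot Local Connectivity (MLC) conjecture, i.e., the assertion that $\mathcal{M}$ is locally connected. By the classical work of Douady and Hubbard \cite{DH85}, if $\mathcal{M}$ is locally connected at a parameter $c \in \partial \mathcal{M}$, then $c$ is accumulated by parameters lying in hyperbolic components of $\mathcal{M}$; hence MLC implies that $\mathcal{H}$ is dense in $\mathcal{M}$. Since hyperbolic components are open, and every parabolic parameter lies on the boundary of some hyperbolic component, it suffices to establish local connectivity of $\mathcal{M}$ at each parameter $c \in \partial\mathcal{M}$ for which $f_c$ is neither hyperbolic nor parabolic.

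For MLC itself, I would follow Yoccoz's puzzle-piece method; see \cite{M14} for a modern survey. Yoccoz proved local connectivity of $\mathcal{M}$ at every parameter that is not infinitely renormalizable, via a combinatorial analysis of the dynamical Yoccoz puzzle around the critical orbit together with the corresponding parapuzzle in parameter space, and a tableau argument controlling moduli of nested annuli. In the infinitely renormalizable case, the best available tools are the Kahn--Lyubich \emph{a priori bounds} in the bounded (primitive) type case, which yield MLC, and hence density of hyperbolicity, in large families of infinitely renormalizable parameters. Combined with Yoccoz's theorem, this establishes density of $\mathcal{H}$ on an open dense subset of $\partial \mathcal{M}$.

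The main obstacle is MLC at the remaining infinitely renormalizable parameters, in particular those of unbounded satellite type, where existing techniques do not provide uniform control on the moduli of the principal nest of annuli required to conclude shrinking of parapuzzle pieces. Corollary~\ref{mandelconjeccor} of the paper recasts the conjecture as the density of $\mathcal{H}'$ in $\mathcal{M}$, but does not by itself supply any new dynamical input: a genuinely new $C^{*}$-algebraic attack would need to extract parameter-space information from the family $\{\mathcal{O}_{c,J}\}_{c\in\mathcal{M}}$ (for instance through continuity properties of $c\mapsto K_{0}(\mathcal{O}_{c,J})$ and the class $[1_{J_{c}}]$), which lies beyond the scope of the invariants developed in this paper.
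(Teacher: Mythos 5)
The statement you were asked about is Conjecture \ref{mandelconjec}, which the paper presents as an open problem and does not prove; your decision not to claim a proof is exactly right, and it matches the paper's own treatment, which only reformulates the conjecture $C^{*}$-algebraically in Corollary \ref{mandelconjeccor} while explicitly disclaiming any new traction on it. Your sketch of the standard reduction (MLC via Douady--Hubbard, Yoccoz's theorem for non-infinitely-renormalizable parameters, a priori bounds in the bounded primitive type case, with the unbounded satellite infinitely renormalizable case as the outstanding obstacle) is an accurate account of the state of the art, and your observation that the $K$-theoretic reformulation supplies no independent dynamical input agrees with the paper's own remark.
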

This conjecture can be stated as a conjecture about the $K$-theory for quadratics in the following way.
\begin{cor}
\label{mandelconjeccor}
The Density of Hyperbolicity Conjecture is true if and only if 
$\mathcal{H}':=\{c\in\mathcal{M}: [1_{J_{c}}] = 0 \text{ in }K_{0}(\mathcal{O}_{c,J})\} = \{c\in \mathcal{M}:\mathcal{O}_{c,J}\simeq\mathcal{Q}_{2}\}$ is dense in $\mathcal{M}$.
\end{cor}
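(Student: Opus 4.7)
The plan is to identify $\mathcal{H}'$ with the set of hyperbolic-or-parabolic parameters in $\mathcal{M}$, verify the stated set-theoretic equality using the $K$-theoretic classification, and then invoke a classical fact about parabolic parameters to bridge between density of $\mathcal{H}$ and density of $\mathcal{H}'$.

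First I would unpack the two equal definitions of $\mathcal{H}'$. For $c\in\mathcal{M}$, we have $0\in K_c$, so only Cases $1$, $2$, $3$ of Corollary \ref{appclassquad1} can occur; of these, only Case $1$ has $[1_{J_c}]=0$, and Case $1$ is precisely the case in which $\mathcal{O}_{c,J}\simeq \mathcal{Q}_2$. This immediately gives $\{c\in\mathcal{M}:[1_{J_c}]=0\}=\{c\in\mathcal{M}:\mathcal{O}_{c,J}\simeq\mathcal{Q}_2\}$. Combining with Corollary \ref{bolicchar} yields
\[
\mathcal{H}'=\{c\in\mathcal{M}: f_c \text{ is hyperbolic or parabolic}\}.
\]

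Next I would prove the easy containment: every hyperbolic $c\in\mathcal{M}$ lies in $\mathcal{H}'$, so $\mathcal{H}\subseteq \mathcal{H}'$, and hence density of $\mathcal{H}$ in $\mathcal{M}$ trivially forces density of $\mathcal{H}'$.

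For the nontrivial direction, I would reduce density of $\mathcal{H}$ to density of $\mathcal{H}'$ by showing every parabolic parameter in $\mathcal{M}$ lies in $\overline{\mathcal{H}}$. This is classical: a parabolic parameter $c_0$ satisfies $f_{c_0}^{\circ n}(z_0)=z_0$ and $(f_{c_0}^{\circ n})'(z_0)=1$ for some periodic point $z_0$, and such $c_0$ lie on the boundary of a hyperbolic component of $\text{int}(\mathcal{M})$ (the multiplier of the perturbed periodic cycle moves into $\mathbb{D}$ as one perturbs $c_0$ into the adjacent hyperbolic component; see, for example, \cite[Theorem~4.1]{Milnor:Dynamics_in_one_complex_variable} or the standard references on the Mandelbrot set). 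So every parabolic $c_0$ is a limit of hyperbolic parameters, i.e.\ $\mathcal{H}'\subseteq \overline{\mathcal{H}}$, from which density of $\mathcal{H}'$ in $\mathcal{M}$ yields density of $\mathcal{H}$.

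There is no real obstacle: once Corollaries \ref{bolicchar} and \ref{appclassquad1} are established, the argument is purely a matter of set-theoretic unpacking together with the (well-known) density of hyperbolic parameters in the set of parabolic parameters. The only step that requires invoking external theory rather than the $C^*$-algebraic machinery of the paper is this last density statement for parabolic parameters, which I would simply cite.
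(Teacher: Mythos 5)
Your proof is correct and follows essentially the same route as the paper's: both reduce via Corollary \ref{bolicchar} to the observation that $\mathcal{H}'$ is the set of hyperbolic-or-parabolic parameters, and both use that every parabolic parameter lies on the boundary of the open set of hyperbolic parameters. One small caveat: \cite[Theorem~4.1]{Milnor:Dynamics_in_one_complex_variable} is not the right citation for the parabolic-on-boundary fact; the paper cites \cite[Lemma~6.1]{Milnor:Mandelbrot} and \cite[Lemma~6.2]{Milnor:Mandelbrot}.
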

\begin{proof}
By Corollary \ref{bolicchar}, the set $\mathcal{H}'$ is precisely the parameters $c$ in $\mathcal{M}$ for which $f_{c}$ is either parabolic or hyperbolic. By \cite[Lemma~6.1]{Milnor:Mandelbrot} and \cite[Lemma~6.2]{Milnor:Mandelbrot}, every parabolic parameter lies on the boundary of the open set $\mathcal{H}$ of hyperbolic parameters in $\mathcal{M}$. Therefore, the set of parabolic parameters is a nowhere dense set in $\mathcal{M}$. It follows that density of $\mathcal{H}'$ is equivalent to density of $\mathcal{H}$.
\end{proof}

\end{document}